\crefname{equation}{}{}
\newtheorem{thm}{Theorem}[section]
\newtheorem*{theorem*}{Theorem}
\newtheorem*{acknowledgement*}{Acknowledgement}
\newtheorem{cor}[thm]{Corollary}
\newtheorem{lem}[thm]{Lemma}
\newtheorem{prop}[thm]{Proposition}
\theoremstyle{definition}
\newtheorem{defn}[thm]{Definition}
\theoremstyle{remark}
\newtheorem{rem}[thm]{Remark}
\numberwithin{equation}{section}
\newcommand{\norm}[1]{\left\Vert#1\right\Vert}
\newcommand{\abs}[1]{\left\vert#1\right\vert}
\newcommand{\set}[1]{\left\{#1\right\}}
\newcommand{\Real}{\mathbb R}
\newcommand{\eps}{\varepsilon}
\newcommand{\func}[1]{\ensuremath{\mathop{\mathrm{#1}}} }
\newcommand{\Div}[0]{\func{div}}
\newcommand{\dist}[0]{\mathrm{dist}}
\newcommand{\spt}[0]{\func{spt}}
\newcommand{\xX}[0]{\mathbf{x}}
\newcommand{\fF}[0]{\mathbf{f}}
\newcommand{\nN}[0]{\mathbf{n}}
\newcommand{\cC}[0]{\mathcal{C}}
\newcommand{\R}{\mathbb{R}}
\title{Existence of monotone Morse flow lines of the expander functional}
\author{Jacob Bernstein}
\address{Department of Mathematics, Johns Hopkins University, 3400 N. Charles Street, Baltimore, MD 21218}
\email{jberns15@jhu.edu}
\author{Letian Chen}
\address{Department of Mathematics, University of Tennessee, 1403 Circle Drive, Knoxville, TN 37996}
\email{lchen63@utk.edu}
\author{Lu Wang}
\address{Department of Mathematics, Yale University, 219 Prospect Street, New Haven, CT 06511}
\email{lu.wang@yale.edu}
\thanks{The first-named author was partially supported by the NSF Grants DMS-1904674 and DMS-2203132. The third-named author was partially supported by the NSF Grant DMS-2146997.}
\begin{document}
\begin{abstract}
  Given a  smooth asymptotically conical self-expander that is strictly unstable we construct a (singular) Morse flow line of the expander functional that  connects it to a stable self-expander.  This flow is monotone in a suitable sense and has small singular set.
\end{abstract}
	\maketitle
	
\section{Introduction} \label{IntroSec}
A family $t\mapsto\Sigma_t\subseteq \mathbb{R}^{n+1}$ of hypersurfaces is an \emph{expander mean curvature flow} (EMCF) if  
\begin{equation}\label{RSMCF}
	\left(\frac{\partial\mathbf{x}}{\partial t}\right)^\perp=\mathbf{H}_{\Sigma_t} -\frac{\mathbf{x}^\perp}{2} 
\end{equation}
where 
$$
\mathbf{H}_{\Sigma_t}=\Delta_{\Sigma_t}\mathbf{x}=-H_{\Sigma_t}\mathbf{n}_{\Sigma_t}=-\mathrm{div}_{\Sigma_t}(\mathbf{n}_{\Sigma_t})\mathbf{n}_{\Sigma_t}
$$
is the mean curvature vector, $\mathbf{n}_{\Sigma_t}$ is the unit normal, and $\mathbf{x}^\perp$ is the normal component of the position vector $\mathbf{x}$. Formally, this is the negative gradient flow of the expander functional
\begin{equation}
\label{expander-functional}
E[\Sigma]=\int_{\Sigma} e^{\frac{|\mathbf{x}|^2}{4}} d\mathcal{H}^n
\end{equation}
where $\mathcal{H}^n$ is the $n$-dimensional Hausdorff measure. 

The static points of the flow \cref{RSMCF} satisfy
$$
\mathbf{H}_{\Sigma} -\frac{\mathbf{x}^\perp}{2} =\mathbf{0}
$$
and are called \emph{self-expanders}. Indeed, $\Sigma$ is a self-expander if and only if the family of homothetic hypersurfaces
$$
t\mapsto \tilde{\Sigma}_t=\sqrt{t}\, \Sigma, t>0
$$
is a \emph{mean curvature flow} (MCF), that is, a solution to the flow
$$
\left(\frac{\partial \mathbf{x}}{\partial t}\right)^\perp=\mathbf{H}_{\tilde{\Sigma}_t}.
$$
Self-expanders model the behavior of a MCF as it emerges from a conical singularity \cite{AngenentIlmanenChopp} as well as the long time behavior of solutions of the flow \cite{EckerHuisken}. 

Consider a cone, $\mathcal{C}\subseteq \mathbb{R}^{n+1}$, with vertex we always fix at the origin,  $\mathbf{0}$.    For $k\in\mathbb{N}$ and $\alpha\in [0,1)$, the cone $\mathcal{C}$ is \emph{of $C^{k,\alpha}$ regularity} if $\mathcal{C}\setminus\{\mathbf{0}\}$ is a hypersurface in $\mathbb{R}^{n+1}$ of regularity $C^{k,\alpha}$. A hypersurface $\Sigma\subseteq\mathbb{R}^{n+1}$ is \emph{$C^{k,\alpha}$-asymptotically conical} if, as $\rho\to 0^+$, $\rho\Sigma \to \mathcal{C}$ in $C^{k,\alpha}_{loc}(\mathbb{R}^{n+1}\setminus\{\mathbf{0}\})$\footnote{ A sequence of closed sets $A_i$ converges to $A$ in $C^{k,\alpha}_{loc}(U)$, for $U\subseteq \mathbb{R}^{n+1}$ open, when the $A_i$ converge as closed sets to $A$, and there is a smooth hypersurface $\Upsilon\subseteq U$ so that in each $V\subset\subset U$, for sufficiently large $i$,  $\partial A_i$ and $\partial A$ are,  respectively, the normal graphs of functions $u_i$ and $u$ on $\Upsilon$ and $u_i\to u$ in the $C^{k,\alpha}$ topology.} for $\mathcal{C}$ a $C^{k,\alpha}$-regular cone. In this case, $\mathcal{C}$ is called the \emph{asymptotic cone of $\Sigma$} and denoted by $\mathcal{C}(\Sigma)$. By \cite[Proposition 3.3]{BWSmoothCompactness}, if $\mathcal{C}$ is a $C^{k,\alpha}$-regular cone in $\mathbb{R}^{n+1}$ with $k+\alpha>2$, and if $\Sigma$ is a self-expander $C^1$-asymptotic to $\mathcal{C}$, then $\Sigma$ is $C^{k',\alpha'}$-asymptotic to $\mathcal{C}$ for all $k'+\alpha'<k+\alpha$. Thus, we simply say the self-expander $\Sigma$ is asymptotic to the cone $\mathcal{C}$.  

Given a cone, $\mathcal{C}$, there may be more than one self-expanding solution emerging from it. That is, there may exist distinct self-expanders, $\Sigma$ and $\Sigma'$, both asymptotic to $\mathcal{C}$. In this case, there may also be other, non-self-similar flows coming out of the cone. For instance, there are non-self-similar solutions that may be interpreted as Morse flow lines of the expander functional interpolating between two different critical points, i.e., eternal solutions of \eqref{RSMCF} that limit to one self-expander as $t\to -\infty$ and to another as $t\to \infty$. In general,  these flows must be realized in a weak sense due to possible singularities.

We are particularly interested in studying weak formulations of \eqref{RSMCF} in the set- and measure-theoretic sense. Namely, we will consider \emph{expander weak flow} of closed sets -- see \cref{WeakFlowSec} for the precise definition and its various generalizations, and \emph{expander Brakke flow} of Radon measures -- see \cref{ProofWeakThmSec} and \cite[Section 13]{HershkovitsWhite} for the definition.
We restrict to the smaller class of \emph{unit-regular} expander Brakke flows, i.e., at spacetime points of Gaussian density $1$, the the flow is smooth in a spacetime neighborhood -- cf. \cite{WhiteRegularity}.

Our main result is the construction of suitable weak Morse flow lines asymptotic at time $-\infty$ to unstable self-expanders. As the precise properties of these flow lines will be important in application, we first summarize them in the following definition:
\begin{defn}
	\label{strictly-monotone-flow}
	Let $\Omega$ be an expander weak flow in $\mathbb{R}^{n+1}$ with starting time $T_0\in\mathbb{R}$ and let $M(t)=\partial\Omega(t)$. We say $\Omega$ is \emph{monotone} if 
	\begin{enumerate}
	\item \label{monotoneCond} $\Omega(t_2)\subseteq \Omega(t_1)$ for $T_0\leq t_1\leq t_2<\infty$.
	\end{enumerate}
	The flow is \emph{strictly monotone} if Item \cref{monotoneCond} can be strengthened to 
	\begin{enumerate}[resume]
		\item $\Omega(t_2)\subseteq \mathrm{int}(\Omega(t_1))$ for $T_0\leq t_1<t_2<\infty$.
	\end{enumerate}
	Given a $C^3$-regular cone $\mathcal{C}\subseteq\mathbb{R}^{n+1}$, the flow is \emph{asymptotic to $\mathcal{C}$} if
	\begin{enumerate}[resume]
		 \item Given $\eps>0$ there is a radius $R_0>1$ so that for $t\in [T_0,\infty)$ there is a $C^2$ function $u(\cdot, t)\colon\mathcal{C}\setminus B_{R_0}\to \mathbb{R}$ satisfying
                          \[
    			\sup_{p\in\mathcal{C}\setminus B_{R_0}}\sum_{i=0}^2 |\mathbf{x}(p)|^{i-1}|\nabla_\mathcal{C}^iu(p,t)|\leq \eps \mbox{ and }
    			\]
    			\[
    			M(t)\setminus B_{2R_0}\subseteq\set{\mathbf{x}(p)+u(p,t)\mathbf{n}_\mathcal{C}(p)\mid p\in\mathcal{C}\setminus B_{R_0}}\subseteq M(t).
    			\]
	\end{enumerate}
	The flow is \emph{regular} if:
	\begin{enumerate}[resume]
		\item For $[a,b]\subseteq [T_0,\infty)$, $t\in [a,b]\mapsto M(t)$ forms a partition of $\Omega(a)\setminus\mathrm{int}(\Omega(b))$; 
		\item $t\in[T_0, \infty)\mapsto \mathcal{H}^n\llcorner M(t)$ is a unit-regular expander Brakke flow that is smooth away from a set of parabolic Hausdorff dimension $(n-1)$ in spacetime.
	\end{enumerate} 
	The flow is \emph{strongly regular} if, in addition,
	\begin{enumerate}[resume]
			\item Any limit flow is \emph{convex} in the sense that it satisfies (1)-(3) of Theorem \ref{StrongRegularThm}.     In particular, for any tangent flow $\tilde{\Omega}$, $t\in\mathbb{R}\mapsto \tilde{M}(t)=\partial \tilde{\Omega}(t)$ is either a static $\mathbb{R}^n$ or a self-similarly shrinking $\mathbb{S}^k\times\mathbb{R}^{n-k}$ for some $1\leq k\leq n$. 
	\end{enumerate}
\end{defn}
We refer to \cref{BlowupDefn} for the definitions of blow-up sequence and tangent flow and to Theorem \ref{StrongRegularThm} for additional details. Using this definition,  we introduce a notion of strictly monotone Morse flow line of the expander functional.
\begin{defn} \label{MorseFlowDefn}
	Let $\mathcal{C}\subseteq\mathbb{R}^{n+1}$ be a $C^3$-regular cone, and let $\Sigma_\pm$ be both smooth self-expanders asymptotic to $\mathcal{C}$. A \emph{strictly monotone expander Morse flow line from $\Sigma_-$ to $\Sigma_+$} is a closed subset $\Omega\subseteq\mathbb{R}^{n+1}\times\mathbb{R}$ with $M(t)=\partial\Omega(t)$ that satisfies: 
	\begin{enumerate}
		\item For any $T_0\in\mathbb{R}$ the restriction of $\Omega$ onto $[T_0,\infty)$ is a strongly regular strictly monotone expander weak flow asymptotic to $\mathcal{C}$ with starting time $T_0$;
		\item For $\Omega_-=\mathrm{cl}(\bigcup_{t\in\mathbb{R}} \Omega(t))$, $\partial\Omega_-=\Sigma_-$ and $\lim_{t\to -\infty} M(t)= \Sigma_-$ in $C^\infty_{loc}(\Real^{n+1})$;
		\item For $\Omega_+=\bigcap_{t\in\mathbb{R}} \Omega(t)$, $\partial\Omega_+=\Sigma_+$ and $\lim_{t\to \infty}  M(t)=  \Sigma_+$ in $C^\infty_{loc}(\Real^{n+1})$.
	\end{enumerate}
\end{defn}

The main result of this paper is the existence of such a strictly monotone expander Morse flow line asymptotic at time $-\infty$ to any given unstable self-expander.
\begin{thm}
	\label{main-theorem}
	For $2\leq n \leq 6$, let $\mathcal{C}$ be a $C^3$-regular cone in $\Real^{n+1}$. Suppose $\Omega_-\subseteq\mathbb{R}^{n+1}$ is a closed set with $\Sigma_-=\partial\Omega_-$ a smooth self-expander asymptotic to $\mathcal{C}$. If $\Sigma_-$ is strictly unstable, i.e.,  each component is unstable, then there exists a stable smooth self-expander $\Sigma_+$ asymptotic to $\mathcal{C}$ and a strictly monotone expander Morse flow line $\Omega$ from $\Sigma_-$ to $\Sigma_+$ with $\Omega_-=\mathrm{cl}(\bigcup_{t\in\mathbb{R}}\Omega(t))$. Moreover, if $\Omega'\subseteq \Real^{n+1}$ is a closed set so $\partial \Omega'$ is a smooth self-expander asymptotic to a $C^3$-regular cone $\mathcal{C}'$, and $\Omega'\subseteq \mathrm{int}(\Omega_-)$, then $\Omega'\subseteq \bigcap_{t\in\mathbb{R}}\Omega(t)$.
\end{thm}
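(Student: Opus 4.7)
The plan is a Morse-theoretic construction: use the strict instability of $\Sigma_-$ to build a one-sided inward perturbation that strictly decreases the expander functional $E$, run an expander weak flow (together with the associated Brakke flow) from this perturbation, and pass to an $\epsilon\to 0$ limit with starting times $T_\epsilon\to-\infty$ to produce an eternal flow on all of $\mathbb{R}$. On each component of $\Sigma_-$, the drift-Jacobi operator associated to the second variation of $E$ has strictly negative lowest eigenvalue, so a first eigenfunction $\phi$ can be chosen to point into $\Omega_-$ and cut off to be compactly supported while keeping the second variation strictly negative. For small $\epsilon>0$, the normal graph $\Sigma_{-,\epsilon}=\{p+\epsilon\phi(p)\mathbf{n}_{\Sigma_-}(p):p\in\Sigma_-\}$ agrees with $\Sigma_-$ outside a compact set, hence is $C^3$-asymptotic to $\mathcal{C}$, and bounds a set $\Omega_{-,\epsilon}\subsetneq\mathrm{int}(\Omega_-)$ satisfying $E[\Sigma_{-,\epsilon}]<E[\Sigma_-]$.

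For each such $\epsilon$, I apply the existence theory for expander weak flows and Brakke flows with prescribed conical asymptotics developed in the earlier sections to produce a monotone flow $\Omega_\epsilon(t)$ on $[T_\epsilon,\infty)$ starting from $\Omega_{-,\epsilon}$, with $M_\epsilon(t)=\partial\Omega_\epsilon(t)$ the support of a unit-regular expander Brakke flow. Strict monotonicity comes from the strong maximum principle, using $\Sigma_-$ as a static outer barrier, and strong regularity in the sense of \cref{StrongRegularThm} -- in particular the convex-shrinker classification of tangent flows and the parabolic $(n-1)$-dimensional bound on the singular set -- comes from White-type regularity combined with the dimensional restriction $2\le n\le 6$, which excludes non-planar stable shrinkers. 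Monotonicity of $E$ along the Brakke flow, together with a lower bound on $E$ over self-expanders asymptotic to $\mathcal{C}$ (by smooth compactness of the asymptotically conical self-expander moduli space), yields a sequence $t_j\to\infty$ along which $M_\epsilon(t_j)$ converges smoothly on compacta to a smooth self-expander $\Sigma_{+,\epsilon}\subseteq\Omega_{-,\epsilon}$ asymptotic to $\mathcal{C}$; stability of $\Sigma_{+,\epsilon}$ is forced by a barrier/exchange argument, since the monotone flow has nowhere further to drop.

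Calibrating $T_\epsilon$ so that the linearized unstable mode $\epsilon e^{\lambda(t-T_\epsilon)}\phi$ has fixed size at time $0$, we have $T_\epsilon\to-\infty$ as $\epsilon\to 0$, and Brakke flow compactness with uniform curvature estimates produces a subsequential limit $\Omega(t)$ defined on all of $\mathbb{R}$. Linearized analysis near $\Sigma_-$ using the exponential growth rate $\lambda$ of the unstable mode forces $M(t)\to\Sigma_-$ smoothly on compacta as $t\to-\infty$, while the argument of the previous paragraph yields a stable limit $\Sigma_+$ at $+\infty$. The final assertion of the theorem is a consequence of the avoidance principle applied to the static expander $\partial\Omega'$: the inclusion $\Omega'\subseteq\mathrm{int}(\Omega_-)$ gives $\Omega'\subseteq\Omega_{-,\epsilon}=\Omega_\epsilon(T_\epsilon)$ for all sufficiently small $\epsilon$, so $\Omega'\subseteq\Omega_\epsilon(t)$ for all $t\ge T_\epsilon$, and this inclusion passes to the limit to yield $\Omega'\subseteq\bigcap_{t\in\mathbb{R}}\Omega(t)$. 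The main obstacle I anticipate is controlling the $\epsilon\to 0$ limit quantitatively so that the resulting eternal flow genuinely emanates from $\Sigma_-$ at $-\infty$ rather than from some strictly inner self-expander; this requires exploiting the exponential divergence of the unstable mode in a precise way and is the delicate heart of the construction.
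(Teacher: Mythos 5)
Your high-level strategy --- perturb $\Sigma_-$ inward by a first eigenfunction, flow, renormalize the starting time, and take an $\eps\to 0$ limit --- is indeed the one the paper uses, but there are several substantive gaps. The most significant is the compactly supported cut-off of the eigenfunction: outside its support $\Sigma_{-,\eps}$ coincides with the self-expander $\Sigma_-$, so its expander mean curvature vanishes identically there, meaning $\Omega_{-,\eps}$ is \emph{not} strictly expander mean convex and Theorem~\ref{weak-theorem} does not directly apply to it. The paper instead uses the full (non-compactly-supported) eigenfunction $f$, which decays, so $\Sigma^\eps=\Gamma_{\eps f}$ is still asymptotic to $\mathcal{C}$ and lies strictly in $\mathrm{int}(\Omega_-)$; it then runs the \emph{smooth} EMCF from $\Sigma^\eps$ (via \cite[Proposition~5.1]{BWTopologicalUniqueness}) for a definite interval, during which the flow becomes strictly expander mean convex by the strong maximum principle, and only then invokes Theorem~\ref{weak-theorem} to extend forward. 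Closely related is your use of $E$: the expander functional is infinite on non-compact hypersurfaces, so statements like ``$E[\Sigma_{-,\eps}]<E[\Sigma_-]$'' and ``monotonicity of $E$ along the Brakke flow with a lower bound over self-expanders'' must be reformulated in terms of the relative entropy $E_{rel}$ of \cite{BWRelEnt}, as the paper does; moreover, the long-time forward convergence and stability of $\Sigma_+$ in the paper come from the White-type regularity theory for the mean convex weak flow in Theorem~\ref{weak-theorem}, not from smooth compactness of the self-expander moduli space (which does not hold in general).

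The ``delicate heart'' you flag --- ensuring the eternal flow emanates from $\Sigma_-$ rather than from some strictly inner self-expander --- is resolved in the paper not by linearized-mode analysis but by two concrete ingredients: (i) Lemma~\ref{NonStationaryLem}, which asserts there is no self-expander that is a small-$C^2$ one-sided normal graph over the strictly unstable $\Sigma_-$; and (ii) the normalization $\|w^\eps(0,\cdot)\|_{W,0}=\omega_0>0$ at the time-translated origin, whose viability rests on the reverse Poincar\'e inequality from \cite{ChenAncient} and the $W,0\to C^{2,\alpha}_P$ interior estimate of Proposition~\ref{GrowthL2WProp}. These provide the uniform graphical/curvature control needed for the Arzel\`a--Ascoli compactness as $\eps\to 0$ (Brakke compactness alone yields a Brakke flow, not a smooth ancient solution), ensure the limit is not static, and, combined with (i), force the backward-in-time limit to be $\Sigma_-$ itself. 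Without (i), a linearized calibration by $\eps e^{\lambda(t-T_\eps)}\phi$ alone does not exclude the possibility that the ancient flow originates from some other self-expander trapped between $\Sigma_-$ and $\Sigma_+$.
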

\begin{rem}
	By replacing $\Omega_-$ with $\Real^{n+1}\setminus \mathrm{int}(\Omega_-)$, Theorem \ref{main-theorem} produces a second strictly monotone Morse flow line on the other side of $\Sigma_-$.  
\end{rem}

To prove this result we will need an existence and regularity result for strictly expander mean convex flows that are well-behaved at infinity. A closed set $U\subseteq \Real^{n+1}$ is \emph{strictly expander mean convex} if $\partial U$ is smooth and $\mathbf{H}_{\partial U}-\frac{\mathbf{x}^\perp}{2}$ is nowhere vanishing on $\partial U$ and points into the (non-empty) interior of $U$, $\mathrm{int}(U)$.
Theorem \ref{main-theorem} is a consequence of the following existence and regularity result.
\begin{thm}
	\label{weak-theorem}
	Let $\mathcal{C}$ be a $C^3$-regular cone in $\Real^{n+1}$.  Suppose $\Omega_0\subseteq \Real^{n+1}$ is a strictly expander mean convex closed set so that $\partial \Omega_0$ is a smooth hypersurface that is $C^3$-asymptotic to $\mathcal{C}$. There is a regular strictly monotone expander weak flow $\Omega$ asymptotic to $\mathcal{C}$ that starts from $\Omega_0$ at time $0$ and so that as $t\to\infty$ the $\partial\Omega(t)$ converge smoothly away from a codimension-$7$ closed set to a stable (possibly singular) self-expander asymptotic to $\mathcal{C}$. Moreover, when $n \leq 6$, this flow is strongly regular.	 
	
  In addition, if there is a closed set $\Omega'\subseteq \mathrm{int}(\Omega_0)$ with $\partial \Omega'$ smooth self-expander that is asymptotic to a $C^3$-regular cone, $\mathcal{C}'$, then one can construct the flow so $\Omega'\subseteq \bigcap_{t\geq 0}\Omega(t)$.
\end{thm}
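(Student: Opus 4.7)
\medskip

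\noindent\textbf{Proof proposal.}  My plan is to construct the flow by elliptic regularization in the spirit of Ilmanen (with the spacetime translator equation replaced by the natural expander-translator equation associated to the drift $-\mathbf{x}/2$), and then to run White's mean convex flow machinery -- suitably modified for the weighted drift term -- to obtain strong regularity.  Concretely, I would first approximate $\Omega_0$ by compact, smooth, strictly expander mean convex domains $\Omega_0^R$ obtained by truncating inside a large ball and gluing to a fixed barrier built out of the cone $\mathcal{C}$ (using $\partial\Omega_0$'s $C^3$-asymptotic structure to produce the gluing).  For each compact $\Omega_0^R$ I would produce a level set / Brakke flow $\Omega^R(t)$ with starting set $\Omega_0^R$ by solving the Dirichlet problem for the elliptic regularization on $\mathbb{R}^{n+2}$, translating by the $-\mathbf{x}/2$ drift, and passing to the limit as the regularization parameter goes to zero.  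Strict expander mean convexity is preserved along each approximation by the parabolic maximum principle applied to $H_\Sigma-\mathbf{x}\cdot\mathbf{n}/2$, which yields a monotone foliation of $\Omega_0^R$.

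\smallskip

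\noindent The second step is to extract the flow in $\mathbb{R}^{n+1}$ by sending $R\to\infty$.  The key analytic input here is a barrier-based control at spatial infinity: using that $\partial\Omega_0$ is $C^3$-asymptotic to $\mathcal{C}$, I would construct supersolutions and subsolutions of the expander level set equation modeled on the normal exponential map over $\mathcal{C}$, with graphical data of arbitrarily small $C^2$-norm outside a ball $B_{R_0}$.  The strong maximum principle in the weighted setting then traps $\partial\Omega^R(t)$ between these barriers for all $t\ge 0$ outside $B_{2R_0}$, giving the uniform graphical asymptotic behavior required by the ``asymptotic to $\mathcal{C}$'' clause in Definition~\ref{strictly-monotone-flow}, uniformly in $R$ and $t$.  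Combined with Huisken-type monotonicity (with the expander-adapted heat kernel) these estimates give local compactness of the $\Omega^R(t)$ as $R\to\infty$ to a limit closed set $\Omega$ satisfying clauses \cref{monotoneCond}--(3) of Definition~\ref{strictly-monotone-flow}, with $\mathcal{H}^n\llcorner\partial\Omega(t)$ a unit-regular expander Brakke flow by Ilmanen's compactness.

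\smallskip

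\noindent The third step is regularity.  By the strict expander mean convexity, the approximating smooth flows sweep out a one-parameter family, and the ``arrival time'' function $u$ on $\Omega_0\setminus\bigcap_t\Omega(t)$ is a degenerate solution of the expander level set equation.  I would then adapt White's stratification and partial regularity theorem for mean convex MCF -- the only changes are the presence of the lower-order drift $-\mathbf{x}/2$ and the non-compact ambient setting.  The drift is smooth and satisfies an appropriate scaling, so blow-ups at finite spacetime points are ordinary tangent flows of Brakke MCF (since the weight $e^{|\mathbf{x}|^2/4}$ becomes negligible under parabolic rescaling), and Huisken monotonicity applies locally.  Hence White's dimension reduction and Simon's Federer-type argument gives that the singular set has parabolic Hausdorff dimension at most $n-1$, and in dimensions $n\le 6$ tangent flows are convex self-similar shrinkers $\mathbb{S}^k\times\mathbb{R}^{n-k}$ or planes, yielding strong regularity.

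\smallskip

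\noindent The final step is the $t\to\infty$ asymptotics and the containment assertion.  Because $E[\partial\Omega(t)]$ is nonincreasing and bounded below (lower semicontinuity plus the $\mathcal{C}$-asymptotic barrier yields a uniform lower bound), the Brakke flow is asymptotically stationary; combined with the global graphical control over $\mathcal{C}\setminus B_{R_0}$ and interior smooth compactness for mean convex limits, the $\partial\Omega(t)$ converge as $t\to\infty$ to a (possibly singular) stable self-expander asymptotic to $\mathcal{C}$, smoothly off a closed set of dimension at most $n-7$.  For the ``Moreover'' clause with $\Omega'$ interior to $\Omega_0$, I would apply the comparison principle in the approximation step: since $\partial\Omega'$ is a smooth self-expander (hence a static solution of~\eqref{RSMCF}) contained in $\mathrm{int}(\Omega_0^R)$ for $R$ large, avoidance for the approximating flows yields $\Omega'\subseteq\Omega^R(t)$ for all $t\ge 0$, and the inclusion persists in the limit.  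The principal obstacle I anticipate is step three: globalizing White's mean convex regularity in the presence of the drift and the non-compact asymptotically conical geometry -- in particular producing a Brakke-regularity-compatible ``convex at infinity'' barrier so that tangent flows at spatial infinity do not spoil the classification of tangent flows at finite points.
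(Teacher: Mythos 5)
Your overall architecture -- approximate $\Omega_0$ by compact strictly expander mean convex domains, flow each, send the parameter to infinity, and then transplant White's mean convex regularity -- matches the paper's in broad outline. But you have glossed over the central technical obstruction that the paper spends Sections 3--4 overcoming, and without addressing it step three of your proposal (and, downstream, the one-sided minimizing property used in your step four) does not go through.

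The gap is this.  White's regularity theory for mean convex flow is not just a local statement about unit-regular Brakke flows with a sign on the mean curvature; it relies crucially on the flow being the \emph{biggest} (level set) flow of the initial set, through two consequences: the time slices partition the swept-out region, and the boundary is one-sided $E$-minimizing relative to the flow, which is what yields the multiplicity bound on tangent flows (cf.\ Lemma~\ref{OneSideMinimizeLem} and Proposition~\ref{MultiplicityProp}).  In your construction the limit set $\Omega$ is produced by exhaustion and is a weak flow, but it is \emph{not} obviously the biggest weak flow of $\Omega_0$, and in the noncompact asymptotically conical setting there is no general avoidance principle with which to transfer the biggest-flow structure to the limit (see the Introduction's discussion and \cite[Example 7.3]{IlmanenIndiana}).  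In particular it is not automatic that weak expander mean convexity or strict monotonicity persists for the exhaustion limit; your appeal to ``the parabolic maximum principle applied to $H_\Sigma-\mathbf{x}\cdot\mathbf{n}/2$'' works only while the approximating flows are smooth, and says nothing about the singular limit.  The paper's resolution is the notion of \emph{ample flow} (Definition~\ref{ample-flow}), a localized version of being the biggest flow, together with Theorem~\ref{exhaustion-ample-flow} which shows the exhaustion limit is ample and strictly monotone.  It is precisely this ampleness that lets one apply Propositions~\ref{CylinderFlow}, \ref{boundary-flow} and the spacetime $X$-mean-convex flow results of Section~\ref{MonotoneXMeanConvexSec} to recover the partition property and the one-sided minimization, after which the modifications to \cite{WhiteMCSize,WhiteMCNature} really are ``minor'' in the sense you claim.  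Your assertion that the only changes to White's argument are the drift and noncompactness is therefore too optimistic: the actual new content of the proof lives exactly in the machinery you have omitted.

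Two secondary points.  First, the compact approximations in the paper are taken \emph{inside} $\Omega_0$ (intersect with $\bar B_{R_i}$ and smooth corners via Proposition~\ref{RoundingProp}), not by gluing to a conical barrier outside; this nesting $\Omega_0^i\subseteq \mathrm{int}(\Omega_0^{i+1})\subseteq\mathrm{int}(\Omega_0)$ is needed to verify the hypotheses of Theorem~\ref{exhaustion-ample-flow} and to make the closure $\mathrm{cl}(\bigcup_i\Omega_i)$ be the right flow.  Second, for the final containment clause you should note that $\Omega'$ is noncompact, so avoidance must be applied to a compact exhaustion of $\Omega'$ as well (as in the proof of Proposition~\ref{monotone-prop}), rather than directly.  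Your proposal to use elliptic regularization rather than the biggest-flow / Brakke-compactness route is a genuine alternative that the paper explicitly flags as likely workable, but it does not by itself supply the ampleness property; you would still need an argument for why the limit flow locally coincides with a biggest flow before White's dimension reduction, sheeting and multiplicity arguments apply.
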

This generalizes \cite[Proposition 5.1]{BWTopologicalUniqueness},  where stronger hypotheses on entropy and the asymptotic decay of the expander mean curvature are used to produce a globally smooth strictly monotone flow using classical mean curvature flow.  

A difficulty in the construction of the flow in Theorem \ref{weak-theorem} comes from the lack of a general avoidance principle for non-compact weak flows -- see for instance \cite[Example 7.3]{IlmanenIndiana}. In particular, while the PDE methods used in \cite{BWTopologicalUniqueness} allow one to show that the initial expander mean convexity condition is preserved as long as the flow remains smooth, it is not immediately clear that an appropriate weak notion of expander mean convexity is preserved for the biggest flow, which is the ``hull" of all weak flows of the given initial set. To handle this, we construct the flow as an exhaustion limit of compact expander mean convex flows. We then show that the flow is what we call \emph{ample}, a localized notion of being a biggest flow -- see Section \ref{AmpleSec}. This allows one to adapt the strategy of White \cite{WhiteMCSize,WhiteMCNature} for compact mean convex flows to address the regularity of the noncompact flow constructed above. Recently, White \cite{WhiteNonCompactAvoidance} has established a quite general avoidance principle for non-compact hypersurfaces that are a positive distance apart.  This does not seem to suffice for our application as the flows we consider are all asymptotic to a fixed cone and so different time slices have distance zero.

Finally, we note a related construction of a (singular) monotone Morse flow line of the shrinker functional in \cite{CCMSGeneric} coming out of an asymptotically conical self-shrinker.  This is a crucial ingredient used there to study generic mean curvature flows.   As in this paper, the initial construction is based off a compact exhaustion and the asymptotic regularity of the flow, though there are several differences related to the different geometric properties of the shrinker and expander functionals. Most notably, unlike the situation considered here, the geometry of the shrinker equation does not allow one to take these compact flows to be themselves monotone. Our approach stresses the preservation of expander mean convexity, and the corresponding regularity, under appropriate limiting process.


This paper is organized as follows. In \cref{PrelimSec} we introduce weak set flows generated by a spacetime set and prove some basic properties for weak flows. Indeed, we work in an ambient Riemannian manifold and consider a more general set-theoreric weak flow associated to MCF with a transport term $X$. Some of the proofs are deferred to \cref{ProvePrelim} because they are straightforward modifications of the proofs of \cite{HershkovitsWhite}. \cref{BiggestFlowSec} focuses on a special class of weak flows called biggest flows. In \cref{MonotoneXMeanConvexSec} we define a weak notion of mean convexity (with respect to mean curvature with a transport term) for spacetime sets. We also show strict monotonicity and closedness under limiting process for biggest flows generated by mean convex spacetime sets, which we call mean convex flows. In \cref{AmpleSec} we define ample flows and construct strictly monotone ample flows using suitable approximations by compact mean convex flows. \cref{ProofWeakThmSec} is devoted to the proof of \cref{weak-theorem} by applying the results of previous sections with the transport term corresponding to the expander weak flow. In \cref{MonotoneMorseFlowSec} we construct an appropriate ancient expander flows, which combined with \cref{weak-theorem} implies \cref{main-theorem}. In \cref{RegularXMCSec} we show how to smooth certain singular $X$-mean-convex domains while preserving the $X$-mean-convexity, which is used in the construction in \cref{ProofWeakThmSec}.

\section{Preliminaries} \label{PrelimSec}
In this section we introduce a notion of weak $X$-flow generated by a spacetime set. This is a generalization of a classical $X$-flow with prescribed heat boundary and will be a basic tool used in the sequel. We establish some properties of weak $X$-flows generated by a spacetime set, including an avoidance principle. We also study biggest $X$-flows generated by a spacetime set that contains all weak $X$-flows generated by the set.  This synthesizes ideas from \cite{WhiteTopology, WhiteSubsequent, HershkovitsWhite}.

We fix some notation and conventions used throughout the paper. First of all, let $N$ be a Riemannian manifold and $X$ be a smooth vector field on $N$.  We say $(N,X)$ is \emph{tame} if $N$ is complete, has Ricci curvature uniformly bounded from below and $|\nabla X|$ is uniformly bounded from above.
When $A\subseteq B\subseteq N$ or $A\subseteq B \subseteq N\times \mathbb{R}$, we denote by 
\[
\mathrm{int}_B(A), \mathrm{cl}_B(A), \mbox{ and } \partial_B A
\]
respectively the interior of the set $A$, the closure of the set $A$, and the boundary of the set $A$ all in the subspace topology of $B$. We omit the subscript $B$ when it is equal to $N$ or $N\times \mathbb{R}$.  Given a spacetime subset $A\subseteq N\times \Real$ and $t\in \Real$ the time $t$-slice of $A$ is denoted
$$
A(t)=\set{x\in N \mid (x,t)\in A}\subseteq N.
$$
Likewise, for an interval $I\subseteq \Real$ we denote the time restriction
$$
A|_I=\set{(x,t)\in A \mid t\in I}\subseteq N\times \Real.
$$
We denote by $B_r(p)$ and $\bar{B}_r(p)$ the open and closed balls in $N$ of radius $r$ centered at $p$;  when $N = \R^{n+1}$, omit the center $p$ if it is the origin. For $T_0\in\mathbb{R}$ and subset $U\subseteq N$, let
\[
C_{U,T_0}=U\times [T_0,\infty)\subseteq N \times \mathbb{R}
\]
be the spacetime cylinder over $U$ starting from $T_0$.  Finally, a closed set $U\subseteq N$ is \emph{smooth} if it has smooth (topologyical) boundary.  Such $U$ is \emph{strictly $X$-mean-convex} if the $X$-mean-curvature vector of $\partial U$ does not vanish and points into $\mathrm{int}(U)$ -- and so $U=\mathrm{cl}(\mathrm{int}(U))$. 

\subsection{Barriers} \label{BarrierSec}
Following Hershkovits--White \cite{HershkovitsWhite}, a family $t\in [a,b]\mapsto K(t)$ of closed subsets of $N$ is a \emph{smooth barrier} in $N$ if it is a smooth family of closed smooth regions. The barrier is \emph{compact} if $\bigcup_{t\in [a,b]} K(t)$ is compact. We think of the barrier $K$ as a subset of spacetime, so $K(t)$ is the time $t$-slice in $N$.

For a smooth barrier $K$, there exists a smooth function $f\colon N\times [a,b]\to \mathbb{R}$ such that $K(t)=\{x \mid f(x,t)\leq 0\}$ and that $|\nabla f(x,t)|\neq 0$ on $\partial K(t)$ for $t\in [a,b]$, where $\nabla$ is the gradient on $N$. For $x\in\partial K(t)$, let $\mathbf{n}_K(x,t)$ be the unit normal to $\partial K(t)$ that points outside $K(t)$, let $v_K(x,t)$ be the normal velocity of $s\mapsto \partial K(s)$ in the direction of $\mathbf{n}_K$, and let $H_K^X(x,t)$ be the scalar $X$-mean-curvature of $\partial K(t)$ given by 
\[
H_K^X(x,t)=-{\Div}_{\partial K(t)}\mathbf{n}_K(x,t)+X\cdot\mathbf{n}_K(x,t).
\]
It is readily checked that
\begin{align*}
\mathbf{n}_K =\frac{\nabla f}{|\nabla f|},  \; \; \; v_K  =-\frac{1}{|\nabla f|}\frac{\partial f}{\partial t},\\
H_K^X  =-\Div\left(\frac{\nabla f}{|\nabla f|}\right)+X\cdot \frac{\nabla f}{|\nabla f|} , 
\end{align*}
where $\Div$ is the divergence on $N$. For $x\in\partial K(t)$, define
\[
\Phi_K^X(x,t)=v_K(x,t)-H_K^X(x,t).
\]
In particular, $\Phi_K^X\equiv 0$ if and only if $\{\partial K(t)\}_{t\in [a,b]}$ is a classical $X$-flow. Finally, a \emph{strong $X$-flow barrier} is a smooth compact barrier in $N$, $t\in [a,b]\mapsto K(t)$ such that $\Phi_K^X(x,t)<0$ for all $(x,t)$ with $x\in\partial K(t)$ and $t\in [a,b]$. For the sake of simplicity, we will use the term \emph{strong barrier} when it is clear from context.

Crucially, shrinking spheres of sufficiently small radii are strong $X$-flow barriers.
\begin{prop}
	\label{spherical-barrier}
	Given $p\in N$, there exist positive constants $c_0=c_0(X,p,n)$ and $\delta_0=\delta_0(p)$ such that for every $c>c_0$, $0<\delta < \delta_0$ and $0<\tau<\delta^2/c$, 
	\[
	t\in [0,\tau] \mapsto K(t)=\bar{B}_{\sqrt{\delta^2 - ct}}(p)
	\]
	is a strong $X$-flow barrier.
\end{prop}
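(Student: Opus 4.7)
The proof is essentially a direct computation using the defining function $f(x,t)=\dist(x,p)^2 - (\delta^2-ct)$, combined with standard Riemannian comparison geometry to compare the mean curvature of a small geodesic sphere with the Euclidean value $n/r$. My plan is as follows.

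First I would shrink $\delta_0 = \delta_0(p)$ below half the injectivity radius at $p$ so that $\dist(\cdot,p)^2$ is smooth on $B_{\delta_0}(p)$, and in particular $f(\cdot,t)$ is smooth with $|\nabla f|=2r(t)\neq 0$ on $\partial K(t)$ for $r(t):=\sqrt{\delta^2-ct}$. Plugging $f$ into the formulas for $v_K$ and $\mathbf{n}_K$ recorded in \cref{BarrierSec} then immediately gives
\[
v_K(x,t)=-\frac{1}{|\nabla f|}\frac{\partial f}{\partial t}=r'(t)=-\frac{c}{2r(t)}.
\]

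Second I would estimate $H_K^X$ on $\partial K(t) = \partial B_{r(t)}(p)$. By standard expansion of the Riemannian metric in normal coordinates at $p$, there is a constant $C_1=C_1(p,n)$, depending only on bounds of the sectional curvature on $\bar{B}_{\delta_0}(p)$, such that the scalar mean curvature of any geodesic sphere $\partial B_r(p)$ with respect to the outward unit normal $\mathbf{n}_K$ satisfies
\[
\Bigl|\Div_{\partial B_r(p)}\mathbf{n}_K - \frac{n}{r}\Bigr|\leq C_1\, r,\qquad 0<r<\delta_0.
\]
Combining with the trivial bound $|X\cdot \mathbf{n}_K|\leq C_2:=\sup_{\bar{B}_{\delta_0}(p)}|X|$ gives
\[
H_K^X(x,t) = -\Div_{\partial K(t)}\mathbf{n}_K + X\cdot \mathbf{n}_K \;\leq\; -\frac{n}{r(t)} + C_1\, r(t) + C_2.
\]

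Third I would combine the two estimates: for every $(x,t)$ with $x\in \partial K(t)$,
\[
\Phi_K^X(x,t) = v_K - H_K^X \;\leq\; \frac{2n-c}{2\, r(t)} + C_1\, r(t) + C_2.
\]
Multiplying by $2r(t)>0$ and using $r(t)<\delta<\delta_0$, the requirement $\Phi_K^X<0$ is implied by $c > 2n + 2C_1\delta_0^2 + 2C_2\delta_0$. Hence setting $c_0:=c_0(X,p,n)$ equal to (any constant strictly larger than) the right-hand side makes $K$ a strong $X$-flow barrier for every $c>c_0$, $0<\delta<\delta_0$, and $0<\tau<\delta^2/c$.

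There is no serious obstacle: the estimate is a direct calculation once the Riemannian correction to the Euclidean mean curvature of a geodesic sphere is controlled, which is a standard consequence of comparison geometry on balls of radius below the injectivity radius. The qualitative content is that the choice $c>2n$ forces the radius $r(t)$ to shrink strictly faster than it would under Euclidean MCF, while the remaining contributions from the ambient curvature and from $X$ are of strictly lower order in $r$ and can be absorbed by enlarging $c_0$ and shrinking $\delta_0$.
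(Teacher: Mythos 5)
Your proof takes essentially the same approach as the paper: compute $v_K = -\tfrac{c}{2r(t)}$, estimate $H_K^X$ on small geodesic spheres, and choose $c_0$ large enough (and $\delta_0$ small enough) to force $\Phi_K^X<0$. One sign slip worth fixing: the displayed inequality $H_K^X \leq -\tfrac{n}{r} + C_1 r + C_2$ is an upper bound, but since $\Phi_K^X = v_K - H_K^X$, what you need to bound $\Phi_K^X$ from above is the \emph{lower} bound $H_K^X \geq -\tfrac{n}{r} - C_1 r - C_2$, coming from $\Div_{\partial B_r(p)}\mathbf{n}_K \leq \tfrac{n}{r} + C_1 r$ and $X\cdot\mathbf{n}_K \geq -C_2$. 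Your third step in fact uses this correct direction (the resulting estimate $\Phi_K^X \leq \tfrac{2n-c}{2r} + C_1 r + C_2$ is right), so the argument goes through once the inequality is restated. The paper absorbs the $C_1 r$ curvature correction by shrinking $\delta_0$ so that $H_K^X \geq -\tfrac{2n}{r} - \sup|X|$, yielding $c_0 = 4n + 2\delta_0\sup|X|$, whereas you keep $C_1 r$ explicit and fold it into $c_0 = 2n + 2C_1\delta_0^2 + 2C_2\delta_0$; these are just different ways of packaging the same constants.
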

\begin{proof}
	Let $f(x,t)=\dist(x,p)^2-(\delta^2-ct)$, so $K(t)=\{x\mid f(x,t)\leq 0\}$. Choose $\delta_0=\delta_0(p)>0$ so that $B_{\delta_0}(p)$ is a normal ball in $N$. If $0<\delta<\delta_0$ and $0<\tau<\delta^2/c$, then $f$ is smooth in a neighborhood of $K$ and $|\nabla f(x,t)|\neq 0$ on $\partial K(t)$ for $t\in [0,\tau]$. One computes, for $x\in \partial K(t)$ and $t\in [0,\tau]$,
	\[
	v_K(x,t)=-\frac{c}{2\sqrt{\delta^2-ct}}
	\]
	and, shrinking $\delta_0$ if necessary, 
	\[
	H_K^X(x,t)\geq -\frac{2n}{\sqrt{\delta^2-ct}}-\sup_{B_{\delta_0}(p)}|X|.
	\]
	Thus, letting 
	\[
	c_0=4n+2\delta_0\sup_{B_{\delta_0}(p)}|X|
	\]
	if $c>c_0$ then $\Phi_K^X(x,t)<0$ for $x\in \partial K(t)$ and $t\in [0,\tau]$.
\end{proof}

\subsection{Weak $X$-flows generated by a spacetime set} \label{WeakFlowSec}
We now introduce weak $X$-flows generated by a natural class of closed subsets of spacetime.
\begin{defn} 
	\label{weak-x-flow-defn}
	Fix a closed set $\Gamma \subseteq C_{N,T_0}$. A closed set $Z \subseteq C_{N,T_0}$ is a \emph{weak $X$-flow generated by $\Gamma$ with starting time $T_0$} provided the following hold:
	\begin{itemize}
		\item $Z(T_0) = \Gamma(T_0)$ and $\Gamma\subseteq Z$;
		\item For any smooth compact barrier 	
		\[
		t\in [a,b]\mapsto K(t)
		\]
		such that $a\geq T_0$ and $K(t)\cap Z(t)=\emptyset$ for $t\in [a,b)$, if $p \in K(b)\cap Z(b)$, then either $p\in \partial K(b)$ and $\Phi_K^X(p,b)\geq 0$, or $p\in \Gamma(b)$.
	\end{itemize}
\end{defn} 
We will omit the starting time when it is taken to be $0$. We will also omit the vector field $X$ when it is the zero vector field. When $X \equiv \mathbf{0}$, White \cite{WhiteTopology} defines a weak flow by mean curvature generated by a closed subset of spacetime that includes hypersurfaces with boundary. In this case, \cref{weak-x-flow-defn} seemingly differs from that of White, but it is shown by \cref{equivalent-defn} below that the two definitions are indeed equivalent. We also remark that when $\Gamma\subseteq N\times\{T_0\}$, a weak $X$-flow generated by $\Gamma$ with starting time $T_0$ is a weak $X$-flow with starting time $T_0$ in Hershkovits--White's definition (see Section 11 of \cite{HershkovitsWhite}). If, in addition, $X=-\frac{\mathbf{x}}{2}$ on $N=\mathbb{R}^{n+1}$, this is an \emph{expander weak flow}.

We make some simple observations.
When $t\in [0,T]\mapsto M(t)$ is a smooth family of smooth hypersurfaces (possibly with boundary) and 
\[
\Gamma=\set{(x,0) \mid x\in M(0)} \cup \set{(x,t) \mid x\in \partial M(t), \, t\in [0,T]},
\]
then $M$ is a weak $X$-flow generated by $\Gamma$ if and only if it is a classical $X$-flow (possibly with boundary).  Suppose $Z$ is a weak $X$-flow generated by $\Gamma$ with starting time $T_0$. For $T\geq T_0$, the restriction $Z|_{[T,\infty)}$ of $Z$ to $[T,\infty)$ is a weak $X$-flow generated by $Z(T)\cup \Gamma|_{[T,\infty)}$ with starting time $T$, and $Z'=Z|_{[T_0,T]} \cup \Gamma$ is a weak $X$-flow generated by $\Gamma$ with starting time $T_0$. 

With minor modification to the arguments of Theorem 16 in \cite{HershkovitsWhite}, we establish the following avoidance principle for weak $X$-flows generated by spacetime sets -- see \cref{ProveAvoidanceSec} for the proof.
\begin{thm} \label{AvoidanceThm}
	For $i=1,2$, let $Z_i$ be a weak $X$-flow generated by a closed set $\Gamma_i\subseteq C_{N,T_0}$ with starting time $T_0$. Suppose $T\geq T_0$ and $\bigcup_{t\in [T_0,T]} Z_1(t)$ is compact. If  
	\[
	\Gamma_1(t)\cap Z_2(t)=\Gamma_2(t)\cap Z_1(t)=\emptyset
	\]
	for all $t\in [T_0,T]$, then $Z_1(t)\cap Z_2(t)=\emptyset$ for every $t\in [T_0,T]$.
\end{thm}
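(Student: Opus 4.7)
I would argue by contradiction using the shrinking spherical barriers of \cref{spherical-barrier}, following the strategy of \cite[Theorem 16]{HershkovitsWhite}. Suppose the conclusion fails and let
\[
t^* = \inf\set{t \in [T_0, T] \mid Z_1(t) \cap Z_2(t) \neq \emptyset}.
\]
Using the compactness of $Z_1|_{[T_0, T]}$ and the closedness of $Z_1, Z_2$ as subsets of spacetime, one extracts a ``first touching point'' $p \in Z_1(t^*) \cap Z_2(t^*)$ as the limit of $(p_n, t_n)$ with $p_n \in Z_1(t_n) \cap Z_2(t_n)$ and $t_n \downarrow t^*$. The hypothesis $\Gamma_1(T_0) \cap Z_2(T_0) = \emptyset$ rules out $t^* = T_0$, and the assumptions at time $t^*$ give $p \notin \Gamma_1(t^*) \cup \Gamma_2(t^*)$.

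The heart of the proof is to construct a smooth compact strong $X$-flow barrier $K$ on an interval $[a, t^*] \subseteq (T_0, t^*]$ such that (i) $K(t) \cap Z_1(t) = \emptyset$ for all $t \in [a, t^*)$, and (ii) $p \in \mathrm{int}(K(t^*))$. Together with $p \in Z_1(t^*) \setminus \Gamma_1(t^*)$, these conditions contradict \cref{weak-x-flow-defn} applied to $Z_1$: that definition would force either $p \in \partial K(t^*)$ or $p \in \Gamma_1(t^*)$. The barrier is furnished by \cref{spherical-barrier}: for small $\tau, \delta > 0$ with $\delta^2 > c \tau$, $c > c_0(X, p, n)$, and $a = t^* - \tau$, the family $K(t) = \bar{B}_{\sqrt{\delta^2 - c(t - a)}}(q)$ is a strong $X$-flow barrier for any admissible center $q$. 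One chooses $q$ close to $p$ so that the shrinking ball stays clear of $Z_1$ for $t \in [a, t^*)$ while engulfing $p$ at $t = t^*$; the disjointness of $Z_1(t)$ and $Z_2(t)$ for $t < t^*$, combined with $p \in Z_2(t^*)$, provides the geometric room to do this.

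The main obstacle is verifying condition (i) rigorously, i.e.\ ensuring the chosen shrinking ball stays disjoint from $Z_1(t)$ throughout $[a, t^*)$. The subtlety is that $Z_2$ need not be smooth near $p$ and may have trivial structure backward in time, so the separating room must be extracted through a limiting argument: one lets $\tau \to 0$, adjusts $\delta$ and $q$ correspondingly, and uses the compactness of $Z_1|_{[T_0, T]}$ to pass to a convergent subsequence that realizes the contradiction. Since this theorem is essentially the analog of \cite[Theorem 16]{HershkovitsWhite} in the setting where $\Gamma$ is a closed subset of the spacetime cylinder rather than a single time slice, the remaining work amounts to tracking which portions of $\Gamma_i$ must be avoided at each time; these technical details are deferred to \cref{ProvePrelim}.
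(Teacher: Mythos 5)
Your proposal takes a genuinely different route from the paper's proof, and unfortunately the route you describe has a real gap that cannot be patched with the tools you invoke.

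The paper does not argue by a first-touching-time contradiction with spherical barriers. It instead proves a quantitative statement (\cref{distance-gap-2}): if $\dist(\Gamma_1(t), Z_2(t))$ and $\dist(\Gamma_2(t), Z_1(t))$ are at least $e^{\lambda t}\eta$, then $\dist(Z_1(t), Z_2(t)) \ge e^{\lambda t}\eta$. This is established by an open-and-closed connectedness argument on the set of good times in $[0,T]$. The closedness step uses \cref{ContCor}. The openness step is where the real geometry lives: one uses \cite[Theorem A.1]{HershkovitsWhite} to construct a closed $C^1$ \emph{separating hypersurface} $M$ equidistant between (a thickening of) $Z_1(\tau)$ and the set of points far from it, and then flows $M$ smoothly for a short time via White's regularity theorem. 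The domains bounded by this moving hypersurface serve as barriers for \emph{both} $Z_1$ and $Z_2$ simultaneously, via \cref{distance-gap}. The theorem then follows by choosing $\lambda$ and $\eta$ appropriately.

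The gap in your approach is precisely the step you flag as the ``main obstacle,'' and it is not a technicality that can be deferred. At a first touching time $t^*$ with touching point $p\in Z_1(t^*)\cap Z_2(t^*)$, item \cref{DistItem} of \cref{ContCor} says that \emph{both} $Z_1(t)$ and $Z_2(t)$ have points within distance $\sqrt{2c_0(t^*-t)}$ of $p$ for $t<t^*$ near $t^*$. So $Z_1(t)$ and $Z_2(t)$ both accumulate at $p$ from opposite ``sides,'' and the separating region between them collapses to something hyperplane-like, not ball-like, as $t\uparrow t^*$. No choice of center $q$ and radius $\delta$ gives a closed ball $\bar{B}_{\sqrt{\delta^2 - c(t-a)}}(q)$ that both (i) avoids $Z_1(t)$ for all $t\in[a,t^*)$ and (ii) contains $p$ in its interior at $t=t^*$: the curvature of the sphere near $p$ is the wrong sign for this, and the ``geometric room'' you appeal to (disjointness of $Z_1(t)$ and $Z_2(t)$ before $t^*$) does not give any quantitative lower bound on $\dist(Z_1(t), Z_2(t))$ that a fixed-center ball could exploit. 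Spheres are the right barriers for ruling out \emph{isolated} touching (as in \cref{spherical-barrier}'s role in \cref{ContCor} and in showing $Z(T_0)=\Gamma(T_0)$), but avoidance between two non-smooth weak flows requires a barrier adapted to the actual separating geometry, which is exactly what the equidistant hypersurface of \cite[Theorem A.1]{HershkovitsWhite} provides. Your closing remark that this ``is essentially the analog of \cite[Theorem 16]{HershkovitsWhite}'' is correct, but that theorem is proved via the distance-gap/separating-hypersurface machinery, not via a first-touching sphere argument, so the citation does not rescue the approach you describe.
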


We give the following criterion that guarantees the compactness of the time slices of the weak $X$-flows generated by spacetime sets considered above. This is analogous to \cite[Theoerm 29]{HershkovitsWhite}, and we defer its proof to \cref{ProveCompactFlowSec}.
\begin{prop} \label{CompactFlowProp}
Suppose $(N,X)$ is tame.
	Let $Z$ be a weak $X$-flow generated by a closed set $\Gamma\subseteq C_{N,T_0}$ with starting time $T_0$. If $T\geq T_0$ and $\bigcup_{t\in [T_0,T]} \Gamma(t)$ is compact, then $\bigcup_{t\in [T_0,T]} Z(t)$ is compact.
\end{prop}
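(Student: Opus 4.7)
The plan is to use the shrinking-ball strong $X$-flow barriers of \cref{spherical-barrier} together with the avoidance built into \cref{weak-x-flow-defn} to confine $Z$ to a bounded region over $[T_0, T]$. Since $N$ is complete, closed metric balls in $N$ are compact by Hopf--Rinow, and $Z$ is closed in $N \times [T_0, T]$; it therefore suffices to show $\bigcup_{t \in [T_0, T]} Z(t)$ is bounded in $N$. Fix $p_0 \in N$ and $R_0 > 0$ with $A := \bigcup_{t \in [T_0, T]} \Gamma(t) \subseteq \bar{B}_{R_0}(p_0)$.

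The core local step is as follows. For $q \notin A$, pick $\delta \in (0, \min(\delta_0(q), d(q, A)))$ and $c > c_0(q)$ as in \cref{spherical-barrier}, so that $K_q(t) := \bar{B}_{\sqrt{\delta^2 - c(t - T_0)}}(q)$ is a strong $X$-flow barrier on any $[T_0, T_0 + \tau] \subseteq [T_0, T_0 + \delta^2/c)$ with $K_q(t) \cap A = \emptyset$ throughout. Starting from $K_q(T_0) \cap Z(T_0) = K_q(T_0) \cap \Gamma(T_0) = \emptyset$, I claim $K_q(t) \cap Z(t) = \emptyset$ on the entire lifespan. If not, let $b^*$ be the first contact time; a contact point $p \in K_q(b^*) \cap Z(b^*)$ must by \cref{weak-x-flow-defn} lie either in $\partial K_q(b^*)$ with $\Phi^X_{K_q}(p, b^*) \geq 0$ (impossible by the strong barrier property) or in $\Gamma(b^*)$ (impossible since $K_q \cap \Gamma = \emptyset$). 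Because $q \in K_q(t)$ for all $t < T_0 + \delta^2/c$, this gives $q \notin Z(t)$ throughout.

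To promote pointwise exclusion to a global bound I use tameness to get uniform barrier constants on bounded regions. On a closed ball $\bar{B}_R(p_0)$ the injectivity radius is bounded below (positive and lower-semicontinuous on a compact set), while $|X| \leq |X(p_0)| + R \sup |\nabla X|$; inspecting the proof of \cref{spherical-barrier}, $\delta_0(q)$ and $c_0(q)$ can be chosen uniformly for $q \in \bar{B}_R(p_0)$, say as $\delta_*(R)$ and $c_*(R)$. Setting $\tau(R) := \delta_*(R)^2/(2 c_*(R))$, the local step applied at every $q \in \bar{B}_R(p_0)$ with $d(q, A) > \sqrt{c_*(R) \tau(R)}$ shows $q \notin Z(t)$ for $t \in [T_0, T_0 + \tau(R)]$, so $Z(t) \cap \bar{B}_R(p_0)$ sits in the $\sqrt{c_*(R) \tau(R)}$-tube around $A$ on this interval. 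Iterating on successive intervals $[T_0 + k\tau, T_0 + (k+1)\tau]$, widening $R$ each time to contain the previous step's slice, covers $[T_0, T]$ in finitely many steps and yields the global bound.

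The main obstacle is controlling this iteration: each step enlarges the containment radius, and I need to ensure that $\delta_*(R)$ and $c_*(R)$ do not degenerate so rapidly that the per-step time increment $\tau(R)$ collapses before $[T_0, T]$ is covered. Here the full tameness hypothesis — completeness, uniform Ricci lower bound, uniform $|\nabla X|$ bound — is essential: it provides uniform control of injectivity radius and $|X|$ on any bounded region, so that across the finitely many iterations $R$ grows only additively by the tube radius $\sqrt{c_*(R) \tau(R)}$ while $\tau(R)$ remains bounded below, letting the procedure reach $T$.
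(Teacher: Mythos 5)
Your approach and the paper's both iterate a barrier-exclusion argument over short time steps, but the barriers are of a fundamentally different nature and yours leaves a genuine gap. The paper's proof invokes \cite[Corollary 28]{HershkovitsWhite}, a distance-function barrier: if $\bigcup_t\Gamma(t)$ is disjoint from $\bar{B}_{2R}(p)$, then $\dist(p, Z(t)) > R$ for $0 \le t \le R/(h + 6RC)$, where $h = h(\kappa, n)$ comes from the Ricci lower bound $\kappa$ via Laplacian comparison for $\dist(p,\cdot)$, and $C$ controls the linear growth of $|X|$. The quantity $R/(h + 6RC) = 1/((h/R) + 6C)$ is bounded below by $1/(h+6C)$ once $R \ge 1$, giving a \emph{uniform} per-step time $T'$ independent of how far out one looks; iterating $\lceil (T - T_0)/T' \rceil$ times then gives $\bigcup_{t\in[T_0,T]} Z(t) \subseteq B_{4^k r}(x_0)$. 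Your argument instead relies on the small shrinking-ball barriers of \cref{spherical-barrier}, which require $\delta < \delta_0(q)$ with $\delta_0(q)$ bounded by the normal-ball (injectivity) scale at $q$, and yield a per-point exclusion time of order $\delta_0(q)^2/c_0(q)$. Under tameness alone the injectivity radius has no uniform positive lower bound at infinity (Ricci bounded below does not yield one), while $c_0(q)$ grows at least as fast as $|X|$; so your quantities $\delta_*(R)$, $c_*(R)$, $\tau(R)$ can degenerate as $R \to \infty$.

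This is exactly the obstacle you flag at the end, but your resolution is not a proof: the assertion that ``$\tau(R)$ remains bounded below'' is simply asserted and is false in general tame $(N,X)$, and the observation that injectivity radius is bounded below on each fixed compact set gives nothing uniform across the growing sequence $\bar{B}_{R_k}(p_0)$ produced by the iteration. A telling symptom that a key tool is missing: the Ricci lower bound never enters your argument --- you only use completeness, continuity of the injectivity radius, and $|\nabla X|$ --- yet Ricci is part of tameness and is what the paper's proof actually uses. Moreover, your claimed control only concerns $Z(t) \cap \bar{B}_R(p_0)$, leaving points $q$ far outside uncontrolled, since for such $q$ the exclusion time $\delta_0(q)^2/c_0(q)$ may be arbitrarily small. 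To close the gap one must replace \cref{spherical-barrier} with a barrier valid at scales comparable to $d(q, A)$, and that is exactly what the distance-function comparison via the Ricci lower bound (i.e.~\cite[Corollary 28]{HershkovitsWhite}) provides.
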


It is convenient to present several equivalent definitions for weak $X$-flows generated by a spacetime set. In particular, \cref{weak-x-flow-defn} is equivalent to the one of White \cite{WhiteTopology} when $X\equiv \mathbf{0}$. The following theorem is analogous to \cite[Theorem 17]{HershkovitsWhite}, and we refer the interested reader to \cref{ProveEquivDefnSec} for a detailed proof.
\begin{thm}
	\label{equivalent-defn}
	Let $\Gamma, Z$ be closed subsets of $C_{N,T_0}$ with  $Z(T_0) = \Gamma(T_0)$ and $\Gamma\subseteq Z$. The following are equivalent:
	\begin{enumerate}
		\item $Z$ is a weak $X$-flow generated by $\Gamma$ with starting time $T_0$; 
		\item If $t\in [a,b]\mapsto K(t)$ is a strong $X$-flow barrier in $N$ such that $a\geq T_0$, $K(a) \cap Z(a) = \emptyset$, and $K(t)\cap \Gamma(t)=\emptyset$ for all $t\in [a,b]$, then $K(t)\cap Z(t)=\emptyset$ for all $t\in [a,b]$;
		\item If $t\in [a,b]\mapsto K(t)$ is a strong $X$-flow barrier in $N$ such that $a\geq T_0$, $K(a) \cap Z(a) = \emptyset$, and $K(t)\cap \Gamma(t)=\emptyset$ for all $t\in [a,b]$, then $K(t)\cap Z(t)=\emptyset$ for all $t\in [a,b)$;
		\item If $t\in [a,b]\mapsto M(t)$ is a smooth $X$-flow of closed embedded hypersurfaces in $N$ such that $a\geq T_0$, $M(a)\cap Z(a)=\emptyset$, and $M(t)\cap \Gamma(t)=\emptyset$ for all $t\in [a,b]$, then $M(t)\cap Z(t)=\emptyset$ for all $t\in [a,b]$. 
	\end{enumerate}
\end{thm}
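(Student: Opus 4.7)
The plan is to prove the equivalence by establishing the cyclic chain $(1)\Rightarrow(2)\Rightarrow(3)\Rightarrow(4)\Rightarrow(1)$. For $(1)\Rightarrow(2)$, I would use a first-touching-time argument. Let $K$ be a strong barrier on $[a,b]$ with $K(a)\cap Z(a)=\emptyset$ and $K(t)\cap\Gamma(t)=\emptyset$ on $[a,b]$. By the compactness of $\bigcup_t K(t)$ and the closedness of $Z$, the set $\{t\in[a,b]:K(t)\cap Z(t)=\emptyset\}$ is open in $[a,b]$; if its complement is nonempty, its minimum $\tau$ is attained, and applying $(1)$ to $K|_{[a,\tau]}$ produces $p\in K(\tau)\cap Z(\tau)$ with either $p\in\Gamma(\tau)$ (contradicting $K\cap\Gamma=\emptyset$) or $p\in\partial K(\tau)$ and $\Phi_K^X(p,\tau)\geq 0$ (contradicting the strong barrier condition $\Phi_K^X<0$). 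The implication $(2)\Rightarrow(3)$ is trivial since $(3)$ has a weaker conclusion.

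For $(3)\Rightarrow(4)$, I would approximate a classical $X$-flow $M$ of closed embedded hypersurfaces by thin strong barriers. First extend $M$ to a classical $X$-flow on $[a,b+\delta]$ by short-time existence, preserving $M\cap\Gamma=\emptyset$ for $\delta$ small using compactness of $M(b)$ and closedness of $\Gamma$. Next, for small $\epsilon>0$ define $K_\epsilon(t)=\{x:\dist(x,M(t))\leq\epsilon\phi(t)\}$ with $\phi>0$ a smooth decreasing function on $[a,b+\delta]$. A direct computation using $v_M=H^X_M$ gives $v_{K_\epsilon}-H^X_{K_\epsilon}=\epsilon\phi'(t)+O(\epsilon^2)<0$ for small $\epsilon$, so $K_\epsilon$ is a strong barrier. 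Compactness of $M$ in spacetime, together with the given disjointness $M(a)\cap Z(a)=\emptyset$ and $M\cap\Gamma=\emptyset$, yields $K_\epsilon(a)\cap Z(a)=\emptyset$ and $K_\epsilon\cap\Gamma=\emptyset$ for small $\epsilon$. Applying $(3)$ on $[a,b+\delta]$ and using $M(t)\subseteq K_\epsilon(t)$ for all $t$, the conclusion $M(t)\cap Z(t)=\emptyset$ on $[a,b]$ follows.

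The main obstacle is $(4)\Rightarrow(1)$, which requires constructing enough classical $X$-flows through a hypothetical bad point to invoke $(4)$. Suppose $p\in K(b)\cap Z(b)\setminus\Gamma(b)$ violates the conclusion of $(1)$. In the interior case $p\in\mathrm{int}(K(b))$, choose small $\delta,\eta>0$ so that $\bar{B}_{2\delta}(p)\subseteq K(t)$ for $t\in[b-\eta,b]$ and the cylinder $\bar{B}_{2\delta}(p)\times[b-\eta,b]$ is disjoint from $\Gamma$ (using $p\notin\Gamma(b)$ and $\Gamma$ closed). Consider a one-parameter family of classical $X$-flows $M_s(t)$, $t\in[b-\eta,b]$, starting from geodesic spheres $\partial B_{r_0}(p+s\mathbf{v})$ of small radius $r_0$ with $\mathbf{v}$ a fixed unit direction. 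By continuous dependence on initial data and an intermediate value argument applied to the signed distance from $p$ to $M_s(b)$ (which is strictly negative at $s=0$ and strictly positive for $|s|$ large), some $s^\ast$ yields $p\in M_{s^\ast}(b)$. By construction $M_{s^\ast}(b-\eta)\subseteq\bar{B}_{2\delta}(p)\subseteq K(b-\eta)$ is disjoint from $Z(b-\eta)$, and $M_{s^\ast}\cap\Gamma=\emptyset$; applying $(4)$ to $M_{s^\ast}$ forces $M_{s^\ast}(b)\cap Z(b)=\emptyset$, contradicting $p\in M_{s^\ast}(b)\cap Z(b)$. In the boundary case $p\in\partial K(b)$ with $\Phi_K^X(p,b)<0$, I would first run a classical $X$-flow $\tilde M$ starting from a small outer parallel of $\partial K(b-\eta)$: the strict inequality $v_K<H^X_K$ near $(p,b)$ forces $\tilde M(t)$ to enclose $K(t)$ locally, placing $p$ in the interior of the region bounded by $\tilde M(b)$ and reducing to the interior case. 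The delicate point is managing this reduction so that $\tilde M$ remains a genuine closed embedded hypersurface avoiding $\Gamma$.
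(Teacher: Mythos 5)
Your chain $(1)\Rightarrow(2)\Rightarrow(3)\Rightarrow(4)\Rightarrow(1)$ takes a different route from the paper, which proves $(1)\Leftrightarrow(2)\Leftrightarrow(3)$, then $(1)\Rightarrow(4)$ via \cref{AvoidanceThm}, and $(4)\Rightarrow(2)$ by running a classical $X$-flow from $\partial K(a)$ and using that $K$, being a \emph{strong} barrier, stays inside the region bounded by that flow. Your $(3)\Rightarrow(4)$ via the thin tubular strong barriers $K_\eps$ around $M$ is a legitimate and somewhat more self-contained alternative to the paper's use of the avoidance theorem. One small correction: the error term in $\Phi^X_{K_\eps}$ is $O(\eps)$, not $O(\eps^2)$ --- the parallel-surface correction to $H^X$ enters at first order in the offset, with coefficient governed by $|A_M|^2$ and $|\nabla X|$ along the compact spacetime track of $M$ --- so one should take, say, $\phi(t)=e^{-Ct}$ with $C$ large relative to those bounds rather than merely $\phi$ decreasing.

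The boundary case of your $(4)\Rightarrow(1)$ has a genuine gap. The outer parallel $\tilde M(b-\eta)$ lies \emph{outside} $K(b-\eta)$, and the only disjointness available is $K(t)\cap Z(t)=\emptyset$ for $t<b$; nothing prevents $Z(b-\eta)$ from meeting the exterior of $K(b-\eta)$, so the hypothesis $\tilde M(b-\eta)\cap Z(b-\eta)=\emptyset$ needed to invoke $(4)$ can fail. Moreover, the enclosure of $K(t)$ by the region bounded by $\tilde M(t)$ would come from a comparison principle requiring $\Phi^X_K<0$ on all of $\partial K$ over $[b-\eta,b]$, but in $(1)$ the barrier $K$ is an arbitrary smooth compact barrier and you only control the sign of $\Phi^X_K$ at the single point $(p,b)$; wherever $\Phi^X_K\geq 0$ the comparison reverses and $\tilde M$ can cut back into $K$, so the reduction to the interior case is not justified. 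This is precisely what the barrier-modification theorem (Hershkovits--White, adapted to $X$-flows) supplies: a small strong barrier $\hat K\subseteq K$ localized near $(p,b)$ with $\hat K(b)\cap\partial K(b)=\{p\}$, to which one can then apply a first-touching argument or your interior-case construction. The paper routes $(4)\Rightarrow(2)\Rightarrow(1)$ exactly so that $K$ may be assumed a strong barrier when a classical flow is compared with it; a direct $(4)\Rightarrow(1)$ needs the same localization step.
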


We observe the following useful localization property for weak $X$-flows in cylinders. 
\begin{prop}\label{WeakFlowLocalization}
	Suppose $U\subseteq N$ is a closed set. If $Z$ is a weak $X$-flow generated by a closed set $\Gamma\subseteq C_{N,T_0}$ with starting time $T_0$, then for any $T_1\geq T_0$
	\[
	Z'=Z \cap C_{U, T_1}
	\]
	is a weak $X$-flow generated by $\Gamma'=(Z\cap \partial C_{U,T_1})\cup (\Gamma\cap C_{U,T_1})$ with starting time $T_1$.
\end{prop}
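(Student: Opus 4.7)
The plan is to verify the two conditions of Definition \ref{weak-x-flow-defn} for $Z'$ as a weak $X$-flow with generator $\Gamma'$ and starting time $T_1$. The initial and containment conditions are immediate set manipulations: since $\partial C_{U,T_1}(T_1) = U$, one has $\Gamma'(T_1) = (Z(T_1)\cap U) \cup (\Gamma(T_1)\cap U) = Z(T_1)\cap U = Z'(T_1)$ using $\Gamma \subseteq Z$, while both $Z \cap \partial C_{U,T_1}$ and $\Gamma \cap C_{U,T_1}$ sit inside $Z \cap C_{U,T_1} = Z'$.

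For the barrier condition, I would fix a smooth compact barrier $t \in [a,b] \mapsto K(t)$ with $a \geq T_1$, $K(t) \cap Z'(t) = \emptyset$ for $t \in [a,b)$, and $p \in K(b) \cap Z'(b) = K(b) \cap Z(b) \cap U$. In the \emph{boundary case} where $p \in \partial U$ or $b = T_1$, the spacetime point $(p,b)$ lies in $\partial C_{U,T_1}$ (either the lateral face $\partial U \times [T_1,\infty)$ or the bottom face $U \times \{T_1\}$), so $(p,b) \in Z \cap \partial C_{U,T_1} \subseteq \Gamma'$, giving $p \in \Gamma'(b)$ as required.

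In the \emph{interior case} $p \in \mathrm{int}(U)$ and $b > T_1$, I may assume $p \notin \Gamma(b)$ (else $p \in \Gamma(b) \cap U \subseteq \Gamma'(b)$) and must show $p \in \partial K(b)$ with $\Phi_K^X(p,b) \geq 0$. Choose $r > 0$ with $\bar{B}_r(p) \subseteq \mathrm{int}(U)$, so that $K(t) \cap Z(t) \cap \bar{B}_r(p) = \emptyset$ for $t \in [a,b)$. Suppose for contradiction that $p \in \mathrm{int}(K(b))$; by continuity of $K$ together with closedness of $\Gamma$ and $p \notin \Gamma(b)$, one finds $\eta, \delta > 0$ so that $\bar{B}_\eta(p) \subseteq \mathrm{int}(K(t)) \cap \mathrm{int}(U)$ for $t \in [b-\delta,b]$ and $\bar{B}_\eta(p) \cap \Gamma(t) = \emptyset$ for $t \in [b-\delta,b+\delta]$, whence $\bar{B}_\eta(p) \cap Z(t) = \emptyset$ for $t \in [b-\delta,b)$. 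Proposition \ref{spherical-barrier} then supplies a shrinking-ball strong $X$-flow barrier $S$ centered at $p$, with initial radius less than $\eta$ and initial time in $(b-\delta,b)$, whose interval of existence contains $b$. Characterization (2) of Theorem \ref{equivalent-defn} applied to $Z$ forces $S \cap Z = \emptyset$, contradicting $p \in S(b) \cap Z(b)$. Hence $p \in \partial K(b)$. If further $\Phi_K^X(p,b) < 0$, a standard small outward $C^\infty$-perturbation of $\partial K$ localized near $(p,b)$ yields a smooth compact barrier $\tilde K$ over $[a,b]$ with $p \in \mathrm{int}(\tilde K(b))$ and $\tilde K(t) \cap Z'(t) = \emptyset$ for $t \in [a,b)$; the preceding argument applied to $\tilde K$ yields a contradiction.

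The main obstacle is the interior case. The barrier $K$ is only known to avoid $Z' = Z \cap C_{U,T_1}$, not $Z$ itself, so the weak flow property of $Z$ cannot be applied to $K$ directly. The resolution is to localize near $(p,b)$ inside $\mathrm{int}(U)$ using the spherical strong barriers from Proposition \ref{spherical-barrier}, which stay entirely within $\mathrm{int}(U)$, where the disjointness conditions for $Z$ and $Z'$ coincide. The outward perturbation step extracting $\Phi_K^X(p,b) \geq 0$ requires care to avoid spurious intersections with $Z'$ at earlier times, but this is controlled by taking the perturbation sufficiently small and localized, using the already-established emptiness of $K(t) \cap Z(t) \cap \bar{B}_r(p)$ for $t < b$ together with the smoothness and compactness of $K$.
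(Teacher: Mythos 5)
Your initialization step, your observation that $p\in \partial U$ or $b=T_1$ forces $(p,b)\in Z\cap \partial C_{U,T_1}\subseteq \Gamma'$, and your shrinking-ball treatment of the case $p\in\mathrm{int}(K(b))$ are all correct. For the last of these, the paper instead applies Proposition~\ref{ContCor} to produce a sequence $(p_i,t_i)\to(p,b)$ with $p_i\in Z(t_i)\setminus\Gamma(t_i)$ and $t_i\uparrow b$, but your spherical-barrier argument is an equivalent route (indeed, the proof of Proposition~\ref{ContCor} is itself a shrinking-ball argument).

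The case $p\in\partial K(b)$ with $\Phi_K^X(p,b)<0$ is where the proof has a genuine gap. You propose to perturb $K$ \emph{outward} near $(p,b)$ to obtain a barrier $\tilde K$ with $p\in\mathrm{int}(\tilde K(b))$ and still $\tilde K(t)\cap Z'(t)=\emptyset$ for $t\in[a,b)$. But the only information available is the disjointness $K(t)\cap Z'(t)=\emptyset$ for $t<b$, with no quantitative gap: $Z'(t)$ can accumulate to $\partial K(t)$ as $t\to b^{-}$ (and Proposition~\ref{ContCor} even shows $\mathrm{dist}(p,Z'(t))\to 0$ at parabolic rate). Hence an outward bump, however small and however localized near $(p,b)$ in spacetime, may intersect $Z'$ at times $t<b$; nothing in ``smoothness and compactness of $K$'' prevents this. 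The paper resolves this by perturbing in the \emph{opposite} direction: adapting the barrier modification lemma (\cite[Lemma 7]{HershkovitsWhite}, in the $X$-flow setting) one obtains $\hat a\in[a,b)$ and a barrier $\hat K$ over $[\hat a,b]$ with $\hat K(t)\subseteq \mathrm{int}(K(t))\cap\mathrm{int}(U)$ for $t\in[\hat a,b)$, $\hat K(b)\cap\partial K(b)=\{p\}$, and $\Phi^X_{\hat K}(p,b)=\Phi_K^X(p,b)<0$. Since $\hat K$ sits inside both $K$ and $\mathrm{int}(U)$, disjointness from $Z$ (not just $Z'$) for $t<b$ comes for free, and disjointness from $\Gamma$ is inherited, so one directly contradicts $Z$ being a weak $X$-flow generated by $\Gamma$. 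The inward modification preserving the value of $\Phi$ at the contact point is nontrivial (it is the content of the cited lemma), and it is precisely what your outward-perturbation sketch does not supply.
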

\begin{proof}
	We argue by contradiction. Suppose $Z'$ was not a weak $X$-flow generated by $\Gamma'$ with starting time $T_1$. Then there is a smooth compact barrier 
	\[
	t\in [a,b] \mapsto K(t)
	\]
	with $b>a\geq T_1$ such that $K(t)\cap Z'(t)=\emptyset$ for all $t\in [a,b)$, but there is $p\in K(b)\cap (Z'(b)\setminus \Gamma'(b))$ such that either $p\in\mathrm{int}(K(b))$ or $p\in\partial K(b)$ with $\Phi_K^X(p,b)<0$. By \cref{ContCor}, as $p\in Z'(b)\setminus\Gamma'(b)\subseteq Z(b)\setminus\Gamma(b)$ there is a sequence of points $p_i\in Z(t_i)\setminus\Gamma(t_i)$ with $t_i<b$ and $(p_i,t_i)\to (p,b)$. If $p \in \mathrm{int}(K(b))$, then $K(t_i)\cap Z'(t_i)\neq\emptyset$ for large $i$, which is a contradiction and so we must have $p \in \partial K(b)$ with $\Phi_K^X(p,b)<0$. By adapting \cite[Lemma 7]{HershkovitsWhite} to the $X$-flow setting (see Section 11 of \cite{HershkovitsWhite}), as $p\in\mathrm{int}(U)\cap\partial K(b)$ there is $\hat{a}\in [a,b)$ and a smooth compact barrier in $\mathrm{int}(U)$
	\[
	t\in [\hat{a},b]\mapsto \hat{K}(t)
	\]
	such that $\hat{K}(t)\subseteq\mathrm{int}(U)\cap \mathrm{int}(K(t))$ for $t\in [\hat{a},b)$, $\hat{K}(b)\cap \partial K(b)=\set{p}$ and $\Phi^X_{\hat{K}}(p,b)=\Phi_K^X(p,b)<0$. Thus, $\hat{K}$ is a smooth compact barrier in $N$ so that $\hat{K}(t)\cap Z(t)=\emptyset$ for $t\in [\hat{a},b)$ and $\Phi_{\hat{K}}^X(p,b)<0$ for some $p\in (Z(b)\setminus\Gamma(b))\cap \partial\hat{K}(b)$. This contradicts $Z$ being a weak $X$-flow generated by $\Gamma$ with starting time $T_0$ and so completes the proof.    
\end{proof} 
We conclude this subsection with the existence of certain weak $X$-flows that locally ``push" in a piece of a smooth and strictly $X$-mean-convex compact set.
\begin{lem}\label{XMCBoundaryWeakFlowLem}
	Let $U\subseteq N$ be a smooth and  strictly $X$-mean-convex compact set.   There is an $\eps_0=\eps_0(U, N,X)>0$ so that for any $T_1\in \Real$,  $p\in \partial U$ and $\eps\in (0, \eps_0)$, there is a closed subset $W\subseteq N\times [T_1, T_1+\eps]$ so that:
	\begin{enumerate}
		\item $W(T_1)=\bar{B}_\eps(p)\cap \partial U$;
		\item $\Gamma=W\cap C_{\partial U, T_1} = (\bar{B}_\eps(p)\cap \partial U)\times [T_1, T_1+\eps]$;
		\item For $t\in [T_1,T_1+ \eps]$, $B_{100^{-1} \eps(t-T_1)}(p)\cap U \subseteq W(t)$;
		\item $W$ is a weak $X$-flow generated by $\Gamma$ with starting time $T_1$ .
	\end{enumerate}
\end{lem}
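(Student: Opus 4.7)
\emph{Proof plan.} I would construct $W$ as the spacetime slab swept out by a classical $X$-flow of $\Sigma := \bar{B}_\eps(p)\cap\partial U$ whose boundary is pinned to $\partial\Sigma$.

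\textbf{Setup.} Compactness and strict $X$-mean-convexity of $\partial U$ give a uniform lower bound $H^X_{\partial U}\geq 2c_0>0$. I would fix $\eps_0=\eps_0(U,N,X)$ small enough that for every $p\in\partial U$ and $\eps\in(0,\eps_0)$: (i) $\Sigma$ is a smooth topological disk in $\partial U$; (ii) the inward normal exponential map $\Phi(q,s)=\exp_q(s\nu_q)$ is a diffeomorphism onto its image on $\Sigma\times[0,\eps_0]$; (iii) $\eps_0\leq 100c_0$; and (iv) the Dirichlet problem in Step~1 is solvable on $[0,\eps_0]$.

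\textbf{Step 1 (Dirichlet $X$-flow).} I would invoke short-time existence for the Dirichlet problem for classical $X$-MCF to produce $u\colon \Sigma\times[T_1,T_1+\eps]\to[0,\infty)$ with $u(\cdot,T_1)\equiv0$, $u|_{\partial\Sigma}\equiv 0$, and $M(t):=\Phi(\Sigma,u(\cdot,t))$ a classical $X$-flow. This is a uniformly parabolic quasilinear problem (the transport $X$ enters as a lower-order term), handled by the standard Huisken--Stahl theory for MCF with Dirichlet boundary, extended to the $X$-setting. Strict $X$-mean-convexity forces $\partial_t u(\cdot,T_1)\geq 2c_0$ on $\Sigma$, so $u>0$ on $\mathrm{int}_\Sigma(\Sigma)\times(T_1,T_1+\eps]$ and $M(t)\setminus\partial\Sigma\subseteq\mathrm{int}(U)$ for $t>T_1$.

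\textbf{Step 2 (Definition of $W$ and items (1)--(3)).} I would define
\[
W=\{(\Phi(q,s),t)\mid q\in\Sigma,\ 0\leq s\leq u(q,t),\ t\in[T_1,T_1+\eps]\},
\]
which is closed. Then (1) is immediate since $u(\cdot,T_1)\equiv 0$; for (2), any point of $W\cap C_{\partial U,T_1}$ must have $s=0$, hence lie on $\Sigma$; for (3), continuity of $\partial_t u$ and $\partial_t u(p,T_1)\geq 2c_0$ yield $u(q,t)\geq c_0(t-T_1)$ on a neighborhood of $p$ in $\Sigma$, and any $x\in B_{\eps(t-T_1)/100}(p)\cap U$ satisfies $\Phi^{-1}(x)=(q_x,s_x)$ with $q_x$ near $p$ and $s_x\leq\mathrm{dist}(x,p)\leq\eps(t-T_1)/100\leq c_0(t-T_1)\leq u(q_x,t)$, so $x\in W(t)$.

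\textbf{Step 3 (Weak $X$-flow property).} I would verify (4) via \cref{equivalent-defn}(4). Suppose for contradiction there is a smooth classical $X$-flow $t\mapsto L(t)$ of closed embedded hypersurfaces on $[a,b]$ with $L(a)\cap W(a)=\emptyset$, $L(t)\cap\Gamma(t)=\emptyset$, yet $t^\ast:=\inf\{t\in[a,b]\colon L(t)\cap W(t)\neq\emptyset\}\leq b$. Then $t^\ast>a$, and joint continuity of $L$ and $W$ forces any $q\in L(t^\ast)\cap W(t^\ast)$ to lie on $\partial W(t^\ast)=\Sigma\cup M(t^\ast)$. Since $L$ avoids $\Gamma\supseteq\Sigma\times\{t^\ast\}$, the contact point satisfies $q\in M(t^\ast)\setminus\partial\Sigma$, where $M$ is smooth, and $L(t^\ast)$ is tangent to $M(t^\ast)$ at $q$ with $L$ locally on the side of $M$ opposite to $\mathrm{int}(W(t^\ast))$. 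The strong maximum principle for classical $X$-MCF then forces $L$ and $M$ to coincide in a parabolic neighborhood of $(q,t^\ast)$, contradicting $L(t)\cap M(t)\subseteq L(t)\cap W(t)=\emptyset$ for $t<t^\ast$.

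\textbf{Main obstacle.} The principal technical input is Step~1: short-time solvability of the Dirichlet problem for $X$-MCF with the corner data at $\partial\Sigma\times\{T_1\}$, where the initial and boundary values agree to zeroth order but disagree in their time derivative (pinned $u\equiv 0$ on $\partial\Sigma$ versus $\partial_t u>0$ in the interior at $t=T_1$). Standard parabolic theory nevertheless provides a solution smooth on $\mathrm{int}_\Sigma(\Sigma)\times(T_1,T_1+\eps]$ and continuous up to the parabolic boundary, which is all the remaining arguments require.
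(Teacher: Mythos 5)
The paper's proof is entirely explicit and does not solve any PDE: it fixes a bump function $\phi$ on $B'=\bar{B}_\eps(p)\cap\partial U$ with $\Vert\phi\Vert_{C^2}\le 10^{-1}\eps$ and $\phi\ge 100^{-1}\eps$ near $p$, and sets $W(t)=\{\exp_N(s\phi(q')\mathbf{n}(q'))\mid q'\in B',\ s\in[0,t]\}$. The top of $\partial W(t)$ then moves inward with normal speed $v_W\le 2\phi\le\eps/5$, which is deliberately far smaller than the $X$-mean-curvature $H_W^X\ge 2\eps$; hence $\partial W$ is a strict subsolution and item (4) follows directly from \cref{weak-x-flow-defn}, while (1)--(3) are automatic because the depth over $p$ is exactly $t\phi(p)\ge 100^{-1}\eps t$. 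Your proposal replaces this by an actual Dirichlet $X$-MCF on $\Sigma$ pinned at $\partial\Sigma$, which is a genuinely different and more PDE-heavy route.

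The short-time existence you flag as the main obstacle is not the real problem; the fatal gap is in your verification of item (3). You claim that continuity of $\partial_t u$ together with $\partial_t u(p,T_1)\ge 2c_0$ gives $u(q,t)\ge c_0(t-T_1)$ for $q$ near $p$ and all $t\in[T_1,T_1+\eps]$. This cannot be uniform in $t$: the Dirichlet condition $u|_{\partial\Sigma}=0$ on a domain of diameter $\eps$ forces $\partial_t u$ to decay on the diffusion time-scale $\sim\eps^2$ (the first Dirichlet eigenvalue of the linearized operator on $\Sigma$ is of order $\eps^{-2}$), so $u(p,\cdot)$ saturates at an amplitude of order $c_0\eps^2$ rather than growing linearly to $c_0\eps$. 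At $t-T_1\approx\eps$ you need depth $\ge\eps^2/100$ at $p$, and the achieved depth is $\approx c_0\eps^2$ (up to dimensional constants). The ratio is independent of $\eps$ and fails whenever the lower bound $c_0$ on the scalar $X$-mean-curvature is small, which it can be for a general $U$; no choice of $\eps_0$ repairs this. This is precisely why the paper moves the surface at the slow, prescribed rate $\phi=O(\eps)$ instead of at its natural flow speed $O(c_0)$: the slow motion keeps the construction a subsolution (giving (4) cheaply) while its constant rate over the time interval $[T_1,T_1+\eps]$ delivers exactly the $O(\eps^2)$ depth that (3) requires. Your Step 3 (weak-flow property via \cref{equivalent-defn}(4) and the strong maximum principle at a first interior contact on $M(t^\ast)$) is sound, but without a correct (3) the construction does not prove the lemma.
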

\begin{proof}
	Without loss of generality we may take $T_1=0$. Pick $\eps_0>0$ small enough so that for any $p\in \partial U$, the injectivity radius of $N$ at $p$ is bigger than $\eps_0$ and $\bar{B}_{\eps_0}(p)\cap \partial U$ is a small graph over $T_p\partial U$ (within normal coordinates around $p$).  Moreover, as $U$ is strictly $X$-mean-convex, there is a uniform lower bound $h_0$ on the scalar $X$-mean-curvature of $U$ (with respect to the outward unit normal), and we choose $\eps_0 \le 100^{-1} h_0$.
	
	Fix $p\in \partial U$ and $\eps\in (0, \eps_0)$ and let $\mathbf{n}$ be the inward unit normal to $\partial U$ defined on $B'=\bar{B}_{\eps}(p)\cap \partial U$. Pick a smooth non-negative and compactly supported function,  $\phi $,  on $B'$ with the property that $\Vert \phi \Vert_{C^2(B')}\leq 10^{-1} \eps$ and on $B_{100^{-1} \eps}(p)\cap B'$, $\phi(q')\geq  100^{-1}\eps$.
	For $t\in [0, \eps]$, set
	$$
	W(t)=\set{\exp_N( s \phi( q') \mathbf{n}(q')) \mid q'\in B', s\in [0, t]}
	$$
	The construction immediately ensures that $W$ satisfies (1)-(3) with $T_1=0$.  
	
	The construction also implies that, for any point $q' \in B'$ with $\phi(q') > 0$
	\begin{align*}
	q = \exp_N( t \phi( q') \nN(q'))\in \partial W(t)\cap \mathrm{int}(U).
	\end{align*} 
	Thus, up to shrinking $\eps_0$, the unit normal at $q$ pointing into $W(t)$ is compatible with the outward normal to $U$ at $q'$, $-\mathbf{n}(q'),$ in the sense that, after parallel transporting along the minimizing geodesic connecting $q$ and $q'$, the two differ by less than $\frac{1}{10}$.  In particular, the $X$-mean-curvature of $\partial W(t)$ with respect to this unit normal, $H_W^X(q,t)$, satisfies
	$$
	H_W^X(q,t)\geq h_0- c_1 \eps t \ge (100 - c_1\eps) \eps
	$$
	where $c_1>0$ depends on $\partial U$ and $X$.  Hence, up to shrinking $\eps_0$, $H_W^X(q,t)\geq 2\eps$.  
	
	To check that $W$ is a weak $X$-flow generated by $\Gamma$, it  suffices to consider a smooth compact barrier $K\colon [a,b]\to N$ with $K(t)\cap W(t)=\emptyset$ for $t\in [a,b)$ so that there is a $q\in K(b)\cap \mathrm{int}(U)\cap W(b)$ where $b\in (0, \eps]$ .  Clearly,  $q=\exp_N(b\phi(q') \mathbf{n}(q'))$ for $q'\in B'$ satisfying $\phi(q')>0$.  One readily checks that the normal velocity of $\partial W$ in the direction out of $W$, i.e., into $K$, must satisfy
	$$
	v_W(q,b)\leq 2\phi(q')\leq \eps.
	$$
	Combining this with the lower bound on $H_W^X(q,b)$ from above, the relationship between $K$ and $W$ forces
	$$
	\Phi_K^X(q,b) =v_K(q,b) -H_K^X(q,b)\geq-v_W(q,b)+H_W^X(q,b) \geq \eps>0
	$$
	This is what is required in the definition of weak $X$-flow and so  (4) holds.
\end{proof}

\subsection{Biggest $X$-flows generated by a spacetime set} \label{BiggestFlowSec}
In what follows we discuss a notion of biggest $X$-flow generated by a spacetime set. For classical mean curvature flow there are several equivalent definitions for biggest flow -- see, e.g., \cite{IlmanenIndiana}, \cite{IlmanenPSPM}, \cite{IlmanenMAMS}, \cite{WhiteTopology}, \cite{HershkovitsWhite}. We adopt a constructive definition inspired by \cite{HershkovitsWhite}.
\begin{thm} \label{ExistBiggestFlow}
	Given a closed set $\Gamma\subseteq C_{N,T_0}$, there exists a unique weak $X$-flow $F^X(\Gamma;T_0)$ generated by $\Gamma$ with starting time $T_0$ such that, for any closed set $\Gamma'\subseteq F^X(\Gamma;T_0) \cap C_{N,T'_0}$ with $T'_0\geq T_0$, all weak $X$-flows $Z'$ generated by $\Gamma'$ with starting time $T'_0$ are contained in $F^X(\Gamma;T_0)$. Moreover, if $\hat{\Gamma}\subseteq C_{N,T_0}$ is a closed subset with $\hat{\Gamma}|_{[T_0,T_1]}=\Gamma|_{[T_0,T_1]}$, then 
	\[
	F^X(\hat{\Gamma};T_0)|_{[T_0,T_1]}=F^X(\Gamma;T_0)|_{[T_0,T_1]}.
	\]
\end{thm}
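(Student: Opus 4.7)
My plan is to construct $F^X(\Gamma; T_0)$ as the closure of the union of all weak $X$-flows generated by $\Gamma$ with starting time $T_0$, and then derive the strong maximality and time-localization properties from the barrier characterizations of \cref{equivalent-defn}. Let $\mathcal{W}$ denote the (nonempty, since $\Gamma\in\mathcal{W}$ trivially) family of all such weak flows and set
\[
F := \mathrm{cl}\!\left( \bigcup_{Z \in \mathcal{W}} Z \right)
\]
as a closed subset of $C_{N, T_0}$.

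The main step is to show $F\in\mathcal{W}$. The inclusion $\Gamma\subseteq F$ is immediate, and $F(T_0) = \Gamma(T_0)$ follows by observing that any putative $(p,T_0)\in F$ with $p\notin\Gamma(T_0)$ could be shielded by a small shrinking-ball strong barrier from \cref{spherical-barrier} on $[T_0, T_0+\varepsilon]$ that also misses $\Gamma$ (using that $\Gamma$ is closed in spacetime), contradicting \cref{equivalent-defn}(3) applied to the $Z\in\mathcal{W}$ approximating $(p,T_0)$ from a later time. To verify the barrier condition for $F$, I consider a smooth compact barrier $K$ on $[a,b]$ with $K(t)\cap F(t)=\emptyset$ for $t\in[a,b)$ and a touching point $p\in K(b)\cap F(b)$ that lies neither in $\Gamma(b)$ nor on $\partial K(b)$ with $\Phi_K^X(p,b)\geq 0$. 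I would construct a strong $X$-flow barrier $\hat K$ on some $[\hat a,b]$ with $p\in\mathrm{int}(\hat K(b))$, $\hat K(\hat a)\cap F(\hat a)=\emptyset$, and $\hat K(t)\cap\Gamma(t)=\emptyset$ on $[\hat a,b]$. Applying \cref{equivalent-defn}(2) to each $Z\in\mathcal{W}$ gives $\hat K(t)\cap Z(t)=\emptyset$ on all of $[\hat a,b]$, but any approximating sequence $(p_n,t_n)\in Z_{i_n}$ converging to $(p,b)$ must eventually have $p_n\in\mathrm{int}(\hat K(t_n))$, a contradiction. When $p\in\mathrm{int}(K(b))$ a shrinking ball centered at $p$ suffices; when $p\in\partial K(b)$ with $\Phi_K^X(p,b)<0$, a local spatio-temporal perturbation of $K$ near $(p,b)$ is needed, in the spirit of \cite[Lemma~7]{HershkovitsWhite} adapted to the transport term.

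With $F\in\mathcal{W}$ in hand, the strong maximality follows by a direct union argument: given $\Gamma'\subseteq F\cap C_{N,T_0'}$ closed and $Z'$ a weak $X$-flow generated by $\Gamma'$ with starting time $T_0'$, the set $\tilde Z:=F\cup Z'$ is closed, satisfies $\tilde Z(T_0)=\Gamma(T_0)$, and contains $\Gamma$. For its barrier condition at a touching point $p\in K(b)\cap\tilde Z(b)$, I would split by whether $p\in F(b)$ or $p\in Z'(b)$: in the first case apply $F$'s barrier condition directly; in the second apply $Z'$'s barrier condition (on $[\max(a,T_0'),b]$) and note that the alternative $p\in\Gamma'(b)\subseteq F(b)$ reduces back to the first case. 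Hence $\tilde Z\in\mathcal{W}$, so $\tilde Z\subseteq F$ by construction, giving $Z'\subseteq F$. Uniqueness is then immediate, since any two candidates satisfying the stated maximality property must contain each other.

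Finally, for the time-localization statement, given $\hat\Gamma$ with $\hat\Gamma|_{[T_0,T_1]}=\Gamma|_{[T_0,T_1]}$, I would glue $F^X(\Gamma;T_0)|_{[T_0,T_1]}$ with $F^X\bigl((F^X(\Gamma;T_0)(T_1)\times\{T_1\})\cup\hat\Gamma|_{[T_1,\infty)};T_1\bigr)$ along the slice $\{t=T_1\}$, verify the glued object is a weak $X$-flow generated by $\hat\Gamma$ with starting time $T_0$ (using the restriction/concatenation remarks following \cref{weak-x-flow-defn} together with a barrier case analysis as above), and appeal to the strong maximality of $F^X(\hat\Gamma;T_0)$ to conclude $F^X(\Gamma;T_0)|_{[T_0,T_1]}\subseteq F^X(\hat\Gamma;T_0)|_{[T_0,T_1]}$; the reverse inclusion is symmetric. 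The main obstacle I anticipate is the construction of the strong barrier $\hat K$ in the boundary case $\Phi_K^X(p,b)<0$, which requires a careful local deformation of $K$ to push $p$ strictly into the interior without destroying the strong-barrier inequality; this is the $X$-flow analogue of \cite[Lemma~7]{HershkovitsWhite} and is the technical heart of the argument.
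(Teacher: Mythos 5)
Your overall strategy matches the paper's: define $F$ as the closure of the union of all weak $X$-flows generated by $\Gamma$ (the paper uses flows generated by closed subsets $\Upsilon\subseteq\Gamma$, but since any $Y$ generated by $\Upsilon$ is contained in $Y\cup\Gamma$, which is generated by $\Gamma$, the two unions coincide), show $F$ is itself such a flow, and get maximality by a union argument; the time-localization falls out of maximality. The argument for $F(T_0)=\Gamma(T_0)$ via \cref{spherical-barrier} and the proof that $F\cup Z'$ is again a weak $X$-flow generated by $\Gamma$ are both as in the paper.

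Where you diverge from the paper is in verifying the barrier condition for $F$, and there your sketch has a gap. You work with the original \cref{weak-x-flow-defn} and argue by contradiction, claiming you can produce a strong $X$-flow barrier $\hat K$ with $p\in\mathrm{int}(\hat K(b))$. In the case $p\in\mathrm{int}(K(b))$ this is a shrinking ball and fine; but in the case $p\in\partial K(b)$ with $\Phi_K^X(p,b)<0$, the barrier modification (the $X$-flow analogue of \cite[Lemma~7]{HershkovitsWhite}) gives a strong barrier $\hat K\subseteq K$ with $\hat K(b)\cap\partial K(b)=\{p\}$, i.e.\ $p\in\partial\hat K(b)$, not the interior. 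With $p$ on the boundary of $\hat K(b)$, the contradiction with an approximating sequence $(p_n,t_n)\in Z_{i_n}$ does not follow: you only know $\hat K(t)\cap Z(t)=\emptyset$ for each fixed $Z$, and points in the closure of the union can sit on $\partial\hat K(b)$. To close this, you need a thickening step: replace $\hat K$ by a slightly enlarged strong barrier $\hat K_\varepsilon$ (the strong inequality $\Phi<0$ is stable under small perturbations), which still avoids $F(\hat a)$ and $\Gamma$, and now does have $p\in\mathrm{int}(\hat K_\varepsilon(b))$. This is exactly the mechanism the paper employs, but the paper uses it more economically: rather than invoking \cref{weak-x-flow-defn} directly, it verifies characterization~(2) of \cref{equivalent-defn}, thickens the given strong barrier $K$ to $K_\varepsilon$, applies (2) to each $Y$ in the family, and thereby gets a uniform $\varepsilon$-gap that survives taking the closure. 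That route avoids needing the barrier modification lemma at all in this step. Your time-localization argument via gluing also works, but is heavier than the paper's, which simply notes that $F^X(\hat\Gamma;T_0)|_{[T_0,T_1]}$ is a weak $X$-flow generated by $\hat\Gamma|_{[T_0,T_1]}=\Gamma|_{[T_0,T_1]}\subseteq F^X(\Gamma;T_0)$ and applies the maximality just proved.
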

We call $F^X(\Gamma;T_0)$ the \emph{biggest $X$-flow generated by $\Gamma$ with starting time $T_0$}. We will omit the vector field $X$ when it is the zero vector field. The proof of \cref{ExistBiggestFlow} is again a minor modification of the proof of Theorem 19 in \cite{HershkovitsWhite} -- see \cref{ProveExistBiggestFlowSec} for details. 

We prove the following semigroup property for biggest flows. Let
\[
F_t^X(\Gamma;T_0)=F^X(\Gamma; T_0)(t)=\set{x\in N \mid (x,t)\in F^X(\Gamma; T_0)}\subseteq N.
\]
\begin{prop} \label{semigroup}
	Given a closed set $\Gamma\subseteq C_{N,T_0}$ and $T\geq T_0$, if 
	\[
	\Gamma^T=(F^X_T(\Gamma;T_0)\times\set{T})\cup \Gamma|_{[T,\infty)}
	\]
	then, for all $h\geq 0$,
	\[
	F^X_{T+h}(\Gamma;T_0)=F^X_{T+h}(\Gamma^T;T).
	\]
\end{prop}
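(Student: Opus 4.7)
The plan is to establish the two set inclusions $F^X(\Gamma^T;T) \subseteq F^X(\Gamma;T_0)|_{[T,\infty)}$ and $F^X(\Gamma;T_0)|_{[T,\infty)} \subseteq F^X(\Gamma^T;T)$, and then pass to the $(T+h)$-slice for $h \geq 0$. Both inclusions are essentially direct invocations of the universal (maximality) property recorded in Theorem \ref{ExistBiggestFlow}. There is no genuine analytic content, only careful tracking of which closed set is generating what.

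For $F^X(\Gamma^T;T) \subseteq F^X(\Gamma;T_0)|_{[T,\infty)}$, I would first verify $\Gamma^T \subseteq F^X(\Gamma;T_0)\cap C_{N,T}$. Indeed, $F_T^X(\Gamma;T_0)\times\{T\} \subseteq F^X(\Gamma;T_0)$ by definition of the time slice, and $\Gamma|_{[T,\infty)} \subseteq \Gamma \subseteq F^X(\Gamma;T_0)$ by the first bullet of \cref{weak-x-flow-defn}. Since $F^X(\Gamma^T;T)$ is a weak $X$-flow generated by $\Gamma^T$ with starting time $T \geq T_0$, the maximality property in \cref{ExistBiggestFlow} (applied with $\Gamma' = \Gamma^T$ and $T_0' = T$) forces $F^X(\Gamma^T;T) \subseteq F^X(\Gamma;T_0)$, and the inclusion lives in $C_{N,T}$.

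For the reverse inclusion, I would invoke the elementary observation recorded after \cref{weak-x-flow-defn} that the restriction $Z|_{[T,\infty)}$ of a weak $X$-flow $Z$ generated by $\Gamma$ with starting time $T_0$ is itself a weak $X$-flow generated by $(Z(T)\times\{T\}) \cup \Gamma|_{[T,\infty)}$ with starting time $T$. Applied to $Z = F^X(\Gamma;T_0)$ this shows that $F^X(\Gamma;T_0)|_{[T,\infty)}$ is a weak $X$-flow generated by $\Gamma^T$ with starting time $T$. Since $\Gamma^T \subseteq F^X(\Gamma^T;T)$ by the first bullet of \cref{weak-x-flow-defn}, the maximality property of $F^X(\Gamma^T;T)$ gives $F^X(\Gamma;T_0)|_{[T,\infty)} \subseteq F^X(\Gamma^T;T)$. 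Evaluating both inclusions at time $T+h$ then yields the claim. The only mild obstacle is a bookkeeping check that $\Gamma^T(T) = F_T^X(\Gamma;T_0)$, which uses $\Gamma(T) \subseteq F_T^X(\Gamma;T_0)$ so that the starting-slice condition $Z|_{[T,\infty)}(T) = \Gamma^T(T)$ required for $F^X(\Gamma;T_0)|_{[T,\infty)}$ to qualify as a weak flow generated by $\Gamma^T$ is met.
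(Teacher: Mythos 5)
Your proposal is correct and matches the paper's argument step for step: both directions come from the maximality property of Theorem~\ref{ExistBiggestFlow}, combined with the remark following Definition~\ref{weak-x-flow-defn} that the time restriction $Z|_{[T,\infty)}$ of a weak $X$-flow generated by $\Gamma$ is a weak $X$-flow generated by $(Z(T)\times\{T\})\cup\Gamma|_{[T,\infty)}$ with starting time $T$. The only thing you added beyond the paper's terse write-up is the explicit bookkeeping check that $\Gamma^T(T)=F^X_T(\Gamma;T_0)$ (using $\Gamma(T)\subseteq F^X_T(\Gamma;T_0)$), which the paper leaves implicit; that is a reasonable inclusion and does not change the substance.
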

\begin{proof}
	By \cref{ExistBiggestFlow}, as $\Gamma^T\subseteq F^X(\Gamma;T_0)$ and $T\geq T_0$, one has $F^X(\Gamma^T;T)\subseteq F^X(\Gamma;T_0)$ which immediately implies $F^X_{T+h}(\Gamma^T;T)\subseteq F^X_{T+h}(\Gamma;T_0)$ for all $h\geq 0$. Likewise, as $F^X(\Gamma;T_0)|_{[T,\infty)}$ is a weak $X$-flow generated by $\Gamma^T$ with starting time $T$, appealing to \cref{ExistBiggestFlow} again gives $F^X(\Gamma;T_0)|_{[T,\infty)}\subseteq F^X(\Gamma^T;T)$ and so $F^X_{T+h}(\Gamma;T_0)\subseteq F^X_{T+h}(\Gamma^T;T)$ for all $h\geq 0$.
\end{proof}

\begin{prop}\label{CylinderFlow}
Suppose $(N,X)$ is tame and $U\subseteq N$ is a smooth and strictly $X$-mean-convex compact set. Then one has
	\begin{enumerate}
		\item $F^X(\partial C_{U,T_0}; T_0)=C_{U, T_0}$;
		\item If $\Gamma\subseteq \partial C_{U,T_0}$ is closed, $F^X_t(\Gamma; T_0)\cap \partial U=\Gamma(t)$ for all $t>T_0$;
		\item If $\Gamma\subseteq \partial C_{U,T_0}$ is closed, then
		$$
		\mathrm{int}_{\partial C_{U, T_0}}(\Gamma)=\partial C_{U, T_0}\cap \mathrm{int}_{C_{U,T_0}} (F^X(\Gamma; T_0)).
		$$
	\end{enumerate}
\end{prop}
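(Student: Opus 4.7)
I will prove the three items in order using the barrier characterization in \cref{equivalent-defn}, the maximality in \cref{ExistBiggestFlow}, and the pushing-in mechanism of \cref{XMCBoundaryWeakFlowLem}. For (1), the inclusion $C_{U,T_0}\subseteq F^X(\partial C_{U,T_0};T_0)$ will follow from verifying that $C_{U,T_0}$ is itself a weak $X$-flow generated by $\partial C_{U,T_0}$: since $C_{U,T_0}(t)=U$ for all $t\geq T_0$, any smooth compact barrier $K$ disjoint from $U$ on $[a,b)$ that first meets $U$ at time $b$ does so at a point of $\partial U\subseteq\partial C_{U,T_0}(b)$ by continuity, so the alternative in \cref{weak-x-flow-defn} is satisfied, and then the biggest-flow property from \cref{ExistBiggestFlow} yields the inclusion. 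For the reverse, which is the technical heart, I would exploit strict $X$-mean-convexity: for $\eps>0$ small, the thickening $\tilde U_\eps=\{d(\cdot,U)\leq\eps\}$ is itself smooth and strictly $X$-mean-convex, so the classical $X$-flow $t\mapsto\partial\tilde U_\eps(t)$ starting from $\partial\tilde U_\eps$ at $T_0$ exists smoothly on some short interval and moves strictly inward, remaining outside $U$ and hence disjoint from $\Gamma(t)=\partial U$. By \cref{equivalent-defn}(4) this smooth flow is disjoint from $Z:=F^X(\partial C_{U,T_0};T_0)$ on the existence interval; combined with $Z(T_0)=U\subsetneq\mathrm{int}(\tilde U_\eps)$, the compactness of $Z$ on time slabs from \cref{CompactFlowProp}, and a spacetime separation argument exploiting the embedded barrier hypersurface, this confines $Z(t)$ to $\tilde U_\eps(t)$. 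Letting $\eps\to 0^+$ gives $Z(t)\subseteq U$ on a short initial interval, and \cref{semigroup} extends this to all $t\geq T_0$.

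For (2), the forward inclusion is immediate since $\Gamma\subseteq F^X(\Gamma;T_0)$ and $\Gamma(t)\subseteq\partial U$. For the reverse, I first observe that $F^X(\Gamma;T_0)\cup\partial C_{U,T_0}$ is a weak $X$-flow generated by $\partial C_{U,T_0}$ (a direct verification of \cref{weak-x-flow-defn}), so by (1) and maximality $F^X(\Gamma;T_0)\subseteq C_{U,T_0}$, whence $F^X_t(\Gamma;T_0)\cap\partial U\subseteq\partial U$. For $p\in F^X_t(\Gamma;T_0)\cap\partial U$ with $p\notin\Gamma(t)$, the closedness of $\Gamma$ provides a spacetime neighborhood of $(p,t)$ disjoint from $\Gamma$; within this neighborhood I will construct a smooth classical $X$-flow (or a strong $X$-flow barrier built from \cref{spherical-barrier}) sitting in $N\setminus\mathrm{int}(U)$ and approaching $p$ tangentially at time $t$ from outside, which must be disjoint from $F^X(\Gamma;T_0)\subseteq C_{U,T_0}$ at earlier times yet meet $(p,t)$, contradicting the barrier condition of \cref{weak-x-flow-defn} (respectively condition (4) of \cref{equivalent-defn}).

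For (3), the direction $\subseteq$ uses \cref{XMCBoundaryWeakFlowLem}: given $(p,t)\in\mathrm{int}_{\partial C_{U,T_0}}(\Gamma)$ with neighborhood $(\bar B_\eta(p)\cap\partial U)\times(t-\eta,t+\eta)\subseteq\Gamma$, I apply the lemma at a starting time $T_1\in(t-\eta,t)$ to produce a weak $X$-flow $W$ whose generator is a closed subset of $\Gamma\subseteq F^X(\Gamma;T_0)$. By \cref{ExistBiggestFlow} this forces $W\subseteq F^X(\Gamma;T_0)$, and item (3) of \cref{XMCBoundaryWeakFlowLem} provides a $C_{U,T_0}$-neighborhood of $(p,t)$ (extending into $\mathrm{int}(U)$) inside $F^X(\Gamma;T_0)$. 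The direction $\supseteq$ follows from (2): a $C_{U,T_0}$-neighborhood of $(p,t)$ in $F^X(\Gamma;T_0)$ restricts to a $\partial C_{U,T_0}$-neighborhood lying in $F^X(\Gamma;T_0)\cap\partial C_{U,T_0}$, which equals $\Gamma$ in the time-slices $>T_0$ by (2). The main obstacle is the confinement step in (1): \cref{equivalent-defn}(4) only gives disjointness of $\partial\tilde U_\eps(t)$ from $Z$, not that $Z$ lies on the inside; ruling out spontaneously appearing components of $Z$ outside $\partial\tilde U_\eps(t)$ requires combining this disjointness with the spacetime closedness and compactness of $Z$, together with a careful analysis at the first putative exit time.
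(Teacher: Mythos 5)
Your overall plan is close, but there are two genuine issues, one of which you already flag yourself.

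\textbf{Item (1).} You correctly identify the weak spot: letting a single smooth hypersurface $\partial\tilde U_\eps(t)$ flow inward only gives you disjointness of $Z$ from that hypersurface via \cref{equivalent-defn}(4), and disjointness alone does not confine $Z$ to one side. Your sketch of a first-exit-time argument can be made to work (combine \cref{ContCor} with the smoothness in time of the barrier flow to show the first exit point must lie on $\partial\tilde U_\eps(t_*)$), but as written it is a gap. The paper sidesteps the issue entirely by \emph{not} flowing anything: it constructs a static foliation $\{V_\eps\}$ of a tubular neighborhood of $U$ by smooth strictly $X$-mean-convex compact sets, defines $s=\inf\{t: F^X_t(\partial C_{U,T_0};T_0)\setminus U\neq\emptyset\}$, shows via sphere barriers that the escaping part is small just after $s$, and then uses the \emph{static} compact barrier $K(t)=\bar B_R(x_0)\setminus\mathrm{int}(V_{\eps_1})$ at a first-touch time $t_1$. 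Because $K$ is a solid annular region (not just a hypersurface) and $V_{\eps_1}$ is strictly $X$-mean-convex, $\Phi_K^X<0$ at the contact point and the contradiction with \cref{weak-x-flow-defn} is immediate; there is no confinement step to worry about. Your moving-barrier route also has the side issue that as $\partial\tilde U_\eps(t)$ flows inward it will eventually meet $\partial U=\Gamma(t)$, so \cref{equivalent-defn}(4) only applies for a short window, forcing you to iterate via \cref{semigroup} and track the constant $\eps$; the paper's static-foliation argument avoids this as well.

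\textbf{Item (3), forward direction.} You only apply \cref{XMCBoundaryWeakFlowLem}, which is a statement about points $p\in\partial U$ and requires a starting time $T_1\in(t-\eta,t)$ with $T_1\geq T_0$, so it cannot cover the case $t=T_0$. But $\partial C_{U,T_0}(T_0)=U$, so at time $T_0$ the set $\Gamma(T_0)$ may contain points $p\in\mathrm{int}(U)$ as well as points $p\in\partial U$, and both $t=T_0$ cases occur. The paper handles them separately: for $t=T_0$, $p\in\mathrm{int}(U)$ it uses the shrinking-sphere barriers of \cref{spherical-barrier} together with \cref{ExistBiggestFlow}; for $t=T_0$, $p\in\partial U$ it uses \cref{spherical-barrier} combined with the localization property \cref{WeakFlowLocalization} to build a half-ball weak $X$-flow inside $C_{U,T_0}$. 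Only the case $t>T_0$ is covered by \cref{XMCBoundaryWeakFlowLem}, which is the one case your proof addresses. Your argument for item (2) and for the reverse inclusion in item (3) is essentially the paper's.
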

\begin{proof}
	To prove Item (1), we observe that it follows directly from the definition that $C_{U,T_0}$ is a weak $X$-flow generated by $\partial C_{U,T_0}$ with starting time $T_0$. In particular, $C_{U,T_0}\subseteq F^X(\partial C_{U,T_0}; T_0)$.  To show the reverse inclusion, first note that, as $U$ is compact and strictly $X$-mean-convex, there is $\eps_0 = \eps_0(U)>0$ and a family $\{V_\eps\}_{0<\eps<\eps_0}$ of smooth and  strictly $X$-mean-convex compact sets in $N$ satisfying $U \subseteq V_{\eps'}\subseteq  \mathrm{int}(V_\eps) \subseteq \mathcal{T}_{\eps_0}(U)$ for $0<\eps'<\eps<\eps_0$, $\bigcup_{0<\eps<\eps_0} \partial V_\eps=\mathcal{T}_{\eps_0}(U)\setminus U$ and, as $\eps\to 0^+$, $\partial V_\eps \to \partial U$ in $C^2_{loc}(N)$. Here $\mathcal{T}_{\eps_0}(U)$ is the $\eps_0$-tubular neighborhood of $U\subseteq N$. 
	
	Let 
	\[
	s=\inf\set{t\geq T_0 \mid F_t^X(\partial C_{U,T_0};T_0)\setminus U\neq\emptyset}.
	\]
	Suppose $s<\infty$. Define $\tau\colon N \to \mathbb{R}$ by $\tau(x)=0$ if $x\in U$,  $\tau(x)=\eps$ if $x\in\partial V_\eps$ for some $\eps\in (0,\eps_0)$, and $\tau(x)=\eps_0$ otherwise. By \cref{CompactFlowProp}, for any $T\geq T_0$, $F^X(\partial C_{U,T_0};T_0)|_{[T_0,T]}$ is compact. Moreover, 
	\[
	\lim_{t\to s^+} \max\set{\tau(x) \mid x\in F_t^X(\partial C_{U,T_0};T_0)\setminus U}=0.
	\]
	Otherwise, there is a sequence with subsequential limit $p\in F^X_s(\partial C_{U,T_0};T_0)\setminus U$.  By \cref{spherical-barrier}, the family of shrinking spheres centered at $p$ with sufficiently small radii is a strong barrier which is disjoint from $(F^X_{s'}(\partial C_{U,T_0};T_0)\times\{s'\})\cup \partial C_{U,T_0}$ for some $s'\in [T_0,s)$, but intersects $F^X(\partial C_{U,T_0};T_0)$ at $(p,s)$, giving a contradiction. Thus, there is $t_1>s$ and $\eps_1\in (0,\eps_0)$ such that $F^X_{t_1}(\partial C_{U,T_0};T_0)\subseteq V_{\eps_1}$, $F^X_{t_1}(\partial C_{U,T_0};T_0)\cap\partial V_{\eps_1}\neq\emptyset$, and $F^X_t(\partial C_{U,T_0};T_0)\subseteq \mathrm{int}(V_{\eps_1})$ for $s\leq t<t_1$. 
	
	Let $B_R(x_0)\subseteq N$ be a large ball containing $V_{\eps_1}$. For $t\in [s,t_1]$, let $K(t)=\bar{B}_R(x_0)\setminus \mathrm{int}(V_{\eps_1})$. As $V_{\eps_1}$ is strictly $X$-mean-convex, $t\in [s,t_1]\mapsto K(t)$ is a compact smooth barrier such that $K\cap \partial C_{U,T_0}=\emptyset$ and $K(t)\cap F_t^X(\partial C_{U,T_0};T_0)=\emptyset$ for $t\in [s,t_1)$, but there is a $p_1\in \partial K(t_1)\cap F^X_{t_1}(\partial C_{U,T_0};T_0)$ so $\Phi_K^X(p_1,t_1)<0$. This contradicts $F^X(\partial C_{U,T_0};T_0)$ being a weak $X$-flow generated by $\partial C_{U,T_0}$ with starting time $T_0$. That is, $s=\infty$. Thus, $F^X_t(\partial C_{U,T_0};T_0)\subseteq U$ for all $t\geq T_0$,  and so $F^X(\partial C_{U,T_0};T_0)\subseteq C_{U,T_0}$ proving Item (1).
	
	To prove Item (2), first note that, by the hypothesis, $\Gamma(t)\subseteq F^X_t(\Gamma;T_0)\cap \partial U$ for $t>T_0$. For the reverse inclusion, assume for a contradiction that there is $t'_1>T_0$ and $p'_1\in (F_{t'_1}^X(\Gamma;T_0)\cap\partial U)\setminus\Gamma(t'_1)$. By the adaption of Lemma 7 in \cite{HershkovitsWhite} to $X$-flow (see Section 11 of \cite{HershkovitsWhite}), as $U$ is strictly $X$-mean-convex, there is a compact smooth barrier $t\in [a',t'_1]\mapsto K'(t)$ with $a'\geq T_0$ such that $K'(t)\subseteq N\setminus U$ for $t\in [a',t'_1)$, $K'(t_1')\cap \partial U=\{p'_1\}$ and $\Phi^X_{K'}(p'_1,t'_1)<0$. By \cref{ExistBiggestFlow} and Item (1), our hypothesis ensures $F^X(\Gamma;T_0)\subseteq F^X(\partial C_{U,T_0};T_0)=C_{U,T_0}$. Thus, $K'(t)\cap F_t^X(\Gamma;T_0)=\emptyset$ for $t\in [a',t'_1)$ and $p'_1\in \partial K'(t'_1)\cap F^X_{t'_1}(\Gamma;T_0)$ with $\Phi^X_{K'}(p'_1,t'_1)<0$. This contradicts $F^X(\Gamma;T_0)$ being a weak $X$-flow generated by $\Gamma$. Hence, $F^X_t(\Gamma;T_0)\cap\partial U\subseteq\Gamma(t)$ for all $t>T_0$ showing Item (2).   
	
	To establish Item (3), we need only show
	$$
	\mathrm{int}_{\partial C_{U,T_0}}(\Gamma)\subseteq \mathrm{int}_{C_{U,T_0}}(F^X(\Gamma; T_0)).
	$$
	This suffices as when $(p,t)\in \partial C_{U,T_0}\cap \mathrm{int}_{C_{U,T_0}}(F^X(\Gamma; T_0))$ has $t=T_0$, by construction $p\in \Gamma(T_0)$ and, when  $t>T_0$, Item (2) implies $p\in \Gamma(t)$.  In either case, basic topology implies $(p,t)\in \mathrm{int}_{\partial C_{U,T_0}}(\Gamma)$.
	
	Now suppose $(p,t)\in  \mathrm{int}_{\partial C_{U,T_0}}(\Gamma)$.  We treat three cases.  First suppose $t=T_0$ and $p\in\mathrm{int}(U)$.
	In this case, there is a small radius $\eps>0$ so $\bar{B}_\eps(p)\subseteq \Gamma(T_0)$.  As strong $X$-flow barriers are also weak $X$-flows it follows from  \cref{spherical-barrier} and \cref{ExistBiggestFlow} that $(p,t)=(p,T_0)\in \mathrm{int}_{C_{U,T_0}}(F^X(\Gamma;T_0))$. 
	Likewise, if $t=T_0$ and  $p\in \partial U$, by Propositions \ref{spherical-barrier} and \ref{WeakFlowLocalization} there exist sufficiently small $\eps,\delta>0$ and sufficiently large $c$ such that
	\[
	t\in [T_0,T_0+\eps]\mapsto \Bar{B}_{\sqrt{\delta^2-c(t-T_0)}}(x)\cap U=\mathcal{B}(t)
	\]
	is a weak $X$-flow with starting time $T_0$ generated by 
	\[
	\partial C_{U,T_0}\cap \bigcup_{t\in [T_0,T_0+\eps]} \mathcal{B}(t)\times\set{t} \subseteq \Gamma.
	\]
	Thus, it follows from \cref{ExistBiggestFlow} that $(p,t)=(p,T_0)\in \mathrm{int}_{C_{U,T_0}}(F^X(\Gamma;T_0))$.
	
	Finally, suppose $t>T_0$.  Using $U$, let $\eps_0>0$ be given by Lemma \ref{XMCBoundaryWeakFlowLem}. As $(p,t)$ is an interior point we may choose $\eps\in (0,\frac{1}{2}\eps_0)$ so that $t-\eps>T_0$ and
	$$ \Gamma'=\partial C_{ U,T_0}\cap \left(\bar{B}_\eps(p)\times[t-\eps, t+\eps]\right) \subseteq \Gamma.
	$$
	Set $T_1=t-\eps$.  By \cref{XMCBoundaryWeakFlowLem} and time translation, there is a weak $X$-flow, $W'$ generated by $\Gamma'$ with starting time $T_0$. Moreover, $(p,t)$ is an interior point of $W'\cap C_{U,T_0}$.  Hence, by \cref{ExistBiggestFlow}, $W'\subseteq F^X(\Gamma; T_0)$ and so $(p,t)$ is an interior point of  $F^X(\Gamma; T_0)$.
\end{proof}

We show that the boundary of the biggest $X$-flow generated by appropriate spacetime sets is itself a weak $X$-flow of a related spacetime set. 
\begin{prop} \label{boundary-flow}
	Suppose $(N,X)$ is tame and $U\subseteq N$ is a smooth and  strictly $X$-mean-convex compact  set. If $\Gamma\subseteq \partial C_{U,T_0}$ is closed 
	then $\partial_{C_{U,T_0}} F^X(\Gamma;T_0)$ is a weak $X$-flow generated by $\partial_{\partial C_{U,T_0}} \Gamma$ with starting time $T_0$.
\end{prop}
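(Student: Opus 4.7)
The plan is to verify the three conditions of Definition \ref{weak-x-flow-defn} for $Z := \partial_{C_{U,T_0}} F$ (writing $F = F^X(\Gamma; T_0)$) with generating set $\tilde{\Gamma} := \partial_{\partial C_{U,T_0}} \Gamma$. The initial-data condition $Z(T_0) = \tilde{\Gamma}(T_0)$ and the containment $\tilde{\Gamma} \subseteq Z$ both follow from the identity $Z \cap \partial C_{U,T_0} = \tilde{\Gamma}$. To establish this I would first combine \cref{CylinderFlow}(2) with the defining property $F(T_0) = \Gamma(T_0)$ to obtain $F \cap \partial C_{U,T_0} = \Gamma$; then the complementary reformulation of \cref{CylinderFlow}(3) gives
$$Z \cap \partial C_{U,T_0} = \Gamma \setminus \mathrm{int}_{\partial C_{U,T_0}}(\Gamma) = \tilde{\Gamma}.$$
Since $U \times \{T_0\} \subseteq \partial C_{U,T_0}$, the initial-data condition is immediate.

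For the barrier condition, fix a smooth compact barrier $K \colon [a,b] \to N$ with $a \geq T_0$ and $K(t) \cap Z(t) = \emptyset$ for $t \in [a,b)$, and let $p \in K(b) \cap Z(b)$. Supposing $p \notin \tilde{\Gamma}(b)$, the target is $p \in \partial K(b)$ with $\Phi_K^X(p,b) \geq 0$. The key observation is a clopen decomposition: since $K|_{[a,b)}$ is disjoint from $\partial_{C_{U,T_0}} F$, the intersection $K|_{[a,b)} \cap F$ lies in $K|_{[a,b)} \cap \mathrm{int}_{C_{U,T_0}} F$, hence is both open and closed in $K|_{[a,b)}$. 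Thus $K|_{[a,b)}$ splits into an \emph{outside} piece $K_{\mathrm{out}} = K|_{[a,b)} \setminus F$ and an \emph{inside} piece $K_{\mathrm{in}} = K|_{[a,b)} \cap F$, and one analyzes the case based on which component accumulates at $(p, b)$.

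In the outside case, I would localize $K$ near $(p, b)$ by intersecting with a small spacetime neighborhood and smoothing, producing a smooth compact barrier $\hat{K}$ with $\hat{K} \cap F = \emptyset$ on $[\hat{a}, b)$ that agrees with $K$ near $(p, b)$. The weak $X$-flow property of $F$ applied to $\hat{K}$ then forces $p \in \partial K(b)$ with $\Phi_K^X(p,b) = \Phi_{\hat{K}}^X(p,b) \geq 0$, or $p \in \Gamma(b)$; the latter gives $p \in Z \cap \Gamma = \tilde{\Gamma}$, contradicting the standing assumption. In the inside case one first checks $p \in \partial K(b)$: if instead $p \in \mathrm{int}(K(b))$, then $K \subseteq F$ near $(p,b)$ for $t < b$ combined with closedness of $F$ would force a spacetime neighborhood of $(p,b)$ into $F$, contradicting $(p, b) \in \partial_{C_{U,T_0}} F$.

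The main obstacle is ruling out $\Phi_K^X(p,b) < 0$ in the inside case. The plan is a contradiction with the maximality clause of \cref{ExistBiggestFlow}: if $\Phi_K^X(p,b) < 0$, then a small outward perturbation $\tilde{K}^{\eps}$ of $K$ localized near $(p,b)$ (constructed from the defining function of $K$) still satisfies $\Phi_{\tilde{K}^\eps}^X < 0$ and, because $(p, b) \in \partial_{C_{U,T_0}} F$ supplies nearby points of $C_{U,T_0} \setminus F$, extends properly into the complement of $F$. One then verifies that $F \cup \tilde{K}^\eps$ (suitably cut off) is a weak $X$-flow generated by $\Gamma$ that strictly contains $F$ near $(p,b)$, contradicting maximality. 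The boundary case $p \in \partial U$ requires the auxiliary construction to remain inside $C_{U,T_0}$, which is achieved via the strict $X$-mean-convexity of $U$ through the $X$-flow adaptation of Lemma 7 of Hershkovits--White already invoked in the proof of \cref{WeakFlowLocalization}.
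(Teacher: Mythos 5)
Your strategy mirrors the paper's: establish $Z \cap \partial C_{U,T_0} = \tilde\Gamma$ from \cref{CylinderFlow}(2)--(3), then reduce the barrier condition to a dichotomy (barrier locally inside or locally outside $F$) resolved by the weak-flow property of $F$ in the outside case and by the maximality clause of \cref{ExistBiggestFlow} in the inside case. This is the same architecture as the paper's Cases (i) and (ii).

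Two points should be tightened. First, the claim that $K|_{[a,b)} \cap F$ is open in $K|_{[a,b)}$ because it lands in $\mathrm{int}_{C_{U,T_0}} F$ is not quite right as a global statement: $\mathrm{int}_{C_{U,T_0}} F$ is open only in the subspace topology of $C_{U,T_0}$, and $K$ may extend outside $U$. If $K$ passes through a point of $\mathrm{int}_{\partial C_{U,T_0}}\Gamma$ (which the hypothesis $K\cap Z=\emptyset$ on $[a,b)$ does not forbid, since $\mathrm{int}_{\partial C_{U,T_0}}\Gamma = \Gamma\setminus\tilde\Gamma$ is disjoint from $Z$), then $K|_{[a,b)}\cap F$ is not open there, and the global clopen decomposition fails. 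The dichotomy should instead be applied to a connected localized barrier $\hat K$ constructed near $(p,b)$ and contained in $\mathrm{int}(U)$ --- this is exactly what the $X$-flow adaptation of Hershkovits--White's Theorem 8 supplies in the paper, and what makes the ``all-in or all-out'' split clean. Second, the ``boundary case $p\in\partial U$'' you anticipate never occurs: since $p\in Z(b)\setminus\tilde\Gamma(b)$ and $Z\cap\partial C_{U,T_0}=\tilde\Gamma$, the point $p$ is forced into $\mathrm{int}(U)$, so no special treatment near $\partial U$ is needed. With the dichotomy localized near $(p,b)$ and this observation in place, your inside-case perturbation argument is a legitimate variant of the paper's prolong-and-fatten step: both exploit that strong barriers are weak $X$-flows together with \cref{ExistBiggestFlow}.
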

\begin{proof}
	Set $Z=F^X(\Gamma;T_0)$, $Y=\partial_{C_{U,T_0}} F^X(\Gamma;T_0)$ and $\Upsilon=\partial_{\partial C_{U,T_0}} \Gamma$.  We first establish 
	\begin{equation}\label{UpsilonYEqn}
	\Upsilon=Y\cap \partial C_{U, T_0}.
	\end{equation}
	Indeed, it follows from the hypotheses that $\Gamma\subseteq Z\cap \partial C_{U, T_0}$. Conversely, suppose $(p,t)\in Z\cap \partial C_{U,T_0}$. When $t=t_0$,  $p\in Z(T_0)=\Gamma(T_0)$, while when $t>T_0$, $p\in \Gamma(t)$ by Item (2) of Proposition \ref{CylinderFlow}. Hence, $\Gamma=Z\cap \partial C_{U, T_0}$.  Finally, as both $Z$ and $\Gamma$ are closed, it follows from Item (3) of  Proposition \ref{CylinderFlow} that
	$$
	\Upsilon =\Gamma\setminus \mathrm{int}_{\partial C_{U, T_0}} (\Gamma)=(Z\setminus\mathrm{int}_{C_{U,T_0}} (Z))\cap \partial C_{U,T_0}=Y\cap \partial C_{U, T_0}.
	$$
	
	By \cref{equivalent-defn}, it remains only to show that if $K\colon t\in [a,b]\mapsto K(t)$ is a strong barrier with $a\geq T_0$ such that $K(a)\cap Y(a)=\emptyset$ and $K(t)\cap\Upsilon(t)=\emptyset$ for all $t\in [a,b]$, then $K(t)\cap Y(t)=\emptyset$ for all $t\in [a,b]$. We argue by contradiction. Without loss of generality we may assume $b$ is the first time that $Y$ intersects $K$. We claim $\partial K(b)\cap Y(b)\neq\emptyset$. Indeed, the hypotheses and \cref{CylinderFlow} ensure $Y(b)\setminus\Upsilon(b)\subseteq Z(b)\setminus\Gamma(b) \subseteq \mathrm{int}(U)$. Thus, by \cref{ContCor} applied to $u(\cdot, t)=\dist(\cdot, Z(t))$ and \cref{spherical-barrier}, any point in $Y(b)\setminus\Upsilon(b)$ is a subsequential limit of points in $Y(t_i)$ with $t_i\uparrow b$. As $K(t)\cap Y(b)=\emptyset$ for $t\in [a,b)$, one has $\mathrm{int}(K(b))\cap Y(b)=\emptyset$ and so $\partial K(b)\cap Y(b)\neq\emptyset$.
	
	Let $p\in\partial K(b)\cap Y(b)$. As observed before $p\in\mathrm{int}(U)$. By the adaption of Theorem 8 in \cite{HershkovitsWhite} to $X$-flow (see Section 11 of \cite{HershkovitsWhite}) there exists a strong barrier $\hat{K}\colon t\in [\hat{a},b]\mapsto \hat{K}(t)\subseteq\mathrm{int}(U)$ with $\hat{a}\in (a,b)$ such that $\hat{K}(t)\subseteq\mathrm{int}(K(t))$ for $t\in [\hat{a},b)$ and such that $\hat{K}(b)\cap \partial K(b)=\{p\}$. Furthermore, we may assume $\hat{K}$ is connected as otherwise we can replace $\hat{K}$ by its connected component containing $(p,b)$. As $\hat{K}(t)\cap Y(t)=\emptyset$ for $t\in [\hat{a},b)$, either (i) $\hat{K}|_{[\hat{a},b)}\cap Z=\emptyset$ or (ii) $\hat{K}|_{[\hat{a},b)}\subseteq\mathrm{int}(Z)$. In Case (i), by \cref{CylinderFlow} one has $\hat{K}(t)\cap \Gamma(t)=\emptyset$ for all $t\in [\hat{a},b]$ and so $\hat{K}(b)\cap Y(b)\subseteq \hat{K}(b)\cap Z(b)=\emptyset$, a contradiction. In Case (ii), let $\tilde{K}$ be a prolongation in time of $\hat{K}$ and fattening in space of $\hat{K}$ so that $\tilde{K}$ is also a strong barrier with $\tilde{K} (\hat{a})\subseteq Z(\hat{a})$. By \cref{ExistBiggestFlow}, as strong barriers are automatically weak flows, $\tilde{K}\subseteq Z$ and so $\hat{K}\subseteq\mathrm{int}(Z)$. In particular, $\hat{K}(b)\cap Y(b)=\emptyset$. This is a contradiction and completes the proof.
\end{proof}

\section{Monotone $X$-flows and $X$-mean-convexity} \label{MonotoneXMeanConvexSec}

\subsection{Monotone $X$-flows generated by spacetime sets} \label{MonotoneFlowSec}
The boundary of a smooth and strictly $X$-mean-convex compact set moves inwards under the smooth $X$-flow. In this subsection, we define a notion in terms of weak $X$-flows that captures the phenomenon. First of all, we formalize what it means for a set to ``move inwards". 
\begin{defn}
	\label{monotone-def}
	Fix a set $U\subseteq N$ and $T_0\in\mathbb{R}$. A set $Z\subseteq C_{U,T_0}$ is \emph{monotone (in $C_{U,T_0}$)} if, for $T_0\leq t_1\leq t_2$,
	\[
	Z(t_2)\subseteq Z(t_1).
	\]
	It is \emph{strictly monotone (in $C_{U,T_0}$)} if, for $T_0\leq t_1<t_2$,
	\[
	Z(t_2)\subseteq\mathrm{int}_U(Z(t_1)).
	\]
	Here we use the convention that the interior of the empty set is the empty set and omit the cylinder $C_{U, T_0}$ when it is clear from context.
\end{defn}
We record some basic topological properties of strictly monotone sets that will be used in the next section. 
\begin{lem}
	\label{strictly-monotone-lem}
	Fix a closed set $U\subseteq N$ and $T_0\in\mathbb{R}$. Suppose $Z\subseteq C_{U,T_0}$ is closed and strictly monotone in $ C_{U,T_0}$. The following hold:
	\begin{enumerate}
		\item $Z|_{(T_0,\infty)}=\mathrm{cl}(\mathrm{int}_{C_{U,T_0}}(Z))|_{(T_0,\infty)}$;
		\item Suppose $V\subseteq U$ and $\Gamma\subseteq Z$ are closed. If $\Gamma\subseteq C_{\partial V,T_0}$ is strictly monotone in $C_{\partial V,T_0}$, then $\mathrm{int}_{C_{\partial V,T_0}}(\Gamma)|_{(T_0,\infty)}\subseteq\mathrm{int}_{C_{U,T_0}}(Z)$.
	\end{enumerate}
\end{lem}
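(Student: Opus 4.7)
The plan is to reduce both items to a single observation about a closed monotone set $Z\subseteq C_{U,T_0}$: if $q\in \mathrm{int}_U(Z(s_1))$ for some $s_1 > T_0$, then $(q,s)\in \mathrm{int}_{C_{U,T_0}}(Z)$ for every $s\in(T_0,s_1)$. Indeed, by monotonicity $Z(s_1)\subseteq Z(s)$ for all $s\in[T_0,s_1]$, so choosing $r>0$ with $B_r(q)\cap U\subseteq Z(s_1)$ produces an open subset $(B_r(q)\cap U)\times(T_0,s_1)$ of $C_{U,T_0}$ lying entirely in $Z$ and containing $(q,s)$. Note that monotonicity (not strict monotonicity) suffices here; strict monotonicity will only be used to manufacture the spatial interior witness at the initial time $s_1$.

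For Item (1), the inclusion $\supseteq$ is immediate from $Z$ being closed. For the reverse, given $(p,t)\in Z$ with $t>T_0$, I would pick $t_n\in(T_0,t)$ with $t_n\uparrow t$. Strict monotonicity applied to the pair $t_n<t$ yields $p\in Z(t)\subseteq \mathrm{int}_U(Z(t_n))$, so the observation (with $s_1=t_n$) says $(p,s)\in\mathrm{int}_{C_{U,T_0}}(Z)$ for every $s\in(T_0,t_n)$. Choosing $s_n\in(T_0,t_n)$ with $s_n\to t$ (e.g.\ $s_n=t-2^{-n}$ and $t_n=t-2^{-n-1}$ for $n$ large enough that $s_n>T_0$) produces interior points of $Z$ converging to $(p,t)$.

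For Item (2), take $(p,t)\in \mathrm{int}_{C_{\partial V,T_0}}(\Gamma)$ with $t>T_0$. The interior assumption guarantees some $\delta\in(0,t-T_0)$ such that $(p,t+\delta)\in\Gamma$, and since $\Gamma\subseteq Z$, $p\in Z(t+\delta)$. Applying strict monotonicity of $Z$ to any pair $s_1<t+\delta$ with $s_1>T_0$ gives $p\in \mathrm{int}_U(Z(s_1))$. Choosing $s_1\in(t,t+\delta)$ and invoking the observation yields an open spacetime neighborhood $(B_r(p)\cap U)\times(T_0,s_1)$ of $(p,t)$ inside $Z$, so $(p,t)\in \mathrm{int}_{C_{U,T_0}}(Z)$.

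The only subtlety to track is the direction in which monotonicity propagates spatial data: a spatial ball contained in $Z(s_1)$ persists \emph{backwards} in time down to $T_0$, but not forward past $s_1$. This forces the two applications to first secure spatial interiority at a time strictly later than the point of interest (from strict monotonicity at time $t$ in Item (1), and from the interior in $\Gamma$ producing a point of $Z$ at time $t+\delta/2$ in Item (2)), and only then slide back in time via monotonicity. There is no analytic obstacle beyond careful bookkeeping of the three relative topologies $\mathrm{int}_{C_{U,T_0}}$, $\mathrm{int}_{C_{\partial V,T_0}}$, and $\mathrm{int}_U$.
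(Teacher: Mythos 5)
Your argument is correct and follows essentially the same strategy as the paper's: in both items, strict monotonicity provides a spatial interior neighborhood of $p$ at some time slightly later than the point of interest, and then ordinary monotonicity propagates that spatial ball backwards in time to manufacture an open spacetime box in $Z$. The paper phrases Item (1) by directly building the open set $\hat{W}=\bigcup_{T_0<t<T_1} W_t\times[T_0,t)$ and showing $(p,T_1)$ lies in its closure, while you extract a sequence of interior points $(p,s_n)\to(p,t)$; these are the same argument with cosmetically different bookkeeping, and your isolation of the ``backward persistence'' observation as a standalone step is a clean way to organize the two cases.
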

\begin{proof}
	As $Z$ is closed and $\mathrm{int}_{C_{U,T_0}}(Z)\subseteq Z$ we have 
	$ \mathrm{cl}(\mathrm{int}_{C_{U,T_0}}(Z))\subseteq Z.$
	Hence,
	\[
	\mathrm{cl}(\mathrm{int}_{C_{U,T_0}}(Z))|_{(T_0,\infty)} \subseteq Z|_{(T_0, \infty)}.
	\]
	For the reverse inclusion, suppose $P=(p,T_1)\in \partial_{C_{U,T_0}} Z$ with $T_1>T_0$. By the strict monotonicity of $Z$, for each $T_0<t<T_1$, there is a subset $W_t$ with $p\in W_t\subseteq Z(t)$ so that $W_t$ is open in $U$. The monotonicity ensures that
	\[
	\hat{W}_t=[T_0, t)\times W_t \subseteq \mathrm{int}_{C_{U,T_0}}(Z)
	\]
	is an open subset of $C_{U,T_0}$. Hence, 
	\[
	\hat{W}=\bigcup_{T_0<t<T_1} \hat{W}_t\subseteq \mathrm{int}_{C_{U,T_0}}(Z)
	\]
	is an open subset of $C_{U,T_0}$ and $P\in \mathrm{cl}(\hat{W})$.  It follows that $P\in \mathrm{cl}(\mathrm{int}_{C_{U,T_0}}(Z))$ and so
	\[
	Z|_{(T_0, \infty)}\subseteq \mathrm{cl}(\mathrm{int}_{C_{U,T_0}}(Z))|_{(T_0,\infty)},
	\]
	which completes the proof of Item (1). 
	
	Similarly, suppose $(p,T_1)\in\mathrm{int}_{C_{\partial V,T_0}}(\Gamma)$ with $T_1>T_0$. There is an $\eps>0$ so that $p\in \Gamma(T_1+2\eps)\subseteq Z(T_1+2\eps)$. Strict monotonicity of $Z$ implies $p\in \mathrm{int}_U(Z(T_1+\eps))$ and so there is an open subset $W$ of $U$ such that $p\in W\subseteq Z(T_1+\eps)$. Monotonicity ensures
	\[
	(p,T_1)\in [T_0,T_1+\eps)\times W\subseteq \mathrm{int}_{C_{U,T_0}}(Z).
	\]
	As $(p,T_1)$ is arbitrary, this proves Item (2). 
\end{proof}

\begin{defn}
	\label{weak-mean-convexity-def}
	Fix a closed set $U\subseteq N$, a $T_0\in \Real$ and a closed set $\Gamma\subseteq\partial C_{U,T_0}$.  A weak $X$-flow, $Z\subseteq C_{U,T_0}$, generated by $\Gamma$ with starting time $T_0$ is \emph{(strictly) monotone} if $\Gamma\cap C_{\partial U,T_0}$ is (strictly) monotone in $C_{\partial U,T_0}$ and $Z$ is a (strictly) monotone in $C_{U,T_0}$.
\end{defn}

We record the following topological property for strictly monotone biggest flows.
\begin{lem}
	\label{BoundaryPropMonotoneLem}
	Fix a smooth closed set $U\subseteq N$ and $T_0\in\mathbb{R}$. Suppose $\Gamma\subseteq\partial C_{U,T_0}$ is closed and $Z=F^X(\Gamma; T_0)\subseteq C_{U,T_0}$ is strictly monotone.  If $\Gamma(T_0)=\mathrm{cl}(\mathrm{int}_{U}(\Gamma(T_0))$, then 
	\[
	Z=\mathrm{cl}(\mathrm{int}_{C_{U,T_0}}(Z)).  
	\]
\end{lem}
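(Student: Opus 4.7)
\emph{Proposal.} The inclusion $\mathrm{cl}(\mathrm{int}_{C_{U,T_0}}(Z))\subseteq Z$ is immediate since $Z$ is closed, so only the reverse inclusion requires work. I would first apply Item (1) of \cref{strictly-monotone-lem} to obtain $Z|_{(T_0,\infty)}\subseteq \mathrm{cl}(\mathrm{int}_{C_{U,T_0}}(Z))$; this reduces the problem to showing that every $p\in Z(T_0)=\Gamma(T_0)$ satisfies $(p,T_0)\in \mathrm{cl}(\mathrm{int}_{C_{U,T_0}}(Z))$.

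To handle this time slice, the plan is to produce a sequence $r_k\to p$ in $N$ with each $(r_k,T_0)\in \mathrm{int}_{C_{U,T_0}}(Z)$. The hypothesis $\Gamma(T_0)=\mathrm{cl}(\mathrm{int}_U(\Gamma(T_0)))$ gives approximation of $p$ by points of $\mathrm{int}_U(\Gamma(T_0))$, while smoothness of $\partial U$ ensures that $\mathrm{int}(U)$ is dense in $U$. A quick point-set argument combining these two facts shows that $\mathrm{int}(U)\cap \mathrm{int}_U(\Gamma(T_0))$ is itself dense in $\Gamma(T_0)$, so one may pick $r_k\to p$ in this set. For each such $r_k$ there is $\delta_k>0$ with $\bar{B}_{\delta_k}(r_k)\subseteq \mathrm{int}(U)\cap \Gamma(T_0)$.

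To conclude $(r_k,T_0)\in \mathrm{int}_{C_{U,T_0}}(Z)$ I would run the same argument used in the ``$t=T_0$, $p\in \mathrm{int}(U)$'' case of the proof of Item (3) of \cref{CylinderFlow}: apply \cref{spherical-barrier} to obtain constants $c,\tau_k$ so that the shrinking balls $K_k(t)=\bar{B}_{\sqrt{\delta_k^2-c(t-T_0)}}(r_k)$, $t\in[T_0,T_0+\tau_k]$, form a strong $X$-flow barrier sitting entirely in $\mathrm{int}(U)$. Since strong barriers are themselves weak $X$-flows generated by their initial slice (the fact invoked in the proof of \cref{CylinderFlow}), and since $\bar{B}_{\delta_k}(r_k)\times\{T_0\}\subseteq \Gamma\subseteq Z$, \cref{ExistBiggestFlow} forces $K_k\subseteq Z$. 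The point $(r_k,T_0)$ clearly lies in the $C_{U,T_0}$-interior of the spacetime set $K_k$, and hence of $Z$; letting $k\to\infty$ yields $(p,T_0)\in \mathrm{cl}(\mathrm{int}_{C_{U,T_0}}(Z))$, completing the proof.

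The step I expect to require the most care is the simultaneous approximation of $p$ by points lying in both $\mathrm{int}(U)$ (so that a small closed Euclidean ball about them fits inside $\mathrm{int}(U)$) and $\mathrm{int}_U(\Gamma(T_0))$ (so that this ball sits inside $\Gamma(T_0)$). The hypothesis on $\Gamma(T_0)$ together with smoothness of $\partial U$ is exactly what makes this double approximation possible; once that is in hand, the rest follows from the shrinking-ball construction already used in \cref{CylinderFlow} and the universal property in \cref{ExistBiggestFlow}.
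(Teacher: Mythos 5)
Your proposal is correct and follows essentially the same route as the paper: use Item (1) of Lemma \ref{strictly-monotone-lem} for $t>T_0$, approximate $p\in\Gamma(T_0)$ by points that lie in both $\mathrm{int}(U)$ and $\mathrm{int}_U(\Gamma(T_0))$, and run the shrinking-sphere barrier of Proposition \ref{spherical-barrier} together with Theorem \ref{ExistBiggestFlow} exactly as in the $(t=T_0,\,p\in\mathrm{int}(U))$ case of Proposition \ref{CylinderFlow}. Your explicit density argument showing that $\mathrm{int}(U)\cap\mathrm{int}_U(\Gamma(T_0))$ is dense in $\Gamma(T_0)$ is a welcome clarification of a step the paper leaves implicit (it writes only $p_i\in\mathrm{int}(U)\cap\Gamma(T_0)$, though the ball $\bar B_{\delta}(p_i)\subseteq\Gamma(T_0)$ needed for the barrier requires $p_i\in\mathrm{int}_U(\Gamma(T_0))$ as well).
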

\begin{proof}
	For $p\in Z(T_0)= \Gamma(T_0)= \mathrm{cl}(\mathrm{int}_U(\Gamma(T_0)))$, there is a sequence of points $p_i\in \mathrm{int}(U)\cap \Gamma(T_0)$ converging to $p$.  Using the sphere barriers of \cref{spherical-barrier} as in the proof of \cref{CylinderFlow}, one sees that $(p_i,T_0)\in \mathrm{int}_{C_{U,T_0}}(Z)$ and so $(p,T_0)\in \mathrm{cl}(\mathrm{int}_{C_{U,T_0}}(Z)$.
	Combined with Item (1) of \cref{strictly-monotone-lem}, it follows that $Z\subseteq\mathrm{cl}(\mathrm{int}_{C_{U,T_0}}(Z))$. The reverse inclusion follows as $Z$ is closed and proves the claim.    
\end{proof}

\subsection{$X$-mean-convex spacetime sets and their flows } \label{XMCSec}
We introduce a notion of $X$-mean-convexity for, not necessarily regular, subsets of spacetime and prove some basic properties of the biggest flows generated by such sets. This should be thought of as a spacetime and low regularity analogue of strict $X$-mean-convexity of smooth sets. For related results on $X$-mean-convexity of spatial  and spacetime sets, see \cite{WhiteMCNature}, \cite{HershkovitsWhite} and \cite{WhiteSubsequent}.

\begin{defn}
	\label{MCSpaceTimeDef}
	Fix a closed set $U\subseteq N$ and suppose $\Gamma\subseteq \partial C_{U,T_0}$ is closed. The set $\Gamma$ is \emph{spacetime $X$-mean-convex} if
	\begin{enumerate}
		\item $\Gamma\cap C_{\partial U, T_0}$ is a strictly monotone set in $C_{\partial U,T_0}$; 
		\item There is an $\eps>0$ so that $F_t^X(\Gamma;T_0)\subseteq \mathrm{int}_{U} (\Gamma(T_0))$ for all $T_0<t<T_0+\eps$;
		\item $\mathrm{int}_{\partial C_{U,T_0}}(\Gamma)|_{(T_0,\infty)}\subseteq\mathrm{int}_{C_{U,T_0}}(F^X(\Gamma;T_0))$.
	\end{enumerate}
	When  $U$ is a smooth and strictly $X$-mean-convex compact set, Proposition \ref{CylinderFlow} implies Item (3) holds automatically. For such a $U$, $\Gamma=U\times \set{T_0}$ is spacetime $X$-mean-convex. 
\end{defn}
The main result in this subsection is the following:
\begin{prop}
	\label{MCGenFlowProp}
	Suppose $(N,X)$ is tame.
	Let $U \subseteq N$ be a smooth and strictly $X$-mean-convex compact set and suppose that $\Gamma\subseteq \partial C_{U,T_0}$ is spacetime $X$-mean-convex. The following hold:
	\begin{enumerate}
		\item $F_{t+h}^X(\Gamma; T_0) \subseteq \mathrm{int}_U(F_t^X(\Gamma; T_0))$ for all $t\geq T_0$ and $h>0$;
		\item $\partial_{C_{U,T_0}} F^X(\Gamma;T_0)=F^X(\partial_{\partial C_{U,T_0}} \Gamma; T_0)$.
	\end{enumerate}
	In particular,  the following \emph{partition property} holds
	\begin{enumerate}[resume]
		\item \label{PartitionPropItem} In $\mathrm{int}(U)$ the family $\{\partial_U F^X_t(\Gamma;T_0)\}_{t\in [T_1,T_2)}$ forms a partition of $F^X_{T_1}(\Gamma;T_0)\setminus F^X_{T_2}(\Gamma;T_0)$ for each $T_0<T_1<T_2$.
	\end{enumerate}
\end{prop}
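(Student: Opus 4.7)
My plan is to establish Item (1) first via a bootstrap argument on the time interval, then derive Item (2) using \cref{boundary-flow} together with Item (1), with Item (3) following as a direct corollary. The central tool throughout is the semigroup property \cref{semigroup}, which lets me shift the starting time while preserving the biggest flow, combined with condition (2) of \cref{MCSpaceTimeDef} as the source of strict inward motion.

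For Item (1), I would first prove monotonicity (non-strict) of $t\mapsto F_t^X(\Gamma;T_0)$ using \cref{semigroup} together with an avoidance argument leveraging the strict monotonicity of $\Gamma$ on $C_{\partial U,T_0}$ and the strict $X$-mean-convexity of $U$. Next, I would consider the set
$$
\mathcal{T} = \{t \geq T_0 : F_s^X(\Gamma;T_0) \subseteq \mathrm{int}_U(F_t^X(\Gamma;T_0)) \text{ for all } s > t\}
$$
and aim to show $\mathcal{T} = [T_0, \infty)$. Condition (2) of \cref{MCSpaceTimeDef}, combined with monotonicity, gives $T_0 \in \mathcal{T}$. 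To show $\mathcal{T}$ is open, given $t^* \in \mathcal{T}$ I would use \cref{semigroup} to write $F^X(\Gamma;T_0)|_{[t^*,\infty)} = F^X(\Gamma^{t^*}; t^*)$, verify that the shifted data $\Gamma^{t^*}$ is itself spacetime $X$-mean-convex relative to the new starting time $t^*$ (the boundary monotonicity is inherited by restriction, the initial strict inward motion is precisely $t^* \in \mathcal{T}$, and condition (3) descends from condition (3) for $\Gamma$), and then apply condition (2) to $\Gamma^{t^*}$ to produce strict inward motion just past $t^*$. Closedness of $\mathcal{T}$ would follow from the compactness of the flow in spacetime (\cref{CompactFlowProp}) via a standard limit argument for closed sets.

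For Item (2), \cref{boundary-flow} together with maximality in \cref{ExistBiggestFlow} immediately gives the inclusion $\partial_{C_{U,T_0}} F^X(\Gamma;T_0) \subseteq F^X(\partial_{\partial C_{U,T_0}} \Gamma; T_0)$. The reverse inclusion reduces to showing that no spacetime interior point $(p,t)$ of $F^X(\Gamma;T_0)$ can lie in the biggest flow generated by $\partial_{\partial C_{U,T_0}} \Gamma$. For such $(p,t)$, I would use Item (1) to exhibit a spacetime neighborhood strictly inside $F^X(\Gamma;T_0)$, then combine condition (3) of \cref{MCSpaceTimeDef} with a barrier argument built from \cref{spherical-barrier}, \cref{XMCBoundaryWeakFlowLem}, and \cref{WeakFlowLocalization} to produce a weak flow through $(p,t)$ whose presence is incompatible with being part of the boundary flow. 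Item (3) then follows directly: strict monotonicity from Item (1) assigns each point of $F_{T_1}^X(\Gamma;T_0)\setminus F_{T_2}^X(\Gamma;T_0)$ to exactly one layer $\partial_U F_t^X(\Gamma;T_0)$, namely the infimum of times at which the point leaves the flow, and Item (2) identifies this layer with the biggest boundary flow.

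The main obstacle will be the openness step of Item (1), specifically verifying that $\Gamma^{t^*}$ retains condition (2) of \cref{MCSpaceTimeDef}. The subtlety is that $F_{t^*}^X(\Gamma;T_0)$ need not have a smooth boundary, so smooth-barrier arguments are not directly available. I would handle this by treating lateral and interior points separately: near $\partial U$, the strict $X$-mean-convexity of $U$ gives a controlled push-in via \cref{XMCBoundaryWeakFlowLem}, while interior points admit shrinking sphere barriers from \cref{spherical-barrier}, and the avoidance principle \cref{AvoidanceThm} then rules out any outward motion immediately past $t^*$. Making this argument uniform so as to secure a genuine $\eps' > 0$ (rather than a pointwise, time-dependent $\eps'$) is likely the most delicate technical point.
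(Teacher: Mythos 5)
The connectedness argument for Item~(1) has a genuine gap in the openness step. Having verified that $\Gamma^{t^*}$ is spacetime $X$-mean-convex for the new starting time $t^*$, you say you will ``apply condition (2) to $\Gamma^{t^*}$ to produce strict inward motion just past $t^*$.'' But unwinding the semigroup identity, condition~(2) for $\Gamma^{t^*}$ reads: there is $\eps'>0$ so that $F_t^X(\Gamma;T_0)\subseteq \mathrm{int}_U(F^X_{t^*}(\Gamma;T_0))$ for $t^*<t<t^*+\eps'$. That (together with monotonicity) is \emph{exactly} the statement $t^*\in\mathcal{T}$ that you started with, so no new information is produced. What openness actually requires is that some $\tau\in(t^*,t^*+\delta)$ also lies in $\mathcal{T}$, i.e.\ that $F^X_s(\Gamma;T_0)\subseteq\mathrm{int}_U(F^X_\tau(\Gamma;T_0))$ for all $s>\tau$ -- and deducing that from $\Gamma^{t^*}$ being spacetime $X$-mean-convex is precisely Item~(1) applied with starting time $t^*$, the thing you are trying to prove. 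The connectedness scheme is therefore circular. There is also an unaddressed subtlety in closedness: for $t_n\uparrow t^*$ with $t_n\in\mathcal{T}$, the sets $\mathrm{int}_U(F^X_{t_n}(\Gamma;T_0))$ are \emph{shrinking} toward $\mathrm{int}_U(F^X_{t^*}(\Gamma;T_0))$, and interiors do not commute with nested intersections, so a ``standard limit argument for closed sets'' does not by itself give $F^X_s(\Gamma;T_0)\subseteq\mathrm{int}_U(F^X_{t^*}(\Gamma;T_0))$.

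The paper avoids this entirely by proving the strict inclusion for \emph{all} $t\geq T_0$ in one shot, not by induction on the time interval. The mechanism is the disjointness of the time-shifted biggest flow $F^X(\tilde{\Gamma}^h)$ from the boundary flow $F^X(\partial_{\partial C_{U,T_0}}\Gamma)$, obtained via \cref{AvoidanceThm}, combined with the inclusion $\partial_{C_{U,T_0}}F^X(\Gamma;T_0)\subseteq F^X(\partial_{\partial C_{U,T_0}}\Gamma;T_0)$ from \cref{boundary-flow}. Together these force $F^X(\tilde{\Gamma}^h)$ into the spacetime interior of $F^X(\Gamma;T_0)$ simultaneously at every time, and iterating in $h$ finishes Item~(1). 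This disjointness-from-the-boundary-flow step is the core of the proof and is absent from your proposal; note that it also underlies Item~(2), where the paper introduces the arrival-time function $u$ and uses \cref{ContCor} rather than the ad hoc barrier argument you sketch. I'd recommend replacing the bootstrap with the direct avoidance argument; \cref{boundary-flow} and \cref{AvoidanceThm} are the tools you are missing.
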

\begin{rem}
	The biggest flow $F^X(\Gamma;T_0)$ of a spacetime $X$-mean-convex set $\Gamma\subseteq\partial C_{U,T_0}$ is called an \emph{$X$-mean-convex flow} and is a strictly monotone flow.  
\end{rem}
The proof below is in part inspired by the proof of \cite[Theorem 32]{HershkovitsWhite}.
\begin{proof}[Proof of \cref{MCGenFlowProp}]
	In this proof we assume, without loss of generality, that $T_0=0$ and write $F^X(\Gamma)=F^X(\Gamma;0)$. For the first item, let $h\in (0,\eps)$, where $\eps$ is from Item (2) of \cref{MCSpaceTimeDef}. Since $\Gamma$ is spacetime $X$-mean-convex, the inclusion holds trivially by \cref{MCSpaceTimeDef} for $t=0$. By \cref{semigroup} we have, for $t\geq h$,
	\[
	F_{t}^X(\Gamma)=F^X_{t}(\Gamma^h;h)
	\]
	where 
	\[
	\Gamma^h=(F^X_h(\Gamma)\times\set{h})\cup \Gamma|_{[h,\infty)}.
	\]
	Thus, if $\tilde{\Gamma}^h$ is the translation in time of $\Gamma^h$ given by
	\[
	\tilde{\Gamma}^h=(F^X_h(\Gamma)\times\set{0})\cup \left(\bigcup_{t\ge h} (\Gamma(t)\times\set{t-h})\right)
	\]
	then 
	\[
	F^X(\tilde{\Gamma}^h)=\bigcup_{t\ge h} (F^X_t(\Gamma^h;h)\times\set{t-h})=\bigcup_{t\ge h} (F_t^X(\Gamma)\times\set{t-h}).
	\]
	As $\Gamma$ is strictly monotone, $\tilde{\Gamma}^h\subseteq\Gamma\subseteq F^X(\Gamma)$. Therefore, by \cref{ExistBiggestFlow}, 
	\begin{equation}
	\label{weak-inclusion}
	F^X(\tilde{\Gamma}^h)\subseteq F^X(\Gamma).
	\end{equation}
	
	By \cref{CylinderFlow} and definition of $\tilde{\Gamma}^h$, one has for all $t>0$
	\[
	F^X_t(\tilde{\Gamma}^h)\cap \partial U=\tilde{\Gamma}^h(t)=\Gamma(t+h).
	\]
	Combined with the strict monotonicity of $\Gamma$ and of $F^X(\Gamma)$ at $t=0$, it follows that
	\begin{equation}
	\label{eq-4-2}
	\tilde{\Gamma}^h\subseteq \mathrm{int}_{\partial C_{U,0}} (\Gamma)
	\end{equation}
	which implies
	\[
	F^X(\tilde{\Gamma}^h)\cap \partial_{\partial C_{U,0}}\Gamma=\tilde{\Gamma}^h \cap \partial_{\partial C_{U,0}}\Gamma\subseteq\mathrm{int}_{\partial C_{U,0}}(\Gamma) \cap \partial_{\partial C_{U,0}}\Gamma=\emptyset.
	\]
	Likewise, by \cref{CylinderFlow} and \eqref{eq-4-2}, 
	\[
	\tilde{\Gamma}^h\cap F^X(\partial_{\partial C_{U,0}}\Gamma)\subseteq \tilde{\Gamma}^h\cap\partial_{\partial C_{U,0}} \Gamma \subseteq \mathrm{int}_{\partial C_{U,0}} (\Gamma) \cap \partial_{\partial C_{U,0}}\Gamma=\emptyset.
	\]
	Thus, by \cref{AvoidanceThm}, 
	\begin{equation} \label{disjoint-boundary}
	F^X(\tilde{\Gamma}^h)\cap F^X(\partial_{\partial C_{U,0}}\Gamma)=\emptyset.
	\end{equation}
	
	As $U$ is a smooth and strictly $X$-mean-convex compact set,  \cref{boundary-flow} ensures $\partial_{C_{U,0}} F^X(\Gamma)$ is a weak $X$-flow generated by $\partial_{\partial C_{U,0}}\Gamma$ and so, by \cref{ExistBiggestFlow}, 
	\begin{equation} \label{inclusion-boundary}
	\partial_{C_{U,0}} F^X(\Gamma)\subseteq F^X(\partial_{\partial C_{U,0}}\Gamma). 
	\end{equation}
	Thus, combined with \eqref{disjoint-boundary}, it follows that 
	\[
	F^X(\tilde{\Gamma}^h)\cap \partial_{C_{U,0}} F^X(\Gamma)=\emptyset
	\]
	and so, by \eqref{weak-inclusion},
	\[
	F^X(\tilde{\Gamma}^h)\subseteq\mathrm{int}_{C_{U,0}}(F^X(\Gamma)).
	\]
	It is a basic topological fact that $(p,t)\in \mathrm{int}_{C_{U,0}}(F^X(\Gamma))$ implies $p\in \mathrm{int}_U(F_t^X(\Gamma))$ and so we conclude that, for all $t\geq 0$ and $0<h<\eps$,
	\[
	F_{t+h}^X(\Gamma)=F^X_t(\tilde{\Gamma}^h)\subseteq \mathrm{int}_U(F_t^X(\Gamma)).
	\]
	Iterating this inclusion gives the first conclusion.  
	
	For the second item, first observe by \eqref{disjoint-boundary} and definition of $F^X(\tilde{\Gamma}^h)$
	\[
	F^X_{t+h}(\Gamma)\cap F^X_t(\partial_{\partial C_{U,0}}\Gamma)=F^X_t(\tilde{\Gamma}^h)\cap F^X_t(\partial_{\partial C_{U,0}}\Gamma)=\emptyset
	\]
	for all $t\geq 0$ and $0<h<\eps$. By the first item
	\[
	F^X_T(\Gamma) \subseteq F^X_{t+h}(\Gamma) 
	\]
	for all $T\geq t+h$, and thus 
	\begin{equation} \label{disjoint-slice}
	F^X_T(\Gamma)\cap F^X_t(\partial_{\partial C_{U,0}}\Gamma)=\emptyset
	\end{equation}
	for all $T>t$. By \cref{ExistBiggestFlow}, as $\partial_{\partial C_{U,0}}\Gamma\subseteq\Gamma$ one has $F^X_T(\partial_{\partial C_{U,0}}\Gamma)\subseteq F^X_T(\Gamma)$ and so
	\begin{equation} \label{disjoint-boundary-slice}
	F^X_T(\partial_{\partial C_{U,0}}\Gamma)\cap F_t^X(\partial_{\partial C_{U,0}}\Gamma)=\emptyset
	\end{equation}
	for all $T>t$.
	
	Now define the arrival-time function $u\colon \Gamma(0)\to [0, \infty]$ by
	\[
	u(x)=\left\{\begin{array}{ll} t & \mbox{if $x\in F^X_t(\partial_{\partial C_{U,0}}\Gamma)$} \\
	\infty & \mbox{if } x\in \Gamma(0)\setminus \bigcup_{t\geq 0} F^X_t(\partial_{\partial C_{U,0}}\Gamma). \end{array} \right.
	\]
	By \eqref{disjoint-boundary-slice}, the $F^X_t(\partial_{\partial C_{U,0}}\Gamma)$ are disjoint for distinct $t$, so $u$ is well-defined.  If $x\in \Gamma(0)\setminus F_t^X(\Gamma)$, then $(x,0)\in F^X(\Gamma)$, but $(x,t)\not\in F^X(\Gamma)$. This means there is a $\tau\in [0,t)$ so $(x,\tau)\in \partial_{C_{U,0}} F^X(\Gamma)$. Combined with \eqref{inclusion-boundary}, it follows that $x\in F^X_\tau(\partial_{\partial C_{U,0}}\Gamma)$ and so $u(x)<t$. 
	Conversely, by \eqref{disjoint-slice}, if $u(x)<t$ then $F^X_{u(x)}(\partial_{\partial C_{U,0}}\Gamma)$ is disjoint from $F_t^X(\Gamma)$ and so $x\not\in F_t^X(\Gamma)$.  Thus, we have proven that $F_t^X(\Gamma)=\{u\geq t\}$.
	Hence, 
	$$\Gamma(\infty)=\bigcap_{t\geq 0} F_t^X(\Gamma)=\bigcap_{t\geq 0} \set{u\geq t}=\set{u=\infty}$$ 
	is closed.  As $F^X(\partial_{\partial C_{U,0}}\Gamma)$ is also a closed subset of spacetime, $u$ is continuous.

	By Item \eqref{DistItem} of \cref{ContCor} applied to $F^X(\partial_{\partial C_{U,0}}\Gamma)$, every point in $\{u=t\}\setminus (\partial_{\partial C_{U,0}}\Gamma)(t)$ is a limit of points in $\set{u<t}$ when $t>0$. Hence, for $t > 0$,
	\[
	\set{u=t} \setminus (\partial_{\partial C_{U,0}} \Gamma)(t)= \partial_U \set{u \ge t} \setminus (\partial_{\partial C_{U,0}} \Gamma)(t)= \partial_U F^X_t(\Gamma) \setminus (\partial_{\partial C_{U,0}} \Gamma)(t).
	\]
	By general topology, if $p\in\partial_U F^X_t(\Gamma)$ then $(p,t)\in \partial_{C_{U,0}} F^X(\Gamma)$, and thus 
	\[
	F^X(\partial_{\partial C_{U,0}}\Gamma)\subseteq \partial_{C_{U,0}} F^X(\Gamma).
	\]
	Combined with \eqref{inclusion-boundary}, this proves the second claim. 
\end{proof}

We have the following closedness property of $X$-mean-convex flows:
\begin{prop}
	\label{MCLimitProp}
	Suppose $(N,X)$ is tame.  Let $U \subseteq N$ be a smooth and strictly $X$-mean-convex set and suppose $\Gamma, \Gamma_i\subseteq \partial  C_{U,T_0}$, for $i\in \mathbb{N}$, are spacetime $X$-mean-convex. If
	\begin{enumerate}
		\item \label{MCLimitHypo1} $\Gamma_i\subseteq \mathrm{int}_{\partial C_{U,T_0}}(\Gamma_{i+1})$;
		\item \label{MCLimitHypo2} $ \mathrm{int}_{\partial C_{U,T_0}} (\Gamma)= \bigcup_{i=1}^\infty\Gamma_i$,
	\end{enumerate}  
then $ \mathrm{int}_{C_{U,T_0}}(F^X(\Gamma;T_0))=Z_\infty$ where $Z_\infty=\bigcup_{i=1}^\infty F^X(\Gamma_i; T_0).$
	If, in addition, 
	\begin{enumerate}[resume]
		\item $\Gamma(T_0)=\mathrm{cl}(\mathrm{int}_U (\Gamma(T_0)))$,
	\end{enumerate}
 then $  F^X(\Gamma; T_0)=\mathrm{cl}(Z_\infty)$.
\end{prop}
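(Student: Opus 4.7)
The plan is to prove the first identity $\mathrm{int}_{C_{U,T_0}}(F^X(\Gamma; T_0)) = Z_\infty$ by establishing both inclusions, and then to deduce the second identity $F^X(\Gamma; T_0) = \mathrm{cl}(Z_\infty)$ essentially for free from \cref{BoundaryPropMonotoneLem} combined with the strict monotonicity already supplied by \cref{MCGenFlowProp}.

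For the inclusion $Z_\infty \subseteq \mathrm{int}(F^X(\Gamma; T_0))$, I would show that $F^X(\Gamma_i; T_0) \subseteq \mathrm{int}_{C_{U,T_0}}(F^X(\Gamma; T_0))$ for each $i$. Since $\Gamma_i \subseteq \Gamma$, \cref{ExistBiggestFlow} gives $F^X(\Gamma_i; T_0) \subseteq F^X(\Gamma; T_0)$, so it suffices to check that $F^X(\Gamma_i; T_0)$ is disjoint from the spacetime boundary of $F^X(\Gamma; T_0)$, which by Item (2) of \cref{MCGenFlowProp} equals $F^X(\partial_{\partial C_{U,T_0}} \Gamma; T_0)$. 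This disjointness follows from the avoidance principle \cref{AvoidanceThm}: the inclusion $\Gamma_i \subseteq \mathrm{int}_{\partial C_{U,T_0}}(\Gamma)$ together with condition (3) of \cref{MCSpaceTimeDef} for $\Gamma$ places $\Gamma_i|_{(T_0,\infty)}$ inside $\mathrm{int}_{C_{U,T_0}}(F^X(\Gamma; T_0))$, so $\Gamma_i(t)$ is disjoint from $F^X_t(\partial\Gamma;T_0)$ for $t>T_0$ (the case $t=T_0$ being the trivial set-theoretic identity $\Gamma_i \cap \partial\Gamma = \emptyset$), while Item (2) of \cref{CylinderFlow} gives $F^X_t(\Gamma_i;T_0) \cap \partial U = \Gamma_i(t)$ and hence $F^X_t(\Gamma_i;T_0) \cap (\partial\Gamma)(t) = \emptyset$. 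Applying the same reasoning to the pair $(\Gamma_i,\Gamma_{i+1})$ yields the stronger nested inclusion $F^X(\Gamma_i;T_0) \subseteq \mathrm{int}(F^X(\Gamma_{i+1};T_0))$, which in particular shows that $Z_\infty$ is open.

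The reverse inclusion $\mathrm{int}(F^X(\Gamma; T_0)) \subseteq Z_\infty$ is where the main obstacle lies. I would work with the continuous arrival-time function $u\colon \Gamma(T_0)\to[0,\infty]$ from the proof of Item (2) of \cref{MCGenFlowProp}, characterized by $F^X_s(\Gamma;T_0) = \{u\geq s-T_0\}$ and $F^X_s(\partial\Gamma;T_0)=\{u=s-T_0\}$, and the analogous $u_i$ on $\Gamma_i(T_0)$. A spacetime interior point $(p,t)$ translates, via strict monotonicity from Item (1) of \cref{MCGenFlowProp} and continuity of $u$, into the condition that $u(p)>t-T_0$ holds in a spatial neighborhood of $p$ in $U$; a compactness argument combined with hypothesis (2) of the proposition places this neighborhood inside some $\Gamma_N(T_0)$. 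It then remains to verify $u_N(p)\geq t-T_0$, equivalently that $u_i\nearrow u$ pointwise on interior points, which I would argue by contradiction: if $u_i(p)\to u_\infty<u(p)$, the points $(p,T_0+u_i(p))$, which lie in $\partial F^X(\Gamma_i;T_0)=F^X(\partial\Gamma_i;T_0)$ by Item (2) of \cref{MCGenFlowProp}, accumulate at $(p,T_0+u_\infty)$, which is itself an interior point of $F^X(\Gamma;T_0)$. The hardest piece of the plan is to then use the nested containment from the previous paragraph together with a shrinking-sphere strong barrier from \cref{spherical-barrier} placed in a spacetime ball at $(p,T_0+u_\infty)$ and the avoidance principle \cref{AvoidanceThm} to force some $F^X(\Gamma_i;T_0)$ to engulf this ball for large $i$, contradicting that $(p,T_0+u_i(p))$ sits on $\partial F^X(\Gamma_i;T_0)$.

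For the second statement, $F^X(\Gamma;T_0) = \mathrm{cl}(Z_\infty)$ is immediate from the first once \cref{BoundaryPropMonotoneLem} is invoked. Indeed, the spacetime $X$-mean-convexity of $\Gamma$ makes $F^X(\Gamma;T_0)$ strictly monotone by Item (1) of \cref{MCGenFlowProp}, and the added hypothesis $\Gamma(T_0)=\mathrm{cl}(\mathrm{int}_U\Gamma(T_0))$ is precisely the initial-slice regularity required by \cref{BoundaryPropMonotoneLem}; that lemma then yields $F^X(\Gamma;T_0) = \mathrm{cl}(\mathrm{int}_{C_{U,T_0}}(F^X(\Gamma;T_0)))$, and substituting the first identity converts this into $F^X(\Gamma;T_0) = \mathrm{cl}(Z_\infty)$.
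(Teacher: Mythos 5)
Your arguments for the easy inclusion $Z_\infty \subseteq \mathrm{int}_{C_{U,T_0}}(F^X(\Gamma;T_0))$ and for the second conclusion via \cref{BoundaryPropMonotoneLem} are essentially correct and match the paper. The problem is the reverse inclusion $\mathrm{int}_{C_{U,T_0}}(F^X(\Gamma;T_0)) \subseteq Z_\infty$, where the mechanism you describe does not actually close the argument.

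You reduce matters to showing that the arrival times $u_i$ converge to $u$ at interior points, and propose to derive a contradiction from $u_\infty < u(p)$ by placing a shrinking-sphere strong barrier at the interior point $(p,T_0+u_\infty)$ and invoking \cref{AvoidanceThm} to ``force some $F^X(\Gamma_i;T_0)$ to engulf this ball.'' This is the gap: avoidance is a \emph{disjointness} statement and cannot force a weak flow to grow to cover anything. A shrinking sphere that is a weak $X$-flow (as in \cref{spherical-barrier}) is contained in a biggest flow $F^X(\Gamma_i;T_0)$ only if its \emph{initial} slice is already contained in $F^X_s(\Gamma_i;T_0)$ at the time $s$ at which you launch it, and this is exactly the containment you are trying to establish. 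To start the sphere at time $T_0$ is useless because the flow of $\Gamma_i$ may erode the ball around $p$ long before time $T_0+u_\infty$; to start it at a time close to $T_0+u_\infty$ you would already need to know $p$ is well inside $F^X_s(\Gamma_i;T_0)$ for $s$ near $T_0+u_\infty$, which is the circular point. There is no purely local-in-space argument available here: the whole proposition is about whether an exhaustion of the generating set produces an exhaustion of the flow, and that question has to be answered globally.

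The paper's proof resolves this with a time-translation device your sketch does not capture. Setting $\tilde{\Gamma}^h$ to be the time-$h$ delayed generator whose initial slice is $F^X_{T_0+h}(\Gamma;T_0)$, one first shows $\mathrm{int}_{C_{U,T_0}}(F^X(\Gamma;T_0)) = \bigcup_{h>0} F^X(\tilde{\Gamma}^h;T_0)$, using the semigroup property \cref{semigroup}, strict monotonicity from \cref{MCGenFlowProp}, and the avoidance of the boundary flow. The crucial gain is that $\tilde{\Gamma}^h|_{[T_0,T_1]}$ is, by strict monotonicity and compactness, contained in some $\Gamma_{i_0}$; then, because by \cref{ExistBiggestFlow} the biggest flow up to time $T_1$ depends only on the generator up to time $T_1$, one gets $F^X(\tilde{\Gamma}^h;T_0)|_{[T_0,T_1]} \subseteq F^X(\Gamma_{i_0};T_0) \subseteq Z_\infty$. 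This whole-flow comparison via the delayed generator is what your pointwise arrival-time argument lacks, and I do not see how to replace it with local barriers.
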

\begin{rem}
	Hypothesis (3) is needed as can be seen by considering two disjoint balls connected by a line segment -- the line segment disappears immediately under the flow. Likewise, the result fails if instead of \cref{MCLimitHypo2}, one assumed that
	\[
	\Gamma =\mathrm{cl}\left( \bigcup_{i=1}^\infty\Gamma_i \right).
	\]
	Indeed, let $X =\mathbf{0}$ and $\mathbb{D} \subseteq \mathbb{R}^2$ be the closed unit disk. Consider $\Gamma_i = \Gamma^+_i \cup \Gamma_i^- \subseteq \mathbb{D}$, where $\Gamma_i^\pm$ are the sets
	\[
	\Gamma^{\pm}_i=\set{(x,y): \pm y\geq i^{-1}} \cap \mathbb{D},\mbox{ so }\Gamma=\mathrm{cl}\left(\bigcup_{i=1}^\infty \Gamma_i\right)=\mathbb{D}.
	\]
	However, as $\mathbf{0}\not\in \Gamma^\pm_i$ and the $\Gamma^\pm_i$ are mean convex, by Proposition \ref{RoundingProp}, for every $t>0$, there is a radius $r>0$ so that $B_r(\mathbf{0})\cap F_t^X(\Gamma_i;0)=\emptyset$ for all $i$. Hence, for $t>0$ small,
	\[
	(0,t)\in F^X(\Gamma; 0)\setminus \mathrm{cl}\left(\bigcup_{i=1}^\infty F^X(\Gamma_i; 0)\right).
	\]
\end{rem}
\begin{proof}[Proof of \cref{MCLimitProp}]
	Without loss of generality we take $T_0=0$ and write $F^X(\Gamma_i)=F^X(\Gamma_i;0)$ and $F^X(\Gamma)=F^X(\Gamma;0)$, omitting the starting time when it is $0$. For $h>0$, let 
	\[
	\tilde{\Gamma}^h=(F^X_h(\Gamma)\times\set{0}) \cup \left(\bigcup_{t\geq h} \Gamma(t)\times\set{t-h}\right).
	\]
	First we show that 
	\begin{equation} \label{interior-eq}
	\mathrm{int}_{C_{U,0}}(F^X(\Gamma))=\bigcup_{h>0} F^X(\tilde{\Gamma}^h).
	\end{equation}
	To see this, observe that by \cref{semigroup}
	\[
	F^X(\Tilde{\Gamma}^h)=\bigcup_{t\geq h} F^X_t(\Gamma)\times\set{t-h}.
	\]
	Thus, if $(p,t)\in\mathrm{int}_{C_{U,0}}(F^X(\Gamma))$, there is an $h>0$ so that 
	\[
	p\in F^X_{t+h}(\Gamma)=F^X_t(\tilde{\Gamma}^h).
	\]
	That is, $(p,t)\in F^X(\tilde{\Gamma}^h)$ and so 
	\[
	\mathrm{int}_{C_{U,0}}(F^X(\Gamma))\subseteq\bigcup_{h>0} F^X(\tilde{\Gamma}^h).
	\]
	
	We next establish the reverse inclusion.  Fix $h>0$.  As $\Gamma$ is spacetime $X$-mean-convex,  it is strictly monotone and, moreover, by \cref{MCGenFlowProp} so is $F^X(\Gamma)$. Hence, $\tilde{\Gamma}^h\subseteq\mathrm{int}_{\partial C_{U,0}}(\Gamma)$ and so, by \cref{ExistBiggestFlow},  $F^X(\tilde{\Gamma}^h)\subseteq F^X(\Gamma)$, and by \cref{CylinderFlow} 
	\[
	F^X(\tilde{\Gamma}^h)\cap \partial_{\partial C_{U,0}}\Gamma=\tilde{\Gamma}^h \cap F^X(\partial_{\partial C_{U,0}}\Gamma)=\tilde{\Gamma}^h\cap\partial_{\partial C_{U,0}}\Gamma=\emptyset.
	\]
	As $\Gamma$ is spacetime $X$-mean-convex, it follows from \cref{MCGenFlowProp} and \cref{AvoidanceThm} that 
	\[
	F^X(\tilde{\Gamma}^h)\cap\partial_{C_{U,0}}F^X(\Gamma)=F^X(\tilde{\Gamma}^h)\cap F^X(\partial_{\partial C_{U,0}}\Gamma)=\emptyset.
	\]
	Hence, $F^X(\tilde{\Gamma}^h)\subseteq\mathrm{int}_{C_{U,0}}(F^X(\Gamma))$ and so
	\[
	\bigcup_{h>0} F^X(\tilde{\Gamma}^h)\subseteq\mathrm{int}_{C_{U,0}}(F^X(\Gamma))
	\]
	since $h>0$ is arbitrary. This proves the claim.
	
	We now establish,  for any $T_1>0$ and $h>0$, the existence of $i_0=i_0(T_1,h)$ so that
	\[
	\tilde{\Gamma}^h|_{[0,T_1]}\subseteq\Gamma_{i_0}.
	\]
	Indeed,  strict  monotonicity of $\Gamma$ and  Hypothesis (2) ensure
	\[
	\tilde{\Gamma}^h|_{[0,T_1]}\subseteq\mathrm{int}_{\partial C_{U,0}}(\Gamma)=\bigcup_{i=1}^\infty \Gamma_i.
	\]
	That is, $\{\mathrm{int}_{\partial C_{U,0}} (\Gamma_i)\}_{i=1}^\infty$ is an open cover of the compact set $\tilde{\Gamma}^h|_{[0,T_1]}$ and so there is a finite subcover. Thus, one may choose $i_0$ sufficiently large so that
	\[
	\tilde{\Gamma}^h|_{[0,T_1]}\subseteq \mathrm{int}_{\partial C_{U,0}} (\Gamma_{i_0})\subseteq \Gamma_{i_0}.
	\] 
	
	Now fix $T_1 > 0$ and $h > 0$, and assume in the following that $i\geq i_0$. Set $\hat{\Gamma}_i^h=\Gamma_i \cap \tilde{\Gamma}^h$. By \cref{ExistBiggestFlow}, as $\hat{\Gamma}^h_i\subseteq\Gamma_i\subseteq F^X(\Gamma_i)$, 
	\[
	F^X(\hat{\Gamma}^h_i)\subseteq F^X(\Gamma_i).
	\]
	Moreover, the choice of $i$ ensures
	\[
	\hat{\Gamma}^h_i|_{[0,T_1]}=\tilde{\Gamma}^h|_{[0,T_1]}.
	\]
	Hence, by uniqueness, \cref{ExistBiggestFlow}, 
	\[
	F^X(\tilde{\Gamma}^h)|_{[0,T_1]}=F^X(\hat{\Gamma}^h_i)|_{[0,T_1]}\subseteq F^X(\Gamma_i)\subseteq\bigcup_{i=1}^\infty F^X(\Gamma_i)=Z_\infty.
	\]
	As $T_1$ is arbitrary, we conclude that, for all $h>0$,
	$
	F^X(\tilde{\Gamma}^h)\subseteq Z_\infty.
	$
	Hence, by \eqref{interior-eq}, 
	\begin{equation} \label{interior-include-union}
	\mathrm{int}_{C_{U,0}}(F^X(\Gamma))=\bigcup_{h>0} F^X(\tilde{\Gamma}^h)\subseteq Z_\infty=\bigcup_{i=1}^\infty F^X(\Gamma_i).
	\end{equation}
	
	Next, for any $i$ fixed, by the hypotheses
	\[
	\Gamma_i\subseteq\mathrm{int}_{\partial C_{U,0}} (\Gamma_{i+1})\subseteq\mathrm{int}_{\partial C_{U,0}}(\Gamma)
	\]
	and thus, by \cref{CylinderFlow}, 
	\[
	F^X(\Gamma_i)\cap \partial_{\partial C_{U,0}}\Gamma=\Gamma_i\cap F^X(\partial_{\partial C_{U,0}}\Gamma)=\Gamma_i\cap\partial_{\partial C_{U,0}}\Gamma=\emptyset.
	\]
	Thus, \cref{AvoidanceThm} applied to $F^X(\Gamma_i)$ and $F^X(\partial_{\partial C_{U,0}}\Gamma)$ implies that 
	\[
	F^X(\Gamma_i)\cap F^X(\partial_{\partial C_{U,0}}\Gamma)=\emptyset.
	\]
	As $i$ is arbitrary and  $\Gamma$ is spacetime $X$-mean-convex  it follows from \cref{MCGenFlowProp} that
	\[
	Z_\infty\cap \partial_{C_{U,0}}F^X(\Gamma)=Z_\infty\cap F^X(\partial_{\partial C_{U,0}}\Gamma)=\emptyset.
	\]
	By \cref{ExistBiggestFlow}, as $\Gamma_i\subseteq\Gamma$ for all $i$, one has
	\[
	\bigcup_{i=1}^\infty F^X(\Gamma_i)=Z_\infty \subseteq F^X(\Gamma) \Rightarrow     \bigcup_{i=1}^\infty F^X(\Gamma_i)\subseteq \mathrm{int}_{C_{U,0}}(F^X(\Gamma)).
	\]
	This together with \eqref{interior-include-union} gives the first conclusion.
	
	To see the second conclusion observe that, as $ F^X(\Gamma)$ is strictly monotone and is the biggest flow generated by $\Gamma$, \cref{BoundaryPropMonotoneLem} implies
	\[
	F^X(\Gamma)=\mathrm{cl} (\mathrm{int}_{C_{U,0}} ( F^X(\Gamma)))=\mathrm{cl}\left(Z_\infty \right)
	\]
	completing the proof.
\end{proof}

\section{Ample flows} \label{AmpleSec}
We establish regularity for a restricted class of flows coming out of smooth $X$-mean-convex sets. When the initial set is compact the class of flows is provided by the biggest flow.  Importantly, such flows  are strictly monotone in the sense of Definition \ref{monotone-def} and satisfy the partition property -- i.e., Item \eqref{PartitionPropItem} of \cref{MCGenFlowProp}. To treat the unbounded case we establish these properties for a class of flows that are ``biggest" in a localized sense.

\begin{defn}
	\label{ample-flow}
	Let $Z$ be a weak $X$-flow in $N$ starting from $T_0$, that is, a weak $X$-flow generated by an initial set in $N\times\{T_0\}$.  If  $U\subseteq N$ is a smooth and strictly $X$-mean-convex compact set, then we say $Z$ is \emph{ample in $U$} provided
	\[
	Z\cap C_{U, T_0}=F^X(Z\cap \partial C_{U,T_0}; T_0)	
	\]
	The flow $Z$ is \emph{ample} if it is ample for all possible choices of $U$. 
\end{defn}

We now show ampleness of a weak $X$-flow that arises as an appropriate limit of a sequence of $X$-mean-convex flows of compact sets.
\begin{thm}
	\label{exhaustion-ample-flow}
Suppose $(N,X)$ is tame. Let $Z$ be a weak $X$-flow in $N$ starting from $T_0$. Suppose there is a sequence of $X$-mean-convex flows $Z_i$ of compact sets in $N$ starting from $T_0$, a smooth and strictly $X$-mean-convex compact set $U_0 \subseteq N$, and an $\eps_0>0$ so that the following hold:
	\begin{enumerate}
		\item $Z_i\subseteq \mathrm{int}_{C_{N,T_0}}(Z_{i+1})$ and $Z=\mathrm{cl}(\bigcup_{i=1}^\infty Z_i)$;
		\item $\mathrm{int}_{C_{N,T_0}}(Z)\setminus C_{U_0,T_0+\eps_0} =(\bigcup_{i=1}^\infty Z_i) \setminus C_{U_0, T_0+\eps_0}$;
		\item For all $T_0\leq t_1<t_2$,
		\[
		Z(t_2) \setminus U_0 \subseteq \mathrm{int} (Z(t_1));
		\]
		\item The biggest $X$-flow of $Z(T_0)$ agrees with $Z$ in $N\times[T_0, T_0+\eps_0]$ and $Z(t)\subseteq \mathrm{int}(Z(T_0))$ for $T_0<t<T_0+\eps_0$;
	\end{enumerate} 
	then $Z$ is strictly monotone and ample in $U_0$. Moreover,
	\[
	\mathrm{int}_{C_{N,T_0}}(Z)=\bigcup_{i=1}^\infty Z_i.
	\]
	If, in addition, $N$ has the property that for every compact set $K$, there is is smooth and strictly $X$-mean-convex compact set, $U$, so $K\subseteq U$, then $Z$ is ample.
\end{thm}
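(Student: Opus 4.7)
The strategy is to prove ampleness of $Z$ in $U_0$ by applying \cref{MCLimitProp} to the boundary traces $\Gamma_i := Z_i \cap \partial C_{U_0,T_0}$ and $\Gamma := Z \cap \partial C_{U_0,T_0}$, and then to derive the strict monotonicity of $Z$, the identification $\mathrm{int}_{C_{N,T_0}}(Z) = \bigcup_i Z_i$, and the full ampleness under the exhaustion hypothesis as corollaries.

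The preliminary step is to show each $Z_i$ is itself ample in $U_0$, i.e., $F^X(\Gamma_i; T_0) = Z_i \cap C_{U_0,T_0}$. One inclusion follows from \cref{WeakFlowLocalization} applied to $Z_i$---which makes $Z_i \cap C_{U_0,T_0}$ a weak $X$-flow generated by $\Gamma_i$---together with the maximality of biggest flows from \cref{ExistBiggestFlow}. For the reverse, $Z_i = F^X(Z_i(T_0) \times \{T_0\}; T_0)$ is itself a biggest flow and $\Gamma_i \subseteq Z_i$ is closed, so \cref{ExistBiggestFlow} forces any weak $X$-flow generated by $\Gamma_i$ starting at $T_0$ to lie in $Z_i$; combining with $F^X(\Gamma_i; T_0) \subseteq C_{U_0,T_0}$ from \cref{CylinderFlow} yields the claim.

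Next I would verify the hypotheses of \cref{MCLimitProp} for $\Gamma$ and the $\Gamma_i$ in $\partial C_{U_0, T_0}$. Each $\Gamma_i$ is spacetime $X$-mean-convex: strict monotonicity of $Z_i$ (from \cref{MCGenFlowProp}) passes to strict monotonicity of $\Gamma_i \cap C_{\partial U_0, T_0}$ in $\partial U_0$ by intersecting ambient balls with $\partial U_0$; the $X$-mean-convexity of $Z_i(T_0)$ together with the preliminary ampleness above gives condition (2) of \cref{MCSpaceTimeDef}; and condition (3) is automatic for the smooth strictly $X$-mean-convex compact $U_0$. The nesting $\Gamma_i \subseteq \mathrm{int}_{\partial C_{U_0,T_0}}(\Gamma_{i+1})$ follows by intersecting hypothesis (1) with $\partial C_{U_0,T_0}$. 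The interior identity $\mathrm{int}_{\partial C_{U_0,T_0}}(\Gamma) = \bigcup_i \Gamma_i$ combines hypothesis (2) outside $C_{U_0, T_0+\eps_0}$ with hypothesis (4) and the partition property \cref{MCGenFlowProp}(3) inside. The regularity condition $\Gamma(T_0) = \mathrm{cl}(\mathrm{int}_{U_0}(\Gamma(T_0)))$ is deduced from $Z(T_0) = \mathrm{cl}(\bigcup_i Z_i(T_0))$---itself a diagonal consequence of $Z = \mathrm{cl}(\bigcup Z_i)$ and the strict monotonicity of each $Z_i$---together with the $X$-mean-convexity of the $Z_i(T_0)$. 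For the spacetime $X$-mean-convexity of $\Gamma$ itself, I would use hypothesis (4) to identify $F^X(\Gamma; T_0)$ with $Z \cap C_{U_0,T_0}$ on $[T_0, T_0+\eps_0]$, so that condition (2) of \cref{MCSpaceTimeDef} reduces to the strict initial monotonicity built into hypothesis (4).

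With these verifications in place, \cref{MCLimitProp} together with the preliminary ampleness gives
\[
F^X(\Gamma; T_0) = \mathrm{cl}\Bigl(\bigcup_i F^X(\Gamma_i; T_0)\Bigr) = \mathrm{cl}\Bigl(\bigcup_i Z_i\Bigr) \cap C_{U_0,T_0} = Z \cap C_{U_0,T_0},
\]
which is ampleness in $U_0$. Strict monotonicity of $Z$ inside $C_{U_0,T_0}$ is then immediate from \cref{MCGenFlowProp}(1), and outside $U_0$ is hypothesis (3); the interior conclusion of \cref{MCLimitProp} together with hypothesis (2) yields $\mathrm{int}_{C_{N,T_0}}(Z) = \bigcup_i Z_i$. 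For the full ampleness statement, given any smooth strictly $X$-mean-convex compact $V$, the exhaustion hypothesis provides such a $U \supseteq V \cup U_0$; since $U_0 \subseteq U$, hypotheses (2) and (3) still hold with $U$ in place of $U_0$, so the same argument yields ampleness of $Z$ in $U$; and because any biggest flow is itself ample in any smaller smooth strictly $X$-mean-convex compact region (by the preliminary argument applied to the biggest flow $Z\cap C_{U,T_0}$), ampleness of $Z \cap C_{U,T_0}$ in $V$ descends to ampleness of $Z$ in $V$. The main obstacle will be verifying the spacetime $X$-mean-convexity of $\Gamma$ and the identity $\mathrm{int}_{\partial C_{U_0,T_0}}(\Gamma) = \bigcup_i \Gamma_i$; the latter requires a careful bridging of subspace interiors in $\partial C_{U_0,T_0}$ with the ambient interiors in $C_{N,T_0}$ appearing in hypotheses (1) and (2), a point that is most delicate near $\partial U_0 \times (T_0, T_0+\eps_0]$.
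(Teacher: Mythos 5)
Your overall strategy---applying \cref{MCLimitProp} to the boundary traces $\Gamma_i$ and $\Gamma$, then deducing strict monotonicity and the interior identity---is indeed the paper's approach, and your preliminary observation that each compact $Z_i$ is ample is correct. But the obstacle you flag at the end is not merely a bookkeeping headache: as written, your argument has a genuine gap, and the source is that you work directly with $\partial U_0$, which lies \emph{inside} $U_0$. Hypothesis (3) only controls $Z(t_2)\setminus U_0$, but to verify Item (1) of \cref{MCSpaceTimeDef} for $\Gamma = Z \cap \partial C_{U_0,T_0}$ you need strict monotonicity of $t\mapsto Z(t)\cap\partial U_0$ in $\partial U_0$; since $\partial U_0\subseteq U_0$, the relevant set $Z(t_2)\cap\partial U_0$ is contained in $Z(t_2)\cap U_0$, and Hypothesis (3) says nothing about it. The same problem undercuts your verification of $\mathrm{int}_{\partial C_{U_0,T_0}}(\Gamma)=\bigcup_i\Gamma_i$: Hypothesis (2) and \cref{strictly-monotone-lem} only give you control away from $U_0$, while for $t>T_0$ the wall $\partial U_0\times\{t\}$ sits inside $C_{U_0,T_0+\eps_0}$ once $t\geq T_0+\eps_0$. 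And unlike what your closing sentence suggests, the trouble is not confined to the band $(T_0,T_0+\eps_0]$---Hypothesis (4) cannot bail you out at later times.

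The fix is to first inflate $U_0$ to a small tubular neighborhood $U$ that is still smooth, compact, and strictly $X$-mean-convex, with $U_0\subseteq\mathrm{int}(U)$, and run your entire argument with $\Gamma=Z\cap\partial C_{U,T_0}$ and $\Gamma_i=Z_i\cap\partial C_{U,T_0}$. Then $\partial U$ is disjoint from $U_0$, so Hypothesis (3) applies directly:
\[
\Gamma(t_2)\cap\partial U=(Z(t_2)\setminus U_0)\cap\partial U\subseteq\mathrm{int}(Z(t_1))\cap\partial U\subseteq\mathrm{int}_{\partial U}(\Gamma(t_1)\cap\partial U),
\]
and one verifies the interior identity on $\partial C_{U,T_0}$ by combining Hypothesis (2), the fact that $Z\setminus C_{U_0,T_0}$ is strictly monotone in $C_{N\setminus U_0,T_0}$ (a region that contains $\partial U$), and \cref{strictly-monotone-lem} applied in $N\setminus U_0$. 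After showing $Z$ is ample in $U$ via \cref{MCLimitProp}, you descend to $U_0$ by \cref{compact-ample-flow}, exactly the step you already invoke for full ampleness. In short, you already possess the enlargement trick---apply it from the start rather than only at the last step.
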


To prove \cref{exhaustion-ample-flow}, we start with the following simple observation. 
\begin{prop}
	\label{compact-ample-flow}
Suppose $(N,X)$ is tame.  Fix $U\subseteq N$ closed and suppose and $\Gamma\subseteq\partial C_{U,T_0}$ is closed with $F^X(\Gamma;T_0)\subseteq C_{U,T_0}$. If $V\subseteq \mathrm{int}(U)$ is a smooth and strictly $X$-mean-convex compact set th boundary, then $Z'=F^X(\Gamma;T_0)\cap C_{V,T_0}$ is the biggest $X$-flow generated by $\Gamma'=F^X(\Gamma;T_0)\cap \partial C_{V,T_0}$ with starting time $T_0$. In particular, the biggest $X$-flow of any closed set in $N$ is ample. 
\end{prop}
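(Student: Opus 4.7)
The proof is essentially a careful assembly of the localization property of \cref{WeakFlowLocalization}, the maximality characterization of $F^X$ in \cref{ExistBiggestFlow}, and the cylinder identity \cref{CylinderFlow}(1). Writing $Z = F^X(\Gamma; T_0)$, my goal is to show that $Z' = Z \cap C_{V,T_0}$ equals $F^X(\Gamma'; T_0)$ where $\Gamma' = Z \cap \partial C_{V,T_0}$.

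For the inclusion $Z' \subseteq F^X(\Gamma'; T_0)$, I would first verify that $Z'$ is itself a weak $X$-flow generated by $\Gamma'$ with starting time $T_0$ and then invoke \cref{ExistBiggestFlow}. Applying \cref{WeakFlowLocalization} to $Z$ with the closed set $V$ and $T_1 = T_0$ shows that $Z'$ is a weak $X$-flow generated by $(Z \cap \partial C_{V,T_0}) \cup (\Gamma \cap C_{V,T_0})$. The key observation is that $V \subseteq \mathrm{int}(U)$ forces $V \cap \partial U = \emptyset$, so since $\Gamma \subseteq \partial C_{U,T_0} = (U \times \{T_0\}) \cup (\partial U \times [T_0,\infty))$, the set $\Gamma \cap C_{V,T_0}$ lies entirely in $V \times \{T_0\} \subseteq \partial C_{V,T_0}$. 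Thus $\Gamma \cap C_{V,T_0} \subseteq Z \cap \partial C_{V,T_0} = \Gamma'$ and the generating set collapses to $\Gamma'$.

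For the reverse inclusion $F^X(\Gamma'; T_0) \subseteq Z'$, I would combine two applications of maximality. Since $\Gamma' \subseteq Z$, \cref{ExistBiggestFlow} yields $F^X(\Gamma'; T_0) \subseteq Z$. Since $\Gamma' \subseteq \partial C_{V,T_0}$ and $V$ is smooth and strictly $X$-mean-convex (and $(N,X)$ is tame), \cref{CylinderFlow}(1) gives $F^X(\partial C_{V,T_0}; T_0) = C_{V,T_0}$, and a second application of \cref{ExistBiggestFlow} forces $F^X(\Gamma'; T_0) \subseteq C_{V,T_0}$. Intersecting the two containments gives $F^X(\Gamma'; T_0) \subseteq Z \cap C_{V,T_0} = Z'$.

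The final clause follows by applying the main statement with $U = N$ (trivially closed in itself, with $F^X(\Gamma; T_0) \subseteq C_{N,T_0}$ automatic), $\Gamma = \Sigma \times \{T_0\}$ for any closed $\Sigma \subseteq N$, and $V$ any smooth strictly $X$-mean-convex compact subset of $N$ (the hypothesis $V \subseteq \mathrm{int}(N) = N$ is vacuous). The resulting identity $Z \cap C_{V,T_0} = F^X(Z \cap \partial C_{V,T_0}; T_0)$ is exactly the ampleness criterion of \cref{ample-flow}. The argument has no serious obstacle; the only real subtlety is the bookkeeping that makes $\Gamma \cap C_{V,T_0}$ absorbable into $\Gamma'$, which is precisely where $V \subseteq \mathrm{int}(U)$ is used.
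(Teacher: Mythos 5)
Your proof is correct and follows essentially the same route as the paper: the forward inclusion $Z' \subseteq F^X(\Gamma';T_0)$ via \cref{WeakFlowLocalization} plus maximality, the reverse inclusion by intersecting the two containments from \cref{ExistBiggestFlow} and \cref{CylinderFlow}(1), and ampleness by specializing to $U = N$. The one place you go beyond the paper's exposition is in spelling out why the generating set $(Z \cap \partial C_{V,T_0}) \cup (\Gamma \cap C_{V,T_0})$ produced by \cref{WeakFlowLocalization} collapses to $\Gamma'$; the paper asserts this without comment, while you supply the needed observation that $V \subseteq \mathrm{int}(U)$ pushes $\Gamma \cap C_{V,T_0}$ into the time-$T_0$ slice and hence into $\Gamma'$, which is a worthwhile bit of bookkeeping the paper leaves implicit.
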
  
\begin{proof}
	By \cref{WeakFlowLocalization}, $Z'$ is a weak $X$-flow generated by $\Gamma'$ with starting time $T_0$ and so
	$Z' \subseteq F^X(\Gamma'; T_0).$
	Conversely, as
	$\Gamma' \subseteq F^X(\Gamma; T_0)$,
	\cref{ExistBiggestFlow} implies $F^X(\Gamma'; T_0)\subseteq F^X(\Gamma;T_0)$. Likewise, as 
	$\Gamma'\subseteq\partial C_{V,T_0},$
	 \cref{ExistBiggestFlow} and \cref{CylinderFlow} imply that
	\[
	F^X(\Gamma'; T_0)\subseteq F^X(\partial C_{V,T_0}; T_0)=C_{V,T_0}.
	\]
	Thus,
	\[
	F^X(\Gamma'; T_0)\subseteq F^X(\Gamma;T_0)\cap C_{V,T_0}=Z'.
	\]
	That is, $Z'=F^X(\Gamma';T_0)$. In particular, if $U=N$ and $\Gamma\subseteq N\times\{T_0\}$, it follows that $F^X(\Gamma;T_0)$ is ample as $V$ is arbitrary. 
\end{proof}

We are now ready to prove \cref{exhaustion-ample-flow}.
\begin{proof}[Proof of \cref{exhaustion-ample-flow}]
	First observe a straightforward modification of the proof of \cite[Theorem 24]{HershkovitsWhite} shows $Z$ is a weak $X$-flow starting from $T_0$. As $U_0$ has smooth boundary and is strictly $X$-mean-convex, a small tubular neighborhood of $U_0$ yields a subset $U\subseteq N$ that is a smooth and strictly $X$-mean-convex compact set satisfying $U_0\subseteq \mathrm{int}(U)$. Let 
	\begin{align*}
	\Gamma & =Z \cap \partial C_{U,T_0}, \\
	\Gamma_i & =Z_i \cap \partial C_{U,T_0}.
	\end{align*}
	We claim that the $\Gamma_i$ are spacetime $X$-mean-convex. 
	Indeed, by \cref{MCGenFlowProp} each $Z_i\subseteq C_{N,T_0}$ is strictly monotone and, by general topology, so are $Z_i\cap C_{U,T_0}\subseteq C_{U,T_0}$ and $\Gamma_i\cap C_{\partial U,T_0}\subseteq C_{\partial U,T_0}$. 
	The hypotheses ensure that $Z_i=F^X(Z_i(T_0)\times \set{T_0}; T_0)$.  Hence, by \cref{compact-ample-flow}, the $Z_i$ are ample in $U$; that is, 
	\begin{equation} \label{seq-ample}
	Z_i\cap C_{U,T_0}=F^X(\Gamma_i;T_0).
	\end{equation}
	As $U$ is a smooth and strictly $X$-mean-convex compact set, the $\Gamma_i$ are $X$-mean-convex spacetime sets.
	
	Next we claim that $\Gamma$ is a spacetime $X$-mean-convex. To see this, by \cref{compact-ample-flow}, Hypothesis (4) ensures that
	\begin{equation} \label{short-time-ample}
	\begin{split}
	(Z\cap C_{U,T_0})|_{[T_0,T_0+\eps_0]} & =(F^X(Z(T_0)\times\{T_0\};T_0)\cap C_{U,T_0})|_{[T_0,T_0+\eps_0]} \\
	& =F^X(\Gamma;T_0)|_{[T_0,T_0+\eps_0]}
	\end{split}
	\end{equation}
	and so, for $T_0<t<T_0+\eps_0$,
	\[
	F^X_t(\Gamma;T_0)=Z(t)\cap U \subseteq\mathrm{int}(Z(T_0))\cap U\subseteq \mathrm{int}_U(\Gamma(T_0)).
	\]
	That is, Item (2) of \cref{MCSpaceTimeDef} holds. 
	Likewise, as $U_0\subseteq\mathrm{int}(U)$, Hypothesis (3) ensures for $T_0\leq t_1<t_2$
	\[
	\Gamma(t_2)\cap \partial U=(Z(t_2)\setminus U_0)\cap \partial U \subseteq\mathrm{int}(Z(t_1))\cap \partial U\subseteq \mathrm{int}_{\partial U}(\Gamma(t_1)\cap \partial U).
	\]
	That is, $\Gamma\cap C_{\partial U,T_0}\subseteq C_{\partial U,T_0}$ is strictly monotone, verifying Item (1) of \cref{MCSpaceTimeDef}.
	Hence, as $U$ a smooth and strictly $X$-mean-convex compact set the claim is verified.

	%
	
	We next show
	\[
	\mathrm{int}_{\partial C_{U,T_0}} (\Gamma) = \bigcup_{i=1}^\infty \Gamma_{i}.
	\]
	To that end, by Hypothesis (1) and general topology
	\[
	\Gamma_i=Z_i\cap  \partial C_{U,T_0} \subseteq\mathrm{int}_{C_{N,T_0}}(Z_{i+1})\cap \partial C_{U,T_0}\subseteq \mathrm{int}_{C_{\partial U,T_0}}(Z_{i+1}\cap \partial C_{U,T_0}).
	\] 
	Hence,   $\Gamma_i\subseteq \mathrm{int}_{\partial C_{U,T_0}}(\Gamma_{i+1}).$
	As such, by Hypothesis (1) ,
	\[
	\bigcup_{i=1}^\infty \Gamma_i\subseteq\bigcup_{i=1}^\infty \mathrm{int}_{\partial C_{U,T_0}}(\Gamma_i)\subseteq \bigcup_{i=1}^\infty \Gamma_i=\partial C_{U,T_0}\cap \bigcup_{i=1}^\infty Z_i\subseteq \partial C_{U,T_0}\cap Z= \Gamma.
	\]
	As $\mathrm{int}_{\partial C_{U,T_0}}(\Gamma_i)$ are open subsets of $\partial C_{U,T_0}$, their union is open and so 
	$$
	\bigcup_{i=1}^\infty \Gamma_i\subseteq \bigcup_{i=1}^\infty \mathrm{int}_{\partial C_{U,T_0}}(\Gamma_i)\subseteq \mathrm{int}_{\partial C_{U,T_0}} ( \Gamma).
	$$ 
	
	We now show  the reverse inclusion.  Pick $P=(p,t)\in \mathrm{int}_{\partial C_{U,T_0}}(\Gamma)$.   When $t>T_0$, by Hypothesis (3), $Z\setminus C_{U_0,T_0}$ is closed and strictly monotone in $C_{N\setminus U_0,T_0}$. As $U\setminus U_0$ is a closed subset of the manifold $N\setminus U_0$ and $\partial_{N\setminus U_0} U=\partial U$, \cref{strictly-monotone-lem} implies
	\begin{equation*} 
	P\in	\mathrm{int}_{\partial C_{U,T_0}}(\Gamma)|_{(T_0,\infty)}\subseteq \mathrm{int}_{C_{N\setminus U_0,T_0}}(Z\setminus C_{U_0,T_0})|_{(T_0,\infty)}\subseteq\mathrm{int}_{C_{N, T_0}}(Z).
	\end{equation*}    
	When $t=T_0$ and $p\in \mathrm{int}(U)$, the construction implies $p\in\mathrm{int}(Z(T_0))$. Likewise, when $t=T_0$ and $p\in \partial U$,  $(p, T_0+\delta)\in \Gamma\subseteq Z$ for $\delta>0$ small and so Hypothesis (3) implies $p\in \mathrm{int}(Z(T_0))$.  By Hypothesis (4), $Z$ agrees with the biggest flow of $Z(T_0)$ in $N\times [T_0,T_0+\eps_0]$ and so, arguing as in the proof of \cref{CylinderFlow},  $P\in\mathrm{int}_{C_{N,T_0}}(Z)$. 
	Hence,  as $P\in \mathrm{int}_{C_{N,T_0}}(Z)\setminus C_{U_0, T_0+\eps}$, Hypothesis (2) implies $P\in Z_i$ for some $i$.  When $t=T_0$, this immediately implies $P\in \Gamma_i$.  When $t>T_0$, as $U$ is a smooth strictly $X$-mean-convex compact set,  Item (2) of Proposition \ref{CylinderFlow} and \eqref{seq-ample} imply $P\in \Gamma_i$.   This establishes the reverse inclusion and the claimed equality.

	By what we have shown above, it follows from \cref{MCLimitProp} and \eqref{seq-ample} that 
	\begin{equation} \label{InteriorEqn}
	\mathrm{int}_{C_{U,T_0}}(F^X(\Gamma;T_0))=\bigcup_{i=1}^\infty F^X(\Gamma_i;T_0)=\bigcup_{i=1}^\infty Z_i\cap C_{U,T_0}.
	\end{equation}
	By \cref{MCGenFlowProp}, as $\Gamma$ is spacetime $X$-mean-convex, $F^X(\Gamma;T_0)\subseteq C_{U,T_0}$ is strictly monotone. Taking the closure of \cref{InteriorEqn} and appealing to \cref{strictly-monotone-lem} yield 
	\[
	F^X(\Gamma;T_0)|_{(T_0,\infty)}=\mathrm{cl}(\mathrm{int}_{C_{U,T_0}}(F^X(\Gamma;T_0)))|_{(T_0,\infty)}=\mathrm{cl}\left(\bigcup_{i=1}^\infty Z_i\cap C_{U,T_0}\right)|_{(T_0,\infty)}.
	\]
	Hence, by Hypothesis (1) and \eqref{short-time-ample},
	\[
	F^X(\Gamma;T_0)=Z\cap C_{U,T_0}
	\]
	and so $Z$ is ample in $U$. Thus, by \eqref{InteriorEqn},
	\[
	\mathrm{int}_{C_{U,T_0}}(Z\cap C_{U,T_0})=\bigcup_{i=1}^\infty Z_i\cap C_{U,T_0}.
	\]
	As $U_0\subseteq\mathrm{int}(U)$, combining the equation above with Hypothesis (2) gives
	\[
	\mathrm{int}_{C_{N,T_0}}(Z)=\bigcup_{i=1}^\infty Z_i.
	\]
	Moreover, the strict monotonicity of $Z$ follows immediately from \cref{MCGenFlowProp} together with Hypothesis (3) and the fact $U_0\subseteq\mathrm{int}(U)$.
	
	As we have shown $Z$ is ample in $U$, it follows from \cref{compact-ample-flow} that $Z$ is ample in $U_0$. In general, when  $V\subseteq N$ is smooth and strictly $X$-mean-convex compact set, then the additional hypothesis on $N$ allows us to choose $U$ so $V\cup U_0\subseteq U$ and so the above argument implies $Z$ is ample in $V$ and hence is an ample flow.
\end{proof}

We conclude with two useful properties of strictly monotone ample flows. 
\begin{lem} \label{SliceLem}
	When $(N, X)$ is tame and $Z$ is a strictly monotone ample $X$-flow in $N$ starting from $T_0$,  
	\[
	\mathrm{int}_{C_{N,T_0}} (Z)=\bigcup_{t\geq T_0} \mathrm{int} (Z(t))\times\set{t} \mbox{ and }   \partial_{C_{N,T_0}} Z=\bigcup_{t\geq T_0} \partial Z(t) \times\set{t}.
	\]
\end{lem}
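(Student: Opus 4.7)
The strategy is to establish the first equality and deduce the second by complementation inside the closed set $Z$. The forward inclusion $\mathrm{int}_{C_{N,T_0}}(Z)\subseteq\bigcup_{t\geq T_0}\mathrm{int}(Z(t))\times\set{t}$ is automatic: any spacetime open set around $(p,t)$ restricts at time $t$ to a spatial open neighborhood of $p$ inside $Z(t)$.

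For the reverse inclusion, I fix $(p,t)$ with $p\in\mathrm{int}(Z(t))$ and $t\geq T_0$ and localize using ampleness. Tameness of $(N,X)$ combined with the computation in \cref{spherical-barrier} lets me choose $\rho>0$ small enough that $U=\bar{B}_\rho(p)$ is a smooth, strictly $X$-mean-convex compact set whose interior contains some $\bar{B}_r(p)\subseteq\mathrm{int}(Z(t))$. Ampleness then gives $Z\cap C_{U,T_0}=F^X(\Gamma;T_0)$ where $\Gamma=Z\cap\partial C_{U,T_0}$. I would next verify that $\Gamma$ is spacetime $X$-mean-convex in the sense of \cref{MCSpaceTimeDef}: Items (1) and (2) follow at once from the strict monotonicity of $Z$, and Item (3) is automatic because $U$ is smooth and strictly $X$-mean-convex compact.

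This puts $F^X(\Gamma;T_0)$ within the scope of \cref{MCGenFlowProp}. In particular, the continuous arrival time function $u\colon\Gamma(T_0)\to[T_0,\infty]$ constructed in its proof satisfies $\set{u\geq s}=Z(s)\cap U$ and, via the partition property, identifies $\set{u=t}\setminus(\partial_{\partial C_{U,T_0}}\Gamma)(t)$ with $\partial_U F^X_t(\Gamma;T_0)\setminus(\partial_{\partial C_{U,T_0}}\Gamma)(t)$. The main technical point is then to upgrade the trivial bound $u(p)\geq t$ to the strict inequality $u(p)>t$: since $p\in\mathrm{int}(U)$, it lies off the lateral piece $(\partial_{\partial C_{U,T_0}}\Gamma)(t)\subseteq\partial U$ (for $t>T_0$), while $p\in\mathrm{int}_U(Z(t)\cap U)$ keeps it out of $\partial_U F^X_t(\Gamma;T_0)$, and the partition forces $u(p)\neq t$. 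The edge case $t=T_0$ is handled separately by noting that $p\in\mathrm{int}(Z(T_0))\cap\mathrm{int}(U)$ sits in $\mathrm{int}_{\partial C_{U,T_0}}(\Gamma)$ and therefore avoids $(\partial_{\partial C_{U,T_0}}\Gamma)(T_0)$. Continuity of $u$ and monotonicity of its super-level sets then produce an $\eps>0$ and a spatial neighborhood $V\subseteq\mathrm{int}(U)$ of $p$ with $V\times[T_0,t+\eps]\subseteq Z$, exhibiting $(p,t)$ as a spacetime interior point of $Z$.

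The main obstacle is precisely this promotion of $u(p)\geq t$ to $u(p)>t$, which is where the partition property of \cref{MCGenFlowProp} and the choice of $p$ away from $\partial U$ are essential. Once the first equality is in hand, the boundary equality is immediate: since $Z$ is closed and each $Z(t)\subseteq N$ is closed,
\[
\partial_{C_{N,T_0}}Z=Z\setminus\mathrm{int}_{C_{N,T_0}}(Z)=\bigcup_{t\geq T_0}(Z(t)\setminus\mathrm{int}(Z(t)))\times\set{t}=\bigcup_{t\geq T_0}\partial Z(t)\times\set{t}.
\]
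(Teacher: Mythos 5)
Your proof is correct, but it follows a noticeably heavier route than the paper to the nontrivial inclusion $\bigcup_{t\geq T_0}\mathrm{int}(Z(t))\times\{t\}\subseteq\mathrm{int}_{C_{N,T_0}}(Z)$. The paper simply picks $\delta>0$ small enough that $\bar{B}_\delta(p)\subseteq Z(t)$ is a smooth strictly $X$-mean-convex compact set and the shrinking spheres of Proposition~\ref{spherical-barrier} give a strong barrier on $[t,t+\eps)$; ampleness at $U=\bar{B}_\delta(p)$ identifies $Z\cap C_{\bar{B}_\delta(p),T_0}$ as the biggest flow, and Theorem~\ref{ExistBiggestFlow} (maximality) then places the whole shrinking-ball barrier inside $Z$, giving $B_{\sqrt{\delta^2-c\eps}}(p)\subseteq Z(s)$ for $s\in[t,t+\eps)$; monotonicity finishes. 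You instead localize at a ball $U$ that need not be contained in $Z(t)$, verify $\Gamma = Z\cap \partial C_{U,T_0}$ is spacetime $X$-mean-convex (your reductions of Items (1)--(2) to strict monotonicity and Item (3) to the $X$-mean-convexity of $U$ are all sound), and then invoke the arrival-time function $u$ with $\{u\geq s\}=Z(s)\cap U$ together with the identity $\{u=t\}\setminus(\partial_{\partial C_{U,T_0}}\Gamma)(t)=\partial_U F^X_t(\Gamma;T_0)\setminus(\partial_{\partial C_{U,T_0}}\Gamma)(t)$. Your promotion of $u(p)\geq t$ to $u(p)>t$, including the $t=T_0$ edge case, and the concluding appeal to continuity and monotonicity are correct, and your derivation of the boundary identity from the interior one is the same as the paper's. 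Two remarks: first, the identities you cite are internal steps of the \emph{proof} of Proposition~\ref{MCGenFlowProp} rather than the stated conclusions, so a careful write-up would re-derive them or cite them explicitly as such; second, the paper's shrinking-sphere argument is shorter and only needs the barrier construction and the maximality clause of Theorem~\ref{ExistBiggestFlow}, whereas yours effectively re-runs part of the mean-convex-flow machinery --- both work, but there is no payoff here for the extra weight.
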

\begin{proof}
	As the projection onto $N$, $(p,t)\in C_{N,T_0}\mapsto p$ is an open map, one has 
	\[
	\mathrm{int}_{C_{N,T_0}}(Z)\subseteq\bigcup_{t\geq T_0} \mathrm{int}(Z(t))\times\set{t}.
	\]
	Let $t\geq T_0$ and $p\in\mathrm{int}(Z(t))$. By \cref{spherical-barrier}, there are small $\delta,\eps>0$ and large $c>0$ so that the family of shrinking spheres
	\[
	s\in [t,t+\eps)\mapsto \bar{B}_{\sqrt{\delta^2-c(s-t)}}(p)
	\]
	is a strong $X$-flow barrier so it is a weak $X$-flow, and $\bar{B}_\delta(p)\subseteq Z(t)$ is a smooth and strictly $X$-mean-convex compact set. The ampleness of $Z$ gives $Z\cap C_{\bar{B}_{\delta}(p),T_0}$ is the biggest flow generated by $Z\cap \partial C_{\bar{B}_{\delta}(p),T_0}$. Thus,  \cref{ExistBiggestFlow} implies that, for $s\in [t,t+\eps)$,
	\[
	B_{\sqrt{\delta^2-c\eps}}(p)\subseteq\bar{B}_{\sqrt{\delta^2-c(s-t)}}(p)\subseteq Z(s)
	\]
	As $Z$ is strictly monotone, one has $B_{\sqrt{\delta^2-c\eps}}(p)\subseteq Z(s)$ for all $s\in [T_0,t]$. Hence, $B_{\sqrt{\delta^2-c\eps}}(p)\times [T_0,t+\eps)\subseteq Z$ and so $(p,t)\in\mathrm{int}_{C_{N,T_0}}(Z)$. As $p,t$ are arbitrary, 
	\[
	\mathrm{int}_{C_{N,T_0}}(Z)\supseteq\bigcup_{t\geq T_0} \mathrm{int}(Z(t))\times\set{t}.
	\]
	This proves the first identity and the second follows immediately. 
\end{proof}

\section{Proof of \cref{weak-theorem}} \label{ProofWeakThmSec}
This section is devoted to the proof of \cref{weak-theorem}. We restrict attention to the expander setting and take  $X=-\frac{\mathbf{x}}{2}$ and $N=\mathbb{R}^{n+1}$. In this case, the $X$-mean-curvature flow is precisely the EMCF, \cref{RSMCF}, and static solutions of the flow are self-expanders. A fact we use throughout is that in this setting there is a correspondence between the biggest $X$-flows (resp. Brakke $X$-flows) and the usual level set flow associated to mean curvature flow (resp. Brakke flows) via the change of variables 
\begin{align}\label{ChangeOfVariables}
	(\mathbf{x},t)\mapsto (e^{\frac{t}{2}}\mathbf{x}, e^t)
\end{align}
-- see \cite[Section 13]{HershkovitsWhite} and \cite[6.3 and 10.3]{IlmanenMAMS} for additional discussion.

Fix a smooth and strictly $X$-mean-convex closed set $\Omega_0\subseteq \Real^{n+1}$ so the smooth hypersurface, $\partial \Omega_0$, is $C^3$-asymptotic to a $C^3$-regular cone $\mathcal{C}$.  We will construct the flow of \cref{weak-theorem} by taking an appropriate exhaustion of $\Omega_0$ by  compact subsets and proving regularity of their limits.  
To that end, pick a sequence $R_i\to\infty$ and smooth the corners of $\Omega_0\cap \bar{B}_{R_i}$ as in \cref{RoundingProp}.  This yields a sequence of smooth and strictly $X$-mean-convex compact sets $\Omega^i_{0}\subseteq\mathrm{int}(\Omega_0)$ such that $\partial\Omega^i_{0}\to\partial\Omega_0$ in $C^\infty_{loc}(\mathbb{R}^{n+1})$ and $\Omega^i_{0}\subseteq \mathrm{int}(\Omega^{i+1}_{0})$. Let $\Omega_i=F^X(\Omega^i_{0}\times\{0\};0)$ be the biggest $X$-flow of $\Omega^i_{0}$ with starting time $0$ and $M_i=\partial_{C_{\mathbb{R}^{n+1},0}}\Omega_i$. Thus,  by \cite[Theorems 26 and 30]{HershkovitsWhite},
\begin{equation} \label{MonotoneExhaustEqn}
	\Omega_i\subseteq \mathrm{int}_{C_{\mathbb{R}^{n+1},0}}(\Omega_{i+1})
\end{equation} 
and, by \cite[Theorem 32]{HershkovitsWhite} and \cref{MCGenFlowProp}, $\Omega_i$ is strictly monotone with
\begin{equation} \label{NonFattenEqn}
	F^X(\partial\Omega^i_{0}\times\{0\};0)=\partial_{C_{\mathbb{R}^{n+1},0}}F^X(\Omega^i_{0}\times\{0\};0)=M_i. 
\end{equation} 
Using this sequence of closed spacetime sets, define
\begin{equation} \label{LimitFlowEqn}
\Omega=\mathrm{cl}(\bigcup_{i=1}^\infty \Omega_i) \mbox{ and } 	M=\partial_{C_{\mathbb{R}^{n+1},0}}\Omega.
\end{equation}
By \cite[Theorem 24]{HershkovitsWhite}, $\Omega$ is a weak $X$-flow with $\Omega(0)=\Omega_0$ and $M(0)=\partial \Omega_0$ a smooth hypersurface. The goal of this section is to establish regularity of $M$ as a spacetime set.

As in \cite{WhiteMCSize}, we analyze the flows of associated boundary measures. By \eqref{NonFattenEqn}, the flows $M_i$ do not fatten and so the dictionary with the usual level set flow means \cite[11.4]{IlmanenMAMS} and \cite{WhiteRegularity} directly imply that
\[
\mathcal{M}_i = \{\mathcal{H}^n \llcorner M_i(t)\}_{t \in [0,\infty)}
\]
are unit-regular integral Brakke $X$-flows. The compactness theorem of Brakke \cite[Chapter 4]{Brakke} (cf. \cite[7.1]{IlmanenMAMS}) implies that, up to passing to a subsequence, the $\mathcal{M}_i$ converge to an integral Brakke $X$-flow, $\mathcal{M}$. By White's local regularity theorem, \cite{WhiteRegularity}, as each $\mathcal{M}_i$ is unit regular, so is $\mathcal{M}$. Ultimately, we will establish that $\mathcal{M}(t)=\mathcal{H}^n\llcorner M(t)$.

%

We first establish some short time and asymptotic regularity of $M$.
\begin{prop} \label{infinity-regularity}
	There are $R_0 > 1$ and $\delta_0>0$ so that $M \setminus C_{\bar{B}_{R_0},\delta_0}$ is the spacetime track of a smooth $X$-mean curvature flow and away from $C_{\bar{B}_{R_0}, \delta_0}$, $M_i$ converge locally smoothly to $M$.  In this region, this flow agrees with the flow $\mathcal{M}$ in that
	$$
	 \mathcal{H}^n\llcorner M(t)= \mathcal{M}(t), t\in [0, \delta_0] \mbox{ and }
	 $$
	 $$	\mathcal{H}^n\llcorner (M(t)\setminus\bar{B}_{R_0}) = \mathcal{M}(t)\llcorner(\mathbb{R}^{n+1}\setminus\bar{B}_{R_0}), t\in [0, \infty).
	 $$
	Moreover, given $\eps>0$ there is a radius $R_\eps\geq R_0$ so that for , $M(t)\setminus \bar{B}_{R_0}$ can be written as a normal graph of a  $C^{2}$ function $w(\cdot, t)$ over some subset of $\mathcal{C}$ with 
	\begin{align*}
		\sum_{j=0}^2 |\mathbf{x}(p)|^{j-1}|\nabla^j_\mathcal{C} w(p,t)|\leq \eps.
	\end{align*}
\end{prop}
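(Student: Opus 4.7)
The plan is to combine short-time smoothness of the smooth $X$-flow from $\partial\Omega_0$ near $t=0$ with a quantitative graphical description of each $M_i$ over $\mathcal{C}$ near infinity, uniform in $i$, and then pass to the limit and identify the smooth picture with both $M$ and the Brakke flow $\mathcal{M}$. Exploiting that $\partial\Omega_0$ is $C^3$-asymptotic to $\mathcal{C}$, I would fix $R_1\gg 1$ so that $\partial\Omega_0\setminus B_{R_1}$ is a small $C^3$ normal graph over $\mathcal{C}$. Since the construction in \cref{RoundingProp} and the excerpt gives $\partial\Omega_0^i\to\partial\Omega_0$ in $C^\infty_{loc}$, for any small $\eps_0>0$ one can choose $R_0>R_1+1$ and $i_0$ so that, for $i\geq i_0$, $\partial\Omega_0^i\cap(B_{R_i-1}\setminus B_{R_0-1})$ is a normal $C^3$-graph over $\mathcal{C}$ of a function $u_0^i$ with $\Vert u_0^i\Vert_{C^3}\leq\eps_0$ uniformly in $i$; in particular the rounding cap is buried well inside $\bar B_{R_i}$.

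Classical short-time existence for the smooth $X$-flow of $\partial\Omega_0$ produces a $\delta_0>0$ and a smooth $X$-flow on $[0,\delta_0]$. Via the self-similar change of variables \eqref{ChangeOfVariables}, which sends $X$-flows to MCFs and $\mathcal{C}$ to its self-similarly expanding MCF, pseudo-locality for MCF combined with the above uniform graphicality yields smooth $X$-flows of $\partial\Omega_0^i$ defined on $\bigl(\mathbb{R}^{n+1}\times[0,\delta_0]\bigr)\cup\bigl(\{|x|\geq R_0\}\times[0,\infty)\bigr)$ with uniform $C^2$ bounds and converging in $C^\infty_{loc}$ as $i\to\infty$. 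Writing $M_i(t)\setminus B_{R_0}$ as a normal graph $u^i(\cdot,t)$ over $\mathcal{C}$, the graph function $u^i$ satisfies a quasilinear parabolic equation whose drift term $\tfrac{1}{2}(\mathbf{x}\cdot\nabla_\mathcal{C} u - u)$ is of Ornstein--Uhlenbeck type and supplies the required decay; $|\mathbf{x}|^{-1}$-type supersolutions together with Schauder interior estimates then yield, for each $\eps>0$, an $R_\eps\geq R_0$ with
\[
\sum_{j=0}^2 |\mathbf{x}(p)|^{j-1}|\nabla^j_\mathcal{C} u^i(p,t)|\leq \eps \quad\text{on } \mathcal{C}\setminus B_{R_\eps},
\]
uniformly in $i$ and $t\geq 0$. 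By \eqref{NonFattenEqn} the smooth $X$-flow of $\partial\Omega_0^i$ coincides with $M_i$ wherever both are defined.

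Arzel\`a--Ascoli applied to the uniformly smooth $M_i$ on the complement of $C_{\bar B_{R_0},\delta_0}$ gives $C^\infty_{loc}$-convergence to a smooth $X$-flow which, by \eqref{MonotoneExhaustEqn} and \eqref{LimitFlowEqn}, must agree with $M$ there. The asymptotic graphical decay for $M$ follows by taking limits in the bound above. Since each $\mathcal{M}_i$ is a unit-regular integral Brakke $X$-flow and $M_i\to M$ smoothly on this region, Brakke compactness combined with \cite{WhiteRegularity} identifies $\mathcal{M}(t)$ with $\mathcal{H}^n\llcorner M(t)$ on the smooth region as asserted.

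The main obstacle I anticipate is establishing the smooth estimates uniformly in $i$ in the presence of the capping-off at radius $R_i$. This is handled by a pseudo-locality argument carried out in the rescaled MCF picture: the $C^3$-graphical closeness of $\partial\Omega_0^i$ to $\mathcal{C}$ on the annulus $B_{R_i-1}\setminus B_{R_0-1}$ from the first step yields interior parabolic estimates depending only on that local graphical data, and hence insensitive to the cap buried inside $\bar B_{R_0-1}$. Once the uniform exterior control is secured, passing to the smooth limit and upgrading to the Brakke identification are routine via White's local regularity.
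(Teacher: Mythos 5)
Your proposal is correct and follows essentially the same route as the paper: uniform $C^3$-graphicality of the $\partial\Omega_0^i$ over $\mathcal{C}$ on growing annuli, pseudo-locality after the change of variables \eqref{ChangeOfVariables} to the MCF picture, Arzel\`a--Ascoli and parabolic regularity to extract the smooth limit that must equal $M$, and White's local regularity to identify the smooth flow with $\mathcal{M}$. The only cosmetic difference is that you sketch a barrier/supersolution plus Schauder argument for the weighted asymptotic decay of the graph function, whereas the paper delegates this to a citation of pseudo-locality \`a la \cite[Proposition 3.3]{BWSmoothCompactness}.
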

\begin{proof}
   As $\Omega_0$ is asymptotically conical and smooth, the construction of the $\Omega_0^i$ and $M_0^i=\partial \Omega_0^i$ ensure that for any $\eta>0$ there is a radius $R_\eta>1$ and scale $r_\eta>0$ so the following holds:  When $R>R_\eps$, there is $i_0=i_0(R)$, so that, for $i>i_0$,
    \begin{enumerate}
    	\item When $p\in B_R\cap \partial \Omega_0^i$, $M_0^i\cap B_{r_0}(p)$ is a $C^{3}$ $\mathbf{n}_{M_0^i}(p)$-graph of size $\eta$ on scale $r_\eta$ -- see \cite[Definition 3.1]{BWSmoothCompactness};
    	\item $M_0^i\cap (B_{2R}\setminus \bar{B}_R)$ can be written as a normal graph of a $C^{2}$-function $w^{i}_0$ over (some subset of) $\mathcal{C}$ with $\sum_{j=0}^2 R^{j-1}|\nabla^j_\mathcal{C} w^{i}_0|<\eta$. 
    \end{enumerate}
 Using the dictionary between expander flows and standard mean curvature flows, we may use the pseudo-locality result of \cite{IlmanenNevesSchulze} to argue as in \cite[Proposition 3.3]{BWSmoothCompactness}. In particular,  for the given $\eps>0$ we may choose $\eta>0$ and $\delta=\delta(\eps)>0$ sufficiently small and obtain the following consequences for $R=R_\eps\geq R_\eta$, and $i\geq i_0(R)$:
     \begin{enumerate}
 	\item When $p\in B_R\cap M_i(0)$ and $t\in [0,\delta]$, $M_i(t)\cap B_{r_0}(p)$ is a $C^{2,\alpha}$ $\mathbf{n}_{M_i(0)}(p)$-graph of size $\eps$ on scale $r_\eta$;
 	\item For all $t\geq 0$, $M_i(t)\cap (B_{2R}\setminus \bar{B}_R)$ can be written as a normal graph of a $C^{2}$-function $w^{i}_0$ over (some subset of) $\mathcal{C}$ with $\sum_{j=0}^2 R^{j-1}|\nabla^j_\mathcal{C} w^{i}_0|<\eps$. 
 \end{enumerate}
Hence, up to passing to a subsequence and choosing $R_0$ appropriately large and $\delta_0$ appropriately small, the  Arzel\`{a}-Ascoli theorem and standard parabolic regularity theory theory imply the $M_i$ converge in $C^\infty_{loc}(\Real^{n+1}\times \Real \setminus C_{\bar{B}_{R_0}, \eps_0})$ to a smooth $X$-mean curvature flow which, by construction, must be
$
M\setminus C_{\bar{B}_{R_0}, \eps_0}.
$
This also implies the claimed equality of the measures and asymptotic regularity.
\end{proof}
We now apply \cref{exhaustion-ample-flow} with $Z_i=\Omega_i$ and $Z=\Omega$ to prove the following:
\begin{prop} \label{monotone-prop}
	The flow $\Omega$ is strictly monotone and ample. Furthermore, 
	\begin{enumerate}
		\item $M(t)=\partial\Omega(t)$ for each $t\geq 0$ and, for $0\leq a<b<\infty$, $\set{M(t)}_{t\in [a,b]}$ forms a partition of $\Omega(a)\setminus\mathrm{int}(\Omega(b))$;
		\item If there is a smooth closed set $\Omega'\subseteq \mathrm{int}(\Omega_0)$ so that $\partial \Omega'$ is self-expander asymptotic to a $C^3$-regular cone, $\mathcal{C}'$, then $\Omega'\subseteq \mathrm{int}(\Omega(t))$ for all $t\geq 0$.
	\end{enumerate}	
\end{prop}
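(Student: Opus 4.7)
The plan has three tasks: (i) apply \cref{exhaustion-ample-flow} to $\set{\Omega_i}\to\Omega$ to obtain strict monotonicity and ampleness; (ii) derive the slice identity $M(t)=\partial\Omega(t)$ and the partition property from \cref{SliceLem} via an arrival-time argument; and (iii) settle the avoidance claim by refining the choice of $\Omega_0^i$ and comparing a suitable union of weak flows against the maximality of the biggest flow $\Omega_i$.

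For (i), I take $Z_i=\Omega_i$, $Z=\Omega$, $T_0=0$, $U_0=\bar B_{R_0}$, and $\eps_0=\delta_0$ from \cref{infinity-regularity}. Hypothesis (1) of \cref{exhaustion-ample-flow} is exactly \eqref{MonotoneExhaustEqn} together with \eqref{LimitFlowEqn}. Hypothesis (2) follows because \cref{infinity-regularity} gives locally smooth convergence $M_i\to M$ outside $C_{\bar B_{R_0},\delta_0}$, so any spacetime interior point of $\Omega$ outside this cylinder has an open neighborhood in $\Omega$ that is captured by some $\Omega_j$. Hypothesis (3), strict inward motion outside $\bar B_{R_0}$, follows because the classical smooth $X$-flow of a strictly $X$-mean-convex hypersurface preserves strict $X$-mean-convexity (by the evolution of $H^X$) and hence is strictly monotone, and \cref{infinity-regularity} identifies $M$ as such a smooth flow in this region. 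Hypothesis (4) combines short-time existence of the classical smooth $X$-flow out of the smooth, strictly $X$-mean-convex $\partial\Omega_0$, the standard matching of smooth and biggest flows on a short interval (through the dictionary with level set flow), and pseudo-locality as in the proof of \cref{infinity-regularity} to identify $\Omega$ with this smooth flow on $[0,\delta_0]$; the inclusion $\Omega(t)\subseteq\mathrm{int}(\Omega_0)$ for $0<t<\delta_0$ is immediate from strict $X$-mean-convexity. Since every compact subset of $\Real^{n+1}$ is contained in a smooth, strictly $X$-mean-convex closed ball, \cref{exhaustion-ample-flow} gives that $\Omega$ is strictly monotone, ample, and $\mathrm{int}(\Omega)=\bigcup_i\Omega_i$.

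Task (ii) begins with $M(t)=\partial\Omega(t)$, which is the second identity of \cref{SliceLem}. For the partition property, fix $0\leq a<b<\infty$ and $p\in\Omega(a)\setminus\mathrm{int}(\Omega(b))$, and set $\tau(p)=\sup\set{t\in[a,b]\mid p\in\Omega(t)}$. Monotonicity and closedness of $\Omega$ give $p\in\Omega(\tau(p))$, strict monotonicity together with the spacetime-interior identity of \cref{SliceLem} rules out $p\in\mathrm{int}(\Omega(\tau(p)))$ (otherwise $p$ would persist in $\Omega$ slightly past $\tau(p)$), so $p\in M(\tau(p))$. Distinct $M(t)$ are disjoint by strict monotonicity.

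Task (iii) is the main obstacle because $M(t)$ and $\partial\Omega'$ are both non-compact and asymptotic to possibly identical cones. I plan to refine the construction of $\Omega_0^i$ so that, in addition to smoothing the corner of $\Omega_0\cap\bar B_{R_i}$ along $\partial B_{R_i}\cap\partial\Omega_0$ via \cref{RoundingProp}, we arrange $\Omega'\cap\bar B_{R_i-1}\subseteq\mathrm{int}(\Omega_0^i)$ and $\partial\Omega_0^i\cap\partial\Omega'=\emptyset$; both are achievable because $\Omega'\subseteq\mathrm{int}(\Omega_0)$ keeps $\partial\Omega'$ at a positive distance from $\partial\Omega_0$ on every bounded region, and the smoothing region can be localized to a thin tube disjoint from $\partial\Omega'$. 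The key observation is that $\hat\Omega_i:=\Omega_i\cup(\Omega'\times[0,\infty))$ is itself a weak $X$-flow: any smooth compact barrier in contact with $\hat\Omega_i$ must meet it either in $\Omega_i$ or in $\Omega'\times[0,\infty)$, each of which is a weak $X$-flow (the latter static, since $\partial\Omega'$ is a smooth self-expander, i.e., a classical $X$-flow), so the barrier condition is inherited. Localizing via \cref{WeakFlowLocalization} to the cylinder $C_{\bar B_{R_i-1},0}$, where the initial data of $\hat\Omega_i$ equals that of $\Omega_i$, and invoking the ample biggest-flow identification together with \cref{ExistBiggestFlow}, forces $\Omega'\cap\bar B_{R_i-1}\subseteq\Omega_i(t)$ for all $t\geq 0$. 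Applying strict monotonicity of $\Omega_i$ at a small time shift then upgrades this to $\Omega'\cap\bar B_{R_i-1}\subseteq\mathrm{int}(\Omega_i(t))$, and letting $i\to\infty$ together with $\mathrm{int}(\Omega_i(t))\subseteq\mathrm{int}(\Omega(t))$ yields $\Omega'\subseteq\mathrm{int}(\Omega(t))$. I expect the most delicate step to be the case $\mathcal{C}'=\mathcal{C}$, where arranging $\partial\Omega_0^i\cap\partial\Omega'=\emptyset$ for the smoothed corner requires carefully exploiting the separate asymptotic expansions of $\partial\Omega_0$ and $\partial\Omega'$ to the common cone $\mathcal{C}$.
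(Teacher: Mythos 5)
Tasks (i) and (ii) of your proposal line up with the paper's argument: the verification of the four hypotheses of \cref{exhaustion-ample-flow} is essentially identical (the paper invokes \cref{BigFlowProp} to identify the weak flow with the smooth flow on $[0,\delta_0]$, which is what your ``standard matching'' step needs to be), and your direct arrival-time derivation of the partition property is a valid alternative to the paper's citation of ampleness plus \cref{MCGenFlowProp}.

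Task (iii), however, has a genuine gap. The claim that $\Omega'\cap\bar B_{R_i-1}\subseteq\Omega_i(t)$ for all $t\geq 0$ cannot hold: $\Omega_i=F^X(\Omega_0^i\times\{0\};0)$ is the biggest $X$-flow of the \emph{compact} strictly $X$-mean-convex set $\Omega_0^i$, and under the change of variables $(\mathbf{x},t)\mapsto(e^{t/2}\mathbf{x},e^t)$ it corresponds to the level set flow of the compact set $\Omega_0^i$, which vanishes in finite time. Thus $\Omega_i(t)=\emptyset$ for $t$ large, and so for fixed $i$ there is no hope of trapping the nonempty compact set $\Omega'\cap\bar B_{R_i-1}$ inside $\Omega_i(t)$ for all $t$. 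Your localization via \cref{WeakFlowLocalization} cannot remedy this: the generating set of $\hat\Omega_i\cap C_{\bar B_{R_i-1},0}$ includes the lateral piece $(\Omega'\cap\partial B_{R_i-1})\times(0,\infty)$, which is precisely what you would need to already know is contained in $\Omega_i$ in order to invoke \cref{ExistBiggestFlow}, so the argument is circular.

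The missing idea is to exhaust $\Omega'$ (not $\Omega_0$) by compact sets $\Omega'_j\subseteq\mathrm{int}(\Omega_0)$ with $\partial\Omega'_j\to\partial\Omega'$ smoothly, and to use that although each $F^X(\Omega'_j\times\{0\};0)$ is a compact flow that eventually vanishes, their union over $j$ is exactly the static spacetime cylinder $\Omega'\times[0,\infty)$ --- this is the content of the argument in \cref{BigFlowProp} and relies on $\partial\Omega'$ being a self-expander. Each compact $\Omega'_j$ lies in $\Omega_0^i$ for $i$ large, so by \cref{ExistBiggestFlow} each $F^X(\Omega'_j\times\{0\};0)\subseteq\Omega_i\subseteq\Omega$, giving $\Omega'\times[0,\infty)\subseteq\Omega$, i.e., $\Omega'\subseteq\Omega(t)$ for every $t\geq 0$. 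Strict monotonicity of $\Omega$ then upgrades this to $\Omega'\subseteq\Omega(t')\subseteq\mathrm{int}(\Omega(t))$ by choosing any $t'>t$. Once you take this route, the extra refinements of $\Omega_0^i$ you propose (disjointness of $\partial\Omega_0^i$ from $\partial\Omega'$, and $\Omega'\cap\bar B_{R_i-1}\subseteq\mathrm{int}(\Omega_0^i)$) become unnecessary, and the delicate case $\mathcal C'=\mathcal C$ is handled automatically because the exhaustion of $\Omega'$ sidesteps any comparison of $\partial\Omega'$ with $\partial\Omega_0$ near infinity.
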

\begin{proof}
	For any $R>0$, in the expander setting $\bar{B}_R$ is strictly $X$-mean-convex.   We check that the remaining conditions of \cref{exhaustion-ample-flow} hold.  Hypothesis (1) of \cref{exhaustion-ample-flow} follows from \eqref{MonotoneExhaustEqn}. By \cref{infinity-regularity}, $M\setminus C_{\bar{B}_{R_0}, \delta_0}$ is the locally smooth limit of the $M_i$, and so
	\begin{align} \label{ExhaustEndEqn}
		&\phantom{{}={}}\mathrm{int}_{C_{\mathbb{R}^{n+1},0}}(\Omega)\setminus C_{\bar{B}_{R_0},\delta_0} = (\Omega \setminus M) \setminus C_{\bar{B}_{R_0},\delta_0} \nonumber  = \left(\bigcup_{i=1}^\infty \Omega_i\right)\setminus C_{\bar{B}_{R_0},\delta_0},
	\end{align}
	That is, Hypothesis (2) of \cref{exhaustion-ample-flow} holds with $U_0 = \bar{B}_{R_0}$.

	By construction, $\Omega$ is monotone. By \cref{infinity-regularity}, $\partial \Omega(t)\setminus \bar{B}_{R_0}={M}(t)\setminus \bar{B}_{R_0}$, and thus the $X$-mean-curvature of ${M}(t)\setminus \bar{B}_{R_0}$ points into $\Omega(t)$ for each $t\geq 0$. As $\Omega(0)=\Omega_0$ is strictly $X$-mean-convex, the strong maximum principle implies the $X$-mean-curvature of ${M}(t)\setminus \bar{B}_{R_0}$ vanishes nowhere for all $t\geq 0$. It follows that, for $t_2>t_1\geq 0$,
	\[
	\Omega(t_2)\setminus \bar{B}_{R_0}\subseteq\mathrm{int}(\Omega(t_1))
	\]
	verifying Hypothesis (3) of \cref{exhaustion-ample-flow}.
	
	Likewise, by \cref{BigFlowProp}, as $\partial \Omega_0$ is smooth and asymptotically conical,
	\[
	\Omega|_{[0,\delta_0]}=F^X(\Omega_0\times\{0\};0)|_{[0,\delta_0]}.
	\]
	By the strong maximum principle, as $\Omega(0)=\Omega_0$ is strictly $X$-mean-convex, so is $\Omega(t)$ for each $t\in [0,\delta_0]$. Thus $\Omega(t_2)\subseteq \mathrm{int}(\Omega(t_1))$ for all $0\leq t_1<t_2\leq\delta_0$. That is, Hypothesis (4) of \cref{exhaustion-ample-flow} holds. 
	
	As such, $Z_i=\Omega_i$ and $Z=\Omega$ satisfy all the hypotheses of \cref{exhaustion-ample-flow}, and so we conclude that $\Omega$ is strictly monotone and ample with 
	\begin{equation} \label{IntClosureEqn}
		\mathrm{int}_{C_{\mathbb{R}^{n+1},0}}(\Omega)=\bigcup_{i=1}^\infty \Omega_i.
	\end{equation}
By \cref{SliceLem}, $M(t)=\partial\Omega(t)$ for each $t\geq 0$ and so, as $\Omega$ is ample,  \cref{MCGenFlowProp} and the asymptotic properties of $M$ ensure, $\{M(t)\}_{t \in [a,b]}$ forms a partition of $\Omega(a)\setminus\mathrm{int}(\Omega(b))$. \par 
	
	To show Item (2), observe that there is a sequence of compact sets $\Omega'_j\subseteq\Omega'\subseteq\mathrm{int}(\Omega_0)$ so that $\Omega'_{j}\subseteq\Omega'_{j+1}$ and $\partial\Omega'_j\to\partial\Omega'$ in $C^\infty_{loc}(\mathbb{R}^{n+1})$. By \cite[Theorem 31]{HershkovitsWhite}, $F^X(\Omega'_j\times\{0\};0)$ agrees with the compact region bounded by the spacetime track of the smooth $X$-mean-curvature flow of $\partial\Omega'_i$ as long as the latter exists.  Arguing as in the proof of \cref{BigFlowProp}, one obtains
	\[
\mathrm{cl}\left(	\bigcup_{j=1}^\infty F^X(\Omega'_j\times\{0\};0)\right)=\Omega'\times [0,\infty).
	\]
	As the $\Omega'_j$ are compact and contained in $\mathrm{int}(\Omega_0)$ it follows from the construction of the $\Omega_0^i$ that for any fixed $j$, there is a sufficiently large $i$ so that $\Omega_j'\subseteq \Omega_0^i$. Thus, by \cite[Theorem 19]{HershkovitsWhite} and \eqref{IntClosureEqn},
	\[
	\Omega'\times [0,\infty)=\mathrm{cl}\left(\bigcup_{j=1}^\infty F^X(\Omega'_j\times\{0\};0)\right) \subseteq\mathrm{cl}\left(\bigcup_{i=1}^\infty \Omega_i\right)=\Omega.
	\]
	Finally, the hypothesis ensures $\Omega'\subseteq \mathrm{int}(\Omega(0))$.  Moreover, as $\Omega$ is strictly monotone and ample, \cref{SliceLem} implies $\Omega'\subseteq\mathrm{int}(\Omega(t))$ for $t> 0$, proving the claim.
\end{proof}
Next we adapt the arguments of the main theorems of \cite{WhiteMCSize, WhiteMCNature} to establish the regularity of $M$. A subtle, but important, issue is that, \emph{a priori}, it is not known that $t\in [0,\infty)\mapsto \mathcal{H}^n\llcorner M(t)$ is an integral Brakke $X$-flow. In \cite[Theorem 5.1]{WhiteMCSize}, this is established in the closed setting using the fact that the flow does not fatten and a result of Ilmanen \cite[11.4]{IlmanenMAMS}.   It is likely the elliptic regularization procedure for flows with boundary as sketched in \cite[Section 5]{WhiteSubsequent} could be used to this end.  However, in order to present a more self-contained argument we instead show the arguments of \cite{WhiteMCSize} apply with minor modifications to the flow $\mathcal{M}$.

We first show $M$ coincides with the support of the Brakke $X$-flow $\mathcal{M}$. For a family $t\mapsto\mathcal{N}(t)$ of Radon measures on $\mathbb{R}^{n+1}$, the \emph{support of $\mathcal{N}$} is the smallest closed spacetime set $\spt(\mathcal{N})$ such that for each $t$, $\mathcal{N}(t)$ is supported in $\{p\in\mathbb{R}^{n+1}\mid (p,t)\in\spt(\mathcal{N})\}$. Recall that, for a spacetime point $(p,t)$, the Gaussian density of $\mathcal{N}$ at $(p,t)$ is:
\[
\Theta(\mathcal{N},(p,t))=\lim_{r\downarrow 0} (4\pi r^2)^{-\frac{n}{2}} \int e^{-\frac{|\mathbf{x}-\mathbf{x}(p)|^2}{4r^2}} \, d\mathcal{N}(t-r^2)
\] 
\begin{prop} \label{SupportProp}
	If $t>0$ then $(p,t)\in \spt(\mathcal{M})\iff (p,t)\in M$.
\end{prop}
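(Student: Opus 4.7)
The plan is to verify the two inclusions separately, using strict monotonicity of the exhaustion $\{\Omega_i\}$ together with standard semicontinuity properties of limits of integral Brakke flows.

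For the inclusion $M \subseteq \spt(\mathcal{M})$ at positive times, fix $(p,t) \in M$ with $t>0$. \cref{monotone-prop} gives $M(t)=\partial\Omega(t)$, so the strict monotonicity of $\Omega$ places $p$ in $\mathrm{int}(\Omega(s))$ for $s<t$ and outside $\Omega(s)$ for $s>t$. Set
\[
t_i=\sup\{s \geq 0 : p \in \Omega_i(s)\}.
\]
Applying \cref{SliceLem} to the ample, strictly monotone flow $\Omega$ and combining with the identity $\mathrm{int}_{C_{\mathbb{R}^{n+1},0}}(\Omega)=\bigcup_i\Omega_i$ established in the proof of \cref{monotone-prop}, any $s\in(0,t)$ satisfies $(p,s)\in\Omega_{i_0(s)}$ for some $i_0(s)$; propagating through \eqref{MonotoneExhaustEqn} yields $p\in\Omega_i(s)$ for every $i\geq i_0(s)$, hence $\liminf_i t_i\geq t$. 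Conversely $\Omega_i(s)\subseteq\Omega(s)$ forces $t_i\leq s$ for every $s>t$, so $t_i \to t$. Since $\Omega_i$ is closed, $(p,t_i)\in\Omega_i$. If $p$ lay in $\mathrm{int}(\Omega_i(t_i))$, \cref{SliceLem} applied to $\Omega_i$ would place $(p,t_i)$ in $\mathrm{int}_{C_{\mathbb{R}^{n+1},0}}(\Omega_i)$, contradicting the maximality of $t_i$; hence $(p,t_i)\in M_i$. Because $\mathcal{M}_i$ is a unit-regular integral Brakke flow, $\Theta(\mathcal{M}_i,(p,t_i))\geq 1$, and the joint upper semicontinuity of Gaussian density under Brakke convergence (via Huisken monotonicity) then yields $\Theta(\mathcal{M},(p,t))\geq 1$, so $(p,t)\in\spt(\mathcal{M})$.

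For the reverse inclusion, suppose $(p,t)\in\spt(\mathcal{M})$ with $t>0$ and, for contradiction, $(p,t)\notin M$. As $t>0$, this forces either $(p,t)\in\mathrm{int}_{C_{\mathbb{R}^{n+1},0}}(\Omega)$ or $(p,t)\notin\Omega$. In the first case, the exhaustion identity from \cref{monotone-prop} gives $(p,t)\in\Omega_{i_0}$ for some $i_0$, and \eqref{MonotoneExhaustEqn} supplies $\delta>0$ with $\bar{B}_\delta(p)\times[t-\delta,t+\delta]\subseteq\mathrm{int}_{C_{\mathbb{R}^{n+1},0}}(\Omega_{i_0+1})$. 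Iterating \eqref{MonotoneExhaustEqn} and applying \cref{SliceLem} to each $\Omega_j$ places this ball in $\mathrm{int}(\Omega_j(s))$ for every $j\geq i_0+2$ and every $s\in[t-\delta,t+\delta]$, so in particular disjoint from $M_j(s)$. In the second case, closedness of $\Omega$ yields a spacetime ball about $(p,t)$ disjoint from $\Omega \supseteq M_j$ for all $j$. Either way, $\mathcal{M}_j$ vanishes on a fixed open spacetime neighborhood of $(p,t)$ for all large $j$; the lower semicontinuity of mass on open sets under Brakke convergence then forces $\mathcal{M}$ to vanish on that neighborhood, contradicting $(p,t)\in\spt(\mathcal{M})$.

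The main technical ingredient I expect to cite rather than reprove is the pair of semicontinuity statements for limits of integral Brakke flows: the upper semicontinuity of Gaussian density at varying spacetime base points, and the fact that open regions on which the approximating flows vanish remain void in the limit. Both are standard in the classical mean curvature flow setting, following from Brakke's compactness theorem, Huisken's monotonicity formula, and White's regularity theorem (cf.~\cite{WhiteMCSize}), and they transfer to the expander setting through the change of variables \eqref{ChangeOfVariables} exactly as in \cite[Section 13]{HershkovitsWhite}. A secondary point of care is the derivation of $t_i\to t$, which uses in an essential way both the spatial strict monotonicity of $\Omega(t)$ and the exhaustion identity $\mathrm{int}_{C_{\mathbb{R}^{n+1},0}}(\Omega)=\bigcup_i\Omega_i$ from \cref{monotone-prop}; once this is in hand the rest is a clean contradiction argument on each side.
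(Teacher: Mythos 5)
Your argument for $\spt(\mathcal{M})\subseteq M$ at positive times is sound and is essentially the paper's, though the paper first records $\spt(\mathcal{M})\subseteq\Omega$ (since $\spt(\mathcal{M})$ is a weak $X$-flow and $\Omega$ is the biggest one) and hence only has one case to handle. The route you take for $M\subseteq\spt(\mathcal{M})$ is a genuinely different decomposition: you track the arrival times $t_i$ of the fixed point $p$ for the compact flows $\Omega_i$, whereas the paper perturbs to points $p_i'\in\mathrm{int}(\Omega(t/2))\setminus\Omega(t)$ converging to $p$ and applies the partition property to those. Your derivation of $t_i\to t$ and $(p,t_i)\in M_i$ is correct, relying on ampleness, strict monotonicity and \cref{SliceLem} for each $\Omega_i$, and is arguably cleaner than the paper's.

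There is, however, a genuine gap at the pivotal density step. You write that $\Theta(\mathcal{M}_i,(p,t_i))\geq 1$ ``because $\mathcal{M}_i$ is a unit-regular integral Brakke flow.'' Unit-regularity gives the converse implication (Gaussian density one forces local smoothness); it does not by itself yield a density lower bound at a point of $M_i$. What you actually need is the chain $(p,t_i)\in M_i\Rightarrow(p,t_i)\in\spt(\mathcal{M}_i)\Rightarrow\Theta(\mathcal{M}_i,(p,t_i))\geq 1$. The second implication is the standard lower density bound for integral Brakke flows from Huisken monotonicity. The first is not automatic from $\mathcal{M}_i(s)=\mathcal{H}^n\llcorner M_i(s)$: a priori the boundary $M_i$ of the level-set flow could contain lower-dimensional pieces carrying no $\mathcal{H}^n$ mass, in which case a point of $M_i$ need not lie in $\spt(\mathcal{M}_i)$. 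Ruling this out is precisely the content of \cite[Theorem 5.3]{WhiteMCSize} (transferred to the expander setting), which rests on the one-sided minimizing property of mean-convex level-set flows and a dimension-reduction argument; the paper's own proof begins by invoking exactly this for each compact flow $\Omega_i$. Without supplying that ingredient your argument does not close.
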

\begin{proof}
	As $\Omega_i$ is strictly monotone and $\mathcal{M}_i(s)=\mathcal{H}^n\llcorner M_i(s)$ for $s\geq 0$, the proof of \cite[Theorem 5.3]{WhiteMCSize} also implies that when $s>0$
	\begin{equation} \label{SupportExhaustEqn}
		(q,s)\in \spt(\mathcal{M}_i) \iff (q,s)\in M_i.
	\end{equation}
	As $\spt(\mathcal{M})$ is a weak $X$-flow $\spt(\mathcal{M})\subseteq \Omega$.  Suppose that $(p,t)\in\spt(\mathcal{M})$, but $(p,t) \in \mathrm{int}_{C_{\R^{n+1},0}}(\Omega)$. By \eqref{IntClosureEqn}, this means  $(p,t)\in \Omega_{i_0}$ for some $i_0$ and so, by  \eqref{MonotoneExhaustEqn}, $(p,t)\in \mathrm{int}_{\Real^{n+1}, 0}(\Omega_{j})$ for all $j> i_0$. As $(p,t) \in \spt(\mathcal{M})$ and $\mathcal{M}_i\to\mathcal{M}$ as Brakke $X$-flows, there is a sequence $(p_i,t_i)\in\spt(\mathcal{M}_i)$ converging to $(p,t)$. By \cref{SupportExhaustEqn}, $(p_i,t_i)\in M_i$, however, for $i> i_0+1$, $ \mathrm{int}_{\Real^{n+1}, 0}(\Omega_{i_0+1})$ is disjoint from $M_i$, which yields a contradiction.
	
	For the other direction, suppose $(p,t)\in M$. 
 By the strict monotonicity shown in \cref{monotone-prop}, we may choose $p_i'\in \mathrm{int}(\Omega(\frac{t}{2}))\setminus \Omega(t)$ so $p_i'\to p$. By construction, $p_i'\not\in \Omega_j(t)$ for all $j$, but, for large enough $j$, $p_i'\in \Omega_j(\frac{t}{2})$ and so the partition property of the $M_j$ implies there is a $t_i'\in (\frac{t}{2},t)$ so $(p_i',t_i')\in M_{j'}$ where $j'$ depends on $i$.  As $p_i'\to p$ and $[\frac{t}{2}, t]$ is compact, up to passing to a subsequence, $(p_i', t_i')\to(p,t') \in \Omega$. By the partition property of $M$ shown in \cref{monotone-prop},   $t'=t$ as otherwise $(p,t')\in \mathrm{int}(\Omega)$ which contradicts choice of $(p_i', t_i')$ for large $i$. Hence,  \eqref{SupportExhaustEqn} gives $(p'_i,t'_i)\in \spt(\mathcal{M}_{j'})$ and so, by monotonicity \cite[Section 11]{WhiteStrat}, $\Theta(\mathcal{M}_{j'}, (p'_i,t'_i))\geq 1$. By the upper semicontinuity of Gaussian density, we have $\Theta(\mathcal{M}, (p,t))\geq 1$ and so, by construction, $(p,t)\in\spt(\mathcal{M})$.
\end{proof}	
Next we establish a multiplicity bound for $\mathcal{M}$. This uses the following one-sided minimization property with respect to the functional $E$, \cref{expander-functional}, for strictly monotone ample $X$-flows -- and is a simple adaptation of \cite[Theorem 3.5]{WhiteMCSize}.  
\begin{lem} \label{OneSideMinimizeLem}
	Let $Z\subseteq C_{\mathbb{R}^{n+1},0}$ be a strictly monotone ample $X$-flow with starting time $0$ and assume $Z(0)$ is strictly $X$-mean-convex. When $t>0$:  
	\begin{enumerate}
		\item Any locally $E$-minimizing compact hypersurface,  $\Lambda\subseteq \mathrm{int}(Z(0))$ with boundary $\gamma\subseteq Z(t)$ satisfies $\Lambda\subseteq Z(t)$. 
		Here $\Lambda$ is locally $E$-minimizing means $\Lambda\setminus \gamma$ can be covered by balls $B$ so that $\Lambda\cap B$ is minimizing for the functional $E$. 
		\item A closed set, $V$, with $Z(t)\subseteq V\subseteq \mathrm{int}(Z(0))$ and $\mathrm{cl}(V\setminus Z(t))$ contained in a ball $B_R$, satisfies $E[\partial V\cap\bar{B}_R]\geq E[\partial Z(t)\cap \bar{B}_R]$.
	\end{enumerate}
\end{lem}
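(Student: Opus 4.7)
The two parts are tightly linked: (2) expresses outward $E$-minimality of $\partial Z(t) \cap \bar{B}_R$, while (1) follows from (2) via a competitor swap. My strategy is to adapt \cite[Theorem 3.5]{WhiteMCSize} to the expander setting, replacing area with $E$ and carrying out a calibration using the weight $e^{|\mathbf{x}|^2/4}$.

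For part (2), the key structural input is the partition property: since $Z$ is strictly monotone and ample, the argument of \cref{monotone-prop} (apply \cref{MCGenFlowProp} to $Z$ restricted to a large ball) shows that $\{\partial Z(s)\}_{s \in [0,t]}$ partitions $Z(0) \setminus \mathrm{int}(Z(t))$, and the strict $X$-mean-convexity of $Z(0)$ propagates to each smooth portion of $\partial Z(s)$ via short-time smoothness and the strong maximum principle. Let $\xi$ denote the outward unit normal to this foliation, extended to the swept-out region. A direct computation using $\Div\xi=H$ (the scalar mean curvature of the leaf with respect to $\xi$) and $\nabla e^{|\mathbf{x}|^2/4} = \tfrac{\mathbf{x}}{2}e^{|\mathbf{x}|^2/4}$ gives
\[
\Div\bigl(\xi\, e^{|\mathbf{x}|^2/4}\bigr) = (H - X\cdot\xi)\, e^{|\mathbf{x}|^2/4} \geq 0,
\]
where the inequality uses strict $X$-mean-convexity of the leaves. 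Applying the divergence theorem to $\mathrm{cl}(V\setminus Z(t))$, the $\partial B_R$ contribution vanishes since $V$ and $Z(t)$ agree there, the $\partial Z(t)$ boundary integral gives exactly $-E[\partial Z(t) \cap \bar{B}_R]$ (as $\xi=\mathbf{n}_{Z(t)}$), and the $\partial V$ contribution is bounded above by $E[\partial V \cap \bar{B}_R]$ since $|\xi\cdot\mathbf{n}_V|\leq 1$. Rearranging yields the claimed inequality.

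For part (1), suppose for contradiction that $\Lambda \not\subseteq Z(t)$. Set $\Lambda_+ = \mathrm{cl}(\Lambda \setminus \mathrm{int}(Z(t)))$, a nonempty compact portion of $\Lambda$ with $\partial \Lambda_+ \subseteq \gamma \cup (\partial Z(t) \cap \Lambda)$. Since $\Lambda \subseteq \mathrm{int}(Z(0))$, $\Lambda_+$ together with a compact subdomain $\Sigma_0 \subseteq \partial Z(t)$ encloses an open region $W \subseteq \mathrm{int}(Z(0)) \setminus Z(t)$. Set $V = Z(t) \cup \mathrm{cl}(W)$ and pick $R$ with $\Lambda \subseteq B_R$; then $V$ satisfies the hypotheses of part (2), and after the common $\partial Z(t)$ pieces cancel this reads $E[\Lambda_+] \geq E[\Sigma_0]$. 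On the other hand, local $E$-minimality of $\Lambda$, with the comparison surface $\Sigma_0$ (which shares boundary with $\Lambda_+$), yields the reverse inequality $E[\Lambda_+] \leq E[\Sigma_0]$. Equality forces $\Lambda_+ = \Sigma_0$ by regularity of $E$-minimizers and the strong maximum principle applied to two $E$-critical hypersurfaces that touch tangentially, contradicting the existence of a point of $\Lambda_+$ in $\mathrm{int}(Z(0))\setminus Z(t)$.

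The main technical obstacle is justifying the calibration when $\partial Z(s)$ has singularities. One route is to restrict the divergence-theorem integral to the smooth portion of the foliation, using that the singular set of a mean-convex Brakke flow has codimension at least seven and hence vanishing $(n-1)$-dimensional Hausdorff measure, so it produces no boundary contribution. Alternatively, one can approximate $Z$ from below by the smooth compact $X$-mean-convex flows implicit in the ample construction (cf.\ \cref{exhaustion-ample-flow}), carry out the smooth calibration in each approximation, and pass to the limit using lower semicontinuity of $E$ and the partition property.
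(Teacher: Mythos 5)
Your proposal inverts the paper's logical order: you prove Item (2) first via a calibration argument and then deduce Item (1) as a corollary, whereas the paper proves Item (1) directly by an avoidance argument and then deduces Item (2) from Item (1). Concretely, the paper observes that since $X=-\tfrac{\mathbf{x}}{2}$, the static spacetime set $\bigcup_{s\geq 0}\Lambda\times\{s\}$ is a weak $X$-flow generated by $(\Lambda\times\{0\})\cup(\bigcup_{s>0}\gamma\times\{s\})$, uses the ampleness of $Z$ and \cref{MCGenFlowProp} to realize $\partial_{C_{\bar{B}_R,0}}(Z\cap C_{\bar{B}_R,0})$ as a weak $X$-flow, and then applies \cref{AvoidanceThm} to keep $\Lambda$ away from the moving boundary up to time $t$; Item (2) then follows by considering the $E$-minimizer $V'$ in the competitor class and invoking the Solomon--White strong maximum principle plus Item (1) to force $V'=Z(t)$. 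The paper's route therefore requires no regularity of the leaves $\partial Z(s)$ beyond what \cref{MCGenFlowProp} provides for set-theoretic flows.

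The genuine gap in your argument is the calibration step for Item (2). To apply the divergence theorem with the vector field $\xi e^{|\mathbf{x}|^2/4}$ you need the foliation $\{\partial Z(s)\}$ to be regular enough (at least $C^1$ with a well-defined unit normal a.e.\ and suitable boundary structure) in the swept-out region $\mathrm{cl}(V\setminus Z(t))$, and you need $\partial V$ itself to be regular enough to take a boundary trace. At the point in the paper where this lemma sits, neither is available: \cref{StrongRegularThm}, which establishes that the singular set has parabolic Hausdorff dimension $\leq n-1$ (and codimension $\geq 7$ in the appropriate sense), is proved \emph{after} and \emph{using} \cref{OneSideMinimizeLem} (via \cref{MultiplicityProp} and \cref{SheetingThm}). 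So your first proposed remedy---discard the singular set because it has codimension at least seven---is circular. Your second remedy---approximate by the compact $X$-mean-convex flows $Z_i$ from \cref{exhaustion-ample-flow} and calibrate there---does not close the gap either, because the $Z_i$ are biggest $X$-flows of compact sets and their leaves $\partial Z_i(s)$ can themselves develop singularities (e.g.\ neckpinches), so ``the smooth calibration in each approximation'' is not available. There is also a secondary looseness in your deduction of (1) from (2): the competitor $\Sigma_0\subseteq\partial Z(t)$ you compare $\Lambda_+$ against may fail to be a hypersurface where you need it to be, and the strict conclusion from equality in the minimizing comparison requires a maximum-principle argument that you would have to set up in a low-regularity setting, again before any regularity of $\partial Z(t)$ is known. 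The avoidance-principle route in the paper is designed precisely to bypass these regularity obstructions.
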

\begin{proof}
	Fix an $R>0$, and observe $\bar{B}_R$ is strictly $X$-mean-convex. By \cref{strictly-monotone-lem}, as $Z$ is strictly monotone and ample, $Z\cap C_{\bar{B}_{R},0}$ is an $X$-mean-convex flow generated by $Z\cap\partial C_{\bar{B}_{R},0}$ with starting time $0$. Let 
	\[
	Z'=\partial_{C_{\bar{B}_{R},0}} (Z\cap C_{\bar{B}_{R},0}) \mbox{ and } \Gamma'=\partial_{\partial C_{\bar{B}_{R},0}} (Z\cap\partial C_{\bar{B}_{R},0}).
	\]
	Then, by \cref{MCGenFlowProp} and \cref{SliceLem}, $Z'$ is a weak $X$-flow generated by $\Gamma'$ and $\{Z'(s)\cap B_R\}_{s\in [0,t)}$ forms a partition of $(Z(0)\setminus Z(t))\cap B_{R}$.  As $X=-\frac{\mathbf{x}}{2}$, the closed set $\bigcup_{t\geq 0} \Lambda\times\{t\}$ is a weak $X$-flow generated by $(\Lambda\times\{0\})\cup(\bigcup_{t>0}\gamma\times\{t\})$ with starting time $0$. Enlarging $R$ if needed, the hypotheses ensure $\gamma \cap Z'(s)=\Gamma'(s)\cap \Lambda=\emptyset$ for all $s\in [0,t)$. It follows from \cref{AvoidanceThm} that $\Lambda\cap Z'(s)=\emptyset$ for all $s\in [0,t)$ and so $\Lambda\subseteq Z(t)$, proving Item (1). 
	
	To see Item (2), suppose $V'$ minimizes $E[\partial V'\cap\bar{B}_R]$ subject to $Z(t)\subseteq V'\subseteq Z(0)$ and $\mathrm{cl}(V'\setminus Z(t))\subseteq \bar{B}_R$. As $\bar{B}_R$ and $Z(0)$ are both smooth and strictly $X$-mean-convex, the strong maximum principle of Solomon--White \cite{WhiteSolomon} implies $V'\setminus Z(t)\subseteq\mathrm{int}(Z(0))\cap B_R$. Thus, by Item (1), $V'\subseteq Z(t)$ and so 
	\[
	E[\partial V\cap\bar{B}_R] \geq E[\partial V'\cap\bar{B}_R]=E[\partial Z(t)\cap \bar{B}_R]
	\]
	proving Item (2).
\end{proof}	
\begin{prop} \label{MultiplicityProp}
	Given $\beta\in (1,2)$ and $p\in \mathrm{int}(\Omega(0))$, there is an $r_0>0$ so that $B_{r_0}(p)\subseteq\mathrm{int}(\Omega(0))$ and that for $r<r_0$, $0<\eps<\frac{1}{4}$ and $t>0$, if $M(t)\cap B_r(p)$ is contained in a slab $S\subseteq B_{r}(p)$ of thickness $2\eps r$ passing through $p$, then $\Omega(t)\cap (B_{r}(p)\setminus S)$ consists of $0\leq k\leq 2$ of the two connected components of $B_r(p)\setminus S$ and 
	\[
	\mathcal{M}(t)(B_r(p))\leq \beta(2-k+2n\eps) \omega_nr^n
	\]
	where $\omega_n$ is the $n$-dimensional volume of a unit $n$-ball.
\end{prop}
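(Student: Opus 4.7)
My plan is to apply the one-sided $E$-minimization of \cref{OneSideMinimizeLem}(2) with a slab-filling competitor to the compact $X$-mean-convex flows $\Omega_i$ approximating $\Omega$, then pass to the limit using $\mathcal{M}_i\to\mathcal{M}$. First I would choose $r_0>0$ so that $\bar{B}_{r_0}(p)\subseteq\mathrm{int}(\Omega_0)$ and the oscillation ratio of $e^{|\mathbf{x}|^2/4}$ on $\bar{B}_{r_0}(p)$ is close enough to $1$ that, together with the lower-order corrections introduced below, it can be absorbed into the factor $\beta$. The topological claim that $k\in\{0,1,2\}$ is immediate from $M(t)\cap(B_r(p)\setminus S)=\emptyset$ and $M(t)=\partial\Omega(t)$: by connectedness each of the two components $B^\pm$ of $B_r(p)\setminus S$ must lie either entirely in $\mathrm{int}(\Omega(t))$ or entirely in $\Omega(t)^c$.

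For the mass bound, fix $0<r'<r$ and small $\eta>0$. Using the Hausdorff convergence $M_i\to M$ in spacetime together with the partition property of \cref{monotone-prop} applied to both $\Omega$ and the $\Omega_i$, I would argue that for all sufficiently large $i$, $M_i(t)\cap\bar{B}_{r'}(p)\subseteq S_\eta=\{x:|(x-p)\cdot\nu|\le(\eps+\eta)r\}$ and $\Omega_i(t)\cap(B_{r'}(p)\setminus S_\eta)$ consists of the same $k$ halves as for $\Omega(t)$. Introduce the competitor $V_i=\Omega_i(t)\cup(S_\eta\cap\bar{B}_{r'}(p))$, which satisfies the hypotheses of \cref{OneSideMinimizeLem}(2) applied to $Z=\Omega_i$ with $R=r'$. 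A direct computation yields that $\partial V_i\cap B_{r'}(p)$ is exactly the $(2-k)$ flat faces of $\partial S_\eta$ inside $B_{r'}(p)$, of total $\mathcal{H}^n$-area at most $(2-k)\omega_n(r')^n$, while $\partial V_i\cap\partial B_{r'}(p)$ is contained in $(S_\eta\cap\partial B_{r'}(p))\cup(\partial\Omega_i(t)\cap\partial B_{r'}(p))$, of which the first piece has $\mathcal{H}^n$-area at most $2n(\eps+\eta)\omega_n(r')^n(1+O(\eps+\eta))$ and the second is $\mathcal{H}^n$-null. Combining the one-sided $E$-minimization with the weight oscillation bound then gives
\[
\mathcal{H}^n(M_i(t)\cap\bar{B}_{r'}(p))\le\beta\bigl(2-k+2n(\eps+\eta)\bigr)\omega_n(r')^n.
\]

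Finally, the weak-$*$ convergence $\mathcal{M}_i\to\mathcal{M}$ as Radon measures together with openness of $B_{r'}(p)$ implies
\[
\mathcal{M}(t)(B_{r'}(p))\le\liminf_{i\to\infty}\mathcal{M}_i(t)(B_{r'}(p))=\liminf_{i\to\infty}\mathcal{H}^n(M_i(t)\cap B_{r'}(p))\le\beta\bigl(2-k+2n(\eps+\eta)\bigr)\omega_n(r')^n,
\]
and then sending $\eta\downarrow 0$ and $r'\uparrow r$ completes the bound. The principal technical obstacle is the approximation step: transferring the slab containment $M_i(t)\cap\bar{B}_{r'}(p)\subseteq S_\eta$ and the exact $k$-configuration from $\Omega$ to all $\Omega_i$ with $i$ large. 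This requires combining the spacetime Hausdorff convergence of $M_i\to M$ with the strict monotonicity and partition properties of both $\Omega$ and the $\Omega_i$ so as to rule out the $\Omega_i$ developing extra components in $B_{r'}(p)\setminus S_\eta$ absent from $\Omega(t)$.
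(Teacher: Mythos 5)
Your proposal is correct and follows essentially the same approach as the paper: approximate by the compact $X$-mean-convex flows $\Omega_i$, apply the one-sided $E$-minimization (Lemma \ref{OneSideMinimizeLem}(2)) with the slab-filling competitor, control the weight by shrinking $r_0$, and pass to the limit using $\mathcal{M}_i\to\mathcal{M}$. Your use of the intermediate radius $r'<r$ and the fattened slab $S_\eta$ before letting $\eta\downarrow 0$, $r'\uparrow r$ is a slightly more careful way of justifying the step that the paper states somewhat informally, namely that the slab containment $M(t)\cap B_r(p)\subseteq S$ transfers to $\partial\Omega_i(t)$ for large $i$ (which, as you correctly anticipate, holds only on compact subsets of $B_r(p)\setminus S$).
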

\begin{proof}
	By \cref{SliceLem}, \eqref{MonotoneExhaustEqn} and \eqref{IntClosureEqn} ensure that $\Omega_i(t)\subseteq \mathrm{int}(\Omega_{i+1}(t))$ and $\mathrm{int}(\Omega(t))=\bigcup_{i=1}^\infty \Omega_i(t)$. Thus, up to passing to a subsequence, $\partial \Omega_i(t)$ converges to a closed subset of $M(t)$ as $i \to \infty$. As $M(t)\cap B_r(p)\subseteq S$, the same holds for $\partial \Omega_i(t)$ when $i$ is sufficiently large. For $r_0$ with $B_{r_0}(p)\subseteq\mathrm{int}(\Omega(0))$, when $i$ is large $B_{r_0}(p)\subseteq\mathrm{int}(\Omega_i(0))$. For $r<r_0$, as $\Omega_i$ is an $X$-mean-convex flow, \cref{OneSideMinimizeLem} implies 
	\begin{align} \label{expander-energy-bound}
		E[\partial \Omega_i(t)\cap \bar{B}_r(p)] \leq E[\partial (\Omega_i(t)\cup S)\cap\bar{B}_r(p)].
	\end{align}
	Shrinking $r_0$ if needed, we can make sure 
	\[
	\sup_{x,y\in \bar{B}_{r_0}(p)}e^{\frac{|x|^2-|y|^2}{4}} \leq {\beta},
	\]
	and so that, by \cref{expander-energy-bound},
	\begin{align*}
		\mathcal{H}^n (\partial \Omega_i(t)\cap \bar{B}_r(p)) \leq \beta \mathcal{H}^n (\partial (\Omega_i(t)\cup S)\cap\bar{B}_r(p)).
	\end{align*}
	Clearly,  $\Omega_i(t)\cap (B_r(p)\setminus S)$ consists of $k$ of the two  components of $B_r(p)\setminus S$ and
	\[
	\mathcal{H}^n (\partial (\Omega_i(t)\cup S)\cap\bar{B}_r(p)) \leq (2-k+2n\eps) \omega_n r^n.
	\]
	Combining the above two inequalities gives the bound for $\mathcal{M}_i(t)$. By construction, $k$ is independent of $i$ for large $i$. As $\partial \Omega_i(t)=M_i(t)$ by \cref{SliceLem} and $\mathcal{M}_i(t)=\mathcal{H}^n\llcorner M_i(t)\to\mathcal{M}(t)$ as Radon measures, the bound for $\mathcal{M}(t)(B_r(p))$ follows by taking $i \to \infty$. 
\end{proof}
Next we will prove a sheeting theorem (see \cref{SheetingThm}) for blow-up sequences that resemble a static multiplicity-two hyperplane as analogous to \cite[Theorem 8.2]{WhiteMCSize}.
\begin{defn} \label{BlowupDefn}
	Given a point $P\in\mathbb{R}^{n+1}\times (0,\infty)$, a \emph{blow-up sequence} to $\Omega$ (resp. $\mathcal{M}$) at $P$ is a sequence $\tilde{\Omega}_i$ (resp. $\tilde{\mathcal{M}}_i$) of closed spacetime sets (resp. families of Radon measures) obtained by translating $\Omega$ (resp. $\mathcal{M}$) by $-P_i$ and then dilating parabolically by $\rho_i$ (i.e., $(\mathbf{x},t)\mapsto (\rho_i \mathbf{x},\rho_i^2t)$) for $P_i\to P$ in spacetime and $\rho_i\to\infty$. 
\end{defn}	
It is convenient to consider the pair $(\tilde{\Omega}_i,\tilde{\mathcal{M}}_i)$ as a blow-up sequence to $(\Omega,\mathcal{M})$ at $P$. By construction, $\tilde{\Omega}_i$ is a weak $X_i$-flow with starting time $T_i$.
It is readily checked that $X_i$ converges smoothly on compact sets to the zero vector field, and $T_i$ converges to $-\infty$. A straightforward modification of the proof of \cite[Theorem 24]{HershkovitsWhite} gives that a subsequence of $\tilde{\Omega}_i$ converges to a monotone weak flow $\tilde{\Omega}$ that exists for all times. Likewise, by \cite[Chapter 4]{Brakke}, a further subsequence of $\tilde{\mathcal{M}}_i$ converges to an integral Brakke flow $\tilde{\mathcal{M}}$. Such a pair $(\tilde{\Omega},\tilde{\mathcal{M}})$ is called a \emph{limit flow} to $(\Omega,\mathcal{M})$ at $P$. When $P_i\equiv P$, $(\tilde{\Omega},\tilde{\mathcal{M}})$ is called a \emph{tangent flow} to $(\Omega,\mathcal{M})$ at $P$. In particular, \cref{MultiplicityProp} implies that multiplicities higher than $2$ cannot occur for any tangent flow to $(\Omega,\mathcal{M})$. \par 
A crucial tool is the following generalization of \cite[Corollary 4.2]{WhiteMCSize} to monotone weak $X'$-flows for a smooth vector field $X'$ on $\mathbb{R}^{n+1}$. For a set $V\subseteq\mathbb{R}^{n+1}$, a point $p\in\mathbb{R}^{n+1}$ and $r>0$, the \emph{relative thickness} of $V$ in $B_r(p)$ is 
\[
\mathrm{Th}(V,p,r)=\frac{1}{r} \inf_{|\mathbf{v}|=1}\left(\sup_{p'\in V\cap B_r(p)} |\mathbf{v}\cdot (p-p')|\right).
\] 
The relative thickness is 0 if and only if $V$ is contained in a hyperplane passing through $p$.
\begin{prop} \label{ExpandHoleProp}
	Given $\eta>0$, there is a $\delta=\delta(\eta)>0$ with the following property. If $Z$ is a monotone weak $X'$-flow for a smooth vector field $X'$ on $\mathbb{R}^{n+1}$, and if $(p,t)$ is a spacetime point such that 
	\begin{equation} \label{RelThickEqn}
		\limsup_{r\to 0} \mathrm{Th}(Z(t), p,r)<\delta
	\end{equation}
	and such that $p\notin Z(t+h)$ when $h>0$, then 
	\begin{equation} \label{HoleSizeEqn}
		\liminf_{r\to 0} \frac{\dist(Z(t+r^2),p)}{r} \geq \eta.
	\end{equation}
\end{prop}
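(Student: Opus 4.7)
The plan is to argue by contradiction and parabolic rescaling, reducing the statement to a structural property of monotone weak mean curvature flows trapped in a hyperplane. Suppose some $\eta>0$ admits no $\delta$; extract sequences $\delta_i\downarrow 0$, smooth vector fields $X_i'$, monotone weak $X_i'$-flows $Z_i$, spacetime points $(p_i,t_i)$, and scales $r_i\downarrow 0$ witnessing the failure, i.e.
\[
\mathrm{Th}(Z_i(t_i),p_i,r_i)<\delta_i,\qquad p_i\notin Z_i(t_i+h)\text{ for all }h>0,\qquad \mathrm{dist}(Z_i(t_i+r_i^2),p_i)<\eta r_i.
\]
Translating $(p_i,t_i)\to(0,0)$ and parabolically rescaling by $1/r_i$, one obtains monotone weak $\tilde X_i$-flows $\tilde Z_i$ with $\tilde X_i(x)=r_i X_i'(p_i+r_i x)$. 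Since each $X_i'$ is smooth and $r_i\downarrow 0$, these $\tilde X_i\to 0$ in $C^\infty_{\mathrm{loc}}$. A routine adaptation of \cite[Theorem 24]{HershkovitsWhite} then extracts a subsequential limit $\tilde Z$ that is itself a monotone weak mean curvature flow on $\mathbb{R}^{n+1}\times[0,\infty)$.

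Next I would read off the structure of the limit. The vanishing thickness of $\tilde Z_i(0)$ in $B_1(0)$ forces $\tilde Z(0)$ to lie in some hyperplane $\Pi$ through the origin, and monotonicity propagates this to $\tilde Z(t)\subseteq\Pi$ for all $t\geq 0$. Simultaneously, picking $x_i\in\tilde Z_i(1)$ with $|x_i|<\eta$ and passing to a subsequential limit produces a point of $\tilde Z(1)$ within distance $\eta$ of the origin.

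To derive the contradiction, the plan is to employ shrinking sphere barriers from \cref{spherical-barrier} to upgrade the pointwise hypothesis. For each $i$, the thickness condition lets one place disjoint shrinking spheres on either side of the slab containing $Z_i(t_i)\cap B_{r_i}(p_i)$; the avoidance principle (\cref{AvoidanceThm}) then forces $Z_i$ to remain trapped in a slab of controlled, slowly-widening thickness for a definite rescaled time. Combining this slab-trapping with the pointwise nonmembership $p_i\notin Z_i(t_i+h)$ and the monotonicity of $Z_i$, one should extract an $h$-dependent but $i$-uniform ball around $p_i$ disjoint from $Z_i(t_i+h)$, which after rescaling gives uniform holes in $\tilde Z_i$. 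These persist in the limit $\tilde Z$, and monotonicity together with the containment $\tilde Z(t)\subseteq\Pi$ then forces $\Pi\setminus\tilde Z(1)$ to contain a ball about the origin of radius strictly greater than $\eta$, contradicting the distance bound.

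The main obstacle I expect is precisely this quantitative upgrade. Closed-set convergence in spacetime does not preserve pointwise nonmembership, so the hypothesis $p_i\notin Z_i(t_i+h)$ cannot be passed directly to the limit; one must convert it, at the approximating level, to a statement about a definite-size neighborhood of $p_i$ being disjoint from $Z_i(t_i+h)$, with the size depending only on $h$. The delicate interplay here is between the thin-slab confinement produced by the spherical barriers (which is uniform in $i$ once scales are fixed) and the geometry of the monotone flow in directions tangent to the slab, which together should rule out the possibility of $Z_i(t_i+h)$ approaching $p_i$ within the slab. Executing this balance is where the bulk of the work will lie.
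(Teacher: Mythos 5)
Your blow-up framework, the extraction of a limiting weak flow contained in a hyperplane, and the target contradiction are all the right shape, and you have correctly identified the essential difficulty: the pointwise nonmembership $p_i\notin Z_i(t_i+h)$ is not preserved under closed-set convergence, and after rescaling by $r_i^{-1}$ there is \emph{no} lower bound on the size of the hole about $\mathbf{0}$ in $\tilde{Z}_i$. In particular, your limit $\tilde Z$ could simply be a static multiplicity-one hyperplane, which is a perfectly valid monotone weak flow and yields no contradiction. The remedy you sketch -- spherical barriers plus monotonicity producing an ``$h$-dependent but $i$-uniform'' disjoint ball -- does not fill this gap: the slab confinement controls the thickness transverse to the approximating hyperplane but says nothing about the hole radius within the slab, which is flow-dependent and can degenerate to zero along the sequence. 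To make it work one would effectively need a quantitative instant-vanishing estimate for almost-flat flows, which is not available from the tools in play.

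The paper circumvents this with a different normalization of the rescaling, and this is the missing idea. After translating $(p_\delta,t_\delta)$ to $(\mathbf{0},0)$, it first shifts time forward by $h_\delta=\delta^2R_\delta^2>0$; since $p_\delta\notin Z_\delta(t_\delta+h)$ for all $h>0$, this ensures $\mathbf{0}\notin Z_\delta(h_\delta)$, while monotonicity carries the thinness and small-hole estimates to time $h_\delta$ (with $\eta$ replaced by $2\eta$). It then sets
\[
\rho_\delta=\inf\set{r>0 \mid \dist(Z_\delta(h_\delta+r^2),\mathbf{0})\leq 2\eta r},
\]
which is strictly positive precisely because $\mathbf{0}\notin Z_\delta(h_\delta)$, and satisfies $\rho_\delta\leq\delta R_\delta$. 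Rescaling parabolically by $\rho_\delta^{-1}$ (rather than by $r_i^{-1}$) produces $Z'_\delta$ with $\mathrm{Th}(Z'_\delta(0),\mathbf{0},r)<\delta$ for $0<r<\delta^{-1}$, with $\dist(Z'_\delta(1),\mathbf{0})=2\eta$, and -- crucially -- $\dist(Z'_\delta(r^2),\mathbf{0})>2\eta r$ for all $0<r<1$. This last is a scale-one lower bound on the hole, uniform in $\delta$, which \emph{does} pass to the closed-set limit. Hence $Z'(0)$ lies in a hyperplane, $\mathbf{0}\notin Z'(\tfrac12)$ so $Z'(\tfrac12)$ is a proper subset of that hyperplane, and therefore $Z'(t)=\emptyset$ for $t>\tfrac12$; but $Z'(1)\neq\emptyset$ (with $\dist(Z'(1),\mathbf{0})=2\eta$), a contradiction. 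Without this ``normalize the hole to size $2\eta$'' choice of scale, your argument cannot close.
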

\begin{proof}
	We argue by contradiction. Suppose the proposition is false for a certain $\eta>0$. Then for each $\delta>0$ there is a monotone weak $X_\delta$-flow $Z_\delta$ in $\mathbb{R}^{n+1}$ and a spacetime point $(p_\delta,t_\delta)$ with $p_\delta\notin Z(t_\delta+h)$ when $h>0$, such that \eqref{RelThickEqn} holds for $Z=Z_\delta$ and $(p,t)=(p_\delta,t_\delta)$, but \eqref{HoleSizeEqn} fails. By translating, we may assume $p_\delta=\mathbf{0}$ and $t_\delta=0$. Thus there is an $R_\delta>0$ so that for each $0<r<R_\delta$, $\mathrm{Th}(Z_\delta(0),\mathbf{0},r)<\delta$ and $\dist(Z_\delta(r^2),\mathbf{0})<\eta r$. Combined with the monotonicity of $Z_\delta$, this implies that if $h_\delta=\delta^2R_\delta^2$ then 
	\[
	\mathrm{Th}(Z_\delta(h_\delta),\mathbf{0},r)<\delta \mbox{ for $0<r<R_\delta$}
	\]
	and 
	\[
	\dist(Z_\delta(h_\delta+r^2),\mathbf{0})<2\eta r \mbox{ for $\delta R_\delta\leq r<R_\delta$}.
	\]
	
	Thus, if
	\[
	\rho_\delta=\inf\set{r>0 \mid \dist(Z_\delta(h_\delta+r^2),\mathbf{0})\leq 2\eta r}
	\]
	then $\rho_\delta\leq\delta R_\delta$. As $\mathbf{0}\notin Z_{\delta}(h_\delta)$ one has $\rho_\delta>0$. Translating $Z_\delta$ in time by $-h_\delta$ followed by parabolically dilating by $\rho_\delta^{-1}$ produces a weak $X'_\delta$-flow $Z'_\delta$ satisfying 
	\[
	\mathrm{Th}(Z'_\delta(0),\mathbf{0},r)<\delta \mbox{ for $0<r<\delta^{-1}$}
	\]
	where we used $R_\delta\rho_\delta^{-1}\geq \delta^{-1}$, $\dist(Z'_\delta(1),\mathbf{0})=2\eta$, and
	\[
	\dist(Z'_\delta(r^2),\mathbf{0})>2\eta r \mbox{ for $0<r<1$}. 
	\]
	In addition, up to shrinking $R_\delta$, $X'_\delta$ converges smoothly on compact sets to the zero vector field as $\delta\to 0$. 
	
	If $Z'$ is a subsequential limit of $Z'_\delta\cap C_{\mathbb{R}^{n+1},0}$ as $\delta\to 0$, then $Z'$ is a weak flow, $Z'(0)$ is contained in a hyperplane and $Z'(\frac{1}{2})$ is a proper subset of the hyperplane. This implies that $Z'(t)=\emptyset$ for $t>\frac{1}{2}$, which contradicts $\dist(Z'(1),\mathbf{0})=2\eta$ and completes the proof.
\end{proof}
If $P=(p,t)$ and $P'=(p',t')$ are points in spacetime, let $\dist(P,P')$ denote the parabolic distance:
\[
\dist(P,P') = \dist((p,t),(p',t'))=\max\set{|p-p'|,|t-t'|^{\frac{1}{2}}}.
\]
Let $B^{n+1,1}_r(P)=\{P'\in \mathbb{R}^{n+1}\times\mathbb{R}\mid \dist(P,P')<r\}$ denote the spacetime (open) ball centered at $P$ with radius $r$. We omit the center $P$ when it is the spacetime origin. For two spacetime sets $Y ,Y'$,  $\dist_H(Y,Y')$ denotes their (parabolic) Hausdorff distance.
\begin{thm} \label{SheetingThm}
	Suppose $(\tilde{\Omega}_i,\tilde{\mathcal{M}}_i)$ is a blow-up sequence to $(\Omega,\mathcal{M})$ such that 
	\[
	\dist_H(\tilde{\Omega}_i\cap B^{n+1,1}_4,\mathbb{H}\cap B^{n+1,1}_4)\to 0
	\]
	where $\mathbb{H}$ is the spacetime half-space $\{x_{n+1}=0\}$. Then for sufficiently large $i$ there exist $f_i,g_i\colon B^{n,1}_1\to\mathbb{R}$ such that 
	\begin{enumerate}
		\item $f_i\leq g_i$;
		\item $f_i,g_i\to 0$ in $C^\infty(B^{n,1}_1)$ as $i\to\infty$;
		\item For $(x,t)\in B^{n+1,1}_2$ with $(x_1,\dots,x_n,0,t)\in B^{n+1,1}_1$, 
		\[
		(x,t)\in\tilde{\Omega}_i \iff f_i(x_1,\dots,x_n,t)\leq x_{n+1}\leq g_i(x_1,\dots,x_n,t);
		\]
		\item For each $x'\in B^{n}_1$, $f_i(x',t)$ and $g_i(x',t)$ are increasing and decreasing, respectively, as functions of $t$.
	\end{enumerate}
	In particular, for large $i$, $\tilde{\mathcal{M}}_i(t)\llcorner B^{n+1}_2=\mathcal{H}^n\llcorner(\partial \tilde{\Omega}_i(t)\cap B^{n+1}_2)$ for each $t\in (-4,4)$.
\end{thm}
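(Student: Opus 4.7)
My plan is to follow the strategy of White \cite{WhiteMCSize} and combine the asymptotic mass bound of Proposition~\ref{MultiplicityProp} with the local regularity theorem for unit-regular integer Brakke flows \cite{WhiteRegularity} to decompose $\partial\tilde{\Omega}_i$ locally into two smooth graphs.

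First, I would establish an asymptotic density bound. The Hausdorff convergence $\tilde{\Omega}_i\cap B^{n+1,1}_4\to\mathbb{H}\cap B^{n+1,1}_4$ provides $\eps_i\downarrow 0$ such that, for each $(p,t)\in\mathbb{H}\cap B^{n+1,1}_3$ and each sufficiently small $r>0$, both $\tilde{\Omega}_i(t)\cap B_r(p)$ and $\partial\tilde{\Omega}_i(t)\cap B_r(p)$ lie in a slab $S$ of thickness $2\eps_i r$ normal to $\mathbf{e}_{n+1}$ through $p$. Applying Proposition~\ref{MultiplicityProp} (after undoing the parabolic dilation used in forming the blow-up) one has $k=0$, since the slab already contains all of $\tilde{\Omega}_i(t)\cap B_r(p)$, and so
\[
\tilde{\mathcal{M}}_i(t)(B_r(p))\le\beta(2+2n\eps_i)\omega_n r^n.
\]
Consequently, any subsequential Brakke limit of $\tilde{\mathcal{M}}_i$ has density at most $2$ at every point of $\mathbb{H}\cap B^{n+1,1}_3$.

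Second, I would set up the candidate graphs. For $(x',t)\in B^{n,1}_1$ let $I_i(x',t)=\{x_{n+1}\in\mathbb{R}:(x',x_{n+1},t)\in\tilde{\Omega}_i\}$, which is contained in $[-\eps_i,\eps_i]$ and nonempty for large $i$ by Hausdorff convergence (in both directions). Define $f_i(x',t)=\inf I_i(x',t)$ and $g_i(x',t)=\sup I_i(x',t)$. Then $f_i\le g_i$ and both converge uniformly to $0$, giving Item~(1) and the $C^0$-part of Item~(2). Item~(4) is immediate from the time-monotonicity $\tilde{\Omega}_i(t_2)\subseteq\tilde{\Omega}_i(t_1)$ for $t_2\ge t_1$, which implies $I_i(x',t_2)\subseteq I_i(x',t_1)$.

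The main task --- and the principal obstacle --- is to upgrade $f_i,g_i$ to $C^\infty$-convergence, verify Item~(3), and obtain the final measure identity. My plan is to apply White's local regularity theorem for unit-regular integer Brakke flows separately to each of the two ``sheets'' of $\spt(\tilde{\mathcal{M}}_i)=\partial\tilde{\Omega}_i$, corresponding to the upper and lower boundary components of the thin slab. For large $i$ these sheets are topologically disjoint inside $B^{n+1,1}_2$: otherwise, an incipient touching at some $(p,t)$ would, via Proposition~\ref{ExpandHoleProp} applied to the hole that would then open at $(p,t+h)$, expand at a definite rate on scale $r$, contradicting the Hausdorff convergence of $\tilde{\Omega}_i$ to $\mathbb{H}$. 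Each sheet then carries asymptotic density exactly~$1$ at every point of $\mathbb{H}\cap B^{n+1,1}_1$: at most $1$ by the total bound of $2$ together with the presence of both sheets (each contributing positive definite mass via Hausdorff lower-semicontinuity), and at least $1$ by unit-regularity together with Huisken-monotonicity and upper-semicontinuity of Gaussian density. White's local regularity then gives smooth $C^\infty_{loc}$-convergence of each sheet to $\mathbb{H}$; restricted to $B^{n,1}_1$ these sheets coincide with the graphs of $f_i$ and $g_i$, verifying Item~(2) and the forward implication of Item~(3). Since for large $i$ the boundary $\partial\tilde{\Omega}_i\cap B^{n+1,1}_2$ is precisely the union of these two graphs, the strict monotonicity and ampleness of $\tilde{\Omega}_i$ (inherited from $\Omega$ through the blow-up) force each slice $I_i(x',t)$ to be the full interval $[f_i(x',t),g_i(x',t)]$, completing Item~(3) and, via the smoothness of the sheets, yielding the final measure identity $\tilde{\mathcal{M}}_i(t)\llcorner B^{n+1}_2=\mathcal{H}^n\llcorner(\partial\tilde{\Omega}_i(t)\cap B^{n+1}_2)$.
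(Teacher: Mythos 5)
Your overall strategy is the right one---it follows White's sheeting argument---but the step at which you claim the two sheets of $\partial\tilde\Omega_i$ in $B^{n+1,1}_2$ are topologically disjoint has a genuine gap, and that step is exactly where the real work happens. You argue that a touching at some $(p,t)$ would, via \cref{ExpandHoleProp}, open a hole expanding ``at a definite rate on scale $r$,'' contradicting the Hausdorff convergence to $\mathbb{H}$. This misapplies the proposition in two ways. First, its conclusion is a $\liminf_{r\to 0}$ statement: it only controls the initial, infinitesimal rate of hole expansion at $(p,t)$ for the \emph{fixed} flow $\tilde\Omega_i$; it does not, by itself, force a unit-size hole inside $B^{n+1,1}_4$, which is what would be needed to contradict $\dist_H(\tilde\Omega_i\cap B^{n+1,1}_4,\mathbb{H}\cap B^{n+1,1}_4)\to 0$. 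Second, the hypothesis $\limsup_{r\to 0}\mathrm{Th}(Z(t),p,r)<\delta$ need not hold at a touching point of $\partial\tilde\Omega_i(t)$: at infinitesimally small scales around a smooth boundary point (or an interior point of a thin slab), the relative thickness of $Z(t)$ is close to $1$, not small. The thinness hypothesis only becomes usable after passing to a further blow-up at the touching points, in a diagonal argument over $i$.

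This is precisely what the paper's proof supplies and what your sketch omits: the Bernstein-type theorem for \emph{limit flows} of $(\Omega,\mathcal{M})$ (the analogue of \cite[Theorem 7.5]{WhiteMCSize}), established from the one-sided minimizing property (\cref{OneSideMinimizeLem}), the support characterization (\cref{SupportProp}), and the multiplicity bound (\cref{MultiplicityProp}). With this in hand, the paper invokes the separating surface $S_i$ (centers of closed balls in $\tilde\Omega_i(t)$ touching $\partial\tilde\Omega_i(t)$ at two or more points) and the argument of \cite[Lemma 8.3]{WhiteMCSize}: one blows up at a putative bad point, classifies the resulting limit flow via the Bernstein theorem, and then applies \cref{ExpandHoleProp} to that limit flow, where the thinness and scale-matching are correct. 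Without this machinery, the separation into two sheets is not established, and your subsequent steps---the multiplicity-one conclusion, White's local regularity, and the identification $\tilde{\mathcal{M}}_i(t)\llcorner B^{n+1}_2=\mathcal{H}^n\llcorner(\partial\tilde\Omega_i(t)\cap B^{n+1}_2)$---do not go through. The initial density bound via \cref{MultiplicityProp} and the definition and monotonicity of $f_i,g_i$ are fine, but the argument as written does not close.
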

\begin{proof}
	Let $S_i$ be the set of points $(x,t)$ such that $x$ is the center of a closed ball in $\tilde{\Omega}_i(t)$ that contains at least two points of $\partial\tilde{\Omega}_i(t)$. For large  enough $i$, $S_i\cap B^{n+1,1}_3$ is a smooth properly embedded hypersurface that separates $B^{n+1,1}_3$ into two open sets, and $\tilde{M}_i=\partial\tilde{\Omega}_i\cap B^{n+1,1}_3$ has exactly two components, $\tilde{M}_i^\pm$, each contained in one of the open sets. 
	
	To see this, we prove the Bernstein-type theorem \cite[Theorem 7.5]{WhiteMCSize} for limit flows to $(\Omega,\mathcal{M})$. With this and \cref{ExpandHoleProp}, the proof of \cite[Lemma 8.3]{WhiteMCSize} gives the claim. Indeed, any limit flow to $(\Omega,\mathcal{M})$ consists of a monotone weak flow and an integral Brakke flow as limit flows to a mean-convex mean curvature flow. The proof of \cite[Theorem 7.5]{WhiteMCSize} depends only on the properties for limit flows \cite[Theorems 5.4, 5.5, 6.1]{WhiteMCSize}, which, in our case, follow readily from \cref{SupportProp}, \cref{OneSideMinimizeLem} and \cref{MultiplicityProp} and so the conclusion of \cite[Theorem 7.5]{WhiteMCSize} is true for limit flows to $(\Omega,\mathcal{M})$. 
	
	By \cref{SupportProp}, $S_i$ separates $\tilde{\mathcal{M}}_i\llcorner B^{n+1,1}_3$ into two parts, $\tilde{\mathcal{M}}_i^\pm$. The hypotheses yield
	\[
	\spt(\tilde{\mathcal{M}}^\pm_i) = \tilde{M}^\pm_i \to \mathbb{H}\cap B^{n+1,1}_3.
	\]
	Hence, the $\tilde{\mathcal{M}}^\pm_i$ converge to  static hyperplanes with multiplicity $m_\pm$ respectively. By \cref{MultiplicityProp}, $m_+ + m_-\leq 2$ and so $m_\pm=1$. The claim follows easily from Brakke's local regularity theorem \cite[6.11]{Brakke} (see also \cite{WhiteRegularity}).
\end{proof}
We are now ready to prove the regularity and long-time behavior for the flow $\Omega$. Combined with \cref{monotone-prop}, this completes the proof of \cref{weak-theorem}.
\begin{thm} \label{StrongRegularThm}
	The boundary $M$ is smooth except for a closed set of parabolic Hausdorff dimension at most $n-1$, and  $\mathcal{M}(t)=\mathcal{H}^n\llcorner M(t)$ for each $t\geq 0$. Moreover, as $t\to\infty$, $M(t)$ converges smoothly away from a set of codimension at most 7 to a stable (possibly singular) self-expander asymptotic to $\mathcal{C}$.
	
	Furthermore, for $n\leq 6$, if $(\tilde{\Omega},\tilde{\mathcal{M}})$ is a limit flow to $(\Omega,\mathcal{M})$, then $\tilde{\Omega}(t)$ is convex for each $t\in\mathbb{R}$, and there is a $T\in [-\infty,\infty]$ such that 
	\begin{enumerate}
		\item $\mathrm{int}(\tilde{\Omega}(t))\neq\emptyset$ when $t<T$, while $\mathrm{int}(\tilde{\Omega}(t))=\emptyset$ when $t=T$;
		\item $\tilde{\mathcal{M}}$ is smooth with multiplicity-one in $\mathbb{R}^{n+1}\times (-\infty,T)$;
		\item $\tilde{\Omega}(t)=\emptyset$ when $t>T$. 
	\end{enumerate}
	If, in addition, $(\tilde{\Omega},\tilde{\mathcal{M}})$ is a tangent flow, then it is either a static multiplicity-one hyperplane, or a self-similarly shrinking sphere or generalized cylinder.
\end{thm}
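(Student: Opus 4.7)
The plan is to adapt White's regularity theory for mean-convex mean curvature flow \cite{WhiteMCSize, WhiteMCNature} to the $X$-flow setting at hand. The key enabling observation is that under parabolic blow-up the vector field $X = -\mathbf{x}/2$ scales to zero, so any limit flow $(\tilde{\Omega}, \tilde{\mathcal{M}})$ is an ancient weak mean curvature flow paired with a unit-regular integral Brakke flow. The structural features established in \cref{SupportProp}, \cref{OneSideMinimizeLem}, \cref{MultiplicityProp}, and \cref{SheetingThm} all pass to such limits, which allows the White machinery to be imported.

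I would first establish the limit-flow statement for $n \le 6$. The one-sided minimization of $\Omega$ (via \cref{OneSideMinimizeLem}) passes to any limit flow, so $\tilde{\Omega}$ is noncollapsed in the sense of Andrews/White. Combined with the multiplicity-two upper bound from \cref{MultiplicityProp}, this places $\tilde{\Omega}$ within the class of noncollapsed ancient mean-convex solutions, which in $\mathbb{R}^{n+1}$ for $n \le 6$ have convex time slices by the convexity estimates of \cite{WhiteMCNature}. The three-phase structure in items (1)--(3) is then immediate: while $\tilde{\Omega}(t)$ has nonempty interior it is a full-dimensional convex body evolving by MCF; at the extinction time $T$ it collapses to a lower-dimensional convex set; and after $T$ it is empty. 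For tangent flows, self-similarity forces each time slice to be a mean-convex self-shrinker, so Huisken's classification yields either a plane or a round $\mathbb{S}^k \times \mathbb{R}^{n-k}$, and \cref{SheetingThm} upgrades the planar case to multiplicity one.

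Next, the regularity of $M$ and the measure identity $\mathcal{M}(t) = \mathcal{H}^n \llcorner M(t)$ follow by parabolic stratification. By the tangent flow classification just obtained, every spacetime point of $M$ at positive time has a tangent flow that is either a multiplicity-one hyperplane or a shrinking $\mathbb{S}^k \times \mathbb{R}^{n-k}$ with $k \ge 1$; the former gives a regular point by \cref{SheetingThm}, while the latter splits off at most $n-1$ Euclidean directions of translation. Federer-style parabolic dimension reduction as in \cite{WhiteMCSize, WhiteStrat} then forces the singular set to have parabolic Hausdorff dimension at most $n-1$, and Brakke local regularity at regular points together with \cref{SupportProp} on the (negligible) singular set gives the measure identity.

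Finally, for the long-time asymptotics I would exploit monotonicity of the expander functional: the Brakke inequality applied to the weighted measure $e^{|\mathbf{x}|^2/4}\mathcal{H}^n \llcorner M(t)$ yields $E[M(t)]$ nonincreasing with $\int_0^\infty \int |H^X|^2 e^{|\mathbf{x}|^2/4}\,d\mathcal{H}^n\,dt < \infty$. Combined with the uniform asymptotic control of \cref{infinity-regularity} at spatial infinity, Brakke compactness and elliptic regularity for the self-expander equation extract a subsequential smooth limit $\Sigma_+$ along $t_j \to \infty$, which is a self-expander asymptotic to $\mathcal{C}$ smooth away from a codimension-$7$ set by Schoen--Simon-type regularity for the stable stationary problem. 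Strict monotonicity of $\Omega$ promotes subsequential to full convergence onto $\Sigma_+ = \partial\bigcap_{t \ge 0}\Omega(t)$, and stability of $\Sigma_+$ follows because any unstable Jacobi direction would produce an $E$-decreasing competitor on one side, contradicting the one-sided $E$-minimization (\cref{OneSideMinimizeLem}) inherited by the limit. The main obstacle I anticipate is the convexity step: propagating the noncollapsing constant through the exhaustion $\Omega_i \to \Omega$ to arbitrary limit flows requires using the uniform ampleness from \cref{exhaustion-ample-flow} to ensure the White/Haslhofer--Kleiner convexity estimates apply without modification.
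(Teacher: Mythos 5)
Your high-level strategy — importing White's structure and regularity machinery via \cref{SupportProp}, \cref{OneSideMinimizeLem}, \cref{MultiplicityProp}, and \cref{SheetingThm}, and using the scale-invariance of blow-ups to reduce to ancient mean-convex MCFs — matches the paper's. However, there are two genuine gaps that need to be addressed.

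First, the long-time-asymptotics step as written is not sound: the expander functional $E[\Sigma]=\int_\Sigma e^{|\mathbf{x}|^2/4}\,d\mathcal{H}^n$ is \emph{infinite} for any asymptotically conical hypersurface (the weight $e^{R^2/4}$ overwhelms the $R^n$ area growth), so the claim that "$E[M(t)]$ is nonincreasing with $\int_0^\infty\int|H^X|^2 e^{|\mathbf{x}|^2/4}<\infty$" cannot hold in the form stated. One must instead use a localized Brakke inequality, or (as is implicit in the paper) run the dimension-reduction and one-sided minimization arguments at $t=\infty$ directly via the MCF dictionary, which is how the paper extracts a stable limiting self-expander with a codimension-$7$ singular set. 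Relatedly, when you invoke one-sided $E$-minimization to derive stability you should phrase it in terms of compactly supported competitors; that part is fine but the "$E$-decreasing" framing is technically empty since both quantities are infinite.

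Second, your ordering is circular relative to the literature you cite. You propose to establish convexity of limit flows \emph{first} (via Andrews/Haslhofer--Kleiner noncollapsing) and then deduce the tangent-flow classification and the $(n-1)$-dimensional stratification. But White's convexity estimates in \cite{WhiteMCNature} are proved \emph{after} and \emph{depend on} the structure theorems of \cite{WhiteMCSize} — in particular, ruling out static/quasi-static multiplicity-two planes as tangent flows and bounding the parabolic dimension of the singular set — which is precisely the reverse of your proposed order. You acknowledge at the end that propagating a noncollapsing constant through the exhaustion is "the main obstacle," but you do not resolve it, and establishing $\alpha$-Andrews noncollapsing uniformly for the $\Omega_i$ and then passing it to arbitrary limit flows of $\Omega$ is not immediate from one-sided minimization alone. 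The paper avoids this entirely by doing the tangent-flow dichotomy (using \cref{SheetingThm} and an adaptation of \cite[Theorem 9.2]{WhiteMCSize}) and the Schoen--Simon dimension reduction on static/quasi-static limit flows \emph{before} appealing to \cite[Theorem 1]{WhiteMCNature} for convexity when $n\le 6$. Reordering your argument to match — multiplicity/tangent-flow classification, then stratification, then convexity — would close this gap.
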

\begin{proof}
	First, using Propositions \ref{SupportProp} and \ref{MultiplicityProp} in place of \cite[Theorems 5.3 and 3.9]{WhiteMCSize}, the proof of \cite[Theorem 5.4]{WhiteMCSize} implies the following: If $(\tilde{\Omega},\tilde{\mathcal{M}})$ is a limit flow to $(\Omega,\mathcal{M})$ and $T' \in (-\infty,\infty]$ is a time such that for all $t \le T'$, $\spt \tilde{\mathcal{M}}(t) = \mathbb{P}$ for some hyperplane $\mathbb{P}\subseteq\mathbb{R}^{n+1}$, then one of the following is true:
	\begin{enumerate}
		\item $\tilde{\Omega}$ is a static half-space with boundary $\mathbb{P}$ and $\tilde{\mathcal{M}}$ is the static hyperplane $\mathbb{P}$ with multiplicity one;
		\item $\tilde{\Omega}=\mathbb{P}\times (-\infty,T'']$ for some $T''\geq T'$ and, for every $t<T''$, $\tilde{\mathcal{M}}(t)$ is $\mathbb{P}$ with multiplicity two while, for every $t>T''$, $\tilde{\mathcal{M}}(t)=0$.
	\end{enumerate}
Suppose now that $(\tilde{\Omega},\tilde{\mathcal{M}})$ is a tangent flow instead of a general limit flow.   By \cref{SheetingThm}, static -- i.e., with $T''=\infty$ -- multiplicity-two hyperplanes cannot arise as tangent flows -- see \cite[Corollary 8.5]{WhiteMCSize} . Furthermore, as zoom-ins of $(\Omega,\mathcal{M})$ about a spacetime point are small perturbations of MCFs, the proof of \cite[Theorem 9.2]{WhiteMCSize}, with minor modifications, gives that quasi-static --  i.e., with $T''<\infty$ -- multiplicity-two hyperplanes cannot arise as tangent flows. Thus, only case (1) holds for tangent flows.
	
	Suppose $(\tilde{\Omega},\tilde{\mathcal{M}})$ is a static or quasi-static limit flow to $(\Omega,\mathcal{M})$. Without loss of generality, we may assume $\tilde{\mathcal{M}}(-1)\neq 0$. By \cref{SheetingThm} and Brakke's local regularity theorem \cite[6.11]{Brakke}, $\tilde{\mathcal{M}}(-1)$ is smooth at every point with a tangent hyperplane (possibly with high multiplicity). Thus, by a dimension reduction argument, e.g., \cite[Appendix A]{SimonBook}, $\tilde{\mathcal{M}}(-1)$ is smooth almost everywhere. Hence, using \cref{SupportProp} and \cref{OneSideMinimizeLem} in place of \cite[Theorems 5.3 and 3.5]{WhiteMCSize}, the proof of \cite[Theorem 6.1]{WhiteMCSize} gives that $(\tilde{\Omega},\tilde{\mathcal{M}})$ has the same one-sided minimizing property as in \cite[Corollary 8.6]{WhiteMCSize}. Combining this with a dimension reduction argument gives that $\tilde{\mathcal{M}}(-1)$ is a stable minimal hypersurface with singular set of Hausdorff dimension $\leq n-3$ -- cf. \cite[Theorem 7.2]{WhiteMCSize} for analogous arguments. It then follows from Schoen-Simon \cite{SchoenSimon} that the singular set of $\tilde{\mathcal{M}}(-1)$ has Hausdorff dimension $\leq n-7$. In particular, $\tilde{\mathcal{M}}(-1)$ cannot be a polyhedral minimal cone.  
	
	By what we have shown above, White's stratification theorem \cite[Section 11]{WhiteStrat} implies that the singular set of $\mathcal{M}$ (or equivalently $M$) has parabolic Hausdorff dimension $\leq n-1$. Thus, by Propositions \ref{monotone-prop} and \ref{SupportProp}, $\mathcal{M}(t)=\mathcal{H}^n\llcorner M(t)$ for each $t\geq 0$. The arguments above also show that $M(t)$ converges to a stable (possibly singular) self-expander asymptotic to $\mathcal{C}$ as $t \to \infty$, and the convergence is smooth away from a set of codimension at most 7. When $n\leq 6$, the convexity and regularity for limit flows follow readily from the proof of \cite[Theorem 1]{WhiteMCNature}.
\end{proof}

\section{Existence of monotone Morse flow line}
\label{MonotoneMorseFlowSec}
We show the existence of the monotone Morse flow line -- i.e., \cref{main-theorem} -- using Theorem \ref{weak-theorem} and by constructing an appropriate ancient expander mean convex flow.  This will be done by showing we can take limits of solutions constructed in \cite{BWTopologicalUniqueness}.  Throughout,  we will fix $\Sigma \subseteq \R^{n+1}$,  a smooth self-expander asymptotic to a $C^3$-regular cone $\cC$. As remarked in \cref{IntroSec}, $\lim_{\rho\to 0^+} \rho\Sigma=\mathcal{C}$ in $C^{2,\alpha}_{loc}(\mathbb{R}^{n+1} \setminus \{\mathbf{0}\})$ for any $\alpha\in (0,1)$.

Before proceeding, we introduce some conventions and notations we use in this section. The time variable $s$ is used for MCFs, while the time variable $t = \log s$ is used for EMCFs. The hypersurfaces along a MCF will be denoted with tildes.  We use the following notations for norms on functions $g\in C^\infty(\Sigma)$.
For an integer $k\geq 0$, let
\begin{align*}
\Vert g \Vert_{W, k}^2 =\sum_{i=0}^k \int_{\Sigma} |\nabla_\Sigma^i g|^2 e^{\frac{|\mathbf{x}|^2}{4}} \, d\mathcal{H}^n
\end{align*}
be a weighted integral norm.  Denote the $C^k$ and, for $\alpha \in (0,1)$,  the H\"{o}lder norm by
$$
\Vert g \Vert_{C^k(\Sigma)} =\sum_{i=0}^k \sup_{\Sigma} |\nabla_{\Sigma}^i g| \mbox{ and } \norm{g}_{C^{k,\alpha}(\Sigma)} = \norm{g}_{C^k(\Sigma)}  + [\nabla_\Sigma^k g]_{\alpha;\Sigma}.
$$
Here $[g]_{\alpha;\Sigma}$ is the (unweighted) H\"{o}lder seminorm
\begin{align*}
[g]_{\alpha; \Sigma} = \sup_{{p \ne q \in \Sigma, q \in B_\delta^\Sigma(p)}} \frac{\abs{g(p) - g(q)}}{d_\Sigma(p,q)^{\alpha}}
\end{align*}
where $d_\Sigma$ denotes the geodesic distance on $\Sigma$ and $B^\Sigma_\delta(p)$ is the geodesic ball in $\Sigma$ centered at $p$ with radius $\delta$. 
For simplicity, we also write $C^{k,\alpha} = C^{k,\alpha}(\Sigma)$ and $[\cdot]_{\alpha;\Sigma} =[\cdot]_{\alpha}$. The parabolic H\"{o}lder space $C^{k,\alpha}_P(I \times \Sigma)$ consists of functions $f\colon I\times\Sigma\to\mathbb{R}$ such that 
\begin{align*}
\norm{f}_{C^{k,\alpha}_P(I \times \Sigma)} = \sum_{i+2j \le k} \sup_{I \times \Sigma} \abs{\nabla_\Sigma^i \partial_t^j f} + \sum_{i+2j = k} [\nabla_\Sigma^i \partial_t^j f]_{\alpha; I \times \Sigma}<\infty,
\end{align*}
where $[f]_{\alpha;I \times \Sigma}$ is the H\"{o}lder seminorm 
\begin{align*}
[f]_{\alpha; I \times \Sigma} = \sup_{\substack{(t_1, p) \ne (t_2,q) \in I \times \Sigma \\ |t_1-t_2|<\delta^2, q \in B_\delta^\Sigma(p)}} \frac{\abs{f(p) - f(q)}}{d_\Sigma(p,q)^{\alpha} + \abs{t_1 - t_2}^{\frac{\alpha}{2}}}.
\end{align*}
We write $C^{k,\alpha}_P(I \times \Sigma) = C^{k,\alpha}_P$ if $I$ is understood. It is straightforward to extend the above notations to tensor fields.
In what follows, we will fix $\delta$ and $\alpha$.

Given a function $u\in C^\infty(\Sigma)$, let $\Gamma_u$ denote the normal graph of $u$ over $\Sigma$, i.e.
\begin{align*}
\Gamma_u =\{\xX(p) + u(p)\nN_\Sigma(p) \mid p \in \Sigma\}.
\end{align*}
We need the following result ensuring the expander mean curvature of $\Gamma_u$ can be computed from the function $u$ in a controlled manner.
\begin{lem}\label{GraphMCLem}
	There is an $\eta_0=\eta_0(\Sigma)>0$ so that if $\Vert u \Vert_{C^{2, \alpha}}\leq \eta_0$, then 
	$$	
	\mathbf{F}_u\colon\Sigma\to \Real^{n+1}, p \mapsto \mathbf{x}(p)+u(p) \mathbf{n}_\Sigma(p)
	$$
	is a $C^{2, \alpha}$ embedding.  Moreover,  the expander mean curvature of $\Gamma_u$ satisfies
	$$
	H_{\mathbf{F}_u}+\frac{\mathbf{F}_u\cdot \mathbf{n}_{\mathbf{F}_u}}{2}= -L_{\Sigma} u +Q(u,\mathbf{x}\cdot \nabla_\Sigma u, \nabla_\Sigma u, \nabla_\Sigma^2 u)
	$$
	where 
	$$
	L_\Sigma=\Delta_\Sigma+\frac{\mathbf{x}}{2}\cdot\nabla_\Sigma+|A_\Sigma|^2-\frac{1}{2}
	$$
	and $Q$ is a homogeneous degree-two polynomial of the form
	$$
	Q(\tau, \rho , \mathbf{d}, \mathbf{T})=\mathbf{a}(\tau, \rho , \mathbf{d}, \mathbf{T})\cdot \mathbf{d}+b(\tau , \mathbf{d}, \mathbf{T})\tau.
	$$
	Moreover, the $\mathbf{a}$ and $b$ are homogeneous degree-one polynomials whose coefficients are functions on $\Sigma$ that are bounded in $C^{0, \alpha}(\Sigma)$ by $\bar{M}=\bar{M}(\Sigma)>0$. 
\end{lem}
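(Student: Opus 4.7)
The first assertion is a standard tubular-neighborhood argument. Since $\Sigma$ is smooth and asymptotically conical, one has a uniform bound $|\mathbf{x}|^{k}|\nabla_\Sigma^{k-1} A_\Sigma|\leq C(\Sigma)$ for $k=1,2$, so the normal exponential map $(p,s)\mapsto \mathbf{x}(p)+s\mathbf{n}_\Sigma(p)$ is a $C^2$-diffeomorphism from $\Sigma\times(-r_0,r_0)$ onto a tubular neighborhood of $\Sigma$ for some $r_0=r_0(\Sigma)>0$. Taking $\eta_0<r_0$ ensures $\mathbf{F}_u(\Sigma)$ lies in this neighborhood and projects to $\Sigma$ by the identity, giving injectivity; smallness of $\nabla_\Sigma u$ and $u$ makes $d\mathbf{F}_u$ injective with $C^{\alpha}$-bounded inverse, so $\mathbf{F}_u$ is a $C^{2,\alpha}$ embedding.

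For the second part, set $\mathcal{E}(u)=H_{\mathbf{F}_u}+\tfrac{1}{2}\mathbf{F}_u\cdot \mathbf{n}_{\mathbf{F}_u}$. Working in a local orthonormal frame $\{e_i\}$ on $\Sigma$, the tangent vectors to $\Gamma_u$ are $\tau_i=(I-uA_\Sigma)^j_i e_j + u_i\mathbf{n}_\Sigma$, and a direct computation gives
\begin{align*}
\mathbf{n}_{\mathbf{F}_u}=\frac{\mathbf{n}_\Sigma-\beta^j e_j}{\sqrt{1+|\beta|^2}},\qquad \beta^j=\bigl((I-uA_\Sigma)^{-1}\bigr)^j_i u_i.
\end{align*}
Thus $\mathcal{E}(u)$ is a quasilinear second-order operator, expressible globally as $A^{ij}(u,\nabla_\Sigma u;p)\,u_{ij}+B(u,\nabla_\Sigma u,\mathbf{x}\cdot\nabla_\Sigma u;p)$, where $A^{ij},B$ depend smoothly on finite-dimensional arguments with $p$-dependence through $\mathbf{x},\mathbf{n}_\Sigma,A_\Sigma,\nabla_\Sigma A_\Sigma$. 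The linearization at $u=0$ is computed from the standard normal-variation formulas: $DH|_0[v]=-\Delta_\Sigma v-|A_\Sigma|^2 v$ and $D(\tfrac{1}{2}\mathbf{F}_u\cdot \mathbf{n}_{\mathbf{F}_u})|_0[v]=\tfrac{1}{2}v-\tfrac{1}{2}\mathbf{x}\cdot\nabla_\Sigma v$. Adding these and using the self-expander equation $\mathcal{E}(0)=0$ yields $\mathcal{E}(u)=-L_\Sigma u+Q(u,\nabla_\Sigma u,\nabla_\Sigma^2 u;p)$ with $Q$ the quadratic-and-higher remainder.

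To organize $Q$ into the form $\mathbf{a}\cdot\mathbf{d}+b\tau$, factor a $\mathbf{d}$ or $\tau$ out of each remainder term: the contribution from $A^{ij}u_{ij}$ is $[A^{ij}(u,\nabla_\Sigma u)-g^{ij}]u_{ij}$, and because $A^{ij}$ is smooth with $A^{ij}(0,0)=g^{ij}$, the bracket is of the form $c^{ij}(u,\nabla_\Sigma u;p)\,u + \mathbf{c}^{ij,k}(u,\nabla_\Sigma u;p)\,u_k$; multiplying by $u_{ij}$ gives pieces of the shape $b\tau$ (with $b$ linear in $\mathbf{T}$) and $\mathbf{a}\cdot\mathbf{d}$ (with $\mathbf{a}$ linear in $\mathbf{T}$). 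The remainder from $B$ is likewise split by a Taylor expansion in $(\tau,\rho,\mathbf{d})$. The coefficient bound in $C^{0,\alpha}(\Sigma)$ is the only nontrivial point: every occurrence of $\mathbf{x}$ in $\mathcal{E}$ originates from $\mathbf{F}_u\cdot\mathbf{n}_{\mathbf{F}_u}$ and must be controlled. A bare $\mathbf{x}$-factor paired with a single $\nabla_\Sigma u$ is absorbed into the argument $\rho$; any other $\mathbf{x}$-factor always appears multiplied by $A_\Sigma$ or $\nabla_\Sigma A_\Sigma$, whose products with $\mathbf{x}$ are bounded by the scale-invariant estimate $|\mathbf{x}|^k|\nabla_\Sigma^{k-1}A_\Sigma|\leq C(\Sigma)$ coming from the asymptotically conical structure. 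That $b$ need not depend on $\rho$ is the structural observation that $\rho$ enters $\mathcal{E}(u)$ only through corrections to $\mathbf{n}_{\mathbf{F}_u}$, which are always accompanied by an extra $\nabla_\Sigma u$; hence every quadratic monomial involving $\rho$ can be pulled into the $\mathbf{a}\cdot\mathbf{d}$ component.

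The main obstacle is purely bookkeeping: tracking, term by term in the expansion coming from the expressions above for $\tau_i,\mathbf{n}_{\mathbf{F}_u}$, and the induced second fundamental form, that each quadratic monomial admits a factor of $\tau$ or $\mathbf{d}$ and that every explicit $\mathbf{x}$ is either absorbed into $\rho$ or paired with a curvature quantity controlled by asymptotic conicality. Once these two structural facts are verified explicitly, the stated $C^{0,\alpha}$ coefficient bound $\bar M=\bar M(\Sigma)$ follows from the $C^3$-regularity of $\Sigma$ and the conical decay of $A_\Sigma$ and $\nabla_\Sigma A_\Sigma$.
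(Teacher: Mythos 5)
Your approach differs from the paper's in two ways, and one of the differences hides a genuine gap.

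On the algebraic side, you rederive the structure of $Q$ explicitly (linearization, factoring quadratic monomials into the $\mathbf{a}\cdot\mathbf{d}+b\tau$ shape, tracking where bare $\mathbf{x}$-factors arise), whereas the paper simply cites the structural computation from [Lemma A.2, BWRelEnt] and focuses only on upgrading the coefficient bounds to uniform $C^{0,\alpha}$. Your explicit route is fine in spirit, your linearization formulas are correct ($D\mathcal{E}|_0 = -L_\Sigma$ and $\mathcal{E}(0)=0$), and your observation that every unbounded $\mathbf{x}$-factor from $\mathbf{F}_u\cdot\mathbf{n}_{\mathbf{F}_u}$ is either absorbed into $\rho = \mathbf{x}\cdot\nabla_\Sigma u$ or paired with a factor of $A_\Sigma$ (and hence bounded) is the correct structural observation; this is more explicit than what the paper writes.

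However, on the analytic side there is a real gap. You assert at the outset the uniform bound $|\mathbf{x}|^k|\nabla_\Sigma^{k-1}A_\Sigma| \le C(\Sigma)$ for $k=1,2$, and at the end invoke "$C^3$-regularity of $\Sigma$," and both of these are used to claim the $C^{0,\alpha}(\Sigma)$ bound $\bar M(\Sigma)$ on the coefficients. But the hypothesis only gives that $\Sigma$ is $C^{2,\alpha}$-asymptotic to $\mathcal{C}$ (the cone is $C^3$, but by the regularity improvement in [Proposition 3.3, BWSmoothCompactness] the self-expander is only $C^{k',\alpha'}$-asymptotic for $k'+\alpha'<3$). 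From $C^{2,\alpha}$-asymptotic conicality alone one controls $A_\Sigma$ and $[A_\Sigma]_\alpha$ with the appropriate decay, but \emph{not} the pointwise or H\"older norms of $\nabla_\Sigma A_\Sigma$ — and $\nabla_\Sigma A_\Sigma$ does appear in the coefficients of $Q$. This is precisely the step where the paper uses the self-expander equation: the eigenvalue identity $L_\Sigma\mathbf{n}_\Sigma = -\tfrac{1}{2}\mathbf{n}_\Sigma$, combined with the estimate on $\mathbf{x}\cdot\nabla_\Sigma\mathbf{n}_\Sigma$, gives a uniform $C^{0,\alpha}$ bound on $\Delta_\Sigma\mathbf{n}_\Sigma$, and interior Schauder estimates then bootstrap $\mathbf{n}_\Sigma$ to uniformly $C^{2,\alpha}$. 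That elliptic bootstrap is what actually produces uniform $C^{0,\alpha}$ control of $\nabla_\Sigma A_\Sigma$, and your proof omits it. Without it, the claimed coefficient bound $\bar M(\Sigma)$ is unsubstantiated.
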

\begin{proof}
	First observe that, as $\Sigma$ is $C^{2,\alpha}$-asymptotically conical, it has the property that, in a uniform sized ball around each point $p\in \Sigma$, the unit normal has uniform $C^{2, \alpha}$ estimates.  For any fixed $R_1\geq 1$, this is true for $p\in B_{R_1}\cap \Sigma$ by standard interior elliptic estimates.  For $p\in \Sigma \setminus B_{R_1}$, as $\Sigma$ is $C^{2,\alpha}$-asymptotically conical,  it follows essentially from the definition -- see \cite[Proposition 4.3]{BWSmoothCompactness} -- that there are constants $\gamma>0$ and $C_1>0$ so that, with $R=|\mathbf{x}(p)|\geq R_1\geq 1$, on $B_{\gamma R}(p)\cap \Sigma$ one has the estimates
	$$
	\Vert \mathbf{n}_\Sigma \Vert_{C^0}+R \Vert\nabla_\Sigma \mathbf{n}_\Sigma\Vert_{C^0} +R^{1+\alpha }[\nabla_\Sigma \mathbf{n}_\Sigma]_{\alpha} +R \Vert\mathbf{A}_\Sigma\Vert_{C^0}+R^{1+\alpha} [\mathbf{A}_\Sigma]_\alpha \leq C_1
	$$
	A consequence is that, on $B_{\gamma R}(p)\cap \Sigma$,  
	$$
	\Vert\mathbf{x}\cdot \nabla_\Sigma \mathbf{n}_\Sigma\Vert_{C^0}+R^\alpha [\mathbf{x}\cdot \nabla_\Sigma \mathbf{n}_\Sigma]_{\alpha}\leq 2 C_1.
	$$
	Using, $L_\Sigma \mathbf{n}_\Sigma=-\frac{1}{2}\mathbf{n}_\Sigma$ -- see \cite[Lemma 5.9]{BWSpace} --
	it follows that on $B_{\gamma R}(p)\cap \Sigma$ 
	$$
	\Vert\Delta_\Sigma \mathbf{n}_\Sigma\Vert_{C^0}+	[\Delta_\Sigma \mathbf{n}_\Sigma]_\alpha \leq C_1'.
	$$
	Hence, applying standard interior Schauder estimates \cite[15, Theorem
	6.2]{GilbargTrudinger} on a fixed scale around $p$ one obtains the desired bounds on the unit normal. 
	Hence,  for $\eta_0$ sufficiently small depending only on $\Sigma$, the map $\mathbf{F}_u$
	is a $C^{2, \alpha}$  embedding of $\Gamma_u=\mathbf{F}_u(\Sigma)$. 
	
	Finally, the hypotheses ensure that \cite[Lemma A.2]{BWRelEnt} gives the desired form of the $Q$, $\mathbf{a}$ and $b$ and the boundedness of the coefficients of $\mathbf{a}$ and $b$. However, by using the established uniform H\"{o}lder estimates on $\mathbf{x}|_{\Sigma}, \mathbf{n}_\Sigma, \mathbf{A}_\Sigma$ one readily checks that the proof of \cite[Lemma A.2]{BWRelEnt} also gives the required H\"{o}lder norm estimates.
\end{proof}

Next we establish a lemma that shows a self-expander cannot be written as a one-sided normal graph with sufficiently small $C^{2}$ norm over a strictly unstable expander -- i.e., each connected component of the expander is unstable. 
\begin{lem}\label{NonStationaryLem}  Suppose $\Sigma$ is strictly unstable. 
	There is an $\eta_1=\eta_1(\Sigma)>0$ depending on $\Sigma$ so that if $u\in C^\infty(\Sigma)$ is positive and $\Vert u\Vert_{C^{2}}\leq \eta_1$, then $\Sigma'=\Gamma_u$, is not a self-expander.
\end{lem}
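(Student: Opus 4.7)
The plan is to argue by contradiction using the positive first Dirichlet eigenfunction of $L_\Sigma$ on a large compact subdomain, combined with a strong maximum principle applied to the ratio $u/\psi$.

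Suppose no such $\eta_1$ works, so there is a positive $u \in C^\infty(\Sigma)$ with $\norm{u}_{C^2} \le \eta_1$ (arbitrarily small) and $\Gamma_u$ a self-expander. By Lemma \ref{GraphMCLem}, this is equivalent to
\[
L_\Sigma u = Q(u, \mathbf{x}\cdot \nabla_\Sigma u, \nabla_\Sigma u, \nabla_\Sigma^2 u),
\]
where $Q = \mathbf{a}\cdot\nabla_\Sigma u + b u$ is bilinear in $u$ and its derivatives with $C^{0,\alpha}$-bounded coefficients. Fix a component $\Sigma'$ of $\Sigma$; since each component is unstable, the top eigenvalue of the self-adjoint operator $L_\Sigma$ on the Gaussian-weighted space over $\Sigma'$ is positive, and by exhausting $\Sigma'$ by compact domains one can select a smoothly bounded compact $K \subset \Sigma'$ on which the top Dirichlet eigenvalue $\lambda > 0$ of $L_\Sigma$ remains positive. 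Let $\psi$ denote a corresponding positive first Dirichlet eigenfunction: $L_\Sigma \psi = \lambda \psi$, $\psi > 0$ on $\mathrm{int}(K)$, $\psi|_{\partial K} = 0$.

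On $\mathrm{int}(K)$, consider $w = u/\psi$. Since $u$ is positive on $\bar K$ while $\psi \to 0$ on $\partial K$, $w \to +\infty$ near $\partial K$ and hence attains its positive minimum $c_* > 0$ at some $p_* \in \mathrm{int}(K)$. Then $f = u - c_*\psi \ge 0$ vanishes at $p_*$, so $\nabla_\Sigma f(p_*) = 0$ and $\Delta_\Sigma f(p_*) \ge 0$; since $f(p_*)=0$ also kills the zero-order term in $L_\Sigma$, this yields $L_\Sigma f(p_*) = \Delta_\Sigma f(p_*) \ge 0$. Using $L_\Sigma f = Q(u) - c_*\lambda \psi$ and $c_*\psi(p_*) = u(p_*)$, evaluation at $p_*$ yields the key inequality
\[
Q(u)(p_*) \ge \lambda u(p_*) > 0.
\]

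The main obstacle, and the final step, is establishing the opposite bound $\abs{Q(u)(p_*)} \le C\, \eta_1\, u(p_*)$ for some $C = C(\Sigma, K)$, which together with the above forces $\lambda \le C \eta_1$, a contradiction for $\eta_1$ small. Two ingredients enter. First, the bilinear form of $Q$ combined with $\abs{u}, \abs{\nabla_\Sigma u}, \abs{\nabla_\Sigma^2 u} \le \eta_1$ and the boundedness of $\abs{\mathbf{x}}$ on the compact $K$ give the pointwise bound $\abs{Q(u)(p)} \le C_1\, \eta_1\, (\abs{u(p)} + \abs{\nabla_\Sigma u(p)})$ on $K$. Second, for $\eta_1$ small the equation $L_\Sigma u = Q(u)$ is a uniformly elliptic quasilinear perturbation of $L_\Sigma u = 0$ with smooth bounded coefficients on a slightly larger $\hat K \supset\supset K$; standard interior Harnack and Schauder estimates applied to the positive solution $u$ on balls of a fixed size inside $\hat K$ then supply a uniform pointwise bound $\abs{\nabla_\Sigma u(p)} \le C_2\, u(p)$ for $p \in K$. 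Combining and taking the minimum of $\lambda/(C_1(1+C_2))$ over the finitely many components of $\Sigma$ yields the required $\eta_1$.
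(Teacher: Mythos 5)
Your proposal is correct, and it takes a genuinely different route from the paper's proof. The paper works globally and integrally: starting from the graph PDE $L_\Sigma u = Q(u)$, it derives a pointwise lower bound for $-\mathcal{L}_\Sigma\log u$ via Cauchy--Schwarz, tests this against an arbitrary compactly supported $\psi$, integrates by parts, and invokes a weighted Poincar\'{e} inequality (from \cite{BWIntegerDegree}) to conclude that the first eigenvalue of $-L_\Sigma$ satisfies $\lambda_0 \geq -K\eta_1$, contradicting strict instability. Your argument instead works locally and pointwise: it uses the positive first Dirichlet eigenfunction $\psi$ of $L_\Sigma$ on a large compact $K$ with positive eigenvalue $\lambda$ as a barrier, slides $c_*\psi$ up to touch $u$ from below at an interior point $p_*$, and reads off $Q(u)(p_*) \geq \lambda u(p_*)$ from the maximum principle (using that $\nabla_\Sigma f(p_*)=0$ and $f(p_*)=0$ wipe out all terms of $L_\Sigma$ except $\Delta_\Sigma$). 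The extra ingredient you need that the paper does not is the reverse bound $|Q(u)(p_*)| \lesssim \eta_1 u(p_*)$, for which the Harnack inequality and interior Schauder estimates on a slightly enlarged $\hat{K}\supset\supset K$ deliver $|\nabla_\Sigma u| \leq C_2 u$ with $C_2$ uniform over $u$ satisfying $\Vert u\Vert_{C^2}\leq \eta_1$. This works: after absorbing the $\nabla_\Sigma^2 u$-terms of $Q$ into the principal part (which remains uniformly elliptic with Lipschitz, hence $C^{0,\alpha}$, coefficients), the frozen-coefficient linear equation for $u$ has the required regularity for both Harnack and Schauder with constants depending only on $\Sigma$, $K$, $\hat K$. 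Your approach avoids the global Poincar\'{e} inequality and the global spectral theory of $L_\Sigma$ (only the standard Dirichlet problem on a compact domain is needed), at the cost of invoking Harnack for the gradient-to-function bound; the paper's approach is tighter with the rest of the paper's spectral framework and is shorter. One small terminology note: $Q$ is a homogeneous degree-two (quadratic) polynomial rather than ``bilinear,'' but your estimate $|Q(u)|\leq C_1\eta_1(|u|+|\nabla_\Sigma u|)$ is exactly what the structure $Q = \mathbf{a}\cdot\nabla_\Sigma u + b\,u$ with degree-one $\mathbf{a}, b$ gives on the compact $K$.
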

\begin{proof}
	Suppose the graph of $u>0$ is a self-expander, by \cite[Lemma A.2]{BWRelEnt}, there is a constant $\bar{M}_0>0$ depending  on $\Sigma$  so that if $\eps<(8\bar{M}_0)^{-1}$ and $\Vert u\Vert_{C^2}\leq \eps$, then $u$ satisfies
	$$
	|L_\Sigma u|\leq C_1\eps ( | \mathbf{x}\cdot \nabla_{\Sigma } u|+|\nabla_\Sigma u| +|u|).
	$$
	where $C_1$ depends only on $\Sigma$.
	
	For  $\mathcal{L}_\Sigma = \Delta_\Sigma + \frac{1}{2}\xX \cdot \nabla_\Sigma $, the drift Laplacian, the Cauchy-Schwarz inequality implies, 
	$$
	-\mathcal{L}_\Sigma \log u \geq (1-C_1\eps ) |\nabla_\Sigma \log u|^2 +|A_\Sigma|^2-\frac{1}{2}-\frac{3}{2} C_1\eps -\frac{1}{2} C_1 \eps|\mathbf{x}|^2,
	$$
Using the Poincar\'{e} inequality of \cite[Lemma A.1]{BWIntegerDegree} one has 
	\begin{align*}
	\int_{\Sigma}\psi^2 &(-\mathcal{L}_\Sigma \log u)e^{\frac{|\mathbf{x}|^2}{4}} \, d\mathcal{H}^n\geq   (1-C_1\eps ) \int_{\Sigma}\psi^2 |\nabla_\Sigma \log u|^2e^{\frac{|\mathbf{x}|^2}{4}} \, d\mathcal{H}^n\\
	&+\int_{\Sigma} \left(|A_\Sigma|^2-\frac{1}{2}-\frac{3}{2} C_1\eps\right) \psi^2e^{\frac{|\mathbf{x}|^2}{4}} \, d\mathcal{H}^n-8C_1 \eps\int_{\Sigma} |\nabla_\Sigma \psi|^2 e^{\frac{|\mathbf{x}|^2}{4}} \, d\mathcal{H}^n.
	\end{align*}
	
	If $\psi$ is a compactly supported smooth function on $\Sigma$ it follows from an integration by parts and the Cauchy-Schwarz inequality that when $C_1 \eps<1$,
	\begin{align*}
	(1-C_1\eps)^{-1}\int_{\Sigma} |\nabla_\Sigma \psi|^2 e^{\frac{|\mathbf{x}|^2}{4}} \, d\mathcal{H}^n& + (1-C_1\eps)\int_{\Sigma}\psi^2 |\nabla_\Sigma \log u|^2 e^{\frac{|\mathbf{x}|^2}{4}} \, d\mathcal{H}^n \\
	\geq \int_{\Sigma}\psi^2 (-\mathcal{L}_\Sigma \log u)e^{\frac{|\mathbf{x}|^2}{4}} \, d\mathcal{H}^n.
	\end{align*}	
	Combining these two inequalities yields
	\begin{align*}
	(1+8C_1 \eps) \int_{\Sigma} |\nabla_\Sigma \psi|^2 e^{\frac{|\mathbf{x}|^2}{4}} \, d\mathcal{H}^n \geq (1-C_1 \eps) \int_{\Sigma} \left(|A_\Sigma|^2-\frac{1}{2}-\frac{3}{2} C_1\eps\right) \psi^2e^{\frac{|\mathbf{x}|^2}{4}} \, d\mathcal{H}^n.
	\end{align*}
	Hence, the boundedness of the second fundamental form, implies that, up to shrinking $\eps$, 
	$$
	\int_{\Sigma} (|\nabla_\Sigma \psi|^2 - |A_\Sigma|^2 \psi^2 +\frac{1}{2}\psi^2)  e^{\frac{|\mathbf{x}|^2}{4}} \, d\mathcal{H}^n\geq -K \eps  \int_{\Sigma}\psi^2e^{\frac{|\mathbf{x}|^2}{4}} \, d\mathcal{H}^n
	$$
	for some $K>0$ that depends only on $\Sigma.$
	
	By \cite[Lemma 4.1]{BWIntegerDegree}, $L_\Sigma$ has a discrete spectrum and it follows from the above that 
	the lowest eigenvalue satisfies $\lambda_0(L_{\Sigma})\geq -K\eps$.  
	However, as $\Sigma$ is strictly unstable,  $\lambda_0(L_\Sigma)<0$.  Hence, when  $\eta_1=\frac{1}{2}\min \set{-K^{-1} \lambda_0(L_\Sigma), 8(\bar{M}_0)^{-1}}>0$ there can be no such $u$ satisfying $\Vert u\Vert_{C^2} \leq \eta_1$.
\end{proof}

We now begin the construction of the flow line.  It will be convenient to have some notion for EMCFs that are near $\Sigma$ in a controlled way and that have certain good properties.
\begin{defn}
	Let $\eta_0$ be the constant given by \cref{GraphMCLem}.
	For $T>0$ and $0<\eta<\eta_0$, 
	$v\in C^\infty([0, T] \times \Sigma)$ is an  \emph{$\eta$-uniform EMCF on $[0, T]$} if 
	\begin{enumerate}
		\item $\Vert v\Vert_{C^{2, \alpha}_P} \leq \eta$;
		\item $\Sigma_t=\Gamma_{v(t, \cdot)}$ is a smooth hypersurface for all $t\in [0, T]$;
		\item $t\mapsto \Sigma_t$ is a EMCF for $t\in [0, T]$.
	\end{enumerate}
	We say such a $v$ satisfies a \emph{reverse Poincar\'{e} inequality with constant $\kappa>1$}, if, in addition,
	\begin{enumerate}[resume]
		\item  For all $t\in [0, T]$, $\Vert v(t, \cdot)\Vert_{W, 1}\leq \kappa\Vert v(t, \cdot)\Vert_{W, 0}<\infty$.
	\end{enumerate}
	We say such a $v$ is \emph{monotone} if, in addition,
	\begin{enumerate}[resume]
		\item  For all $0\leq t\leq t'\leq T$, $0\leq v(t,\cdot)\leq v(t', \cdot)$.
	\end{enumerate}
\end{defn}
By \cref{GraphMCLem},  $v$,  an $\eta$-uniform EMCF on $[0, T]$,  satisfies a parabolic PDE of the form
\begin{align}
\label{graphical-rmcf-equation}
\frac{\partial v}{\partial t} = L_\Sigma v + Q(v,\xX\cdot \nabla_\Sigma v  , \nabla_\Sigma v, \nabla^2_\Sigma v).
\end{align}
Here $Q$ is the homogeneous degree-two polynomial given by \cref{GraphMCLem} which we will sometimes shorten to $Q[v]$. 
We may treat the equation \cref{graphical-rmcf-equation} as a quasilinear parabolic PDE with time independent coefficients bounded in $C^{0,\alpha}(\Sigma)$.   By the strict maximum principle if $v$ is monotone and $\Sigma$ is connected, then either $v$ is strictly monotone, i.e., the inequality in Item (5) above is strict, or is a static solution to \eqref{graphical-rmcf-equation}, i.e., each $\Sigma_t$ is a self-expander.

Standard existence results for MCF establish a short time existence and uniqueness in the class of $\eta$-uniform EMCF for hypersurfaces close to $\Sigma$.
\begin{prop}\label{IterativeRegProp}
	Given $0<\eta<\eta_0$, there is an $\alpha_0=\alpha_0(\Sigma, \eta)>0$ and  $\delta_0 = \delta_0(\Sigma,\eta)>0$ so that, if $v_0\in C^{\infty}(\Sigma)$ satisfies $\Vert v_0\Vert_{C^{2, \alpha}}\leq \alpha_0$, then there is a unique $\eta$-uniform EMCF on $[0, \delta_0]$, $v$, with $v(0,\cdot)=v_0$. 
\end{prop}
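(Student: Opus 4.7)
The plan is to view the graphical EMCF equation \eqref{graphical-rmcf-equation} as a quasilinear parabolic Cauchy problem on $\Sigma$ and solve it by a Banach fixed point argument. Write the equation as
\[
\partial_t v - L_\Sigma v = Q[v], \qquad v(0,\cdot)=v_0,
\]
where, by \cref{GraphMCLem}, $Q[v]=Q(v,\mathbf{x}\cdot\nabla_\Sigma v,\nabla_\Sigma v,\nabla_\Sigma^2 v)$ is a quadratic form whose coefficients are uniformly bounded in $C^{0,\alpha}(\Sigma)$. For $w$ in a small ball of $C^{2,\alpha}_P([0,\delta]\times\Sigma)$, let $\Phi(w)$ denote the solution of the \emph{linear} Cauchy problem $\partial_t v - L_\Sigma v = Q[w]$ with $v(0,\cdot)=v_0$; the desired $v$ will arise as a fixed point of $\Phi$ in the $\eta$-ball.

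The main technical obstacle is to obtain parabolic Schauder estimates for $\partial_t - L_\Sigma$ on the non-compact asymptotically conical $\Sigma$, since the drift term $\tfrac{1}{2}\mathbf{x}\cdot\nabla_\Sigma$ has linearly growing coefficients. To handle this, I will exploit the dictionary \eqref{ChangeOfVariables} between EMCF and ordinary MCF: under $(\mathbf{x},t)\mapsto(e^{t/2}\mathbf{x},e^t)$ the drift term is absorbed, and the problem becomes the graphical MCF equation over the one-parameter family $s\mapsto\sqrt{s}\Sigma$, whose linearization has uniformly bounded coefficients on $s\in[1,e^{\delta_0}]$. Combined with the uniform $C^{2,\alpha}$ control of $\Sigma$ on balls of size proportional to $|\mathbf{x}|$ established in the proof of \cref{GraphMCLem}, classical parabolic Schauder theory on manifolds of uniformly bounded geometry yields an estimate of the form
\[
\Vert \Phi(w) \Vert_{C^{2,\alpha}_P([0,\delta]\times\Sigma)} \leq C\bigl(\Vert v_0 \Vert_{C^{2,\alpha}}+\Vert Q[w] \Vert_{C^{0,\alpha}_P([0,\delta]\times\Sigma)}\bigr),
\]
with $C$ depending only on $\Sigma$.

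Once this linear estimate is in hand, the explicit form of $Q$ from \cref{GraphMCLem} and the uniform H\"{o}lder bounds on its coefficients yield the quadratic estimate $\Vert Q[w] \Vert_{C^{0,\alpha}_P}\leq \bar C \Vert w \Vert_{C^{2,\alpha}_P}^2$ and the Lipschitz bound
\[
\Vert Q[w_1]-Q[w_2] \Vert_{C^{0,\alpha}_P}\leq \bar C\bigl(\Vert w_1 \Vert_{C^{2,\alpha}_P}+\Vert w_2 \Vert_{C^{2,\alpha}_P}\bigr)\Vert w_1-w_2 \Vert_{C^{2,\alpha}_P}.
\]
Choosing $\alpha_0$ so that $C\alpha_0+C\bar C\eta^2\leq \eta$ ensures $\Phi$ maps the $\eta$-ball of $C^{2,\alpha}_P([0,\delta_0]\times\Sigma)$ into itself, and shrinking $\alpha_0$ and $\delta_0$ further makes $\Phi$ a strict contraction. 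The Banach fixed point theorem then produces a unique fixed point $v$ in this ball, which is the sought $\eta$-uniform EMCF. Smoothness follows from a standard parabolic bootstrap via iterated Schauder estimates applied to spatial derivatives of \eqref{graphical-rmcf-equation}, and uniqueness within the class of $\eta$-uniform EMCFs is built into the contraction (alternatively, it follows from a direct maximum-principle argument for the difference of two such flows).
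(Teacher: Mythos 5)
Your proposal takes a genuinely different route from the paper. The paper does not set up a fixed-point argument on the graphical PDE at all: it first observes that $\Gamma_{v_0}$ has uniformly bounded second fundamental form, invokes Ecker--Huisken's interior estimates \cite{EckerHuiskenInterior} to get short-time existence and curvature bounds for the \emph{ordinary} MCF from $\Gamma_{v_0}$, converts this to an EMCF via the change of variables \eqref{ChangeOfVariables} at the level of the geometric flow, and only \emph{afterwards} expresses the flow as a graph over $\Sigma$ to read off $v$, applying interior parabolic Schauder estimates to the smooth flow for $t>0$ to get the $C^{2,\alpha}_P$ bound, and citing \cite{ChenYin} for uniqueness. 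This geometric route sidesteps the unbounded drift coefficient $\tfrac{1}{2}\mathbf{x}\cdot\nabla_\Sigma$ because the change of variables is applied to hypersurfaces rather than to the graph equation.

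The weak point of your route is the asserted global Schauder estimate
\[
\Vert \Phi(w) \Vert_{C^{2,\alpha}_P} \le C\bigl(\Vert v_0 \Vert_{C^{2,\alpha}}+\Vert Q[w] \Vert_{C^{0,\alpha}_P}\bigr),
\]
with $C$ uniform on all of the non-compact $\Sigma$. You acknowledge that the drift coefficient grows linearly in $|\mathbf{x}|$, but the proposed remedy is not as immediate as you suggest. Your claim that ``the uniform $C^{2,\alpha}$ control of $\Sigma$ on balls of size proportional to $|\mathbf{x}|$'' suffices runs into trouble: the rescaled first-order coefficient on a ball of radius $R$ around $p$ is $R\cdot|\mathbf{x}(p)|$, which stays bounded only if $R\lesssim|\mathbf{x}(p)|^{-1}$ (this is exactly the scale used in the paper's Proposition \ref{GrowthL2WProp}), and summing interior estimates on balls of that shrinking scale does not obviously produce a uniform unweighted $C^{2,\alpha}_P$ bound. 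Passing to MCF via \eqref{ChangeOfVariables} does absorb the drift, but then the graph is over the time-dependent family $\sqrt{s}\,\Sigma$; pulling back to the fixed $\Sigma$ reintroduces a drift, while keeping the moving base means ``classical parabolic Schauder on manifolds of bounded geometry'' does not apply off the shelf. None of this is unfixable --- and once that estimate is in place, your contraction argument, the quadratic/Lipschitz bounds from \cref{GraphMCLem}, and the maximum-principle backup for uniqueness are all standard and correct --- but as written, the Schauder estimate is the step carrying essentially all of the difficulty, and the proposal defers it rather than resolves it. That is precisely the difficulty the paper avoids by working with Ecker--Huisken's geometric short-time existence theorem instead of the graph PDE.
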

\begin{proof}
	By \cref{GraphMCLem},  for $\alpha_0$ sufficiently small depending only on $\Sigma$, the map
	$$
	\mathbf{F}_0\colon\Sigma\to \Real^{n+1}, p \mapsto \mathbf{x}(p)+v_0(p) \mathbf{n}_\Sigma(p)
	$$
	is a $C^{2, \alpha}$ diffeomorphism onto $\Sigma_0=\mathbf{F}_0(\Sigma)=\Gamma_{v_0}$.  It follows from the interior estimates of Ecker--Huisken  \cite{EckerHuiskenInterior} -- specifically \cite[Theorem 4.2]{EckerHuiskenInterior} -- that there is short time existence of the MCF starting from $\Sigma_0$ and hence also of the EMCF. This flow is parameterized by a map $\mathbf{F}(t, \cdot)\colon [0, \delta_0]\times \Sigma\to \Real^{n+1}$ with $\mathbf{F}(0, \cdot)=\mathbf{F}_0$. Moreover, the map $\mathbf{F}$ is $\frac{1}{2}$-H\"older continuous in time at $t=0$ and smooth for $t>0$.  It follows that there exists a $C^{\frac{1}{2}}_P$ function $v\colon [0,\delta_0] \times \Sigma \to \R$ such that
	\begin{align*}
	\Sigma_t=\mathbf{F}(t, \Sigma) = \Gamma_{v(t,\cdot)}
	\end{align*}
	and that $v$ solves \cref{graphical-rmcf-equation}. Here one uses only that $\Sigma_0$ is expressible locally as a Lipschitz graph on a uniform scale. Standard parabolic Schauder estimates give the bound on $\Vert v \Vert_{C^{2,\alpha}_P}$ as long as $\alpha_0$ is taken sufficiently small -- see the discussion in the proof of \cite[Corollary 4.4]{EckerHuiskenInterior}.  The uniqueness is straightforward -- see also \cite{ChenYin}.
\end{proof}

We now establish estimates that will allow us to iterate Proposition \ref{IterativeRegProp}.
First, we recall an interior $L^2$ to $C^{2,\alpha}_P$ estimate:
\begin{prop}
	\label{L2-schaulder-estimates}
	Suppose $v$ is smooth and satisfies 
	\begin{align}
	\frac{\partial v}{\partial t} =a^{ij} \nabla_{ij} v+ b^j \nabla_jv+c v.
	\end{align}
	on $[-R^2, 0]\times \bar{B}_{R}(\mathbf{0})\subset \Real\times \Real^n$, with $\{a^{ij}\}$ uniformly elliptic. If the coefficients satisfy
	$$
	\Vert a^{ij}\Vert_{C_P^{0, \alpha}}+R \Vert b^j\Vert_{C_P^{0, \alpha}}+ R^2 \Vert c\Vert_{C_P^{0, \alpha}} \leq C_0
	$$
	on  $[-R^2, 0]\times \bar{B}_{R}(\mathbf{0})$.
	Then there is a constant $C>0$ depending on $C_0, n$,  and the ellipticity constant so that, after setting, 	$$
	I_R[v]=R^{-n-2} \int_{-R^2}^0 \int_{\bar{B}_R(\mathbf{0})} |v|^2 \, dx dt,
	$$
	\begin{gather*}
	\abs{v(0,\mathbf{0})}^2 + R^2\abs{\nabla_\Sigma v(0,\mathbf{0})}^2 + R^{4} \abs{\nabla^2_\Sigma v(0,\mathbf{0})}^2 +
	R^4\abs{\partial_t v(0,\mathbf{0})}^2 \leq C I_R[v] \mbox{ and }
	\end{gather*}
	\begin{gather*}
	R^{4+2\alpha} \left( [\nabla^2_\Sigma v]_{\alpha;  [-R^2/4,0] \times \bar{B}_{R/2}(\mathbf{0})}^2+[\partial_t v]_{\alpha;  [-R^2/4,0] \times \bar{B}_{R/2}(\mathbf{0})}^2 \right) \leq C I_R[v].
	\end{gather*} 

\end{prop}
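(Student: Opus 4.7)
The plan is to reduce to the unit-scale case by parabolic rescaling, and then chain two standard interior parabolic estimates: first an $L^2$-to-$L^\infty$ bound (for the equation with merely bounded coefficients), and then an $L^\infty$-to-$C^{2,\alpha}_P$ Schauder estimate (for the equation with $C^{0,\alpha}_P$ coefficients). All the relevant tools are available in, e.g., Lieberman's book on parabolic PDE.

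Concretely, first I would set $\tilde v(t,x)=v(R^2 t, Rx)$ on $Q_1=[-1,0]\times\bar B_1(\mathbf{0})$. Then $\tilde v$ solves
\begin{equation*}
\partial_t \tilde v = \tilde a^{ij}\nabla_{ij}\tilde v + \tilde b^j\nabla_j\tilde v + \tilde c\,\tilde v
\end{equation*}
with $\tilde a^{ij}(t,x)=a^{ij}(R^2t,Rx)$, $\tilde b^j(t,x)=Rb^j(R^2t,Rx)$, $\tilde c(t,x)=R^2c(R^2t,Rx)$. The scaling is arranged exactly so that the uniform ellipticity constant of $\{\tilde a^{ij}\}$ is preserved and the coefficients satisfy $\|\tilde a^{ij}\|_{C_P^{0,\alpha}(Q_1)}+\|\tilde b^j\|_{C_P^{0,\alpha}(Q_1)}+\|\tilde c\|_{C_P^{0,\alpha}(Q_1)}\le C_0$. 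Moreover a direct change-of-variables gives $\|\tilde v\|_{L^2(Q_1)}^2=I_R[v]$.

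Next I would invoke the standard parabolic $L^2$-to-$L^\infty$ estimate (Moser iteration for equations in non-divergence form with bounded measurable coefficients) on the pair of cylinders $Q_1\supset Q_{3/4}=[-9/16,0]\times \bar B_{3/4}(\mathbf{0})$ to obtain
\begin{equation*}
\|\tilde v\|_{L^\infty(Q_{3/4})} \le C\,\|\tilde v\|_{L^2(Q_1)} = C\,I_R[v]^{1/2},
\end{equation*}
where $C$ depends only on $n$, the ellipticity constant, and $C_0$. Then, because the coefficients are $C^{0,\alpha}_P$, classical interior parabolic Schauder estimates applied on $Q_{3/4}\supset Q_{1/2}=[-1/4,0]\times \bar B_{1/2}(\mathbf{0})$ give
\begin{equation*}
\|\tilde v\|_{C^{2,\alpha}_P(Q_{1/2})} \le C\,\|\tilde v\|_{L^\infty(Q_{3/4})} \le C\,I_R[v]^{1/2}.
\end{equation*}

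Finally I would undo the scaling. Chain rule gives $\nabla^k\tilde v(0,\mathbf{0})=R^k\nabla^k v(0,\mathbf{0})$ for $k=0,1,2$ and $\partial_t\tilde v(0,\mathbf{0})=R^2\partial_t v(0,\mathbf{0})$, which yields the first pointwise inequality once we square the $C^0$-part of the Schauder bound. For the seminorm part, the identities $[\nabla^2_\Sigma v]_{\alpha;[-R^2/4,0]\times\bar B_{R/2}}=R^{-2-\alpha}[\nabla^2\tilde v]_{\alpha;Q_{1/2}}$ and $[\partial_t v]_{\alpha;[-R^2/4,0]\times\bar B_{R/2}}=R^{-2-\alpha}[\partial_t\tilde v]_{\alpha;Q_{1/2}}$ convert the Schauder bound into the claimed $R^{4+2\alpha}$-weighted inequality. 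There is no real obstacle here — the only point requiring care is verifying the correct scaling of the H\"older seminorms in the parabolic metric (where time scales as length squared), so that the powers of $R$ come out as stated; all analytic input is classical.
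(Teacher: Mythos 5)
Your proof is correct and is essentially the same approach as the paper's: the paper simply parabolically rescales to the unit cylinder and cites \cite[Theorem C.2]{ChoiMantoulidis}, which packages exactly the $L^2\to C^{2,\alpha}_P$ interior estimate you assemble from the $L^2\to L^\infty$ local maximum principle followed by interior Schauder. One small caveat worth noting: for non-divergence-form equations the $L^2\to L^\infty$ step is more precisely a Krylov--Safonov/ABP-type local maximum principle (or an $L^p$ bootstrap via Calder\'{o}n--Zygmund theory, which is cleaner here since the coefficients are $C^{0,\alpha}_P$) rather than Moser iteration, and your claim that $\|\tilde a^{ij}\|_{C^{0,\alpha}_P}+\|\tilde b^j\|_{C^{0,\alpha}_P}+\|\tilde c\|_{C^{0,\alpha}_P}\le C_0$ implicitly uses $R\le 1$, which is how the proposition is applied in the paper.
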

\begin{proof}
	By rescaling the domain, this follows from \cite[Theorem C.2]{ChoiMantoulidis}. 
\end{proof}
Using this we establish a weighted interior estimate for appropriate $\eta$-uniform ECMF.
\begin{prop}\label{GrowthL2WProp}
	Fix $\kappa_1>1, T_1>0$.  There are $\eta_2=\eta_2(\Sigma)>0$, $C_1=C_1(\Sigma, \kappa_1)>0$ and $C_2=C_2(\Sigma,  \kappa_1, T_1)>0$ so that if $v$ is an $\eta_2$-uniform EMCF on $[0, T_1]$, that satisfies a reverse Poincar\'{e} inequality with constant $\kappa_1$, then
	$$
	\Vert v(t, \cdot )\Vert_{W,0} \leq e^{C_1 t} \Vert v(0, \cdot) \Vert_{W,0} \mbox{ and } 	\norm{v}_{C^{2,\alpha}_P ([\frac{1}{2}T_1 ,T_1] \times \Sigma)} \le C_2\norm{v(T_1,\cdot)}_{W,0}.
	$$
\end{prop}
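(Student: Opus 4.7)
The plan is to establish the weighted $L^2$ growth estimate by a differential inequality and Gr\"onwall, then apply \cref{L2-schaulder-estimates} locally to obtain the $C^{2,\alpha}_P$ bound. For the first step, I would differentiate $\|v(t,\cdot)\|_{W,0}^2$, substitute the PDE \eqref{graphical-rmcf-equation}, and integrate by parts. Using self-adjointness of the drift Laplacian with respect to $e^{|\xX|^2/4}\,d\mathcal{H}^n$, the linear part contributes
\begin{equation*}
2\int_\Sigma v L_\Sigma v\, e^{|\xX|^2/4}\,d\mathcal{H}^n= -2\|\nabla_\Sigma v\|_{W,0}^2 + 2\int(|A_\Sigma|^2 - \tfrac{1}{2})v^2 e^{|\xX|^2/4}\,d\mathcal{H}^n,
\end{equation*}
which is at most $C(\Sigma)\|v\|_{W,0}^2$ since $|A_\Sigma|$ is uniformly bounded on the smooth asymptotically conical self-expander $\Sigma$.

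The main obstacle is the nonlinear term $\int v Q[v]\,e^{|\xX|^2/4}\,d\mathcal{H}^n$, where $Q$ has the structure from \cref{GraphMCLem}. Expanding term by term with $|v|,|\nabla_\Sigma v|,|\nabla_\Sigma^2 v|\leq\eta_2$, every contribution not involving $\xX\cdot\nabla_\Sigma v$ is pointwise controlled by $C\bar{M}\eta_2(v^2 + |v||\nabla_\Sigma v|)$, which integrates via Cauchy--Schwarz and the reverse Poincar\'e hypothesis to $C(\bar{M},\kappa_1)\eta_2\|v\|_{W,0}^2$. The terms involving $\xX\cdot\nabla_\Sigma v$ are handled by first reducing, via Cauchy--Schwarz and the reverse Poincar\'e inequality, to bounding $\int v^2|\xX|^2 e^{|\xX|^2/4}\,d\mathcal{H}^n$ in terms of $\|v\|_{W,0}$. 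To do so, I would exploit the self-expander identity $\vec H_\Sigma = \xX^\perp/2$, which gives $\Div_\Sigma(\xX^T) = n - |\xX^\perp|^2/2$, and apply the divergence theorem to $v^2 \xX^T e^{|\xX|^2/4}$ on $\Sigma$ (justified by a cutoff argument using the finiteness of $\|v\|_{W,1}$). This yields
\begin{equation*}
\int v^2 |\xX|^2 e^{|\xX|^2/4}\,d\mathcal{H}^n = -4\int v(\xX\cdot\nabla_\Sigma v) e^{|\xX|^2/4}\,d\mathcal{H}^n - 2n\|v\|_{W,0}^2 + 2\int v^2|\xX^\perp|^2 e^{|\xX|^2/4}\,d\mathcal{H}^n.
\end{equation*}
Since $|\xX^\perp|=2|H_\Sigma|$ is bounded on $\Sigma$, applying Cauchy--Schwarz and the reverse Poincar\'e inequality to the first term on the right and solving the resulting quadratic inequality for $\int v^2|\xX|^2 e^{|\xX|^2/4}$ produces a bound of the form $C(\kappa_1,\Sigma)\|v\|_{W,0}^2$. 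Putting all estimates together yields $\frac{d}{dt}\|v\|_{W,0}^2\leq 2 C_1\|v\|_{W,0}^2$, and Gr\"onwall's inequality gives the first conclusion. The same argument produces $\frac{d}{dt}\|v\|_{W,0}^2 \geq -2C_1\|v\|_{W,0}^2$, so that $\sup_{t\in[T_1/2, T_1]}\|v(t,\cdot)\|_{W,0}\leq e^{C_1 T_1/2}\|v(T_1,\cdot)\|_{W,0}$.

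For the $C^{2,\alpha}_P$ estimate, I would fix a uniform scale $r_0=r_0(\Sigma)>0$ on which, by the $C^3$-asymptotically conical property (cf.\ the proof of \cref{GraphMCLem}), $\Sigma$ admits uniform local parameterizations and, once $\eta_2$ is small enough, the PDE \eqref{graphical-rmcf-equation} (treating the nonlinearity as frozen at $v$) satisfies the coefficient hypotheses of \cref{L2-schaulder-estimates} with constants depending only on $\Sigma$. At each $p\in\Sigma$, the local $L^2$ average satisfies
\begin{equation*}
I_{r_0}[v](T_1,p)\leq C r_0^{-n}\sup_{B_{r_0}(p)\cap\Sigma}e^{-|\xX|^2/4}\sup_{t\in[T_1-r_0^2, T_1]}\|v(t,\cdot)\|_{W,0}^2\leq C(\Sigma,\kappa_1,T_1)\|v(T_1,\cdot)\|_{W,0}^2
\end{equation*}
uniformly in $p$, using that $e^{-(|\xX(p)|-r_0)^2/4}$ is uniformly bounded over $p$ and combining with the reverse Gr\"onwall estimate from the previous paragraph. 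Applying \cref{L2-schaulder-estimates} at $(T_1,p)$ yields a pointwise bound on $|v|,|\nabla_\Sigma v|,|\nabla_\Sigma^2 v|,|\partial_t v|$ together with H\"older seminorm bounds on $[T_1-r_0^2/4,T_1]\times (B_{r_0/2}(p)\cap\Sigma)$; since the parabolic H\"older seminorm $[\cdot]_\alpha$ involves only points within parabolic distance $\delta\leq r_0/2$, taking the supremum over $p\in\Sigma$ delivers the desired global estimate.
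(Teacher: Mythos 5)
Your first step --- the $W,0$ growth estimate --- is essentially correct and in fact more explicit than the paper's. The paper bounds the nonlinear contribution compactly as $\int_\Sigma |vQ[v]| e^{|\xX|^2/4}\, d\mathcal{H}^n \leq \bar M_0 \norm{v}_{C^{2,\alpha}}\norm{v}_{W,1}^2$ and then invokes the reverse Poincar\'e inequality, whereas you isolate the $\xX\cdot\nabla_\Sigma v$ terms and control $\int v^2|\xX|^2 e^{|\xX|^2/4}$ directly via integration by parts against the self-expander identity; this is a clean weighted Hardy-type inequality, and (modulo the sign: one should have $\Div_\Sigma(\xX^T) = n + \tfrac{1}{2}|\xX^\perp|^2$, which only makes the estimate easier) the argument goes through. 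The paper's proof of \cref{NonStationaryLem} relies on a similar Poincar\'e inequality cited from elsewhere, so your derivation is a reasonable self-contained substitute.

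The second half, however, has a genuine gap. You fix a uniform scale $r_0 = r_0(\Sigma)$ and assert that \eqref{graphical-rmcf-equation} "satisfies the coefficient hypotheses of \cref{L2-schaulder-estimates} with constants depending only on $\Sigma$." This fails: the first-order coefficient in $L_\Sigma$ contains $\tfrac{\xX}{2}\cdot\nabla_\Sigma$, whose magnitude near $p$ is of order $|\xX(p)|$, so with $R = r_0$ one gets $R\Vert b^j\Vert_{C^{0,\alpha}_P} \sim r_0\, |\xX(p)| \to \infty$ as $|\xX(p)|\to\infty$, and the hypothesis $\Vert a^{ij}\Vert + R\Vert b^j\Vert + R^2\Vert c\Vert \le C_0$ is violated with any fixed $C_0$. \cref{L2-schaulder-estimates} does not come with an explicit dependence of $C$ on $C_0$, so you cannot simply let $C_0$ grow and absorb it into the exponential weight. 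The paper resolves this by taking a scale
$r(p) = \tfrac{1}{2}\min\{\sqrt{T_1},\, |\xX(p)|^{-1},\, r_0\}$ that shrinks at infinity, which keeps $R\Vert b^j\Vert$ uniformly bounded; the price is a polynomial factor $r^{-n-2-\cdots}\sim |\xX(p)|^{n+2+\cdots}$ in the local $L^2$ average, and the paper then explicitly observes that $\rho^{N} e^{-\rho^2/4}$ is bounded on $\rho\ge R_1$ to beat that factor with the exponential weight. Your proof needs this variable-scale modification to be correct. (A secondary, more easily patched point: you apply the local estimate only at base time $T_1$, which gives control only near $t=T_1$; to cover all of $[\tfrac{1}{2}T_1, T_1]\times\Sigma$ one must also vary the base time, as the paper does by working at $(T,p)$ with $T\in[\tfrac{1}{2}T_1,T_1]$.)
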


\begin{proof}
	First we establish the existence of a constant $K_1=K_1(\Sigma, T_1)>0$ so that
	\begin{equation} \label{L2toC2aEst}
	\Vert v\Vert_{C^{2,\alpha}_P([\frac{1}{2} T_1,T_1] \times \Sigma)}^2\leq K_1 \int_{0}^{T_1} \Vert v(t,\cdot)\Vert_{W,0}^2\, dt.
	\end{equation}
	Fix $p \in \Sigma$ and $T\in [\frac{1}{2} T_1, T_1]$ and consider the localization at scale $r$ around $(T,p)$:
	$$
	\Sigma_r(T, p)= \left(\left[\frac{1}{2}T_1, T_1\right]\cap [T-r^2,T]\right)\times B_{r}^\Sigma(p)
	$$
	Note the restriction on the time.  Take $r=r(p)$ to be
	\begin{align*}
	r = \frac{1}{2}\min\{\sqrt{T_1}, \abs{\xX(p)}^{-1}, r_0\}.
	\end{align*}
	Here $r_0=r_0(\Sigma)>0$ is the scale where $B^\Sigma_{r_0}(p)$ can be written as a graph over a region in $ T_p \Sigma$ of a function with $C^{2, \alpha}$ norm smaller than $\frac{1}{4}$ -- this scale exists by Lemma \ref{GraphMCLem}. 
	
	By Lemma \ref{GraphMCLem} and the $\eta_2$-uniformity of $v$, with $\eta_2<\eta_0$ sufficiently small,  the  equation \cref{graphical-rmcf-equation}, satisfies all the conditions of \cref{L2-schaulder-estimates} and so, up to increasing $C$, 
	\begin{align*}
	\phantom{\le}\abs{v(T,p)}^2 &+ r^2\abs{\nabla_\Sigma v(T,p)}^2 + r^{4} \abs{\nabla^2_\Sigma v(T,p)}^2 \le Cr^{-n-2} \int_{T-r^2}^{T} \int_{B_r^\Sigma(p)} \abs{v}^2 \, dt \\
	&\le Cr^{-n-2}e^{-\frac{\abs{\xX(p)}^2+r^2-2r \abs{\xX(p)}}{4}} \int_{0}^{T_1} \int_{B_r^\Sigma (p)} \abs{v}^2 e^{\frac{\abs{\xX}^2}{4}}\,  dt\\
	&\le  C r^{-n-2}e^{-\frac{|\mathbf{x}(p)|^2}{4}} \int_{0}^{T_1} \Vert v\Vert_{W,0}^2 \, dt.
	\end{align*}
	Likewise, \cref{L2-schaulder-estimates} and above reasoning implies that,
	\begin{align*}
	r^{4+2\alpha}\left(  [\nabla^2_\Sigma v]_{\alpha; \Sigma_r(T, p)}^2 +[\partial_t v]_{\alpha;\Sigma_r(T,p)}^2\right)\leq Cr^{-n-2}e^{-\frac{\abs{\xX(p)}^2}{4}} \int_{0}^{T_1} \Vert v\Vert_{W,0}^2 \, dt.
	\end{align*}
	When $|\mathbf{x}(p)|\leq R_1=R_1(\Sigma,T_1)=\max\set{T_1^{-\frac{1}{2}}, r_0^{-1}}$,  then $r=\frac{1}{2} R_0^{-1}>0$ is independent of $p$.
	Hence,  the preceding estimates give the bound on scale $r$
	\begin{equation}\label{ScalerEst}
	\Vert v\Vert_{C^{2, \alpha}_P(\Sigma_r(T, p))}^2 \leq C K(p)
	\int_{0}^{T_1} \Vert v\Vert_{W,0}^2\, dt
	\end{equation}
	where, we have increased $C=C(\Sigma, T_1)$ and set
	$$
	K(p)=\left\{\begin{array}{cc} 1 & |\mathbf{x}(p)|\leq R_1 \\  	r^{-n-6-2\alpha}e^{-\frac{\abs{\xX(p)}^2}{4}} & |\mathbf{x}(p)|> R_1.\end{array}\right.
	$$
	
	It remains to establish a bound on scale $\delta$.  
	To do so one observes that up to shrinking $r$ and increasing $C$ if needed, for any
	$ (T', p')\in 	\Sigma_\delta(T, p)$,  \eqref{ScalerEst} holds with $K(p)$ in place of $K(p')$. Moreover, if $\Sigma_r(T', p')$ and $\Sigma_r(T'', p'')$ intersect, then the triangle inequality and properties of the H\"{o}lder seminorms imply
	$$
	\Vert v \Vert_{C^{2, \alpha}_P(\Sigma_r(T', p')\cup \Sigma_r(T'', p''))}\leq 	\Vert v\Vert_{C^{2, \alpha}_P(\Sigma_r(T', p'))} +	\Vert v\Vert_{C^{2, \alpha}_P(\Sigma_r(T'', p''))} 
	$$
	As we can connect any point in $\Sigma_{\delta}(T,p)$ to $(T, p)$ by a piece-wise smooth path consisting of a minimizing geodesic of length at most $\delta$ in $\Sigma$ and a stationary path in time with temporal length at most $\delta^2$, we obtain
	\begin{align*}
	\Vert v \Vert_{C^{2, \alpha}_P(\Sigma_\delta(T,p))}^2& \leq 2(\delta r^{-1})^6 	\Vert v \Vert_{C^{2, \alpha}_P(\Sigma_r(T,p))}^2\leq 	2 C\delta^6 r^{-6}K(p)\int_0^{T_1}\Vert v\Vert_{W,0}^2 \, dt.
	\end{align*}
	When $|\mathbf{x}(p)|\leq R_1$ as $r$ and $K(p)$ are depend only on $T_1$ and $\Sigma$,  the bound of \eqref{L2toC2aEst} holds with an appropriately chosen $K_1$.  When $|\mathbf{x}(p)|\geq R_1$, one uses the fact that on $\rho\geq R_1$,  the functions $E_N \colon \rho\mapsto \rho^N e^{-\frac{\rho^2}{4}}$ have maxima that depend only on $N$ and $R_1$, \eqref{L2toC2aEst} holds after possibly increasing $K_1$.
	
	Differentiating equation \cref{graphical-rmcf-equation} and integrating by parts, both justified as $v$ satisfies the reverse Poincar\'{e} inequality, yields
	\begin{align*}
	\frac{d}{dt}\Vert v(t, \cdot )\Vert_{W, 0}^2&= 2 \int_\Sigma v \left(L_\Sigma v + Q[v]\right) e^{\frac{\abs{\xX}^2}{4}} \, d\mathcal{H}^n \\
	&= 2\int_\Sigma \left(-\abs{\nabla_\Sigma v}^2 + \left( \abs{A_\Sigma}^2 - \frac{1}{2} \right) \abs{v}^2 + vQ[v]\right)  e^{\frac{\abs{\xX}^2}{4}} \, d\mathcal{H}^n \\
	&\le K_2 \Vert v(t,\cdot)\Vert_{W,0}^2
	\end{align*}
	where $K_2=K_2(\Sigma, \eta_1)$.  Here we used the hypotheses that $v$ is an $\eta_1$-uniform EMCF and satisfies the reverse Poincar\'{e} inequality and \cref{GraphMCLem} to obtain
	$$
	\int_\Sigma |v Q[v]| e^{\frac{\abs{\xX}^2}{4}} \, d \mathcal{H}^n \leq  \bar{M}_0 \norm{v(t, \cdot)}_{C^{2,\alpha}} \norm{v(t,\cdot)}_{W,1}^2\leq \eta_1 \kappa_1^2 \bar{M}_0 \Vert v(t, \cdot) \Vert_{W,0}^2.
	$$
	Hence, for sufficiently large $C_1=C_1(\Sigma, \eta_1, \kappa_1)$,
	\begin{align*}
	f_1(t) = e^{-2C_1 t} \norm{v(t,\cdot)}_{W,0}^2
	\end{align*}
	is non-increasing.  This proves the first bound.
	The same reasoning implies
	$$
	\frac{d}{dt}\Vert v(t, \cdot )\Vert_{W, 0}^2	\ge -K_3 \norm{v(t,\cdot)}_{W,0}^2.
	$$
	where $K_3=K_3(\Sigma, \eta_1 ,\kappa_1)$.
	Hence, 
	\begin{align*}
	f_2(t) = e^{K_3 t} \norm{v(t,\cdot)}_{W,0}^2
	\end{align*}
	is non-decreasing. Thus,  by \cref{L2toC2aEst}
	\begin{align*}
	\norm{v}_{C^{2,\alpha}_P([\frac{1}{2}T_1,T_1] \times \Sigma)}^2 &\le K_1 \int_{0}^{T_1} \norm{v(t,\cdot)}_{W,0}^2 \, dt \\
	&\le K_1e^{K_3 T_1} T_1 \norm{v(T_1,\cdot)}_{W,0}^2 \le C_2\norm{v(T_1,\cdot)}_{W,0}^2,
	\end{align*}
	where $C_2 = C_1(S_1, K_1, K_3)$.
\end{proof}

We now assume $\Sigma$ is strictly unstable and trapped, and use construction of \cite{BWTopologicalUniqueness} to produce the flow line from $\Sigma$. Recall, an asymptotically conical hypersurface is \textit{trapped} if it is contained in the region bounded by the outermost and innermost self-expanders -- see e.g., \cite[Section 4]{BWTopologicalUniqueness} for the partial ordering of self-expanders and \cite{BWRelEnt} for the trapping of flows.   By treating each connected component separately, we may assume $\Sigma$ is connected  and unstable -- see \cite[Theorem 6.1]{BWTopologicalUniqueness} for more details.

Let $f$ be the unique positive eigenfunction of $-L_\Sigma$ corresponding to the lowest eigenvalue, normalized so $\Vert f \Vert_{C^{2,\alpha}}= 1$.  As in \cite{BWTopologicalUniqueness}, we consider the map
\begin{align*}
\fF^\eps = \xX|_{\Sigma} + \eps f \nN_\Sigma,
\end{align*}
and put $\Sigma^\eps = \fF^\eps(\Sigma)=\Gamma_{\eps f}$. By \cref{GraphMCLem}, for $\abs{\eps}$ sufficiently small, $\fF^\eps$ is an embedding and $\Sigma^\eps$ is a hypersurface. The asymptotic structure of $f$, \cite[Proposition 3.2]{BWMountainPass}, implies that for for $\abs{\eps}$ sufficiently small $\Sigma^\eps$ is a trapped hypersurface that is asymptotic to $\mathcal{C}$, the asymptotic cone of $\Sigma$.  By \cite[Proposition 5.1]{BWTopologicalUniqueness}, up to shrinking $\abs{\eps}$, there is a unique, smooth MCF starting from $\Sigma^\eps$, $\tilde{\mathcal{M}}^\eps = \{\tilde{\Sigma}^\eps_s\}_{s \in [1,S^\eps)}$, where $S^\eps\in (1, \infty]$ is the maximal time of smooth existence, so that certain rescaled mean curvature of the $\tilde{\Sigma}^\eps_s$ vanishes nowhere. Let $\mathcal{M}^\eps=\{\Sigma_t^\eps\}_{t\in [0, T^\eps)}$ denote the corresponding EMCFs defined on $[0,T^\eps)$.  The construction ensures the $\Sigma_t^\eps$ lie on one side of $\Sigma$ for $\eps\neq 0$ and remain trapped and asymptotic to $\mathcal{C}$ in the sense of Item (3) of \cref{strictly-monotone-flow}.  

We construct an ancient flow by taking appropriate limits of the $\mathcal{M}^\eps$. To do so, we need more quantitative control on the amount of time they remain well-behaved. To that end, 
let
$$
I^{\eps}(\eta)=\set{T\in (0, T^\eps) \mid  \exists  v, \mbox{an $\eta$-uniform EMCF on $[0, T]$ so $\Sigma_t^\eps=\Gamma_{v(t, \cdot)}$}}
$$
where we take this interval to be $[0]$  if there is no such $v$.  Set
$$
T^{\eps}(\eta)=\sup I^{\eps}(\eta).
$$
When $T^{\eps}(\eta)>0$,  denote by $v^\eps\in C^\infty([0, T^\eps(\eta))\times \Sigma)$ the function such that $\Sigma_t^\eps = \Gamma_{v^\eps(t,\cdot)}$.  Observe that when $\alpha_0=\alpha_0(\Sigma, \eta)$ is given by \cref{IterativeRegProp}, then, when $\abs{\eps} <\alpha_0$, $T^{\eps}(\eta)\geq \delta_0$ since we have normalized $\norm{f}_{C^{2, \alpha}}=1$.  

The definition ensures that $v^\eps$ is an $\eta$-uniform ECMF on $[0, T]$ for every $T\in (0, T^\eps)$.  In particular, for $|\eps'|\leq |\eps|$, $T^{\eps}(\eta)\leq T^{\eps'}(\eta)$ and $T^{0}(\eta)=\infty$ as in this case the EMCF is the static flow of ${\Sigma}$ and $v^0$ vanishes.  
For $\eps> 0$, we may choose the unit normal to $\Sigma$ so $v^\eps(0, \cdot) >0$ and hence, by the expander mean convexity,  $v^\eps$ is monotone -- see the proof of  \cite[Proposition 5.1 (3)]{BWTopologicalUniqueness} for a more detailed justification.   

We now establish that for $\eps>0$ sufficiently small the flows $\mathcal{M}^\eps$ exist with good properties for a long time after which they have moved away from $\Sigma$ by a definite amount.
\begin{prop}\label{TimeExistProp} 
	There are constants $\omega_0=\omega_0(\Sigma)>0$,  $\eps_0=\eps_0(\Sigma)>0$ and $\kappa=\kappa(\Sigma)>1$ so that when $0<\eps<\eps_0$, there is a $T^\eps_0\in [1, T^\eps)$ so that
	\begin{enumerate}
		\item $ -\kappa^{-1}\log \eps-\kappa\leq T^\eps_0<T^\eps(\eta_1)<\infty$, where $\eta_1$ is given by Lemma \ref{NonStationaryLem};
		\item On $[0, T^\eps_0]$, $v^\eps$ satisfies a reverse Poincar\'{e} inequality with constant $\kappa$;
		\item  For  $t\in [0, T^\eps_0]$, $\Vert v^\eps(t,\cdot)\Vert_{W,0}\leq \omega_0=\Vert v^\eps(T^\eps_0,\cdot)\Vert_{W,0}$.
	\end{enumerate} 
\end{prop}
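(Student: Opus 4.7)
The plan is to define $T_0^\eps$ as the first time the weighted $L^2$-norm $\|v^\eps(t,\cdot)\|_{W,0}$ reaches a small threshold $\omega_0=\omega_0(\Sigma)$, and then verify each conclusion via a bootstrap that closes provided $\omega_0$ is small enough. Since $\|v^\eps(0,\cdot)\|_{W,0}=\eps\|f\|_{W,0}$ is small for small $\eps$ and $t\mapsto\|v^\eps(t,\cdot)\|_{W,0}$ is continuous by iterating \cref{IterativeRegProp}, the number $T_0^\eps=\inf\{t\geq 0:\|v^\eps(t,\cdot)\|_{W,0}\geq\omega_0\}$ is well-defined in $(0,\infty]$. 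To see $T_0^\eps<\infty$, note that $v^\eps$ is positive and monotone nondecreasing; if $\|v^\eps\|_{W,0}$ stayed below $\omega_0$ for all $t<T^\eps(\eta_1)$, standard parabolic estimates combined with monotonicity would produce a limit $v_\infty>0$ that is a stationary solution to \eqref{graphical-rmcf-equation} with $\|v_\infty\|_{C^2}\leq\eta_1$, making $\Gamma_{v_\infty}$ a one-sided graphical self-expander over $\Sigma$, contradicting \cref{NonStationaryLem}.

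The principal obstacle is the reverse Poincar\'{e} inequality (Item 2). The eigenfunction $f$ itself satisfies such an inequality with constant $\kappa_f=\|f\|_{W,1}/\|f\|_{W,0}$, which is finite by integration by parts using $-L_\Sigma f=\lambda_0 f$ and the boundedness of $|A_\Sigma|$ on $\Sigma$. For $v^\eps$, I would spectrally decompose $v^\eps(t,\cdot)=a(t)f+w(t,\cdot)$ with $w$ orthogonal to $f$ in the weighted $L^2$ inner product, so that $\dot{a}=\mu_0 a+O(\|v^\eps\|_{W,0}^2)$ with $\mu_0=-\lambda_0>0$, while $w$ evolves under the restriction of $L_\Sigma$ to $f^\perp$ (which has a spectral gap below $\mu_0$), driven by the projection of the quadratic nonlinearity $Q[v^\eps]$ supplied by \cref{GraphMCLem}. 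Under the bootstrap assumption that reverse Poincar\'{e} holds with some $\kappa>\kappa_f$ on $[0,T]$, \cref{GrowthL2WProp} provides $C^{2,\alpha}_P$ control, which makes the quadratic error of order $\omega_0\cdot|a(t)|$ in weighted $L^2$, so $v^\eps$ stays in a cone around $a(t)f$ and the reverse Poincar\'{e} closes on $[0,T]$ with $\kappa$ only slightly larger than $\kappa_f$. Taking $\omega_0$ small relative to $\kappa_f$ and the spectral gap allows the bootstrap to run across $[0,T_0^\eps]$.

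Given Item (2), Item (1) follows quickly. The exponential bound in \cref{GrowthL2WProp} gives $\omega_0=\|v^\eps(T_0^\eps,\cdot)\|_{W,0}\leq e^{C_1 T_0^\eps}\eps\|f\|_{W,0}$, which rearranges to $T_0^\eps\geq C_1^{-1}\log(\omega_0/\|f\|_{W,0})-C_1^{-1}\log\eps$; this has the required form after absorbing constants into $\kappa$, and in particular $T_0^\eps\geq 1$ for $\eps$ sufficiently small. For the upper bound, the same proposition gives $\|v^\eps\|_{C^{2,\alpha}_P}\leq C_2\omega_0\leq\eta_1/2$ on a terminal subinterval, so $v^\eps(T_0^\eps,\cdot)$ lies strictly inside the $\eta_1$-uniform class, forcing $T_0^\eps<T^\eps(\eta_1)$; finiteness of $T^\eps(\eta_1)$ then follows by the same limit-of-stationary argument via \cref{NonStationaryLem}. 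The hard part throughout will be controlling the projection of $Q[v^\eps]$ onto $f^\perp$ uniformly over $[0,T_0^\eps]$, an interval whose length grows like $|\log\eps|$ so that even quadratic errors can potentially accumulate.
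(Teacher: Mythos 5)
Your overall skeleton matches the paper's: define $T_0^\eps$ as the first hitting time of $\|v^\eps(t,\cdot)\|_{W,0}=\omega_0$, derive the lower bound $T_0^\eps\gtrsim -\log\eps$ from the exponential growth estimate of \cref{GrowthL2WProp}, and rule out infinite existence by a monotone-limit argument landing on a positive static graph over $\Sigma$, contradicting \cref{NonStationaryLem}. Where the two proofs diverge completely is exactly at the point you flag as the principal obstacle: establishing the reverse Poincar\'e inequality (Item 2) uniformly over an interval of length $\sim|\log\eps|$.

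You propose a spectral decomposition $v^\eps = a(t)f + w$ with $w\perp f$, closing a cone condition $\|w\|\lesssim |a|$ by a bootstrap that uses the spectral gap below $\lambda_0$. You acknowledge but do not resolve the accumulation issue over the long time interval, and as written that is a genuine gap at the hardest step of the argument. The paper sidesteps the bootstrap entirely: since $\Sigma_0^\eps=\Gamma_{\eps f}$ is a perturbation in the first eigendirection, the relative entropy satisfies $E_{rel}[\Sigma_0^\eps;\Sigma]<-\delta\eps^2<0$; the forward monotonicity of relative entropy from \cite{BWRelEnt}, together with the fact that the $\Sigma^\eps_t$ remain trapped, propagates this strict negativity for all $t$; and \cite[Corollary~4.3]{ChenAncient} converts a negative relative entropy bound directly into a reverse Poincar\'e inequality with a constant $\kappa=\kappa(\Sigma)$, uniformly on $[0,T^\eps(\eta)]$. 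No self-improving cone estimate is needed, precisely because relative entropy is a globally monotone quantity whereas the cone condition is not. Your spectral route could plausibly be pushed through with a careful Duhamel/Gronwall estimate against the spectral gap, but the paper's use of the monotone relative-entropy quantity and the cited corollary is both simpler and already available in the literature, and it is what makes the reverse Poincar\'e inequality hold a priori rather than via bootstrap.
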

\begin{proof}
	Let us write $v_0^\eps=\eps f$ for the initial perturbation from above.  
	As $v^\eps_0$ is a small perturbation by the first eigenfuction normalized to be positive, for $\eps_0=\eps_0(\Sigma)>0$ sufficiently small, one has $\Vert v^\eps_0\Vert _{W,0}^2 =\eps^2 \norm{f}_{W,0}^2 $ and 
	$$E_0 =E_{rel}[\Sigma^\eps_0;\Sigma ] = E_{rel}[\Gamma_{v^\eps_0};\Sigma ]<-\delta \eps^2,$$
	where $\delta = \delta(\Sigma)>0$. The forward monotonicity of \cite{BWRelEnt} and fact that $\Sigma_t^\eps$ are trapped implies this bound holds for all $t\in [0, T^\eps)$.  
	
	Set  $\eta=\min\set{\eta_0(\Sigma), \eta_1(\Sigma), \eta_2(\Sigma)}>0$
	where $\eta_0$ is given by Lemma \ref{GraphMCLem},  Lemma \ref{NonStationaryLem} and  $\eta_1$ is given by Proposition \ref{GrowthL2WProp} and $\eta_2$ is given by .  Let $\alpha_0$ and $\delta_0$ be the constant obtained in \cref{IterativeRegProp} from this $\eta$.  As observed above, by taking $\eps_0<\alpha_0$ we have
	$$
	\Vert v^\eps_0\Vert_{C^{2,\alpha}} =\eps <\alpha_0
	$$
	and hence $T^\eps(\eta )>\delta_0>0$.  In particular, we may take $v^\eps$ to be defined at least on $[0, \delta_0]$. 
	
	As the relative entropy of the $\Sigma_t$ is negative,  we may appeal to \cite[Corollary 4.3]{ChenAncient} to see that for any $t\in [0, T^\eps(\eta)]$, there is a constant $\kappa=\kappa(\Sigma)>1$ so that
	\begin{align*}
	\norm{v^\eps(t,\cdot)}_{W,1}^2 \le \kappa^2\norm{v^\eps(t,\cdot)}_{W,0}^2.
	\end{align*}
	That is, $v^\eps$ satisfies the reverse Poincar\'{e} inequality with constant $\kappa$ for any $t\in [0, T^\eps(\eta)]$. Our choice of $\eta$ allows us to apply \cref{GrowthL2WProp} and obtain
	\begin{align}
	\label{shorttimel2}
	\norm{v^\eps(t,\cdot)}_{W,0}\le \eps C_3 e^{ C_1 t}, \mbox{ for }	t\in [0, T^\eps(\eta)]. 
	\end{align} 
	Using $T^\eps(\eta)\geq \delta_0$ and the time translation of EMCF, one also obtains from \cref{GrowthL2WProp} that, for $t\in [\delta_0, T^\eps(\eta)]$,
	\begin{align}\label{shorttimeCP}
	\norm{v^\eps(t,\cdot)}_{C^{2,\alpha}} \le \norm{v^\eps}_{C^{2,\alpha}_P([\frac{1}{2}\delta_0,\delta_0]\times \Sigma)} \le  C_2\norm{v^\eps(t,\cdot)}_{W,0}.
	\end{align}
	Now fix a $\omega_0=\omega_0(\Sigma)>0$ small enough so $C_2\omega_0 <  \alpha_0 $, and set 
	\begin{align*}
	T' = \sup \{T \in (0,T^\eps(\eta)) \mid  \forall t\in [0, T], \norm{v^\eps(t,\cdot)}_{W,0} \leq  \omega_0\}.
	\end{align*}
	Fix $\eps_0=\eps_0(\Sigma)>0$ small enough so $\eps_0 C_3 e^{C_1 \delta_0}<\omega_0$.  For $\eps<\eps_0$, \cref{shorttimel2} implies that $T' \geq \delta_0$.   Indeed, by \cref{shorttimel2} we see that, 
	after possibly increasing $\kappa$,
	$$
	T'\geq -C_1^{-1} \log \eps+C_1^{-1} \log( C_3^{-1} \omega_0)\geq -\kappa^{-1} \log \eps-\kappa.
	$$

	We now argue that $T' < T^\eps(\eta)$ for every $\eps < \eps_0$ -- in particular $T'<\infty$ and 
	$$
	\Vert v^\eps(T', \cdot) \Vert_{W,0}=\omega_0.
	$$
	Suppose that $T' = T^\eps(\eta)\geq \delta_0$.  If $T^\eps(\eta) < \infty$, then by \cref{shorttimeCP},  for $t\in [\delta_0, T']$, 
	$$\norm{v^\eps(t,\cdot)}_{C^{2,\alpha}} <\alpha_0 $$ 
	and so $ T'<T'+\delta_0\leq T^\eps(\eta)$ which is a contradiction.  Hence,  it remains to consider the case $T' =T^\eps(\eta)= \infty$.  Take a sequence of times $t_i \to \infty$ and consider the translated graphical EMCF given by $v_i(t,\cdot) = v^\eps(t + t_i, \cdot)$. Clearly $\norm{v_i}_{C^{2,\alpha}_P(I\times \Sigma)} \le \eta$ for any compact interval $I\subset (-t_i,  \infty)$, and so, by the Arzel\`{a}--Ascoli theorem, there exists a subsequence, still denoted by $v_i$, such that $v_i\to \bar{v}$ in $C^2_{loc}(I\times \Sigma)$.  Consequently $\bar{v}$ satisfies \eqref{graphical-rmcf-equation} on $(-\infty, \infty) \times \Sigma$ and is thus a smooth function by standard parabolic estimates.  Moreover, it follows from the uniform bounds on the $v_i$'s that
	$$\norm{\bar{v}}_{C^{2,\alpha}_P((-\infty, \infty)\times \Sigma)} \le \eta.$$
	By construction and the fact that the $v^\eps$ are monotone, for any $t\geq t_i$, 
	$$0<\eps f = v^\eps_0  \leq  v_i(t_i,\cdot) \leq  v_i(t, \cdot) \leq \bar{v}(t, \cdot)$$ 
	and so $\bar{v}$ is not identically zero.    In fact, as $\bar{v}$ is the increasing limit of monotone flows it must be static; that is,  the graph of $\bar{v}$ is a smooth self-expander satisfying $\norm{\bar{v}}_{C^{2,\alpha}} \le \eta$. As $\bar{v}$ is non-negative and positive somewhere, this contradicts \cref{NonStationaryLem} and our choice of $\eta$.   The result follows by taking $T_0^\eps=T'$.
\end{proof}
\begin{cor}\label{AncientCor}
	Suppose $\Sigma$ is strictly unstable and trapped.  There is an ancient smooth EMCF $\mathcal{M}=\{\Sigma_t\}_{t\in (-\infty, 0]}$ that satisfies
	\begin{enumerate}
		\item Each $\Sigma_t$ lies on one side of $\Sigma$ and is asymptotic to $\mathcal{C}$ in the sense of Item (3) of \cref{strictly-monotone-flow};
		\item Each $\Sigma_t$ has expander mean curvature nowhere vanishing;
		\item $\lim_{t\to -\infty}\Sigma_t = \Sigma$ in $C^\infty_{loc}(\mathbb{R}^{n+1})$.	
	\end{enumerate} 
\end{cor}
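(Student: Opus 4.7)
The approach is to construct the flow as a subsequential limit of the one-sided EMCFs from \cref{TimeExistProp}, re-centered so that the late-time slices sit at $t=0$. For each sufficiently small $\eps>0$, set $\tilde{v}^\eps(t,\cdot) = v^\eps(t+T_0^\eps,\cdot)$ on $[-T_0^\eps,0]\times\Sigma$. Since $T_0^\eps \geq -\kappa^{-1}\log\eps - \kappa \to \infty$ as $\eps\to 0^+$, these domains exhaust $(-\infty,0]\times\Sigma$. Each $\tilde{v}^\eps$ is an $\eta$-uniform EMCF, so $\norm{\tilde{v}^\eps}_{C^{2,\alpha}_P}\leq \eta$, and the normalization $\norm{\tilde{v}^\eps(0,\cdot)}_{W,0}=\omega_0$ fixes a uniform positive lower bound on its size at time $0$.

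Applying Arzelà--Ascoli in $C^{2,\beta}_P$ for any $\beta<\alpha$, followed by a diagonal argument over time intervals $[-T,0]$ with $T\to\infty$, I extract a subsequence $\eps_i\to 0^+$ and a limit $\bar v$ on $(-\infty,0]\times\Sigma$ with $\norm{\bar v}_{C^{2,\alpha}_P}\leq \eta$ and $\tilde v^{\eps_i}\to \bar v$ locally in $C^{2,\beta}_P$. Since each $\tilde v^{\eps_i}$ satisfies the graphical EMCF equation \eqref{graphical-rmcf-equation}, so does $\bar v$, and parabolic bootstrapping gives smoothness. This yields the candidate ancient smooth EMCF $\{\Sigma_t = \Gamma_{\bar v(t,\cdot)}\}_{t\in(-\infty,0]}$.

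The main obstacle is verifying that $\bar v$ is not identically zero, since local $C^2$ convergence alone permits weighted $L^2$ mass to escape to infinity and so we cannot directly pass the normalization $\norm{\tilde v^{\eps_i}(0,\cdot)}_{W,0}=\omega_0$ to the limit. I plan to use the reverse Poincaré bound $\norm{\tilde v^\eps(0,\cdot)}_{W,1}\leq \kappa\omega_0$ from \cref{TimeExistProp} together with the compact embedding of weighted $W^{1,2}$ into weighted $L^2$ on $\Sigma$, which follows from the discreteness of the spectrum of $L_\Sigma$ (see \cite[Lemma 4.1]{BWIntegerDegree}). Passing to a further subsequence, $\tilde v^{\eps_i}(0,\cdot)$ converges strongly in weighted $L^2$; the local $C^2$ convergence identifies the strong limit as $\bar v(0,\cdot)$, so $\norm{\bar v(0,\cdot)}_{W,0}=\omega_0>0$ and in particular $\bar v\not\equiv 0$.

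The remaining properties fall out as follows. (1) $\bar v\geq 0$ is inherited from the $v^\eps$, yielding one-sidedness of each $\Sigma_t$; the asymptotic behavior to $\cC$ is inherited from the uniform asymptotic decay of $v^\eps$ propagated along the flow from the decay of the eigenfunction $f$ (see \cite[Proposition 3.2]{BWMountainPass} and \cite{BWSmoothCompactness}). (2) By strict instability the $v^\eps$ are strictly monotone, so $\partial_t \bar v\geq 0$; since $\bar v$ is not static, the strong parabolic maximum principle applied to the linear equation satisfied by $\partial_t \bar v$ (obtained by differentiating \eqref{graphical-rmcf-equation}) forces $\partial_t\bar v>0$ everywhere, which is equivalent to non-vanishing expander mean curvature of $\Sigma_t$. (3) Monotonicity and uniform $C^{2,\alpha}_P$ control show that for any $t_i\to -\infty$, the time-translates $\bar v(\cdot + t_i,\cdot)$ sub-converge in $C^{2,\beta}_{loc}$ to a function constant in time, i.e., a function $w\in C^{2,\alpha}(\Sigma)$ with $w\geq 0$, $\norm{w}_{C^2}\leq \eta\leq \eta_1$, and $\Gamma_w$ a self-expander. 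The strong maximum principle combined with \cref{NonStationaryLem} then forces $w\equiv 0$, and a parabolic Schauder bootstrap upgrades the convergence to $C^\infty_{loc}$, giving $\Sigma_t\to \Sigma$ in $C^\infty_{loc}(\mathbb{R}^{n+1})$ as $t\to -\infty$.
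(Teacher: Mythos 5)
Your proposal is correct and follows essentially the same route as the paper: time-translate so that the $W,0$-normalized slice sits at $t=0$, apply Arzel\`{a}--Ascoli with a diagonal argument, use the reverse Poincar\'{e} bound to keep the weighted $L^2$ mass from escaping so the limit is nontrivial, and then invoke the strong maximum principle and \cref{NonStationaryLem} to get nowhere-vanishing expander mean curvature and the backward limit $\Sigma$. Your elaboration of the ``strong convergence in $\Vert\cdot\Vert_{W,0}$'' step via the compact embedding of the weighted $W^{1,2}$ space into weighted $L^2$ is exactly the justification the paper leaves implicit, and your treatment of item (2) by differentiating \eqref{graphical-rmcf-equation} and applying the strong maximum principle to $\partial_t\bar v$ is the same argument the paper phrases directly in terms of the expander mean curvature.
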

\begin{proof}
	Let $\omega_0, \eps_0, \kappa_0$ and, for each $0<\eps<\eps_0$ small, let $T^\eps_0$ be given by Proposition \ref{TimeExistProp}. By the lower bound on $T^\eps_0$, we have $T^\eps_0 \to \infty$ as $\eps \to 0^+$.  
	Let $\mathcal{M}^{\eps,T_0^\eps}$ be the EMCF obtained from time translating $\mathcal{M}^\eps$ by $-T^\eps_0$, and let $w^{\eps}$ be the corresponding graphical solutions over $\Sigma$. Then, by definition of $T^\eps_0$,
	\begin{align*}
	\norm{w^{\eps}(0,\cdot)}_{W,0} = \omega_0>0,
	\end{align*}
	and $\norm{w^{\eps}(0,\cdot)}_{C^{2,\alpha}} \le \eta$.  As each $v^\eps(t,\cdot)$ satisfies a reverse Poincar\'{e} inequality with constant $\kappa$ for $t\in [0, T_0^\eps]$, the same is true for $w^\eps$ on $[-T_0^\eps, 0]$, 
	
	By the Arzel\`{a}--Ascoli theorem we may take a subsequential limit $\eps _i\to 0^+$ to obtain a graphical EMCF $\mathcal{M}=\{\Sigma_t\}_{t\in (-\infty, 0]}$ so that $\Sigma_t=\Gamma_{{w}(t,\cdot)}$ where $\Vert w(0, \cdot) \Vert_{C^{2, \alpha}}<\eta_1$ and
	\begin{align*}
	\norm{w(0,\cdot)}_{W,0} = \omega_0>0.
	\end{align*} 
	The strong convergence in $\Vert \cdot\Vert_{W, 0}$ holds because of the reverse Poincar\'{e} inequality.
	
	This means that $\mathcal{M}$ is not the static flow of $\Sigma$. As $\mathcal{M}$ is obtained as the smooth limit of flows with nowhere vanishing expander mean curvature, the strong maximum principle implies each component of $\mathcal{M}$ either has expander mean curvature nowhere vanishing or is static.  However, the latter contradicts \cref{NonStationaryLem}.   Finally, one can argue as in the proof of Proposition \ref{TimeExistProp} that the limit as $t\to -\infty$ of the $\Sigma_t$ is a self-expander that is a graph over $\Sigma$ of a non-negative function.  By \cref{NonStationaryLem}, this expander must be $\Sigma$, i.e., the backwards limit in time of $w$ vanishes.
\end{proof}

\begin{rem}
	When the cone $\mathcal{C}$ is generic, this is the unique one-sided EMCF by \cite[Proposition 4.9]{ChenAncient}. The genericity hypothesis can likely be dropped.
\end{rem}
To finish the proof of \cref{main-theorem} we will couple the smooth ancient flow constructed above with an expander weak flow constructed as in \cref{weak-theorem}. 
\begin{proof}[Proof of \cref{main-theorem}]
	Let $\mathcal{M}=\{\Sigma_t\}_{t\in (-\infty,0]}$ be the ancient smooth flow constructed in Corollary \ref{AncientCor} with $\Sigma=\Sigma_-$. By symmetry, we may also assume the $\Omega(t)$ are the closed sets with boundary $\Sigma_t$ so that $\bigcup_{t\leq 0} \Omega(t)=\mathrm{int}(\Omega_-)$. Observe that our construction ensures $\Sigma_t$ is asymptotic to $\mathcal{C}$ in a uniform-in-time manner as in Item (3) of \cref{strictly-monotone-flow}. As $\Omega(0)$ is strictly expander mean convex and asymptotically conical, there is a regular strictly monotone expander weak flow $\Omega(t)$ for $t \ge 0$ starting from $\Omega(0)$ by \cref{weak-theorem}.  It is readily checked that $\Omega(t)$ for $t\in\mathbb{R}$ satisfies the properties in \cref{strictly-monotone-flow}. 
\end{proof}

\appendix

\section{Proofs of some preliminary results} \label{ProvePrelim}
In this appendix, we give detailed proofs of some results from \cref{PrelimSec}.

\subsection{Proof of \cref{AvoidanceThm}} \label{ProveAvoidanceSec}
We adapt the arguments of \cite[Theorem 16]{HershkovitsWhite} to weak $X$-flows generated by spacetime sets. First we need a couple of auxiliary propositions.

\begin{prop}
	\label{ContCor}
	Suppose $Z$ is a weak $X$-flow generated by a closed set $\Gamma\subseteq C_{N,T_0}$ with starting time $T_0$ and $T>T_0$. 
	\begin{enumerate}
		\item \label{DistItem} For any $p\in Z(T)\setminus \Gamma(T)$, $\dist(p,Z(t))^2\leq 2c_0(T-t)$ for $t<T$ close to $T$. Here $c_0$ is the constant given by \cref{spherical-barrier}.
		\item If $u\colon N\times \Real \to \Real$ is continuous, then,  when $ \tilde{u}(t)=\inf_{x\in Z(t)} u(x,t)$, 
		\[
		\tilde{u}(T)\geq \min\set{\limsup_{t\uparrow T} \tilde{u}(t), \inf_{y\in \Gamma(T)} u(y,T)}.
		\]
	\end{enumerate}
\end{prop}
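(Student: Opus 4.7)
The plan is to deduce both parts from the defining property of weak $X$-flows via the shrinking spherical barriers of \cref{spherical-barrier}. For part (1), I would argue by contradiction: if there were a sequence $t_n \uparrow T$ with $\dist(p, Z(t_n))^2 > 2c_0(T - t_n)$, then I would set up a shrinking ball barrier centered at $p$ that is disjoint from $Z$ at time $t_n$ yet whose interior still contains $p$ at time $T$, and extract a contradiction from the weak-flow definition applied at the first time of contact.

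Concretely, fix $c \in (c_0, 2c_0)$ and choose $\eta_n > 0$ small enough that $\delta_n^2 = c(T - t_n) + c\eta_n$ still satisfies $\delta_n < \dist(p, Z(t_n))$; this is possible because $\dist(p,Z(t_n))^2 > 2c_0(T-t_n) > c(T-t_n)$. By \cref{spherical-barrier}, for $n$ large the family $K_n(t) = \bar{B}_{\sqrt{\delta_n^2 - c(t - t_n)}}(p)$, $t \in [t_n, T + \eta_n/2]$, is a strong $X$-flow barrier. It is disjoint from $Z$ at $t = t_n$ by the choice of $\delta_n$, and contains $p$ in its interior at $t = T$. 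Let $s_n \in [t_n, T]$ be the first contact time; this infimum is attained because $K_n$ is spatially compact, so $K_n \cap Z$ projects to a closed subset of $[t_n,T]$. Then for some $q \in K_n(s_n) \cap Z(s_n)$, the weak-flow property forces either $q \in \partial K_n(s_n)$ with $\Phi_{K_n}^X(q,s_n) \geq 0$, or $q \in \Gamma(s_n)$. The strong barrier inequality $\Phi_{K_n}^X < 0$ precludes the first alternative, so $q \in \Gamma(s_n)$. However, since $\Gamma$ is closed and $(p, T) \notin \Gamma$, for $n$ large the whole spacetime set $K_n|_{[t_n, T + \eta_n/2]}$ lies in a neighborhood of $(p, T)$ disjoint from $\Gamma$, yielding the contradiction.

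For part (2), I would split into cases. If $p \in \Gamma(T)$, then trivially $u(p,T) \geq \inf_{y\in\Gamma(T)} u(y,T)$. If $p \in Z(T) \setminus \Gamma(T)$, then part (1) supplies, for any sequence $t_n \uparrow T$, points $p_n \in Z(t_n)$ with $p_n \to p$; continuity of $u$ gives $u(p_n, t_n) \to u(p, T)$, while $u(p_n, t_n) \geq \tilde{u}(t_n)$. Choosing $t_n$ so that $\tilde{u}(t_n) \to \limsup_{t \uparrow T} \tilde{u}(t)$ (with the usual quantitative argument in case the $\limsup$ is $+\infty$), we obtain $u(p,T) \geq \limsup_{t \uparrow T} \tilde{u}(t)$. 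Taking the infimum over $p \in Z(T)$ combines the two cases and gives the claimed bound.

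The main obstacle will be balancing the three requirements on $K_n$: it must simultaneously be a strong barrier on a time interval reaching past $T$ (forcing $c>c_0$ and $\delta_n^2 > c(T-t_n)$), be disjoint from $Z$ at time $t_n$ (forcing $\delta_n < \dist(p,Z(t_n))$), and fit inside a spacetime neighborhood of $(p, T)$ missing $\Gamma$ (forcing $\delta_n$ small). Once $T-t_n$ is small enough, all three can be arranged because $\delta_n^2 = O(T - t_n) \to 0$, but some care with the ordering of parameter choices is required.
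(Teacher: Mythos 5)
Your proposal is correct and rests on the same mechanism as the paper's proof, namely the shrinking spherical barriers of \cref{spherical-barrier} together with the observation that, since $\Gamma$ is closed and $(p,T)\notin\Gamma$, a small spacetime neighborhood of $(p,T)$ misses $\Gamma$ entirely. The only organizational difference is that the paper first restricts $Z$ to a cylinder $B_r(p)\times[T-\eps,T]$ on which $\Gamma$ is absent and then cites the Hershkovits--White versions of the statement for weak flows without a generating set, whereas you carry $\Gamma$ along and rule out the first-contact-in-$\Gamma$ alternative at the end; your parameter bookkeeping ($c\in(c_0,2c_0)$, $\delta_n^2=c(T-t_n)+c\eta_n$, time span $<\delta_n^2/c$) correctly satisfies the hypotheses of \cref{spherical-barrier}, and part (2) is handled the same way in both proofs.
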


\begin{proof}
	As $p\notin\Gamma(T)$ and $\Gamma$ is closed, there is a radius $r>0$ and an $\eps>0$ with $T-\eps>T_0$ such that $B_r(p)\cap \Gamma(t)=\emptyset$ for $T-\eps\leq t\le T$. Thus, the restriction, $Z^\prime$, of $Z$ to $B_r(p)\times [T-\eps,T]$ is a weak $X$-flow in $B_r(p)$ with starting time $T-\eps$. Appealing to \cref{spherical-barrier}, one sees that Theorem 3(ii) and consequently Corollary 4(i) of \cite{HershkovitsWhite} hold for the weak $X$-flow $Z'$. That is, $\dist(p,Z^\prime(t))^2\le 2c_0(T-t)$ for $t<T$ close to $T$. As $Z^\prime\subseteq Z$, the inequality holds with $Z^\prime$ replaced by $Z$. This proves the first claim. 
	
	For the second claim, we note that if $x \in \Gamma(T)$, clearly
	\[
	u(x,T)\geq \inf_{y\in\Gamma(T)} u(y,T).
	\]
	If $x \in Z(T) \setminus \Gamma(T)$, the by the first claim, for each $t<T$ close to $T$ we can find $x_t \in Z(t)$ with $\dist(x_t,x) \le \sqrt{2c_0(T-t)}$. Hence $\tilde{u}(t)\leq u(x_t,t)$ and so
	\[
	\limsup_{t \uparrow T} \tilde{u}(t) \le \limsup_{t \uparrow T} u(x_t,t) = u(x,T).
	\]
	Combining the two cases gives that, for each $x\in Z(T)$,
	\[
	u(x,T)\geq\min\set{\limsup_{t\uparrow T} \tilde{u}(t), \inf_{y\in\Gamma(T)} u(y,T)}.
	\]
	Taking the infimum yields the desired inequality.
\end{proof}

For $v\in T_p N$, let
\[
\mathrm{Ric}^X(v,v)=\mathrm{Ric}(v,v)+v\cdot\nabla_v X.
\]

\begin{prop} 
	\label{distance-gap}
	Let $t\in [0,T]\mapsto K(t)$ be a smooth barrier in $N$ such that $\Phi_K^X(p,t)\leq 0$ for all $t\in [0,T]$ and $p\in\partial K(t)$. Suppose $\eta>0$ and $\lambda\in\mathbb{R}$ are such that the set 
	\[
	Q=\set{(p,t) \mid t\in [0,T], \dist(p,\partial K(t))\leq e^{\lambda t} \eta}
	\]
	is compact, and $\mathrm{Ric}^X > \lambda$ on $\bigcup_{t\in [0,T]} Q(t)$. Let $Z$ be a weak $X$-flow generated by a closed subset $\Gamma\subseteq C_{N,0}$. Assume either $\bigcup_{t\in [0,T]} K(t)$ or $\bigcup_{t\in [0,T]} Z(t)$ is compact. If, for all $t\in [0,T]$,
	\[
	\dist(\Gamma(t), K(t))\geq e^{\lambda t} \eta,
	\]
	then,  for all $t\in [0,T]$,
	\[
	\dist(Z(t), K(t)) \geq e^{\lambda t} \eta.
	\]
\end{prop}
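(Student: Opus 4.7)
My plan is to construct a time-dependent outward offset of $K$ and then apply the weak $X$-flow property at a first contact time. For each $\delta \in (0,\eta)$, set $r_\delta(t) = (\eta-\delta)e^{\lambda t}$ and consider
$$K^{(\delta)}(t) = \set{p \in N \mid \dist(p, K(t)) \leq r_\delta(t)}.$$
The distance hypothesis gives $\Gamma(t) \cap K^{(\delta)}(t) = \emptyset$ for every $t \in [0,T]$, and since $Z(0) = \Gamma(0)$ we also have $Z(0) \cap K^{(\delta)}(0) = \emptyset$. It suffices to show $Z(t) \cap K^{(\delta)}(t) = \emptyset$ for all $t \in [0,T]$ and then send $\delta \downarrow 0$.

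The core computation is to show that $K^{(\delta)}$ behaves like a strong barrier in the sense that $\Phi_{K^{(\delta)}}^X < 0$ wherever $\partial K^{(\delta)}$ is smooth. At such a point $q \in \partial K^{(\delta)}(t)$ with unique nearest point $p \in \partial K(t)$, one has $q = \exp_p(r_\delta(t)\nN_K(p,t))$, so the normal velocity decomposes as $v_{K^{(\delta)}}(q,t) = v_K(p,t) + r_\delta'(t)$. Along the outward normal geodesic $\gamma(s) = \exp_p(s\nN_K(p,t))$, combining the Riccati equation for the shape operator with the evolution of $X\cdot\nN$ along $\gamma$ yields
$$\frac{d}{ds}H^X(s) = \abs{A(s)}^2 + \mathrm{Ric}^X(\gamma'(s), \gamma'(s)),$$
and integrating, using $\mathrm{Ric}^X > \lambda$ on $Q$, gives $H_{K^{(\delta)}}^X(q,t) > H_K^X(p,t) + \lambda r_\delta(t)$. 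Since $r_\delta'(t) = \lambda r_\delta(t)$, the two contributions cancel to give
$$\Phi_{K^{(\delta)}}^X(q,t) = v_{K^{(\delta)}} - H_{K^{(\delta)}}^X < \Phi_K^X(p,t) \leq 0.$$

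To extract a contradiction, let $t_0$ be the infimum of times $t \in [0,T]$ with $Z(t) \cap K^{(\delta)}(t) \neq \emptyset$. The compactness hypothesis together with the continuity furnished by \cref{ContCor} produces a contact point $p_0 \in Z(t_0) \cap K^{(\delta)}(t_0)$; the minimality of $t_0$ forces $p_0 \in \partial K^{(\delta)}(t_0)$, while the distance hypothesis excludes $p_0 \in \Gamma(t_0)$. The definition of a weak $X$-flow, applied to the smooth compact barrier $K^{(\delta)}$ near $(p_0, t_0)$, then demands $\Phi_{K^{(\delta)}}^X(p_0, t_0) \geq 0$, which contradicts the strict inequality above.

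The main obstacle is ensuring that $K^{(\delta)}$ is genuinely a smooth barrier at the contact point $(p_0, t_0)$: the distance function $\dist(\cdot, K(t))$ is smooth only inside the focal cut locus of $\partial K(t)$. Compactness of $Q$ together with smoothness of $\partial K$ yields a uniform positive lower bound on the focal radius along $[0,T]$, so after possibly restricting to a small subinterval of time or localizing around $p_0$, the offset is smooth and the Riccati comparison applies. If $p_0$ happens to lie on the cut locus, one picks any nearest point $p \in \partial K(t_0)$ and replaces $K^{(\delta)}$ near $(p_0, t_0)$ by the smooth local offset of a small neighborhood of $p$ in $\partial K(t_0)$, which still serves as a smooth barrier at the single contact point and carries the same $\Phi^X < 0$ estimate.
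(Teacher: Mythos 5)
Your overall strategy—construct an outward offset of $K$ growing like $e^{\lambda t}$, compute $\Phi^X$ via Riccati comparison, and derive a contradiction at first contact—is exactly what the paper does, except the paper encapsulates the distance/Riccati comparison by citing the $X$-flow adaptation of \cite[Lemma 10]{HershkovitsWhite} rather than unpacking it. Your normal-velocity decomposition $v_{K^{(\delta)}} = v_K + r_\delta'$ and the Riccati identity $\frac{d}{ds}H^X = |A|^2 + \mathrm{Ric}^X(\gamma',\gamma')$ are both correct, so the core computation is sound.

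However, there is a genuine gap in the last paragraph. The offset $K^{(\delta)}(t)$ at radius $r_\delta(t)\approx e^{\lambda t}\eta$ is typically \emph{not} a smooth barrier: $\eta$ is not assumed small, so the offset radius will in general exceed both the focal radius and, more stubbornly, the cut radius of $\partial K(t)$. Restricting the time interval does not shrink $r_\delta$, so that move does not restore smoothness. The ``local offset of a small neighborhood of $p$'' that you propose as a replacement is also not a smooth compact barrier in the sense required by \cref{weak-x-flow-defn}: it has a lower-regularity edge where the normal tube meets the spherical cap over $\partial S$, so you cannot feed it directly into the weak-flow test. To make this rigorous you need the standard support-function argument at the cut point (replace $\dist(\cdot,K(t))$ near $p_0$ by the smooth upper support $\dist(\cdot,p)$, or equivalently touch $Z(t_0)$ at $p_0$ from inside $K^{(\delta)}(t_0)$ by a small smooth geodesic ball/tube and verify $\Phi^X<0$ there by the same Riccati estimate)—which is precisely the content of the cited Lemma 10. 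You also need to address compactness of the barrier: when only $\bigcup_t Z(t)$ is compact, $K^{(\delta)}$ may be noncompact; the paper handles this by replacing $K$ with $Q\cap K$, and your local replacement would cure it too, but it should be stated. Once these two points are patched, the argument matches the paper's.
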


\begin{proof}
	It suffices to show that given $\eta^\prime\in (0,\eta)$, one has $\dist (Z(t),K(t))>e^{\lambda t}\eta^\prime$ for all $t\in [0,T]$. Suppose this is not true for some $\eta^\prime$. If 
	\[
	Q^\prime=\set{(p,t) \mid t\in [0,T], \dist(p,K(t))\leq e^{\lambda t} \eta^\prime},
	\]
	then $Q^\prime\cap Z$ is a non-empty compact spacetime set. Thus, there is a first time $t_0$ such that $Q^\prime(t_0)\cap Z(t_0)$ is non-empty. The hypotheses ensure $t_0>0$. Suppose $q\in  Q^\prime(t_0)\cap Z(t_0)$. Then, by the definition of $Q^\prime$, 
	\[
	\dist(q, K(t_0))\leq e^{\lambda t_0} \eta^\prime.
	\]
	As $u(x,t)=\dist(x,K(t))$ is continuous, \cref{ContCor} applied to $u$ implies that
	\[
	\dist(q, K(t_0))\geq \min\set{e^{\lambda t_0} \eta^\prime, \dist(\Gamma(t_0), K(t_0))} \geq e^{\lambda t_0} \eta^\prime,
	\]
	where the second inequality uses the hypotheses. Thus, $\dist(q, K(t_0))=e^{\lambda t_0} \eta^\prime$. 
	
	As $Q$ is compact and $K$ is closed, we may assume $\bigcup_{t\in [0,t_0]} K(t)$ is compact by replacing $K$ by $Q\cap K$; this does not decrease the distance to $Z(t)$ for $t\in [0,t_0]$ and keeps it the same for $t=t_0$. Thus, there is a unit-speed, minimizing geodesic $\gamma$ joining $q$ to a point $p\in\partial K(t_0)$ with $\dist(q,p)=\dist(q,K(t_0))$. As $\gamma\subseteq Q$ one has $\mathrm{Ric}^X(\gamma',\gamma')>\lambda$. As $\dist(\Gamma(t_0),K(t_0))>e^{\lambda t_0}\eta^\prime$,  $q\notin\Gamma(t_0)$ and so, as remarked in Section 11 of \cite{HershkovitsWhite}, the proof of \cite[Lemma 10]{HershkovitsWhite} may be adapted straightforwardly to weak $X$-flows. It follows that $\Phi_K^X(p,t_0)>0$. This contradicts the hypotheses and so proves the claim.
\end{proof}

\begin{thm} 
	\label{distance-gap-2}
	For $i=1,2$, let $Z_i$ be a weak $X$-flow generated by a closed set $\Gamma_i\subseteq C_{N,0}$. Suppose $\eta>0$ and $\lambda\in\mathbb
	R$ are such that the set
	\[
	Z_{1,\lambda}^\eta =\bigcup_{t\in [0,T]} \set{p\in N \mid \dist(p, Z_1(t))\leq e^{\lambda t}\eta}
	\]
	is compact. Suppose also that $\mathrm{Ric}^X\geq \lambda$ on $Z_{1, \lambda}^\eta$. If
	\[
	\min\set{\dist(\Gamma_1(t), Z_2(t)),\dist(\Gamma_2(t), Z_1(t))}\geq e^{\lambda t} \eta 
	\]
	for all $t\in [0,T]$, then 
	\[
	\dist(Z_1(t), Z_2(t))\geq e^{\lambda t} \eta 
	\]
	holds for all $t\in [0, T]$.
\end{thm}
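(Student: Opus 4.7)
The plan is to follow the proof of \cite[Theorem 16]{HershkovitsWhite} essentially verbatim, making only the minor modification that wherever the original proof exploits the hypothesis that the $\Gamma_i$ are concentrated in the time-zero slice, we instead invoke the stronger hypothesis that $\Gamma_i(t)$ stays at distance at least $e^{\lambda t}\eta$ from $Z_j(t)$ for every $t$.

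First I would argue by contradiction, letting
$$
t_* = \inf \set{ t \in [0, T] : \dist(Z_1(t), Z_2(t)) < e^{\lambda t}\eta }
$$
and assuming $t_* \le T$. The compactness of $Z_{1,\lambda}^\eta$, together with the lower semi-continuity of $t \mapsto \dist(Z_1(t), Z_2(t))$ supplied by the second part of \cref{ContCor} applied to $u(x,t) = \dist(x, Z_2(t))$, yields $t_* > 0$ and a realizing pair $p \in Z_1(t_*)$, $q \in Z_2(t_*)$ with $\dist(p,q) = e^{\lambda t_*}\eta$. The crucial consequence of the spacetime hypothesis is that $p \notin \Gamma_1(t_*)$ and $q \notin \Gamma_2(t_*)$. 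Applying Item \eqref{DistItem} of \cref{ContCor} to $Z_1$ at $(p,t_*)$ and to $Z_2$ at $(q, t_*)$ then produces sequences $(p_i, t_i) \to (p, t_*)$ in $Z_1$ and $(q_j, s_j) \to (q, t_*)$ in $Z_2$ with controlled approach rates $O((t_* - t_i)^{1/2})$ and $O((t_* - s_j)^{1/2})$.

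Next I would reduce to \cref{distance-gap} by constructing, on a small spacetime neighborhood $U$ of $(q, t_*)$ on which $\Gamma_2$ is absent, a smooth compact subsolution barrier $K \subseteq U$ approximating $Z_2$ from inside and satisfying $\Phi_K^X \le 0$; this is the geometric heart of \cite[Theorem 16]{HershkovitsWhite} and uses the curvature lower bound $\mathrm{Ric}^X \ge \lambda$ via a Jacobi-field comparison along the minimizing geodesic from $p$ to $q$. The spacetime hypothesis then guarantees that, after shrinking $U$ further if necessary, $K$ stays at distance at least $e^{\lambda t}(\eta - \delta)$ from $\Gamma_1(t)$ for each fixed $\delta > 0$ and $t$ in the time projection of $U$; this is the one place where the strengthened hypothesis enters, since in the original proof the disjointness of $K$ and $\Gamma_1$ was automatic because $\Gamma_1$ was supported on $N \times \set{0}$. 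Applying \cref{distance-gap} to $K$ against $Z_1$ on $U$ then gives $\dist(Z_1(t), K(t)) \ge e^{\lambda t}(\eta - \delta)$, and sending $\delta \to 0$ contradicts the existence of the near-contact pair $(p,q)$.

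The main obstacle is the construction of the smooth subsolution barrier $K$ itself; this is already the hardest step in \cite{HershkovitsWhite} and relies on a careful spacetime mollification of $\dist(\,\cdot\,, Z_2(t))$ carried out on a scale small enough to exploit both the compactness of $Z_{1,\lambda}^\eta$ and the Ricci lower bound. The genuinely novel ingredient here --- verifying that the mollified barrier remains separated from $\Gamma_1$ on the relevant spacetime neighborhood --- is immediate from the strengthened hypothesis and constitutes the promised "minor modification" to the argument of \cite{HershkovitsWhite}.
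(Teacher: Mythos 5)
Your outer framework (first-bad-time contradiction with a realizing pair, ruling out $\Gamma_i$ at the contact point via \cref{ContCor}, and reduction to \cref{distance-gap}) is a reasonable variant of the paper's open-and-closed argument, and you correctly identify that the strengthened hypothesis on $\Gamma_i(t)$ is the only new ingredient needed to rule out $\Gamma_i$ from the contact set. However, the step you label ``the geometric heart'' is where the proposal has a genuine gap, and it also misdescribes what \cite[Theorem 16]{HershkovitsWhite} actually does.

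You propose to build the barrier $K$ by mollifying $\dist(\cdot, Z_2(t))$ near the contact point so that $K$ approximates $Z_2$ from inside and satisfies $\Phi_K^X \le 0$. There is no obvious reason a mollified sublevel set of the distance to the (possibly highly singular) weak flow $Z_2$ would be a subsolution: the weak flow is defined by an avoidance property, not a PDE, so there is no a priori curvature or velocity control on $\partial Z_2(t)$ that a mollification could inherit. The paper's proof --- and HW's --- sidesteps this entirely. At a good time $\tau$, they invoke \cite[Theorem A.1]{HershkovitsWhite} to produce a closed $C^1$ hypersurface $M$ \emph{equidistant} between a slight enlargement of $Z_1(\tau)$ and the far set $J$, then use White's local regularity theorem to flow $M$ forward as a smooth $X$-mean-curvature flow $t\mapsto M(t)$. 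The regions bounded by $M(t)$ then automatically satisfy $\Phi_K^X = 0$, so they are admissible barriers, and \cref{distance-gap} is applied \emph{twice} (once against $Z_1$, once against $Z_2$) at scale $\tfrac{1}{2}e^{\lambda t}\eta'$ to conclude that $M(t)$ still separates. No approximation of $Z_2$ by subsolutions is ever performed. Relatedly, the Ricci/Jacobi-field comparison does not enter the barrier construction as you suggest; it appears only inside the proof of \cref{distance-gap} when deriving the contradiction $\Phi_K^X > 0$ at a first contact. Two smaller omissions: the paper first passes to $\eta' \in (0,\eta)$ and to a strict Ricci bound $\mathrm{Ric}^X > \lambda$ to make the closed/open argument go through cleanly, both of which you drop; and the realizing-pair reduction in your setup needs the $\eta'$ slack to be made rigorous. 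As written, the proposal would not compile into a proof without supplying the separating-hypersurface-plus-parabolic-smoothing mechanism, which is the part it replaces with an unjustified mollification.
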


\begin{proof}
	Suppose first that $\lambda$ is a strict lower bound of $\mathrm{Ric}^X$ on $Z_{1,\lambda}^\eta$. Given $\eta^\prime\in (0,\eta)$, let 
	\[
	\mathcal{T} = \set{\tau \in [0,T] \mid \dist(Z_1(t),Z_2(t)) \ge e^{\lambda t}\eta^\prime \; \forall t \in [0,\tau]}.
	\]
	By the hypotheses, $0 \in \mathcal{T}$. 
	
	First we show $\mathcal{T}$ is closed. Indeed, let $b = \sup \mathcal{T}$. Suppose $z_1 \in Z_1(b)$ and $z_2 \in Z_2(b)$. We claim $\dist (z_1,z_2)\geq e^{\lambda b} \eta^\prime$. To see this, if $z_1 \in \Gamma_1(b)$ or $z_2\in\Gamma_2(b)$, then it follows from our hypotheses that 
	\[
	\dist(z_1,z_2) \ge \min\set{\dist(\Gamma_1(b),Z_2(b)),\dist(\Gamma_2(b),Z_1(b))} \ge e^{\lambda b} \eta > e^{\lambda b} \eta^\prime.
	\]
	So we may assume that $z_1 \in Z_1(b) \setminus \Gamma_1(b)$ and $z_2 \in Z_2(b) \setminus \Gamma_2(b)$. For $t < b$, the triangle inequality gives
	\begin{equation}
	\label{eq-2-2}
	e^{\lambda t}\eta^\prime \le \dist(Z_1(t),Z_2(t)) \le \dist(Z_1(t),z_1) +\dist(z_1,z_2) + \dist(z_2,Z_2(t)).
	\end{equation}
	By \cref{ContCor} applied to $u_i(x,t) = \dist(x,z_i)$, $i=1,2$, as $\dist(z_i,\Gamma_i(b))>0$, 
	\[
	\limsup_{t \uparrow b} \dist(z_i,Z_i(t)) \le \inf_{x \in Z_i(b)} \dist(x,Z_i(b)) = 0.
	\]
	So taking $t \uparrow b$ in \cref{eq-2-2} gives that
	\[
	\dist(z_1,z_2) \ge e^{\lambda b} \eta^\prime
	\]
	proving the claim. Since this holds for all $z_1\in Z_1(b)$ and $z_2\in Z_2(b)$, we conclude that
	\[
	\dist(Z_1(b),Z_2(b)) \ge e^{\lambda b}\eta^\prime.
	\]
	Hence $b\in\mathcal{T}$ and $\mathcal{T}$ is closed. 
	
	Next we show that $\mathcal{T}$ is open. Let $\tau \in \mathcal{T}$ with $\tau<T$. For $\delta>0$, let 
	\[
	Z_1^\prime(\tau)=Z_1(\tau)\cup \set{p\in N \mid \dist(p,\Gamma_1(\tau))\leq \delta}.
	\]
	By the hypotheses and triangle inequality, we may choose $\delta$ small so that $Z_1^\prime(\tau)$ is compact and so that
	\[
	\dist(Z^\prime_1(\tau),Z_2(\tau))\geq e^{\lambda \tau} \eta^\prime \mbox{ and } \dist(\Gamma_2(\tau),Z_1^\prime(\tau))>e^{\lambda \tau} \eta^\prime.
	\]
	Let $J = \set{p \in N \mid \dist(p,Z^\prime_1(\tau)) \ge e^{\lambda \tau}\eta^\prime}$. Note that $Z_2(\tau) \subseteq J$ and $\Gamma_2(\tau)\subseteq \mathrm{int} J$. By \cite[Theorem A.1]{HershkovitsWhite}, there exists a closed $C^1$ hypersurface $M\subseteq N$ separating $Z^\prime_1(\tau)$ and $J$ such that 
	\begin{equation} \label{EquaSeparate}
	\dist(Z_1^\prime (\tau),M) = \dist(M,J) = \frac{1}{2} \dist (Z_1^\prime (\tau), J) = \frac{1}{2}e^{\lambda\tau} \eta^\prime.
	\end{equation}
	By construction
	\[
	\min\set{\dist(\Gamma_1(\tau),M), \dist(\Gamma_2(\tau),M)}>\frac{1}{2} e^{\lambda\tau} \eta^\prime.
	\]
	
	By White's regularity theorem \cite{WhiteRegularity}, there exists a smooth $X$-flow of closed hypersurfaces, $t\in (\tau,\tau+\eps]\mapsto M(t)$ such that $M(t)$ converges in the $C^1$ topology to $M$ as $t\to\tau$. Denote $M(\tau)=M$. For sufficiently small $\eps$ such that $[\tau,\tau+\eps] \subseteq [0,T]$, we let
	\[
	\tilde{M} = \bigcup_{t \in [\tau,\tau+\eps]} \set{p \in N \mid \dist(p,M(t)) \le \frac{1}{2}e^{\lambda t}\eta^\prime}.
	\]
	Shrinking $\eps$ if necessary, we can also make sure that $\tilde{M}$ is compact, that $\lambda$ is a strict lower bound of $\mathrm{Ric}^X$ on $\tilde{M}$, and that $M(t)$ separates $\Gamma_1(t)$ and $\Gamma_2(t)$ with
	\[
	\min\set{\dist (\Gamma_1(t),M(t)),\dist(\Gamma_2(t),M(t))}>\frac{1}{2} e^{\lambda t} \eta^\prime
	\]
	for $t\in [\tau,\tau+\eps]$. Combined these with \cref{EquaSeparate}, it follows from \cref{distance-gap} applied to the barriers bounded by $M(t)$ and the flows $Z_i$, with $\frac{1}{2} e^{\lambda\tau} \eta^\prime$ in place of $\eta$, that for $t \in [\tau,\tau+\eps]$
	\[
	\min\set{\dist(Z_1(t),M(t)),\dist(Z_2(t),M(t))} \ge \frac{1}{2}e^{\lambda t}\eta^\prime.
	\]
	Since $M(t)$ separates $Z_1(t)$ and $Z_2(t)$, we have $\dist(Z_1(t),Z_2(t)) \ge e^{\lambda t}\eta^\prime$, meaning $\tau + \eps \in \mathcal{T}$. Hence $\mathcal{T}$ is open. 
	
	As $[0,T]$ is connected, it follows that $\mathcal{T}=[0,T]$. That is, $\dist(Z_1(t),Z_2(t))\geq e^{\lambda t}\eta^\prime$ for all $t\in [0,T]$. Sending $\eta^\prime\to\eta$, the inequality also holds for $\eta^\prime=\eta$. 
	
	When $\lambda$ is not a strict lower bound for $\mathrm{Ric}^X$ on $Z_{1,\lambda}^\eta$, we observe that $Z_{1,\lambda'}^\eta \subset Z_{1,\lambda}^\eta$ for all $\lambda' < \lambda$, and we may apply the above to $Z_{1,\lambda'}^\eta$ to conclude
	\[
	\dist(Z_1(t),Z_2(t)) \ge e^{\lambda' t}\eta
	\]
	for all $t \in [0,T]$. Since $\lambda' <\lambda$ is arbitrary, the inequality holds for $\lambda$ as well.
\end{proof}

\begin{proof}[Proof of \cref{AvoidanceThm}]
	Without loss of generality we may assume $T_0=0$. Let $U\subseteq N$ be an open set containing $\bigcup_{t\in [0,T]} Z_1(t)$ with $\overline{U}$ compact and let $\lambda$ be a lower bound of $\mathrm{Ric}^X$ on $U$. Choose $\eta>0$ so small that
	\[
	\inf_{t \in [0,T]} \min \set{e^{-\lambda t}\dist(\Gamma_1(t), Z_2(t)), e^{-\lambda t}\dist(\Gamma_2(t),Z_1(t))} > \eta,
	\]
	and that
	\[
	\bigcup_{t \in [0,T]} \set{p\in N \mid \dist(p,Z_1(t)) \le e^{\lambda t}\eta} \subseteq U.
	\]
	It follows from \cref{distance-gap-2} that $Z_1(t)$ and $Z_2(t)$ are disjoint for all $t \in [0,T]$.
\end{proof}

\subsection{Proof of \cref{CompactFlowProp}} \label{ProveCompactFlowSec}
Without loss of generality we may assume $T_0=0$. By our hypotheses, there is a point $x_0\in N$ and $r>1$ such that $\bigcup_{t\in [0,T]}\Gamma(t)\subseteq B_r(x_0)$. Let $p\in N\setminus B_{4r}(x_0)$. Define $R$ by $\dist(p,x_0)=r+3R$, so $R \ge r>1$. Observe by the triangle inequality $\bar{B}_{2R}(p)\subseteq \bar{B}_{6R}(x_0)$. As $|\nabla X|$ is bounded,
\[
\sup_{\bar{B}_{2R}(p)} |X| \leq 6RC
\]
where $C=|X(x_0)|+\sup_{N} |\nabla X|$. Let $\kappa\in\mathbb{R}$ be a lower bound of the Ricci curvature of $N$. As $\bigcup_{t\in [0,T]}\Gamma(t)$ is disjoint from $\bar{B}_{2R}(p)$, by \cite[Corollary 28]{HershkovitsWhite} there is $h=h(\kappa,n)>0$ such that 
\[
\dist(p,Z(t))>R \mbox{ for $0\leq t\leq\frac{R}{h+6RC}$}.
\]
As $R>1$,
\[
\frac{R}{h+6RC}=\frac{1}{(h/R)+6C}>\frac{1}{h+6C}.
\]
Thus, if $T^\prime=\frac{1}{h+6C}$, then $p\notin \bigcup_{t\in [0,T^\prime]} Z(t)$. This holds for all $p\in N\setminus B_{4r}(x_0)$, so
\[
\bigcup_{t\in [0,T^\prime]} Z(t) \subseteq B_{4r}(x_0).
\]
Hence, by iteration, 
\[
\bigcup_{t\in [0,T]} Z(t)\subseteq B_{4^k r}(x_0)
\]
that is, $\bigcup_{t\in [0,T]} Z(t)$ is compact.

\subsection{Proof of \cref{equivalent-defn}} \label{ProveEquivDefnSec}
We first show (1) and (2) are equivalent. Clearly, (1) implies (2). To see (2) implies (1), suppose (2) holds, but $Z$ is not a weak $X$-flow generated by $\Gamma$. This means that there exists a smooth compact barrier $\tilde{K}(t)$ on $[a,b]$ such that 
\begin{itemize}
	\item $\tilde{K}(t) \cap Z(t) = \emptyset$ for all $t \in [a,b)$. 
	\item $(\tilde{K}(b) \cap Z(b)) \cap \Gamma(b) = \emptyset$. 
	\item For $p \in \tilde{K}(b) \cap Z(b)$, either $p \in \mathrm{int}(\tilde{K}(b))$ or $p \in \partial \tilde{K}(b)$ but $\Phi_{\tilde{K}}^X(p,b) < 0$. 
\end{itemize}
By \cref{ContCor}\footnote{The proof of \cref{ContCor} uses only the spherical strong barriers, so the corollary remains true for flows satisfying (2).} applied to $u(x,t)=\dist(x,N\setminus \tilde{K}(t))$, as $\tilde{K}(t)\cap Z(t)=\emptyset$ for $t\in [a,b)$ one sees that $p$ cannot be in the interior of $\tilde{K}(b)$. Thus necessarily $p \in \partial \tilde{K}(b)$ and $\Phi_{\tilde{K}}^X(p,b) < 0$. As remarked in Section 11 of \cite{HershkovitsWhite}, the barrier modification theorem \cite[Theorem 9]{HershkovitsWhite} may be adapted to produce a strong $X$-flow barrier $\hat{K}$ with $p \in \partial \hat{K}(b)$ and $\hat{K}(t) \subseteq \tilde{K}(t)$ for all $t \in [\hat{a},b]$, where $\hat{a}$ is sufficiently close to $b$. This means $\partial \hat{K}(b) \cap Z(b) \ne \emptyset$, a contradiction. This verifies the claim. 

Next, it is clear that (2) implies (3). Since every strong barrier can be extended slightly due to the strict inequality while remaining disjoint from $\Gamma$, (3) implies (2). This proves (2) and (3) are equivalent. 

By \cref{AvoidanceThm}, as any smooth $X$-flow is automatically a weak $X$-flow, (1) implies (4). To conclude the proof, it suffices to show (4) implies (2). Indeed, suppose (4) holds, but (2) does not; that is, there exists a strong $X$-flow barrier $K(t)$ on $[a,b]$ such that $K(t) \cap \Gamma(t) = \emptyset$ for all $t \in [a,b]$, and that $K(a) \cap Z(a) = \emptyset$ but $K(t) \cap Z(t) \ne \emptyset$ for some $t \in (a,b]$. Shrinking $b$ if necessary we may assume $t = b$ is the first time $K(t)$ intersects $Z(t)$ nontrivially. Since
\[
\min_{t \in [a,b]} \dist(\partial K(t), \Gamma(t)) > 0
\]
by increasing $a$ if necessary, we may assume that the classical $X$-flow $M(t)$ starting from $\partial K(a)$ is smooth on $[a,b]$, and that $M(t)\cap\Gamma(t)=\emptyset$ for all $t \in [a,b]$. Let $t\in [a,b]\mapsto K'(t)$ be the family of closed regions such that $\partial K'(t)=M(t)$ and $K'(t) \cap \Gamma(t) = \emptyset$ for all $t \in [a,b]$. Since $K(t)$ is a strong barrier (i.e., $\Phi_K^X(x,t) < 0$), we have $K(t) \subseteq K'(t)$ for all $t \in [a,b]$. It follows that $K'(b) \cap Z(b) \ne \emptyset$. As remarked in Section 11 of \cite{HershkovitsWhite}, we may adapt \cite[Lemma 18]{HershkovitsWhite} to $X$-flows and get the first contact of $K'(t)$ and $Z(t)$ occurs at a point in $\partial K'(t)$. This is a contradiction to (4). Hence, (4) implies (2).

\subsection{Proof of \cref{ExistBiggestFlow}} \label{ProveExistBiggestFlowSec}
Without loss of generality, assume $T_0=0$. Let 
\[
\mathcal{Y}=\set{\mbox{$Y$ is a weak $X$-flow generated by a closed set $\Upsilon\subseteq\Gamma$}}
\]
and let $Z$ be the closure of $\bigcup_{Y\in\mathcal{Y}} Y$. We show that $Z$ is a weak $X$-flow generated by $\Gamma$. First, as $\Gamma$ is a weak $X$-flow generated by itself, $\Gamma\in\mathcal{Y}$ and so $\Gamma\subseteq Z$. In particular, $\Gamma(0)\subseteq Z(0)$. To see the reverse inclusion, we argue by contradiction. Suppose $p\in Z(0)\setminus\Gamma(0)$. Observe there is a small radius $\delta>0$ and a time $s>0$ so that $B_\delta(p)\cap \Gamma(t)=\emptyset$ for $t\in [0,s]$. Thus, by \cref{spherical-barrier}, there is $c>0$ so that, shrinking $\delta$ and $s$ if necessary, for all $Y\in\mathcal{Y}$
\[
\dist(p,Y(t))>\sqrt{\delta^2-ct} 
\]
for $t\in [0,s]$. This contradicts the construction of $Z$ and so $Z(0)=\Gamma(0)$. 

Next, suppose $t\in [a,b]\mapsto K(t)$ is a strong barrier with $a\ge 0$ such that $K(a)\cap Z(a)=\emptyset$ and $K(t)\cap \Gamma(t)=\emptyset$ for $t\in [a,b]$. For $\eps>0$ sufficiently small, the flow 
\[
t \in [a,b] \mapsto K_\eps(t)=\set{x\in N \mid \dist(x,K(t))\leq \eps}
\]
is a strong barrier such that $K_\eps(a)\cap Z(a)=\emptyset$ and $K_\eps(t)\cap\Gamma(t)=\emptyset$ for $t\in [a,b]$. Let $Y\in\mathcal{Y}$ be a weak $X$-flow generated by $\Upsilon$. Then $K_\eps(a)\cap Y(a)=\emptyset$ and $K_\eps(t)\cap\Upsilon(t)=\emptyset$ for $t\in [a,b]$. Thus $K_\eps(t)\cap Y(t)=\emptyset$ for $t\in [a,b]$. In particular, $\dist(K(t),Y(t))\geq \eps$ for $t\in [a,b]$. As $Y$ is an arbitrary element of $\mathcal{Y}$, it follows that $\dist(K(t),Z(t))\geq \eps$ for $t\in [a,b]$. By \cref{equivalent-defn}, this proves that $Z$ is a weak $X$-flow generated by $\Gamma$. 

Moreover, suppose $Z'$ is a weak $X$-flow generated by a closed set $\Gamma'\subseteq Z$ with starting time $T'_0\geq 0$. It is readily checked that $Z\cup Z'$ is a weak $X$-flow generated by $\Gamma$. Thus $Z\cup Z' \in\mathcal{Y}$ and so $Z'\subseteq Z$, and $Z$ is the biggest $X$-flow we are looking for, which we will denote by $Z = F^X(\Gamma,0)$. Uniqueness of $Z$ follows immediately from the definition. \par 
Finally, suppose $\hat{\Gamma}\subseteq C_{N,0}$ is a closed subset with $\hat{\Gamma}|_{[0,T_1]}=\Gamma|_{[0,T_1]}$. As $F^X(\hat{\Gamma}; 0)|_{[0,T_1]}$ is a weak $X$-flow generated by $\hat{\Gamma}|_{[0,T_1]}\subseteq\Gamma$,
\[
F^X(\hat{\Gamma};0)|_{[0,T_1]}\subseteq F^X(\Gamma;0)|_{[0,T_1]}.  
\]
By symmetry, 
\[
F^X(\Gamma;0)|_{[0,T_1]}\subseteq F^X(\hat{\Gamma};0)|_{[0,T_1]}.
\]
Combining these inclusions gives that $F^X(\hat{\Gamma};0)$ coincides with $F^X(\Gamma;0)$ on $[0,T_1]$.

\section{Regularizing certain singular $X$-mean-convex subsets} \label{RegularXMCSec}
We wish to formalize the notion that an appropriate class of singular $X$-mean-convex domains can be approximated by smooth $X$-mean-convex domains. Specifically, we show:
\begin{prop} \label{RoundingProp}
Suppose $(N,X)$ is tame.  Let  $U_1,U_2\subseteq N$ be two closed sets satisfying:
	\begin{enumerate}
		\item $U_1,U_2$ are both smooth and strictly $X$-mean-convex;
		\item $U=U_1\cap U_2$ is non-empty and compact;
		\item $\partial U_1$ intersects $\partial U_2$ transversally.
	\end{enumerate}
	Then for any sufficiently small $\eps>0$ there are compact sets $U^\pm_\eps\subseteq N$ satisfying:
	\begin{enumerate}
		\item $U^\pm_\eps$ is smooth and strictly $X$-mean-convex;
		\item $U^-_\eps\subseteq \mathrm{int}(U)\subseteq U\subseteq \mathrm{int}(U_\eps^+)$;
		\item As $\eps\to 0$ the $\partial U^\pm_\eps$ converge as closed sets to $\partial U$, and the convergence is smooth on compact subsets of $N\setminus (\partial U_1\cap \partial U_2)$.
	\end{enumerate}
\end{prop}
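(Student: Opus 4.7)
The plan is to construct $U^\pm_\eps$ as sublevel sets of a smoothed maximum $F_\eps=\phi_\eps(f_1,f_2)$, where $f_1,f_2$ are smooth defining functions for $U_1,U_2$ (e.g., smoothed signed distance functions), with $\{f_i\leq 0\}=U_i$ and $\nabla f_i\neq 0$ on $\partial U_i$, and $\phi_\eps\colon\Real^2\to\Real$ is a smooth, strictly convex function with $\partial_1\phi_\eps,\partial_2\phi_\eps>0$, $\partial_1\phi_\eps+\partial_2\phi_\eps\equiv 1$, and $\max(a,b)\leq\phi_\eps(a,b)\leq\max(a,b)+\eps/2$. An explicit choice is
\[
\phi_\eps(a,b)=\tfrac{1}{2}\bigl(a+b+\sqrt{(a-b)^2+\eps^2}\bigr),
\]
whose level curves $\{\phi_\eps=c\}$ in the $(a,b)$-plane are branches of the hyperbolas $(a-c)(b-c)=\eps^2/4$, with convex sublevel sets. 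I would set $U^-_\eps=\{F_\eps\leq -\eps\}$ and $U^+_\eps=\{F_\eps\leq\eps\}$, localized to a fixed neighborhood of $U$ to ensure compactness.

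The first task is to verify the basic topological properties. The inclusions $U^-_\eps\subseteq\mathrm{int}(U)\subseteq U\subseteq\mathrm{int}(U^+_\eps)$ follow from the squeeze on $\phi_\eps$ together with $U=\{\max(f_1,f_2)\leq 0\}$. Smoothness of $\partial U^\pm_\eps$ reduces to $\nabla F_\eps\neq 0$: transversality of $\partial U_1,\partial U_2$ gives linear independence of $\nabla f_1,\nabla f_2$ on a neighborhood $\mathcal V$ of $\partial U_1\cap\partial U_2$, and since $\partial_i\phi_\eps>0$ the combination $\nabla F_\eps=\partial_1\phi_\eps\,\nabla f_1+\partial_2\phi_\eps\,\nabla f_2$ does not vanish there; off $\mathcal V$, $F_\eps$ agrees with the dominant $f_i$ up to errors of order $\eps^2/|f_1-f_2|$, so $\nabla F_\eps\neq 0$ near the relevant level sets for $\eps$ small. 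Convergence $\partial U^\pm_\eps\to\partial U$ as closed sets is immediate from $F_\eps\to\max(f_1,f_2)$ uniformly, and smooth convergence off $\partial U_1\cap\partial U_2$ follows from smooth closeness of $F_\eps$ to the dominant $f_i$ there.

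The crux is strict $X$-mean-convexity. With outward normal $\mathbf n=\nabla F_\eps/|\nabla F_\eps|$ and the decomposition
\[
\nabla^2 F_\eps=\sum_{i,j}\phi_{\eps,ij}\,\nabla f_i\otimes\nabla f_j+\sum_i\phi_{\eps,i}\,\nabla^2 f_i,
\]
the scalar mean curvature splits as $H=\mathrm{div}(\mathbf n)=H_1+H_2$, with $H_1,H_2$ the contributions of the two sums traced over the tangent space of $\partial U^\pm_\eps$ and divided by $|\nabla F_\eps|$. Convexity of $\phi_\eps$ makes $(\phi_{\eps,ij})$ positive semidefinite, so $H_1\geq 0$, and the explicit form of $\phi_\eps$ combined with transversality (which yields a uniform lower bound on the tangential component of $\nabla f_1-\nabla f_2$ near $\partial U_1\cap\partial U_2$) forces $H_1\geq c/\eps$ on the corner region $\{|f_1-f_2|\lesssim\eps\}$. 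Away from this region, $\phi_{\eps,i}\approx 1$ for the dominant index and $\partial U^\pm_\eps$ is a $C^2$-small perturbation of the level set $\{f_i=\pm\eps\}$; strict $X$-mean-convexity of $U_i$ and continuity give $H_2-X\cdot\mathbf n>0$ there, while $H_1\geq 0$ only helps. Combining these regimes with compactness of $U$ and boundedness of $X$ near $U$ yields $H-X\cdot\mathbf n>0$, i.e., $H^X=-H+X\cdot\mathbf n<0$, uniformly on $\partial U^\pm_\eps$ for $\eps$ small.

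The main obstacle is the transition zone $\{|f_1-f_2|\sim\eps\}$ between the ``rounded corner'' and ``parallel surface'' regimes, where $H_1$ is already of order $1/\eps$ but $H_2$ is a weighted average of traces involving $\nabla^2 f_1$ and $\nabla^2 f_2$ whose individual signs (relative to the tilted normal $\mathbf n$) need not match the hypothesis on the $U_i$. Overcoming this requires exploiting the coercivity of $(\phi_{\eps,ij})$ on the diagonal band (for the explicit choice, $\phi_{\eps,11}=\phi_{\eps,22}=-\phi_{\eps,12}=\eps^2/(2((a-b)^2+\eps^2)^{3/2})$, reaching $1/(2\eps)$ on $\{a=b\}$), uniform $C^2$-bounds on $f_1,f_2$ over a compact neighborhood of $U$, and quantitative strict $X$-mean-convexity of the $U_i$ on a neighborhood of $\partial U_i\cap U$ --- all of which follow from compactness of $U$ and the stated hypotheses.
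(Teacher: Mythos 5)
Your approach is correct but genuinely different from the paper's. You build $U^\pm_\eps$ explicitly as sublevel sets of a smoothed maximum $F_\eps=\phi_\eps(f_1,f_2)$ of defining functions, and verify strict $X$-mean-convexity by a direct Hessian computation, split into a corner regime $\{|f_1-f_2|\lesssim\eps\}$ where the coercivity of $(\phi_{\eps,ij})$, together with transversality (which ensures the tangential part of $\nabla f_1-\nabla f_2$ is uniformly bounded below on $\partial U_\eps^\pm$), produces an $O(1/\eps)$ contribution that overwhelms everything else, and an outer regime where $\partial U^\pm_\eps$ is a $C^2$-small perturbation of a nearby level set of the dominant $f_i$, so strict $X$-mean-convexity of $U_i$ carries over by continuity. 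The paper instead passes through tubular neighborhoods $\mathcal{T}_\eps(U)$ (for $U^+_\eps$, and inward parallel domains $U_i\setminus\mathcal{T}_{\eps'}(U_i^c)$ for $U^-_\eps$), which are only $C^{1,1}$ with corners, checks that these are strictly \emph{viscosity} $X$-mean-convex in the sense of Definition~\ref{ViscosityXMCDefn}, and invokes Theorem~\ref{ApproxThm}, which runs the $X$-mean-curvature flow for a short time to smooth the boundary while preserving mean-convexity (relying on Proposition~\ref{UniquenessXFlowProp}, White's local regularity, and avoidance). Your route is more elementary and self-contained --- no parabolic machinery --- and handles $U^+_\eps$ and $U^-_\eps$ symmetrically by a single formula with shifted cutoffs, at the cost of the delicate matched estimate on the transition band (you correctly isolate this as the crux and handle it by choosing the band width $C\eps$ with $C$ large but independent of $\eps$). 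The paper's route is softer: the hard analysis is factored into the flow regularization, and the geometric input reduces to the $C^{1,1}$ structure and the easy-to-check viscosity condition at corner points.
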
	
To prove this we use the classical $X$-flow to show smooth out the boundary of certain less singular $X$-mean-convex domains while preserving $X$-mean-convexity.  
\begin{defn} \label{ViscosityXMCDefn}
	Let $V\subseteq N$ be a closed set with non-empty interior and $C^{1,1}$-regular boundary. We say $V$ is \emph{strictly viscosity $X$-mean-convex} if every closed subset $K\subseteq V$ with $\partial K$ a $C^{2}$-regular hypersurface has the property that, if $p\in \partial V\cap \partial K$, then the $X$-mean-curvature of $\partial K$ at $p$ is non-vanishing and points into $K$, i.e., $H^X_K(p)=-\Div_{\partial K}\mathbf{n}_K(p)+X(p)\cdot\mathbf{n}_{K}(p)<0$ where $\mathbf{n}_{K}$ is the unit outward normal on $\partial K$.
\end{defn}
We have the following approximation result:
\begin{thm} \label{ApproxThm}
Suppose $(N,X)$ is tame. Let $V\subseteq N$ be a strictly viscosity $X$-mean-convex compact set with $C^{1,1}$-regular boundary. For every sufficiently small $\eps>0$, there exists a compact subset, $V_\eps\subseteq \mathrm{int}(V)$, so that:
	\begin{enumerate}
		\item $V_\eps$ is strictly $X$-mean-convex with smooth boundary;
		\item $\partial V_\eps$ is a $C^{1}$ exponential graph over $\partial V$ with $C^1$ norm bounded above by $\eps$.
	\end{enumerate}
	In fact, there is $\eps_1$ small such that, for $0< \eps <\eps_1$, the sets $V_\eps$ can be constructed so that $V_{\eps'}\subseteq \mathrm{int}(V_\eps)$ for $\eps < \eps'$, and 
	$$
	\mathrm{int}(V) \setminus V_{\eps_1} =\bigcup_{0<\eps<\eps_1} V_\eps.
	$$ 
\end{thm}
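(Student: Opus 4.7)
The plan is to evolve $\partial V$ by the classical $X$-mean-curvature flow and obtain each $V_\eps$ as a time-slice of the resulting flow. Since $\partial V$ is $C^{1,1}$, it admits uniform local graphical parametrizations over its tangent hyperplanes with $C^{1,1}$-norm controlled by the essentially bounded second fundamental form of $\partial V$. Standard quasilinear parabolic theory --- with the $X$-term treated as a controlled lower-order perturbation of classical MCF, so that interior estimates of Ecker--Huisken type apply (cf.\ \cite{EckerHuiskenInterior}) --- then yields short-time existence of a family $\{V(t)\}_{t\in[0,T)}$ of compact subsets of $N$ such that $\partial V(0)=\partial V$, $t\mapsto \partial V(t)$ is a smooth family solving the $X$-MCF equation for $t\in (0,T)$, and $\partial V(t)\to \partial V$ in $C^{1,\alpha}_{loc}$ for every $\alpha\in(0,1)$ as $t\to 0^+$.

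Next I would establish that this flow is strictly monotone decreasing and that $\partial V(t)$ is strictly $X$-mean-convex for every $t\in(0,T)$. For strict monotonicity, the viscosity condition supplies at every $p\in\partial V$ a $C^2$ inner touching set $K_p\subseteq V$ with $H^X_{K_p}(p)<0$; the classical $X$-MCF of a slight smooth thickening of $K_p$ produces a smooth flow that remains inside $V(t)$ by \cref{AvoidanceThm} and moves strictly inward at $p$, and a covering argument using compactness of $\partial V$ combined with the spherical strong barriers of \cref{spherical-barrier} then yields $V(t)\subsetneq V(s)$ whenever $s<t$. For strict $X$-mean-convexity of $\partial V(t)$ at $t\in(0,T)$, the smooth scalar $X$-mean-curvature $H^X=H^X_{V(t)}$ satisfies a linear parabolic equation of the form
\begin{equation*}
\partial_t H^X = \Delta_{\partial V(t)} H^X + \bigl(|A|^2+c(X)\bigr)H^X + \mbox{lower-order terms involving } \nabla X,
\end{equation*}
to which the strong maximum principle applies; coupling the uniform inner touching barriers with the short-time interior parabolic estimates from the previous step gives a uniform bound $\limsup_{t\to 0^+,\,q\to p}H^X_{V(t)}(q)<0$ in $p\in\partial V$, which the strong maximum principle then propagates to $H^X_{V(t)}<0$ for all $t\in(0,T)$.

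Finally, the $C^{1,\alpha}$-convergence in the first step produces, for each sufficiently small $\eps>0$, a time $t(\eps)>0$ such that $\partial V(t(\eps))$ is an exponential graph over $\partial V$ with $C^1$-norm at most $\eps$. Setting $V_\eps=V(t(\eps))$ for a continuous and strictly decreasing choice of $\eps\mapsto t(\eps)$ tending to $0$ as $\eps\to 0$, the strict monotonicity established above yields $V_{\eps'}\subseteq\mathrm{int}(V_\eps)$ for $\eps<\eps'$, while compactness of $V$ combined with strict monotonicity yields the exhaustion $\mathrm{int}(V)\setminus V_{\eps_1}=\bigcup_{0<\eps<\eps_1}V_\eps$. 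The main technical obstacle is the second step: extracting a uniform strict upper bound on $H^X_{V(t)}$ as $t\to 0^+$ from only a pointwise viscosity condition on the $C^{1,1}$ initial surface. This requires building inner $C^2$ touching barriers with uniform-in-$p$ quantitative control, and coupling them with short-time parabolic estimates so that $H^X_{V(t)}$ lands in the regime where the strong maximum principle can be applied.
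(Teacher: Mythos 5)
Your proposal follows the same broad strategy as the paper — flow the boundary and take a time slice — but it has a genuine gap at precisely the step you flag, namely showing that $H^X_{V(t)}<0$ for small $t>0$. You try to derive a uniform strict bound $\limsup_{t\to0^+,\,q\to p}H^X_{V(t)}(q)<0$ directly from the viscosity condition, but the viscosity condition is purely qualitative: it says $H^X_K(p)<0$ for every $C^2$ inner touching set $K$, with no lower bound on $|H^X_K(p)|$, and $\partial V$ is only $C^{1,1}$, so there is no quantitative modulus to feed into the $H^X$ evolution equation at $t\to0^+$. Constructing ``uniform-in-$p$ quantitative control'' on touching barriers from a purely sign-based hypothesis is exactly what does not come for free, so your plan to ``couple the uniform inner touching barriers with short-time parabolic estimates'' does not close. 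A related soft spot is your strict-monotonicity argument: inner $C^2$ touching sets with negative $X$-mean curvature give infinitesimal inward motion at each touching point, but turning that into $V(t)\subsetneq V(s)$ for a $C^{1,1}$ surface with no quantitative curvature bound again requires uniform control you have not supplied.

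The paper sidesteps both issues. It first regularizes by convolution to smooth hypersurfaces $\partial K_\eps\subseteq\mathrm{int}(V)$ with uniform $|A|$ bounds, flows those, and shows each $K_\eps(t)$ stays in $\mathrm{int}(V)$ by a first-touching argument that uses the viscosity condition only at a single smooth touching point. Passing to the limit gives a smooth $X$-MCF $K(t)$ out of $\partial V$ contained in $V$. Crucially, rather than estimating $H^X$ near $t=0$, the paper identifies $K(t)$ (via uniqueness, Proposition~\ref{UniquenessXFlowProp}, and \cite[Theorem 31]{HershkovitsWhite}) with the biggest $X$-flow of $V$, invokes the nesting property $F^X_{t+h}\subseteq\mathrm{int}(F^X_t)$ of \cite[Remark 33]{HershkovitsWhite} to get $H^X\le0$ at positive times, and then runs the strong maximum principle only to upgrade $\le0$ to $<0$: if $H^X$ vanished somewhere on $\partial K(t)$ it would vanish on an entire component, contradicting strict viscosity $X$-mean-convexity. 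This ``soft'' route avoids any quantitative estimate near $t=0$, which is the part your proposal cannot supply.
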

In order to prove this we will need the following uniqueness result that seems to be not quite standard; see \cite{SchulzeMetzger, HershkovitsReifenberg} for related results.
\begin{prop} \label{UniquenessXFlowProp}
Suppose $(N,X)$ is tame. If $\Sigma\subseteq N$ is a (two-sided)  $C^{1,1}$-regular closed hypersurface, then there is a unique solution, $t\in [0,T)\mapsto\Sigma(t)$, of the $X$-mean-curvature flow which is smooth in maximal existence interval $(0,T)$ and so $\Sigma(t)\to \Sigma(0)=\Sigma$ in $C^1$ as $t\to 0^+$.
\end{prop}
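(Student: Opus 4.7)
The plan is to establish existence by approximation with smooth hypersurfaces and uniqueness by representing the flows as normal graphs over $\Sigma$ and applying the parabolic maximum principle to the difference of the graphical heights.

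For existence, I would approximate $\Sigma$ in $C^{1,1}$ by a sequence $\Sigma_k$ of smooth closed hypersurfaces with uniformly bounded second fundamental form (e.g., via mollification along normal exponential graphs over $\Sigma$). For each $\Sigma_k$ classical short-time existence produces a smooth $X$-mean-curvature flow $\Sigma_k(t)$ on $[0,T_k)$. Since $|A_{\Sigma_k}|$ is uniformly bounded and $X,\nabla X$ are bounded on a neighborhood of $\Sigma$, Ecker--Huisken interior estimates adapted to the lower-order $X$-perturbation give a uniform $T>0$ and uniform $C^k$ bounds for $\Sigma_k(t)$ on any compact subinterval of $(0,T)$, together with uniform Lipschitz-in-space and $\tfrac12$-H\"older-in-time estimates up to $t=0$. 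Passing to a subsequence yields a smooth flow $\Sigma(t)$ on $(0,T)$ with $\Sigma(t)\to\Sigma$ in $C^0$. The uniform bound on $|A|$ and interpolation upgrade this to $C^1$ convergence. Continue smoothly to the maximal time $T$.

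For uniqueness, suppose $t\mapsto\Sigma^1(t)$ and $t\mapsto\Sigma^2(t)$ are two solutions with the stated properties. By the $C^1$ convergence to the $C^{1,1}$ hypersurface $\Sigma$, for sufficiently small $t>0$ each $\Sigma^i(t)$ is a normal graph over $\Sigma$ via the (Lipschitz) unit normal $\mathbf{n}_\Sigma$, represented by a function $u_i(\cdot,t)\colon\Sigma\to\mathbb{R}$. The $u_i$ are smooth in spacetime for $t>0$, tend to $0$ in $C^1(\Sigma)$ as $t\to 0^+$, and satisfy a quasilinear parabolic equation of the form
\[
\partial_t u_i=a^{jk}(x,\nabla u_i)\nabla_{jk}u_i+b(x,u_i,\nabla u_i)
\]
whose coefficients inherit $C^{0,1}$ spatial regularity from $\Sigma$ and include the contribution of $X$. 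The difference $v=u_1-u_2$ then satisfies a linear parabolic equation $\partial_t v=\tilde a^{jk}\nabla_{jk}v+\tilde b^j\nabla_j v+\tilde c\, v$ whose coefficients are bounded on $\Sigma\times(0,T']$ once $T'>0$ is chosen so both $u_i$ have small $C^1$ norms. Since $v(\cdot,t)\to 0$ uniformly as $t\to 0^+$, the parabolic maximum principle on the compact manifold $\Sigma$ forces $v\equiv 0$ on $\Sigma\times(0,T']$; smooth uniqueness for MCF from the matching datum $\Sigma^1(T')=\Sigma^2(T')$ then propagates the identity up to the maximal existence time.

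The main obstacle is ensuring that the maximum-principle argument closes at $t=0^+$ despite the limited regularity: the coefficients of the equation for $v$ degenerate at $t=0$ only in the sense that one has no better than $C^{0,1}$ control of $\tilde a^{jk}$ and only continuous boundary data $v|_{t=0}=0$, and one must rule out nonzero bounded solutions of the linear parabolic Cauchy problem in this class. The cleanest route is to apply the $L^\infty$-maximum principle to $v$ on $\Sigma\times[\tau,T']$ for each $\tau>0$ and then let $\tau\to 0^+$, using the uniform convergence $v(\cdot,\tau)\to 0$; the standard constant-exponential-type comparison function $\phi(t)=\|v(\cdot,\tau)\|_\infty e^{C(t-\tau)}$, with $C$ an upper bound for $\tilde c$ and the positive parts of $\tilde b^j$, completes the argument. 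A subsidiary technical point is justifying the graphical representation globally on $\Sigma$ for small $t$; this follows from the compactness of $\Sigma$, its $C^{1,1}$ uniform tubular neighborhood, and the $C^1$ convergence hypothesis, so it poses no serious difficulty beyond careful bookkeeping.
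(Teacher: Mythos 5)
Your existence sketch and your overall strategy for uniqueness — expressing both flows as normal graphs, deriving a linear parabolic equation for the difference, and invoking a maximum principle — are the right ideas, and the iteration to the maximal time via compactness of the flow (as in the paper) is also correct. However, there is a genuine gap in the way you set up the graphical representation for uniqueness, and it is precisely the point the paper's proof is designed to avoid.

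You propose to write $\Sigma^1(t)$ and $\Sigma^2(t)$ as normal exponential graphs over $\Sigma$ itself, and then assert that the graph functions $u_i(\cdot,t)$ are smooth in spacetime for $t>0$. This last claim does not hold: if $\Sigma$ is only $C^{1,1}$, then $\mathbf{n}_\Sigma$ is merely Lipschitz, so the normal exponential parameterization $(p,s)\mapsto\exp_p(s\,\mathbf{n}_\Sigma(p))$ is only bi-Lipschitz. Consequently, even though each $\Sigma^i(t)$ is a smooth hypersurface for $t>0$, the height function $u_i(\cdot,t)$ it defines over the rough base $\Sigma$ is only Lipschitz in $p$, not $C^2$; the ``smoothness of $u_i$'' is limited by the regularity of the foliation by normal lines of $\Sigma$, not by the regularity of $\Sigma^i(t)$. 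This means the pointwise quasilinear equation you write, $\partial_t u_i = a^{jk}\nabla_{jk}u_i + b$, is not available classically, and the subsequent linear equation for $v=u_1-u_2$ with ``bounded coefficients'' cannot be formulated without a weak or viscosity framework — which would require substantial additional work. The paper sidesteps this exactly by first mollifying $\Sigma$ to a \emph{smooth} hypersurface $\Sigma'$ with $C^1$-distance $\delta$ to $\Sigma$ and $C^{1,1}$ norm bounded by a $\delta$-independent constant $C$, and then writing $u_i$ as graphs over $\Sigma'$. Over the smooth base, the $u_i$ are genuinely smooth for $t>0$ with uniform $C^{1,1}$ bounds, the difference equation has $C^1$ coefficients in divergence form, and the maximum principle applies cleanly with $v\equiv 0$ at $t=0$. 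If you replace ``graph over $\Sigma$'' by ``graph over a smooth $\Sigma'$ obtained by convolution, uniformly $C^{1,1}$ close to $\Sigma$,'' your argument aligns with the paper's and the remaining steps go through; without that change, the step ``the $u_i$ are smooth'' is false and the proof does not close.
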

\begin{proof}
	The existence follows readily from the local regularity of White \cite{WhiteRegularity}.  Hence, it remains only to show that it is the unique smooth flow.
	
	To that end, pick a small $\delta>0$ and use a convolution argument to approximate $\Sigma$ by a smooth hypersurface, $\Sigma'$, so that $\Sigma$ can be written as a normal exponential graph over $\Sigma'$ by a function whose $C^1$ norm is bounded above by $\delta$. Moreover, there is a constant $C>0$ independent of $\delta$ so that the $C^{1,1}$ norm of the function is bounded above by $C$. 
	
	For two $X$-mean-curvature flows, $t\in [0,T)\mapsto \Sigma_i(t)$ with $\Sigma_i(0)=\Sigma$ for $i=1,2$, there is $T' < T$ so that, for all $t \in (0,T')$, one can express both of these as normal graphs, $u_i(t,\cdot)$, $i=1,2$, over $\Gamma$ with $C^1$ norm bounded by $2\delta$ and $C^{1,1}$ norm bounded by $2C$. In fact, for $t\in (0,T')$ both functions are smooth and will have $C^2$ norm bounded by $2C$. 	If $v=u_1-u_2$, then $v$ solves a uniformly parabolic equation in divergence form with coefficients that are $C^1$. Moreover, at time $t=0$, $v$ identically vanishes. It follows from the strong maximum principle that $v$ identically vanishes and so $\Sigma_1(t)=\Sigma_2(t)$ for all $t\in [0,T')$. As the flows remain compact (by \cite[Theorem 29]{HershkovitsWhite}), uniqueness up to $T$ follows by iteration.
\end{proof}
\begin{proof}[Proof of \cref{ApproxThm}]
	By standard convolution arguments and the hypothesis that $\partial V$ is $C^{1,1}$-regular,  there is $C>0$ so that, for every sufficiently small $\eps>0$,  there is a smooth compact set, $K_\eps\subseteq N$, so that:
	\begin{enumerate}
		\item $K_\eps \subseteq\mathrm{int}(V)$;
		\item $\sup_{\partial K_\eps} |A_{\partial K_\eps}|\leq C$;
		\item $\partial K_\eps$ can be written as a $C^{1}$-graph over $\partial V$ with $C^1$ norm bounded above by $\eps$.
	\end{enumerate}
	Let $t\in [0,T_\eps)\mapsto K_\eps(t)$ be a continuous family of compact sets with $K_\eps(0)=K_\eps$ and whose boundary is a smooth $X$-mean-curvature flow. By Items (2) and (3) and \cite{WhiteRegularity}, there is a $T>0$ so that the maximum time of existence $T_\eps>T$ for all small $\eps$.  
	
	We claim that $K_\eps(t)\subseteq \mathrm{int}(V)$ for all $t\in (0, T_\eps)$.  To see this, we observe that $K_\eps(0)=K_\eps\subseteq \mathrm{int}(V)$ by construction. The curvature bound implies that there is a $T_\eps'\in (0, T_\eps)$ so that for $t\in [0, T_\eps']$, $K_\eps(t)\subseteq \mathrm{int}(V)$.  If the claim is not true, then there is a first time $t'\in [0, T_\eps)$ for which the inclusion fails.  Clearly, $t'>T_\eps'$, $K_{\eps}(t')\subseteq V$ and there is a $p\in \partial K_{\eps}(t')\cap \partial V$. As $V$ is strictly viscosity $X$-mean-convex, this implies that the $X$-mean-curvature of $\partial K_{\eps}(t')$ at $p$ points into $K_\eps(t')$ which is impossible, as it would mean at some previous time $K_\eps(t)$ was not contained in $V$.
	
	Let $t\in [0,T)\mapsto K(t)$ be a subsequential limit of the $t\in [0,T)\mapsto K_\eps (t)$ as $\eps\to 0$. Thus, $t\in [0,T)\mapsto \partial K(t)$ is a solution of the $X$-mean-curvature flow which is smooth on $(0,T)$ and $\partial K(t)\to\partial V$ in $C^1$ as $t\to 0^+$. As $K_\eps(t)\subseteq V$ for all $t\in [0,T)$ and small $\eps> 0$, the nature of the convergence ensures that $K(t)\subseteq V$ for all $t\in [0,T)$ with equality at time $t=0$.  Because $V$ is strictly viscosity $X$-mean-convex and $\partial K(t)$ is smooth for all $t\in (0,T)$, arguing as above implies that $K(t)\subseteq \mathrm{int}(V)$ for all $t \in (0,T)$.
	
	By the uniqueness result of Proposition \ref{UniquenessXFlowProp}, one appeals to \cite[Theorem 31]{HershkovitsWhite} to see that the biggest $X$-flow of $V$ agrees with the domain bounded by the smooth $X$-mean-curvature flow as long as the latter exists, i.e., $K(t)=F^X_t(V\times\{0\}; 0)$ for $t\in (0,T)$. As $F^X_t(V\times\{0\}; 0)\subseteq \mathrm{int}(V)$ for $0<t<T$, it follows that $F^X_{t+h}(V\times\{0\};0)\subseteq \mathrm{int}(F_t^X(V\times\{0\};0))$ for all $t,h>0$ -- see \cite[Remark 33]{HershkovitsWhite}. In particular, for $t\in (0,T)$ the $X$-mean curvature vector of $\partial K(t)$ either vanishes at a point or points into $K(t)$.  However, if the $X$-mean-curvature vector vanishes at a point $p\in \partial K(t)$, then, by the strong maximum principle, one must have it vanish on the entire connected component of $\partial K(t)$ containing $p$. This means that there is a component of $\partial V$ that is smooth and on which the $X$-mean-curvature vector vanishes. This would contradict the assumption that $V$ is strictly viscosity $X$-mean-convex and so the $X$-mean curvature vector of $\partial K(t)$ is non-vanishing everywhere and points into $K(t)$ for all $t\in (0,T)$.  
	
	To conclude the proof we observe, for sufficiently small $t>0$, $K(t)$ satisfies the conclusions of the theorem and so we may take $V_\eps=K(t)$ for an appropriate choice of $t$.
\end{proof}
We can now prove Proposition \ref{RoundingProp}.
\begin{proof}[Proof of Proposition \ref{RoundingProp}]
	We first construct $U^+_\eps$. For a set $V\subseteq\mathbb{R}^{n+1}$, denote by $\mathcal{T}_\eps(V)$ the $\eps$-tubular neighborhood of $V$. Fix a sufficiently large $R$ so $U=U_1\cap U_2\subseteq B_R(p)$. By taking $\eps$ sufficiently small one can ensure that the $\partial\mathcal{T}_\eps(U_i)\cap B_R(p)$ are smooth and have $X$-mean-curvature vector pointing into $\mathcal{T}_\eps(U_i)$. 
	
	It is not hard to see that, for $\eps>0$ sufficiently small, $\mathcal{T}_\eps(U)$ has $C^{1,1}$-regular boundary that is actually piecewise smooth. Moreover, $\mathcal{T}_\eps(U)$ is strictly viscosity $X$-mean-convex. Indeed, by taking $\eps$ small enough $\partial\mathcal{T}_\eps(U)$ can be decomposed into pieces lying in $\partial \mathcal{T}_\eps(U_1)$, $\partial \mathcal{T}_\eps(U_2)$ and $\partial \mathcal{T}_\eps(\partial U_1\cap \partial U_2)$ and on each of these pieces  the $X$-mean-curvature vector strictly points into $\mathcal{T}_\eps(U)$.  This last claim follows from a computation similar to that in the proof of \cref{spherical-barrier}. The non-smooth points of $\partial\mathcal{T}_\eps(U)$ lie on $\partial\mathcal{T}_\eps(U_i)\cap \partial \mathcal{T}_\eps(\partial U_1\cap \partial U_2)\subseteq B_R(p)$ and one readily checks that \cref{ViscosityXMCDefn} holds at these points. We conclude the existence of $U_\eps^+$ by applying \cref{ApproxThm} to $\mathcal{T}_\eps(U)$.
	
	In order to construct $U^-_{\eps}$ we consider the inward parallel domains
	\[
	U^{\eps'}_i = U_i\setminus \mathcal{T}_{\eps'}(U_i^c), i=1,2,
	\]
	for $\eps'$ sufficiently small, so $U^{\eps'}_1$ and $U^{\eps'}_2$ satisfy the same hypotheses as $U_1$ and $U_2$. By choosing $\eps<\eps'/4$ and possibly shrinking $\eps'$, we can construct a domain $U^-_{\eps}$ by applying the first part of the proof to $U^{\eps'}_1$ and $U^{\eps'}_2$ in place of $U_1$ and $U_2$. Moreover, $U^-_{\eps}$ satisfies 
	$$
	U^{\eps'}_1\cap U^{\eps'}_2\subseteq U_\eps^-\subseteq \mathrm{int}(U).
	$$
	Items (1)-(3) immediately follow from the construction and \cref{ApproxThm}.
\end{proof}

\section{A property of the biggest flow of an asymptotically conical hypersurface} \label{BigFlowACSurfaceSec}
Throughout this section, we take $X = -\frac{\xX}{2}$. We show that the biggest $X$-flow of a asymptotic conical hypersurface agrees with the smooth EMCF for a short time.
\begin{prop}[{cf. \cite[Theorem 31]{HershkovitsWhite}}]
	\label{BigFlowProp} 
	Let $\Sigma\subseteq \Real^{n+1}$ be a smooth hypersurface that is $C^2$-asymptotic to a $C^2$-regular cone $\cC$. There is an $\eps>0$ so that the smooth $X$-mean-curvature flow starting from $\Sigma$, $t\mapsto\Sigma(t)$ exists for $t\in [0,\eps_0]$ and agrees with the biggest $X$-flow $F^X(\Sigma\times\{0\};0)$. Moreover, if $\Sigma=\partial U$ for a closed subset $U$, then for $t\in [0,\eps_0]$, $\Sigma(t)=\partial F^X_t(U\times\{0\};0)$.
\end{prop}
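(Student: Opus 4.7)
The plan is in two main steps: first establish short-time existence of a smooth EMCF starting from $\Sigma$ using Ecker--Huisken interior estimates together with the uniform $C^2$-bounds inherited from the asymptotic-conical structure, then identify the smooth flow with the biggest $X$-flow by invoking pseudolocality and local uniqueness of smooth parabolic evolution.

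\textbf{Step 1: smooth short-time existence and easy inclusion.} Since $\Sigma$ is $C^2$-asymptotic to the $C^2$-regular cone $\mathcal{C}$, it admits uniform local $C^2$-graph representations on a scale comparable to $\max(1,|\mathbf{x}|^{-1})$ (cf.\ the proof of Proposition \ref{infinity-regularity} and \cite[Proposition 4.3]{BWSmoothCompactness}). Applying the Ecker--Huisken interior estimates \cite{EckerHuiskenInterior} via the MCF--EMCF correspondence \eqref{ChangeOfVariables} yields a smooth EMCF $t\in[0,\eps_0]\mapsto\Sigma(t)$ starting from $\Sigma$, with $\Sigma(t)\to\Sigma$ in $C^2_{loc}$ as $t\to 0^+$. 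Its spacetime track $Z^{*}=\bigcup_{t\in[0,\eps_0]}\Sigma(t)\times\{t\}$ is a classical smooth $X$-flow, hence a weak $X$-flow generated by $\Sigma\times\{0\}$; Theorem \ref{ExistBiggestFlow} then gives $Z^{*}\subseteq F^X(\Sigma\times\{0\};0)$.

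\textbf{Step 2: reverse inclusion via pseudolocality.} For the reverse inclusion, fix $p\in\Real^{n+1}$. By the Ilmanen--Neves--Schulze pseudolocality \cite{IlmanenNevesSchulze}, applied through the correspondence \eqref{ChangeOfVariables}, the uniform local $C^2$ control of $\Sigma$ near $p$ guarantees the existence of a parabolic neighborhood $B^{n+1,1}_{r}(p,0)$, with $r=r(\Sigma)>0$ independent of $p$, in which the level set flow of $\Sigma$ --- and therefore, by the dictionary of \cite[Section 13]{HershkovitsWhite} and \cite[Sections 6.3, 10.3]{IlmanenMAMS}, $F^X(\Sigma\times\{0\};0)$ --- is a smooth hypersurface evolving by EMCF. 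Standard local uniqueness for smooth parabolic evolutions (in the spirit of Proposition \ref{UniquenessXFlowProp}) forces this smooth restriction of $F^X(\Sigma\times\{0\};0)$ to coincide with $\Sigma(t)$ on $B^{n+1,1}_r(p,0)$. Since $p$ is arbitrary and $r$ is uniform, shrinking $\eps_0$ if needed yields $F^X(\Sigma\times\{0\};0)=Z^{*}$ on $[0,\eps_0]$.

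\textbf{Step 3: boundary statement for $U$.} Suppose $\Sigma=\partial U$. The smooth family of closed regions $t\in[0,\eps_0]\mapsto U(t)$ with $\partial U(t)=\Sigma(t)$ is a weak $X$-flow generated by $U\times\{0\}$: against any smooth compact barrier, the smoothness of $\Sigma(t)$ from Step 2 forces any first-contact point with $U(t)$ to lie on $\Sigma(t)$, where the barrier condition reduces to the classical one for the smooth flow. Hence $\bigcup_t U(t)\times\{t\}\subseteq F^X(U\times\{0\};0)$ by Theorem \ref{ExistBiggestFlow}. For the reverse inclusion, restrict $F^X(U\times\{0\};0)$ to a large cylinder $C_{\bar{B}_R,0}$ via Proposition \ref{WeakFlowLocalization} and argue as in Proposition \ref{boundary-flow} to obtain $\partial F^X(U\times\{0\};0)\subseteq F^X(\Sigma\times\{0\};0)=Z^{*}$; since $\Sigma(t)$ locally separates $\Real^{n+1}$ into two components, it follows that $F^X_t(U\times\{0\};0)=U(t)$ and hence $\partial F^X_t(U\times\{0\};0)=\Sigma(t)$ for $t\in[0,\eps_0]$.

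The chief obstacle is making the pseudolocality radius $r$ uniform in $p\in\Sigma$: pseudolocality provides a parabolic neighborhood whose size depends on the local initial $C^2$-geometry, and it is precisely the $C^2$-asymptotic-conical hypothesis that supplies this uniformity at infinity. Once this is in hand, local uniqueness of smooth EMCF plus the level-set-flow characterization of $F^X$ promotes the local identification $F^X=Z^{*}$ to a global one on $[0,\eps_0]$.
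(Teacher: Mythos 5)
Your Step~1 matches the paper's start (Ecker--Huisken short-time existence giving $Z^*\subseteq F^X(\Sigma\times\{0\};0)$), but Step~2 has a genuine gap. You assert that the Ilmanen--Neves--Schulze pseudolocality, via the change of variables \eqref{ChangeOfVariables}, ``guarantees the existence of a parabolic neighborhood \dots in which the level set flow of $\Sigma$ \dots is a smooth hypersurface.'' Pseudolocality gives curvature estimates for \emph{Brakke} or smooth flows starting from locally graphical initial data; it is not a statement about the level set flow (equivalently, the biggest $X$-flow) being thin. The biggest flow $F^X(\Sigma\times\{0\};0)$ is just a closed subset of spacetime, and a priori nothing prevents it from fattening instantaneously. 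Ruling that out is exactly the content of the reverse inclusion, and it requires a barrier or avoidance argument --- but, as the Introduction of the paper emphasizes, there is no general avoidance principle for two noncompact weak flows. So your proposed deduction ``pseudolocality $\Rightarrow$ level set flow smooth locally $\Rightarrow$ local uniqueness identifies it with $\Sigma(t)$'' skips the crucial step.

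The paper handles this by exhausting each side $U^\pm$ of $\Sigma$ by smooth compact sets $V_i^\pm$, running Ilmanen's elliptic regularization (which can be arranged to be non-fattening after a small perturbation) to obtain unit-regular Brakke $X$-flows from $\Sigma_i^\pm=\partial V_i^\pm$, and then applying the avoidance principle (Theorem~30 of \cite{HershkovitsWhite}) between $F^X(\Sigma\times\{0\};0)$ and $F^X(V_i^\pm\times\{0\};0)$. This use of avoidance is legitimate precisely because $V_i^\pm$ is compact. White's regularity and Brakke compactness show $\Sigma_i^\pm(t)\to\Sigma^\pm(t)$ smoothly, and Chen--Yin uniqueness identifies $\Sigma^+(t)=\Sigma^-(t)=\Sigma(t)$; the disjointness from the interior flows then pins $F^X_t(\Sigma\times\{0\};0)$ onto $\Sigma(t)$ from both sides. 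If you want to salvage your local strategy, you would need to construct local compact strong $X$-flow barriers on both sides of $\Sigma$ near every $p$ that survive a uniform time (here the uniform $C^2$ control and pseudolocality are indeed relevant), and then invoke the avoidance theorem against these compact barriers; but as written the appeal to ``local uniqueness for smooth parabolic evolutions'' alone does not engage the weak flow at all.

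A smaller issue in Step~3: you invoke Proposition~\ref{boundary-flow}, but that proposition requires $U$ to be smooth, compact and strictly $X$-mean-convex, none of which is assumed here. The paper's two-sided exhaustion handles the boundary statement directly via avoidance against both $V_i^+$ and $V_i^-$.
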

\begin{proof}
	 As $\Sigma$ is an embedded hypersurface, there are disjoint open sets, $U^\pm\subseteq \Real^{n+1}\setminus \Sigma$ so $\Sigma=\partial U^\pm$.  
	 As $\Sigma$ is smooth and $C^2$-asymptotic to $\cC$, there is a uniform upper bound on the norm of the second fundamental form, and so there are sequences of smooth compact sets $V_i^\pm\subseteq U^\pm$ so that $U^\pm=\bigcup_{i=1}^\infty V_i^\pm$ and $\Sigma_i^\pm=\partial V_i^\pm$ converge with multiplicity one to $\Sigma$ in $C^\infty_{loc}(\Real^{n+1})$ as $i \to \infty$. As $X=-\frac{\mathbf{x}}{2}$, by  the change of variable \cref{ChangeOfVariables}, we may directly apply Ilmanen's elliptic regularization scheme \cite{IlmanenMAMS} for the usual MCF. In particular, there are small perturbations of $V_i^\pm$ so that the biggest flows of the $\Sigma_i^\pm$ do not fatten, but all other properties are preserved.  Thus, there are unit-regular integral Brakke $X$-flows, $\mathcal{M}_i^\pm$  from $\Sigma_i^\pm$ with
	$$
	\Sigma_i^\pm(t)=\spt(\mathcal{M}_i(t))\subseteq F_t^X(\Sigma_i^\pm\times \set{0}; 0)\subseteq F_t^X(V_i^\pm\times \set{0}; 0).
	$$
	By \cite{WhiteRegularity} and standard compactness results, up to passing to a subsequence,  the $\Sigma_i^\pm$ converge in $C^\infty_{loc}(\mathbb{R}^{n+1}\times [0,\eps_0])$ to smooth $X$-mean-curvature flows $t\in [0,\eps_0]\mapsto \Sigma^\pm(t)$ that satisfy $\Sigma^\pm(0)=\Sigma$.  As this convergence is locally smooth
 when $t\in [0, \eps_0]$, there are open subsets $U^\pm(t)\subseteq \Real^{n+1}$ so that, for such $t$, $\Sigma^\pm=\partial U^\pm(t)$ and 
	$$
U^\pm(t)=\bigcup_{i=1}^\infty F_t(V_i^\pm\times \set{0}; 0).
	$$
 Moreover, as $\Sigma$ is $C^2$-asymptotic to $\cC$, the uniqueness result of \cite{ChenYin} applies, and so there is only one smooth $X$-mean-curvature flow out of $\Sigma$, i.e., $\Sigma(t)=\Sigma^\pm(t)$ for all $t \in [0,\eps_0]$. 
	By \cite[Theorem 30]{HershkovitsWhite}, for $t\in [0,\eps_0]$, $F^X_t(\Sigma\times\{0\};0)$ is disjoint from $F^X_t(V_i^\pm\times \set{0}; 0)$, and so $F_t^X(\Sigma\times\{0\};0)\subseteq \Sigma(t)$. As the set $\set{(x,t) \mid x\in \Sigma(t), t\in [0,\eps_0]}$ is a weak $X$-flow, it follows that $F^X_t(\Sigma\times\{0\};0)=\Sigma(t)$ for $t\in [0, \eps_0]$. 
	
	Finally, set $U^+=\mathrm{int}(U)$. By \cite[Theorems 30]{HershkovitsWhite}, $F^X(U\times\{0\};0)$ is disjoint from the biggest $X$-flow of the $V_i^-$ and , likewise,  contains the biggest flow of $V_i^+$.  Hence, $F^X_t(U\times\{0\};0)\cap U^-=\emptyset $ and $U^+\subseteq F^X_t(U\times\{0\};0)$. As $F^X(\Sigma\times\{0\};0)\subseteq F^X(U\times\{0\};0)$ by the definition of the biggest flow and $\Sigma(t)=\partial U^\pm(t)$, the last claim follows.
\end{proof}

\begin{cor} \label{ExpanderBigFlowCor}
	Let $U\subseteq \Real^{n+1}$ be a closed set so that $\partial U$ is smooth self-expander that is $C^2$-asymptotic to a $C^2$-regular cone. For all $t\geq 0$, 
	$$
	F_t^X(\partial U\times\{0\};0)=\Sigma \mbox{ and } F_t^X(U\times\{0\};0)=U.
	$$ 
\end{cor}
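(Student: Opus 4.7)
The plan is to combine the short-time identification of the biggest $X$-flow with the smooth EMCF (Proposition \ref{BigFlowProp}) with the semigroup property (Proposition \ref{semigroup}) to iterate to all $t \ge 0$, exploiting the fact that a self-expander is precisely a static solution of EMCF.

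First, since $\partial U$ is a self-expander, the unique smooth $X$-mean-curvature flow emerging from $\partial U$ is the constant family $\Sigma(t) \equiv \partial U$. Applying Proposition \ref{BigFlowProp} produces $\eps_0 > 0$ such that
\[
F^X_t(\partial U \times \{0\}; 0) = \Sigma(t) = \partial U \quad \text{and} \quad \partial F^X_t(U \times \{0\}; 0) = \Sigma(t) = \partial U
\]
for $t \in [0, \eps_0]$. To upgrade the second (boundary) identity to the full equality $F^X_t(U \times \{0\}; 0) = U$, I would extract from the final paragraph of the proof of Proposition \ref{BigFlowProp} the two inclusions $U^+ = \mathrm{int}(U) \subseteq F^X_t(U \times \{0\}; 0)$ and $F^X_t(U \times \{0\}; 0) \cap U^-(t) = \emptyset$, where $U^{\pm}$ are the two open sides of $\partial U$ and $U^{\pm}(t)$ are their smooth EMCFs. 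Because $\partial U$ is a self-expander, $U^{\pm}(t) \equiv U^{\pm}$, and combining these with $\partial U \subseteq F^X_t(U \times \{0\}; 0)$ gives
\[
U = U^+ \cup \partial U \subseteq F^X_t(U \times \{0\}; 0) \subseteq \R^{n+1} \setminus U^- = U.
\]

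Second, I would iterate using the semigroup property. Assuming inductively that the two identities hold on $[0, k\eps_0]$, apply Proposition \ref{semigroup} with $T = k \eps_0$ and $\Gamma = \partial U \times \{0\}$ (respectively $U \times \{0\}$); since $\Gamma|_{[T,\infty)} = \emptyset$ and $F^X_T(\Gamma;0)$ is already identified, $\Gamma^T$ reduces to a single spatial slice at time $T$, and
\[
F^X_{T+h}(\partial U \times \{0\}; 0) = F^X_{T+h}(\partial U \times \{T\}; T), \quad F^X_{T+h}(U \times \{0\}; 0) = F^X_{T+h}(U \times \{T\}; T).
\]
The same proof of Proposition \ref{BigFlowProp} (with starting time $T$ in place of $0$, which is allowed since EMCF is autonomous and the same elliptic-regularization input and uniqueness from \cite{ChenYin} apply) gives that the right-hand sides are $\partial U$ and $U$ respectively for $h \in [0, \eps_0]$, completing the induction and hence the claim for all $t \ge 0$.

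I do not expect a substantive obstacle: the analytic content is already contained in Proposition \ref{BigFlowProp}, and the corollary is the straightforward bookkeeping consequence of that proposition, the semigroup property, and the static nature of self-expanders under EMCF. The only mildly delicate step is verifying that the short-time equality $F^X_t(U \times \{0\};0) = U$ — and not merely the corresponding statement about boundaries — is available from Proposition \ref{BigFlowProp}, but this drops out of its proof once one observes that staticity of $\Sigma(t)$ forces staticity of $U^{\pm}(t)$.
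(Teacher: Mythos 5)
The proposal is correct and supplies the argument the paper leaves implicit; since no proof is printed for Corollary~\ref{ExpanderBigFlowCor}, the intended reasoning is exactly your combination of Proposition~\ref{BigFlowProp} on $[0,\eps_0]$ with the staticity of a self-expander under EMCF and the semigroup property (Proposition~\ref{semigroup}) to bootstrap to all $t\ge 0$. Your observation that the full set equality $F^X_t(U\times\{0\};0)=U$ (not merely the boundary statement of Proposition~\ref{BigFlowProp}) must be extracted from the two inclusions $U^+\subseteq F^X_t(U\times\{0\};0)$ and $F^X_t(U\times\{0\};0)\cap U^-=\emptyset$ in the final paragraph of that proof, together with $U^\pm(t)\equiv U^\pm$ in the static case and the uniformity of $\eps_0$ across iterations (the flow always restarts from the same hypersurface $\partial U$), is the correct bookkeeping.
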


\bibliographystyle{alpha}
\bibliography{EMCref.bib}

\end{document}